\theoremstyle{plain}
\newtheorem{theorem}{Theorem}[section]
\newtheorem{lemma}[theorem]{Lemma}
\newtheorem*{claim}{Claim}
\newtheorem{corollary}[theorem]{Corollary}
\newtheorem{proposition}[theorem]{Proposition}
\newtheorem{problem}[theorem]{Problem}
\theoremstyle{definition}
\newtheorem{definition}[theorem]{Definition}
\newtheorem{example}[theorem]{Example}
\newtheorem{note}[theorem]{Note}
\newtheorem*{notation}{Notation}
\newtheorem{question}[theorem]{Question}
\theoremstyle{remark}
\newtheorem{remark}[theorem]{Remark}
\newcommand{\Aut}{\mbox{Aut}}
\newcommand{\csuchthat}{\, :\,}
\newcommand{\dom}{\mbox{dom}}
\newcommand{\La}{\Leftarrow}
\newcommand{\ra}{\rightarrow}
\newcommand{\Ra}{\Rightarrow}
\newcommand{\mc}[1]{\mathcal{#1}}
\newcommand{\ol}[1]{\overline{#1}}
\newcommand{\cc}{\curvearrowright}
\newcommand{\IFF}{\Leftrightarrow}
\newcommand{\U}{\EuScript{U}}
\renewcommand{\H}{\EuScript{H}}
\newcommand{\K}{\EuScript{K}}
\numberwithin{equation}{section}
\DeclareSymbolFont{AMSb}{U}{msb}{m}{n}
\DeclareMathSymbol{\N}{\mathbin}{AMSb}{"4E}
\DeclareMathSymbol{\Z}{\mathbin}{AMSb}{"5A}
\DeclareMathSymbol{\R}{\mathbin}{AMSb}{"52}
\DeclareMathSymbol{\Q}{\mathbin}{AMSb}{"51}
\DeclareMathSymbol{\I}{\mathbin}{AMSb}{"49}
\DeclareMathSymbol{\C}{\mathbin}{AMSb}{"43}
\DeclareMathSymbol{\T}{\mathbin}{AMSb}{"54}
\DeclareMathSymbol{\E}{\mathbin}{AMSb}{"45}
\DeclareMathSymbol{\F}{\mathbin}{AMSb}{"46}
\DeclareMathSymbol{\PP}{\mathbin}{AMSb}{"50}
\renewcommand{\P}{\PP}
\renewcommand{\emptyset}{\varnothing}
\begin{document}
\title{Weak equivalence and non-classifiability of measure preserving actions}
\author{Robin D. Tucker-Drob}
\date{April 6, 2012}
\begin{abstract}
Ab\'{e}rt-Weiss have shown that the Bernoulli shift $s_\Gamma$ of a countably infinite group $\Gamma$ is weakly contained in any free measure preserving action $\bm{a}$ of $\Gamma$.  Proving a conjecture of Ioana we establish a strong version of this result by showing that $\bm{s}_\Gamma\times \bm{a}$ is weakly equivalent to $\bm{a}$. Using random Bernoulli shifts introduced by Ab\'{e}rt-Glasner-Virag we generalized this to non-free actions, replacing $\bm{s}_\Gamma$ with a random Bernoulli shift associated to an invariant random subgroup, and replacing the product action with a relatively independent joining. The result for free actions is used along with the theory of Borel reducibility and Hjorth's theory of turbulence to show that the equivalence relations of isomorphism, weak isomorphism, and unitary equivalence on the weak equivalence class of a free measure preserving action do not admit classification by countable structures.  This in particular shows that there are no free \emph{weakly rigid} actions, i.e., actions whose weak equivalence class and isomorphism class coincide, answering negatively a question of Ab\'{e}rt and Elek.

We also answer a question of Kechris regarding two ergodic theoretic properties of residually finite groups. A countably infinite residually finite group $\Gamma$ is said to have property $\mbox{EMD}^*$ if the action $\bm{p}_\Gamma$ of $\Gamma$ on its profinite completion weakly contains all ergodic measure preserving actions of $\Gamma$, and $\Gamma$ is said to have property $\mbox{MD}$ if $\bm{\iota}\times \bm{p}_\Gamma$ weakly contains all measure preserving actions of $\Gamma$, where $\bm{\iota}$ denotes the identity action on a standard non-atomic probability space. Kechris shows that $\mbox{EMD}^*$ implies $\mbox{MD}$ and asks if the two properties are actually equivalent. We provide a positive answer to this question by studying the relationship between convexity and weak containment in the space of measure preserving actions.
\end{abstract}
\maketitle

\setcounter{tocdepth}{4}
\tableofcontents

\section{Introduction}

By a \emph{measure preserving action} of a countable group $\Gamma$ we mean a triple $(\Gamma , a , (X,\mu ))$, which we write as $\Gamma \cc ^ a (X,\mu )$, where $(X,\mu )$ is a standard probability space (i.e., a standard Borel space equipped with a Borel probability measure) and $a:\Gamma\times X\ra X$ is a Borel action of $\Gamma$ on $X$ that preserves the Borel probability measure $\mu$. In what follows all measures are probability measures unless explicitly stated otherwise and we will write $\bm{a}$ and $\bm{b}$ to denote the measure preserving actions $\Gamma \cc ^a (X,\mu )$ and $\Gamma \cc ^b (Y,\nu )$ respectively when the group $\Gamma$ and the underlying probability spaces $(X,\mu )$ and $(Y,\nu )$ are understood. 
Given measure preserving actions $\bm{a} = \Gamma \cc ^a (X,\mu )$ and $\bm{b} = \Gamma \cc ^b (Y,\nu )$, we say that $\bm{a}$ is {\emph{weakly contained}} in $\bm{b}$, and write $\bm{a}\prec \bm{b}$, if for every finite partition $A_0,\dots ,A_{k-1}$ of $X$ into Borel sets, every finite subset $F\subseteq \Gamma$, and every $\epsilon >0$, there exists a Borel partition $B_0,\dots ,B_{k-1}$ of $Y$ such that
\[
|\mu (\gamma ^a A_i\cap A_j ) - \nu (\gamma ^b B_i \cap B_j ) | < \epsilon
\]
for all $\gamma \in F$ and $0\leq i, j <k$.  
We write $\bm{a}\sim \bm{b}$ if both $\bm{a}\prec \bm{b}$ and $\bm{b}\prec \bm{a}$, in which case $\bm{a}$ and $\bm{b}$ are said to be \emph{weakly equivalent}. The notion of weak containment of measure preserving actions was introduced by Kechris \cite{Ke10} as an ergodic theoretic analogue of weak containment for unitary representations.

Weak containment of unitary representations may be defined as follows (see \cite[Appendix F]{BdlHV08}). Let $\pi$ and $\rho$ be unitary representations of $\Gamma$ on the Hilbert spaces $\H _\pi$ and $\H _\rho$ respectively. Then $\pi$ is \emph{weakly contained} in $\rho$, written $\pi\prec \rho$, if for every unit vector $\xi$ in $\H _\pi$, every finite subset $F\subseteq \Gamma$, and every $\epsilon >0$, there exists a finite collection $\eta _0,\dots ,\eta _{k-1}$ of unit vectors in $\H _\rho$ and nonnegative real numbers $\alpha _0,\dots ,\alpha _{k-1}$ with $\sum _{i=0}^{k-1}\alpha _i =1$ such that
\[
|\langle \pi (\gamma )\xi , \xi \rangle - \sum _{i=0}^{k-1} \alpha _i \langle \rho (\gamma )\eta _i ,\eta _i \rangle |  < \epsilon
\]
for all $\gamma \in F$. Each unit vector $\xi \in H_\pi$ gives rise to a normalized positive definite function on $\Gamma$ defined by $\gamma \mapsto \langle \pi (\gamma )\xi ,\xi \rangle$. We call such a function a \emph{normalized positive definite function realized in $\pi$} and we may rephrase the definition of $\pi\prec \rho$ accordingly as: every normalized positive definite function realized in $\pi$ is a pointwise limit of convex sums of normalized positive definite functions realized in $\rho$.

A similar rephrasing also applies to weak containment of measure preserving actions, as pointed out by Ab\'{e}rt-Weiss \cite{AW11}. If we view a finite Borel partition $A_0,\dots ,A_{k-1}$ of $X$ as a Borel function $\phi : X \ra k = \{ 0,1,\dots ,k-1 \}$ (where we view $k$ as a discrete space) then, given a measure preserving action $\bm{a} = \Gamma \cc ^a (X,\mu )$, each partition $\phi : X\ra k$ gives rise to a shift-invariant Borel probability measure $(\Phi ^{\phi ,a} )_*\mu$ on $k^\Gamma$, where
\[
\Phi ^{\phi ,a}(x)(\gamma ) = \phi ((\gamma ^{-1})^a\cdot x) .
\]
The map $\Phi ^{\phi ,a}$ is equivariant between the action $a$ and the shift action $s$ on $k^\Gamma$ given by $(\gamma ^s \cdot f)(\delta ) = f(\gamma ^{-1}\delta )$, and one may show that the measures $(\Phi ^{\phi ,a})_*\mu$, as $\phi$ ranges over all Borel partitions of $X$ into $k$-pieces, are precisely those shift-invariant Borel measures $\lambda$ such that $\Gamma \cc ^s (k^\Gamma ,\lambda )$ is a factor of $\bm{a}$. In this language $\bm{a}$ being weakly contained in $\bm{b}$ means that for every natural number $k$, each shift-invariant measure on $k^\Gamma$ that is a factor of $\bm{a}$ is a weak${}^*$-limit of shift-invariant measures that are factors of $\bm{b}$.

More precisely, given a compact Polish space $K$ we equip $K^\Gamma$ with the product topology, and we let $M_s(K^\Gamma )$ denote the convex set of shift-invariant Borel probability measures on $K^\Gamma$ equipped with the weak${}^*$-topology so that it is also a compact Polish space. 
We define
\[
E(\bm{a},K) = \{ (\Phi ^{\phi ,a})_*\mu \csuchthat \phi : X\ra K \mbox{ is Borel} \} \subseteq M_s(K^\Gamma ) .
\]
Then Ab\'{e}rt-Weiss characterize weak containment of measure preserving actions as follows: $\bm{a}\prec \bm{b}$ if and only if $E(\bm{a},K)\subseteq \ol{E(\bm{b},K)}$ for every finite $K$ if and only if $E(\bm{a},K)\subseteq \ol{E(\bm{b},K)}$ for every compact Polish space $K$.

From this point of view one difference between the two notions of weak containment is apparent. While weak containment of representations allows for normalized positive definite functions realized in $\pi$ to be approximated by \emph{convex sums} of normalized positive definite functions realized in $\rho$, weak containment of measure preserving actions asks that shift invariant factors of $\bm{a}$ be approximated by a \emph{single} shift invariant factor of $\bm{b}$ at a time.  It is natural to ask for a characterization of the situation in which shift invariant factors of $\bm{a}$ are approximated by convex sums of shift invariant factors of $\bm{b}$. When this is the case we say that $\bm{a}$ is \emph{stably weakly contained in} $\bm{b}$ and we write $\bm{a}\prec _s \bm{b}$. The relationship between weak containment and stable weak containment of measure preserving actions is analogous to the relationship between weak containment in the sense of Zimmer (see \cite[F.1.2.(ix)]{BdlHV08} and \cite[Appendix H.{\bf (B)}]{Ke10}) and weak containment of unitary representations. Our first theorem is a characterization of this stable version of weak containment of measure preserving actions.

In what follows $(X,\mu )$ and $(Y,\nu )$ and $(Z,\eta )$ always denote standard probability spaces. We let $\iota _\eta :\Gamma \times Z\ra Z$ denote the trivial (identity) action of $\Gamma$ on $(Z,\eta )$, writing $\bm{\iota} _\eta$ for the corresponding triple $\Gamma \cc ^{\iota _\eta}(Z,\eta )$, and we write $\iota$ and $\bm{\iota}$ for $\iota _{\eta}$ and $\bm{\iota}_{\eta}$ respectively when $\eta$ is non-atomic.  We show the following in \S\ref{section2}.

\begin{theorem}\label{thm:conv}
Let $\bm{b}=\Gamma \cc ^b (Y,\nu )$ be a measure preserving action of $\Gamma$. Then $\ol{E(\bm{\iota} \times \bm{b} , K )} = \ol{\mbox{co}}E(\bm{b},K)$ for every compact Polish $K$. In particular, for any $\bm{a}=\Gamma \cc ^a (X,\mu )$ we have that $\bm{a}\prec \bm{\iota}\times \bm{b}$ if and only if $E(\bm{a},K) \subseteq \ol{\mbox{co}}E(\bm{b},K)$ for every compact Polish space $K$.
\end{theorem}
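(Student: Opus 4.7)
The plan is to establish $\ol{E(\bm{\iota}\times\bm{b},K)}=\ol{\mbox{co}}\,E(\bm{b},K)$ by proving both inclusions; the ``in particular'' clause then follows immediately from the Ab\'{e}rt-Weiss characterization recalled above: $\bm{a}\prec\bm{c}$ if and only if $E(\bm{a},K)\subseteq\ol{E(\bm{c},K)}$ for every compact Polish $K$.

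For the inclusion $\ol{\mbox{co}}\,E(\bm{b},K)\subseteq\ol{E(\bm{\iota}\times\bm{b},K)}$, I would first show that every finite convex combination $\sum_{i=1}^n\alpha_i(\Phi^{\phi_i,b})_*\nu$ of elements of $E(\bm{b},K)$ already lies in $E(\bm{\iota}\times\bm{b},K)$. Since $\eta$ is non-atomic one can partition $Z$ into Borel sets $Z_1,\ldots,Z_n$ with $\eta(Z_i)=\alpha_i$ and define a Borel map $\phi\colon Z\times Y\to K$ by $\phi(z,y)=\phi_i(y)$ for $z\in Z_i$. Since $\Gamma$ acts trivially on $Z$, a direct computation gives $\Phi^{\phi,\iota\times b}(z,y)=\Phi^{\phi_i,b}(y)$ whenever $z\in Z_i$, and pushing $\eta\times\nu$ forward through this map yields exactly $\sum_i\alpha_i(\Phi^{\phi_i,b})_*\nu$. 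Taking closures then gives the desired inclusion.

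For the reverse inclusion I would disintegrate along the $Z$-coordinate. For an arbitrary Borel $\phi\colon Z\times Y\to K$ let $\phi_z(y)=\phi(z,y)$; the same equivariance calculation gives $\Phi^{\phi,\iota\times b}(z,y)=\Phi^{\phi_z,b}(y)$, and hence, by Fubini,
\[
(\Phi^{\phi,\iota\times b})_*(\eta\times\nu)=\int_Z(\Phi^{\phi_z,b})_*\nu\,d\eta(z).
\]
This exhibits the measure on the left as the barycenter of the Borel probability measure on $M_s(K^\Gamma)$ obtained by pushing $\eta$ forward under the map $z\mapsto(\Phi^{\phi_z,b})_*\nu$. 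This pushforward measure is supported inside $E(\bm{b},K)\subseteq\ol{\mbox{co}}\,E(\bm{b},K)$, and $\ol{\mbox{co}}\,E(\bm{b},K)$ is a closed convex subset of the compact convex space $M_s(K^\Gamma)$, so its barycenter must lie in $\ol{\mbox{co}}\,E(\bm{b},K)$. Thus $E(\bm{\iota}\times\bm{b},K)\subseteq\ol{\mbox{co}}\,E(\bm{b},K)$, and taking closures completes the equality.

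The main technical point is verifying the measurability of $z\mapsto(\Phi^{\phi_z,b})_*\nu$ as a map into $M_s(K^\Gamma)$ and the identification of its barycenter with the pushforward on the left; both are routine but need a careful argument using that pairings against continuous cylinder functions $f\in C(K^\Gamma)$ commute with integration in $z$. A more hands-on alternative avoiding any abstract barycenter argument is to approximate an arbitrary $\phi$ in measure by Borel maps that are piecewise constant in $z$ on a finite Borel partition of $Z$; for such approximants the induced shift-invariant measure on $K^\Gamma$ is literally a finite convex combination of elements of $E(\bm{b},K)$ by the first step, and a weak${}^*$-approximation argument then places $(\Phi^{\phi,\iota\times b})_*(\eta\times\nu)$ inside $\ol{\mbox{co}}\,E(\bm{b},K)$.
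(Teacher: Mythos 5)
Your argument is correct, but it takes a genuinely different route from the paper's. The paper never disintegrates a single measure $(\Phi^{\phi,\iota\times b})_*(\eta\times\nu)$; instead it proves (Lemma \ref{lem:setcon}) that $\bm{\iota}\times\bm{b}$ is \emph{weakly equivalent as an action} to the family $\mc{B}_1=\{\bm{\iota}_{\eta_{\bm{\alpha}(n)}}\times\bm{b}\}_{n\geq 1}$ of finite uniform convex sums, by approximating an arbitrary partition of $Z\times Y$ by a product partition whose $Z$-parts have equal measure, and then invokes the set-valued Ab\'ert--Weiss characterization (Proposition \ref{prop:AWgen}) to convert this into the inclusion $E(\bm{\iota}\times\bm{b},K)\subseteq\ol{\bigcup_n E(\bm{\iota}_{\eta_{\bm{\alpha}(n)}}\times\bm{b},K)}\subseteq\ol{\mbox{co}}E(\bm{b},K)$, the last step being Lemma \ref{lem:subsets}. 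Your first inclusion coincides with the second half of Lemma \ref{lem:subsets}, but your reverse inclusion replaces all of that machinery with the identity $\Phi^{\phi,\iota\times b}(z,y)=\Phi^{\phi_z,b}(y)$, the Fubini computation $(\Phi^{\phi,\iota\times b})_*(\eta\times\nu)=\int_Z(\Phi^{\phi_z,b})_*\nu\,d\eta(z)$, and the fact that a closed convex subset of the compact convex metrizable set $M_s(K^\Gamma)$ contains the barycenter of any probability measure it carries. This is shorter and arguably more transparent, and it even gives the slightly stronger statement $E(\bm{\iota}\times\bm{b},K)\subseteq\ol{\mbox{co}}E(\bm{b},K)$ before any closure is taken; the measurability of $z\mapsto(\Phi^{\phi_z,b})_*\nu$ and the barycenter identification that you flag are indeed routine (pair against $f\in C(K^\Gamma)$ and apply Fubini), and your fallback via approximation by maps piecewise constant in $z$ together with Lemma \ref{lem:convermeas} is an equally valid way to close the argument. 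What the paper's longer route buys is reusable infrastructure: Proposition \ref{prop:AWgen} and the weak containment of \emph{sets} of actions are needed again later (e.g.\ in \S\ref{subsection:compspace}), so the detour is not wasted there, whereas your proof is self-contained at the level of measures on $K^\Gamma$.
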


When $\bm{a}$ is ergodic, so that $E(\bm{a},K )$ is contained in the extreme points of $M_s (K^\Gamma )$, we show that Theorem \ref{thm:conv} implies the following direct analogue of the fact (see \cite[F.1.4]{BdlHV08}) that if $\pi$ and $\rho$ are representations of $\Gamma$, $\pi$ is irreducible, and $\pi$ is weakly contained in $\rho$, then every normalized positive definite function realized in $\pi$ is actually a pointwise limit of normalized positive definite functions realized in $\rho$.

\begin{theorem}\label{thm:main}
Let $\bm{a}=\Gamma \cc ^a (X,\mu )$ and $\bm{b}=\Gamma \cc ^b(X,\mu )$ be measure preserving actions of $\Gamma$ and suppose that $\bm{a}$ is ergodic. If $\bm{a}\prec \bm{\iota} \times \bm{b}$ then $\bm{a}\prec \bm{b}$.
\end{theorem}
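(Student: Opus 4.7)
The plan is to combine Theorem \ref{thm:conv} with Milman's partial converse to the Krein--Milman theorem, exploiting the fact that ergodic shift-invariant measures are extreme points of $M_s(K^\Gamma)$.

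First, I would fix a compact Polish space $K$ and aim to show $E(\bm{a},K)\subseteq \overline{E(\bm{b},K)}$, which by the Ab\'ert--Weiss characterization recalled in the introduction is equivalent to $\bm{a}\prec\bm{b}$. By Theorem \ref{thm:conv}, the hypothesis $\bm{a}\prec\bm{\iota}\times\bm{b}$ gives
\[
E(\bm{a},K)\subseteq \overline{\mathrm{co}}\,E(\bm{b},K).
\]
So fix $\lambda\in E(\bm{a},K)$, say $\lambda=(\Phi^{\phi,a})_*\mu$ for some Borel $\phi:X\to K$. Since $\bm{a}$ is ergodic and $\Phi^{\phi,a}$ is $\Gamma$-equivariant between $\bm{a}$ and the shift action on $K^\Gamma$, the pushforward $\lambda$ is ergodic, hence an extreme point of the compact convex set $M_s(K^\Gamma)$.

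Now let $C:=\overline{\mathrm{co}}\,E(\bm{b},K)\subseteq M_s(K^\Gamma)$. This is a closed convex subset of the compact convex set $M_s(K^\Gamma)$ (viewed inside the locally convex space of signed measures with the weak${}^*$-topology), so $C$ is itself compact and convex. Since $\lambda\in C$, and any decomposition $\lambda=t\mu_1+(1-t)\mu_2$ with $\mu_1,\mu_2\in C\subseteq M_s(K^\Gamma)$ would by extremality of $\lambda$ in $M_s(K^\Gamma)$ force $\mu_1=\mu_2=\lambda$, we see that $\lambda$ is in fact an extreme point of $C$.

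The concluding step applies Milman's theorem (the partial converse to Krein--Milman): in a locally convex Hausdorff space, every extreme point of the closed convex hull of a set $S$ lies in $\overline{S}$. Applying this with $S=E(\bm{b},K)$ yields $\lambda\in\overline{E(\bm{b},K)}$. Since $\lambda\in E(\bm{a},K)$ and $K$ were arbitrary, we conclude $E(\bm{a},K)\subseteq\overline{E(\bm{b},K)}$ for every compact Polish $K$, i.e.\ $\bm{a}\prec\bm{b}$. The only nontrivial ingredient is Theorem \ref{thm:conv}; the rest is a clean application of Milman's theorem, so I do not expect a real obstacle here beyond verifying that $\lambda$ is an extreme point of $C$ (which is immediate) and that $\overline{\mathrm{co}}\,E(\bm{b},K)$ is compact (which follows from compactness of $M_s(K^\Gamma)$).
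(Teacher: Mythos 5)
Your proposal is correct and is essentially identical to the paper's own proof: both deduce $E(\bm{a},K)\subseteq \ol{\mbox{co}}E(\bm{b},K)$ from Theorem \ref{thm:conv}, note that ergodicity of $\bm{a}$ places $E(\bm{a},K)$ among the extreme points of $M_s(K^\Gamma)$ (hence of the closed convex hull), and conclude via Milman's partial converse to Krein--Milman (the paper cites this as \cite[Proposition 1.5]{Ph02}). No gaps.
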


In Theorem \ref{thm:aeweakcont} we show more generally that if $\bm{a}$ is an ergodic measure preserving action that is weakly contained in $\bm{d}$, then $\bm{a}$ is weakly contained in almost every ergodic component of $\bm{d}$.  This may be seen as a weak containment analogue of the fact that if $\bm{a}$ is a factor of $\bm{d}$, then $\bm{a}$ is a factor of almost every ergodic component of $\bm{d}$ (see Proposition \ref{prop:factor} below).

One consequence of Theorem \ref{thm:main} is that every non-amenable group has a free, non-ergodic weak equivalence class, and this in fact characterizes non-amenability (Corollary \ref{cor:newchar} below).

\begin{theorem}\label{thm:nonerg}
If $\bm{b}$ a measure preserving action of $\Gamma$ that is strongly ergodic, then $\bm{\iota}\times \bm{b}$ is not weakly equivalent to any ergodic action. In particular, if $\Gamma$ is a non-amenable group and $\bm{s}_\Gamma =\Gamma \cc ^ {s_\Gamma} ([0,1]^\Gamma , \lambda ^\Gamma )$ is the Bernoulli shift action of $\Gamma$, then $\bm{\iota}\times \bm{s}_\Gamma$ is a free action of $\Gamma$ that is not weakly equivalent to any ergodic action.
\end{theorem}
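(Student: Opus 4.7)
The strategy is to assume, toward a contradiction, that $\bm{\iota}\times \bm{b}\sim \bm{c}$ for some ergodic action $\bm{c}=\Gamma \cc ^c (X,\mu )$, and to extract from $\bm{c}$ a non-trivial asymptotically invariant sequence that transfers into $\bm{b}$, violating strong ergodicity. The plan has two key steps, one of which uses Theorem \ref{thm:main}.

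First I would observe that $\bm{\iota}$ is a factor of $\bm{\iota}\times \bm{b}$ via projection to $Z$, so that $\bm{\iota}\prec \bm{\iota}\times \bm{b}\prec \bm{c}$. Fixing a Borel partition $Z=Z_0\sqcup Z_1$ with $\eta (Z_0)=\eta (Z_1)=1/2$, each piece is $\iota$-invariant. Applying the definition of weak containment $\bm{\iota}\prec \bm{c}$ with finite subsets $F_n\subseteq \Gamma$ exhausting $\Gamma$ and $\epsilon _n\to 0$ produces Borel sets $B_n\subseteq X$ with $\mu (B_n)\to 1/2$ and $\mu (\gamma ^c B_n\,\triangle\, B_n)\to 0$ for every $\gamma \in \Gamma$; that is, a non-trivial asymptotically invariant sequence in $\bm{c}$.

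Next, because $\bm{c}$ is ergodic and $\bm{c}\prec \bm{\iota}\times \bm{b}$, Theorem \ref{thm:main} yields $\bm{c}\prec \bm{b}$. Applying the definition of weak containment to the partitions $(B_n , X\setminus B_n)$, again with an exhausting sequence $F_n$ and $\epsilon _n\to 0$, I obtain Borel sets $C_n\subseteq Y$ with $\nu (C_n )\to 1/2$ and $\nu (\gamma ^b C_n\,\triangle\, C_n )\to 0$ for every $\gamma \in \Gamma$. This is a non-trivial asymptotically invariant sequence in $\bm{b}$, contradicting strong ergodicity.

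For the ``in particular'' statement, I would invoke Schmidt's theorem that $\bm{s}_\Gamma$ is strongly ergodic whenever $\Gamma$ is non-amenable (equivalently, spectral gap of the Koopman representation on $L^2_0$), and note that $\bm{\iota}\times \bm{s}_\Gamma$ is free by Fubini, since a non-identity $\gamma$ fixes $(z,y)$ iff $\gamma$ fixes $y$ under $s_\Gamma$, which happens on a null set. The only step that uses anything nontrivial is the appeal to Theorem \ref{thm:main} to convert $\bm{c}\prec \bm{\iota}\times \bm{b}$ into $\bm{c}\prec \bm{b}$; granted that theorem, the argument is a direct manipulation of weak containment together with the standard observation that the identity action's weak containment in a target forces non-trivial asymptotic invariance.
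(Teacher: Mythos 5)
Your proof is correct and follows essentially the same route as the paper: both arguments hinge on applying Theorem \ref{thm:main} to the ergodic action weakly equivalent to $\bm{\iota}\times\bm{b}$ to conclude it is weakly contained in $\bm{b}$, and then derive a contradiction with strong ergodicity via the asymptotically invariant sets supplied by $\bm{\iota}$. (The only cosmetic difference is that the paper phrases the contradiction as ``$\bm{a}$ would be strongly ergodic yet weakly contain $\bm{\iota}$,'' whereas you push the asymptotically invariant sequence one step further into $\bm{b}$; by transitivity $\bm{\iota}\prec\bm{b}$ you could even skip the intermediate sequence in $\bm{c}$.)
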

%

If $\mc{B}$ is a class of measure preserving actions of a countable group $\Gamma$ and $\bm{a}\in \mc{B}$, then $\bm{a}$ is called \emph{universal for $\mc{B}$} if $\bm{b}\prec \bm{a}$ for every $\bm{b}\in \mc{B}$. When $\bm{a}$ is universal for the class of \emph{all} measure preserving actions of $\Gamma$ then $\bm{a}$ is simply called \emph{universal}. In \S\ref{section3} we study the universality properties $\mbox{EMD}$, $\mbox{EMD}^*$, and $\mbox{MD}$ of residually finite groups introduced by Kechris \cite{Ke11} (MD was also independently studied by Bowen \cite{Bo03}, but with different terminology), and defined 
as follows. Let $\Gamma$ be a countably infinite group. $\Gamma$ is said to have property $\mbox{EMD}$ if the measure preserving action $\bm{p}_\Gamma$ of $\Gamma$ on its profinite completion is universal. $\Gamma$ is said to have property $\mbox{EMD}^*$ if $\bm{p}_\Gamma$ is universal for the class of all ergodic measure preserving actions of $\Gamma$. $\Gamma$ is said to have property $\mbox{MD}$ if $\bm{\iota}\times \bm{p}_\Gamma$ is universal.

Each of these properties imply that $\Gamma$ is residually finite and it is clear that $\mbox{EMD}$ implies both $\mbox{EMD}^*$ and $\mbox{MD}$. Kechris shows that $\mbox{EMD}^*$ implies $\mbox{MD}$ and asks (Question 4.11 of \cite{Ke11}) whether the converse is true. We provide a positive answer to this question.

\begin{theorem}\label{thm:MD}
The properties $\mbox{MD}$ and $\mbox{EMD}^*$ are equivalent.
\end{theorem}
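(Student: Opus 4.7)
The plan is to obtain this as an almost immediate corollary of Theorem \ref{thm:main}. One direction, that $\mbox{EMD}^*$ implies $\mbox{MD}$, is due to Kechris and may be cited. So I would focus on proving $\mbox{MD}\Rightarrow \mbox{EMD}^*$.

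Assume $\Gamma$ has property $\mbox{MD}$, so that $\bm{\iota}\times \bm{p}_\Gamma$ is universal. Let $\bm{a}=\Gamma\cc^a (X,\mu)$ be an arbitrary ergodic measure preserving action of $\Gamma$; I must show $\bm{a}\prec \bm{p}_\Gamma$. By universality of $\bm{\iota}\times \bm{p}_\Gamma$ we immediately have $\bm{a}\prec \bm{\iota}\times \bm{p}_\Gamma$. Since $\bm{a}$ is ergodic, Theorem \ref{thm:main} applied with $\bm{b}=\bm{p}_\Gamma$ upgrades this to $\bm{a}\prec \bm{p}_\Gamma$. As $\bm{a}$ was an arbitrary ergodic action, this says $\bm{p}_\Gamma$ is universal for ergodic measure preserving actions of $\Gamma$, i.e.\ $\Gamma$ has property $\mbox{EMD}^*$.

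There is essentially no obstacle left at this stage: all of the work is packaged into Theorem \ref{thm:main}, whose content is exactly the statement that weak containment in $\bm{\iota}\times \bm{b}$ collapses to weak containment in $\bm{b}$ on the ergodic side. The only thing to double-check when writing this up is that the hypothesis of Theorem \ref{thm:main} does not require $\bm{b}$ to live on the same underlying space as $\bm{a}$ (it is stated that way in the excerpt, but this is harmless since weak containment depends only on the isomorphism class of the action, so one may always replace $\bm{p}_\Gamma$ by an isomorphic copy on $(X,\mu)$). With that remark the two implications together give Theorem \ref{thm:MD}.
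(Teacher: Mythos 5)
Your proposal is correct and matches the paper's proof essentially verbatim: the forward direction is cited from Kechris (the paper also notes it follows from Theorem \ref{thm:more}), and the converse is exactly the application of Theorem \ref{thm:main} with $\bm{b}=\bm{p}_\Gamma$ that you describe. Your remark about the underlying space is a harmless technicality that the paper does not bother to mention.
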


This implies (Corollary \ref{cor:EMD(T)} below) that the properties $\mbox{EMD}$ and $\mbox{MD}$ are equivalent for all groups without property (T). We also show in Theorem \ref{thm:subgrpEMD} that the free product of groups with property MD has EMD and we give two reformulations of the problem of whether $\mbox{EMD}$ and $\mbox{MD}$ are equivalent in general (Theorem \ref{thm:MDEMD} below).

In \S\ref{section4} we discuss the structure of weak equivalence with respect to invariant random subgroups.  A countable group $\Gamma$ acts on the compact space $\mbox{Sub}(\Gamma )\subseteq 2^\Gamma$ of all of its subgroups by conjugation. Following \cite{AGV11}, a conjugation-invariant Borel probability measure on $\mbox{Sub}(\Gamma )$ will be called an \emph{invariant random subgroup} (IRS) of $\Gamma$. We let $\mbox{IRS}(\Gamma )$ denote the set of all invariant random subgroups of $\Gamma$. If $\bm{a}=\Gamma \cc ^a (Y, \nu )$ is a measure preserving action of $\Gamma$ then the stabilizer map $y\mapsto \mbox{stab}_a(y) \in \mbox{Sub}(\Gamma )$ 
is equivariant so that the measure $(\mbox{stab}_a)_*\nu$ is an IRS of $\Gamma$ which we call the \emph{type} of $\bm{a}$, and denote $\mbox{type}(\bm{a})$.  It is shown in \cite{AE11a} that the type of a measure preserving action is an invariant of weak equivalence (we give a proof of this in \ref{thm:contin} below).

In \S\ref{subsection:compspace} we use the framework laid out in \S\ref{section2} to study the compact metric topology introduced by Ab\'{e}rt-Elek \cite{AE11a} on the set $A_\sim (\Gamma ,X,\mu )$ of weak equivalence classes of measure preserving actions of $\Gamma$. We show that the map $A_\sim (\Gamma ,X,\mu )\ra \mbox{IRS}(\Gamma )$ sending each weak equivalence class to its type in $\mbox{IRS}(\Gamma )$ is continuous when $\mbox{IRS}(\Gamma )$ is equipped with the weak${}^*$ topology.

In \S\ref{subsection:randbern} we detail a construction, described in \cite{AGV11}, whereby, given a probability space $(Z,\eta )$, one canonically associates to each $\theta \in \mbox{IRS}(\Gamma )$ a measure preserving action $\bm{s}_{\theta ,\eta}$ of $\Gamma$ such that $\mbox{type}(\bm{s}_{\theta ,\eta}) =\theta$ when $\eta$ is non-atomic. We call $\bm{s}_{\theta ,\eta}$ the \emph{$\theta$-random Bernoulli shift of $\Gamma$ over $(Z,\eta )$}. When $\bm{a}$ is free then $\mbox{type}(\bm{a})$ is the point mass $\updelta _{\langle e\rangle}$ on the trivial subgroup $\langle e \rangle$ of $\Gamma$ and $\bm{s}_{\updelta _{\langle e\rangle} ,\eta}$ is the usual Bernoulli shift action of $\Gamma$ on $(Z^\Gamma ,\eta ^\Gamma )$.  
After establishing some properties of random Bernoulli shifts we show the following in \S\ref{subsection:randshift}.

\begin{theorem}\label{thm:randshift}
Let $\bm{a} = \Gamma \cc ^a (Y,\nu )$ be a non-atomic measure preserving action of type $\theta$, and let $\bm{s}_{\theta ,\eta}$ be the $\theta$-random Bernoulli shift over  $(Z,\eta )$. Then the relatively independent joining of $\bm{s}_{\theta ,\eta}$ and $\bm{a}$ over their common factor $\Gamma \cc (\mbox{\emph{Sub}}(\Gamma ),\theta )$ is weakly equivalent to $\bm{a}$. In particular, $\bm{s}_{\theta ,\eta}$ is weakly contained in every non-atomic action of type $\theta$.
\end{theorem}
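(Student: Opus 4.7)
The plan is to prove both $\bm{j}\prec\bm{a}$ and $\bm{a}\prec\bm{j}$, where $\bm{j}$ denotes the relatively independent joining in the statement. The direction $\bm{a}\prec\bm{j}$ is automatic because $\bm{a}$ is a factor of $\bm{j}$: any Borel partition of $Y$ pulls back along the coordinate projection to a partition of the joining space with identical joint $\Gamma$-statistics, so in fact $E(\bm{a},K)\subseteq E(\bm{j},K)$ for every compact Polish $K$. The ``in particular'' clause then follows at once from $\bm{j}\sim\bm{a}$, since $\bm{s}_{\theta,\eta}$ is likewise a factor of $\bm{j}$.

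The content is the direction $\bm{j}\prec\bm{a}$. By the factor characterization of weak containment recalled in the introduction, it suffices, for each finite $K$, to approximate an arbitrary element of $E(\bm{j},K)$ by elements of $E(\bm{a},K)$ in the weak${}^*$-topology. Given a Borel $K$-valued partition $\phi$ of the joining space, I would first approximate $\phi$ by one depending, in the random-Bernoulli fiber $Z^{\Gamma/H}$, on only finitely many coordinates: for fixed $\gamma_1,\dots,\gamma_n\in\Gamma$, write $\phi(y,f)\approx\psi(y,f(\gamma_1 H),\dots,f(\gamma_n H))$ where $H=\mbox{stab}_a(y)$. Under the joining, conditionally on $H$ the coordinates $f(\gamma_i H)$ are i.i.d.\ $\eta$-distributed and independent of $y$, so the joint law of the $\Gamma$-shifts of $\phi$ over any finite $F\subseteq\Gamma$ has an explicit conditional structure governed by $\theta$, $\eta$, and the disintegration $\nu=\int\nu_H\,d\theta(H)$.

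To match this on the $\bm{a}$-side, I would build a Borel map $\alpha\colon Y\to Z^n$ via a Rokhlin-type tiling of $Y$ adapted to $\theta$. On the fiber $(Y,\nu_H)$ of the disintegration, the stabilizer of $\nu_H$-almost every point equals $H$ and the induced $\Gamma/H$-action is essentially free, so measurable Rokhlin towers exist whose shapes exhaust $F\cdot\{\gamma_1,\dots,\gamma_n\}$ modulo $H$. Using non-atomicity of $\nu$, place independent $\eta$-distributed random variables on the towers to define $\alpha$, and set $\tilde\phi(y):=\psi(y,\alpha(y))$. A law-of-large-numbers estimate inside each tower then shows that the joint distribution under $\nu$ of $\tilde\phi\circ(\gamma^{-1})^a$ for $\gamma\in F$ is within $\epsilon$ of the corresponding joining distribution of $\phi\circ(\gamma^{-1})^j$, giving the desired approximation.

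The main obstacle is the fiberwise Rokhlin tiling: since $\bm{a}$ may be non-ergodic and its stabilizers vary with $y$, the tiling must be measurable in the $\mbox{Sub}(\Gamma)$-coordinate and must simultaneously accommodate every stabilizer type in the support of $\theta$. I would obtain this by disintegrating $\nu$ over its common $\mbox{Sub}(\Gamma)$-factor with $\bm{s}_{\theta,\eta}$ and then running a measurable, fiberwise version of the Ab\'{e}rt-Weiss Rokhlin argument (used to prove $\bm{s}_\Gamma\prec\bm{a}$ in the free case), with the essentially free $\Gamma/H$-action on $(Y,\nu_H)$ in place of $\bm{a}$ itself. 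Non-atomicity of $\nu$ is precisely what supplies the independent $Z$-valued samples needed to populate the towers and recover the fiberwise product structure of the random Bernoulli shift.
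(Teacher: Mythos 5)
Your overall skeleton is sound: $\bm{a}\prec\bm{j}$ is indeed automatic because $\bm{a}$ is a factor of the joining, and the paper likewise reduces $\bm{j}\prec\bm{a}$ to matching, for a generating sequence of finite\nobreakdash-coordinate partitions of the joining space, the joint $F$-statistics by a partition built on $Y$ alone (this is exactly the role of Lemma \ref{lem:suff}). The gap is in your central construction. You propose to produce the $Z^n$-valued map $\alpha$ on $Y$ from a fiberwise Rokhlin tiling by towers whose shapes contain $F\cdot\{\gamma_1,\dots,\gamma_n\}$ modulo $H$ and which cover most of the space. Such tilings are an amenability phenomenon (Ornstein--Weiss): for the joint law of the $F$-translates of a tower point to come out right, the translates must stay inside the tower, so the shapes must have small $F$-boundary relative to their size; for a non-amenable $\Gamma$ --- e.g.\ a free group, to which the theorem and Corollary \ref{thm:shift} certainly apply --- this fails, and the resulting error is bounded below by an isoperimetric constant, not by $\epsilon$. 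Moreover, the Ab\'ert--Weiss proof you invoke as a model is not a Rokhlin-tower argument at all; it is a random-coloring, second-moment argument, and that is precisely what the paper runs.

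Concretely, the paper fixes a compatible metric on $Y$, uses non-atomicity of $\nu$ to discard a small set so that for a.e.\ $y$ the translates $t^{-1}y$, $t\in F$, lying in distinct right cosets of $\Gamma_y$ are $\delta$-separated (and, for $\nu^2$-a.e.\ pair $(y,y')$, all translates of $y$ are separated from those of $y'$), takes a finite partition $\{O_m\}$ of $Y$ into cells of diameter less than $\delta$, and colors the cells i.i.d.\ with distribution given by the $\eta$-masses $\alpha_i=\eta(N_i)$. Separation forces the colors read off along $\{t^{-1}y\}_{t\in F}$ to be independent across distinct $\Gamma_y$-cosets and constant within a coset, which reproduces the target value (\ref{eqn:value}) in expectation; the variance bound and Chebyshev then produce a single deterministic coloring working simultaneously for all $\tau$ and all $\sigma$. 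No tower structure or within-tower law of large numbers is needed, and none is available outside the amenable case. If you replace your tiling step by this cell-coloring device, keeping your (correct) bookkeeping via the stabilizer-coset equivalence relation on $F$, your outline becomes the paper's proof.
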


When $\bm{a}$ is free 
then the relatively independent joining of $\bm{s}_{\updelta _{\langle e\rangle},\eta}$ and $\bm{a}$ is simply the product of the Bernoulli shift with $\bm{a}$ and Theorem \ref{thm:randshift} proves a conjecture of Ioana, becoming the following strengthening of Ab\'{e}rt-Weiss \cite[Theorem 1]{AW11}:

\begin{corollary}\label{thm:shift}
Let $\bm{s}_\Gamma = \Gamma \cc ^ {s_\Gamma} ( [0,1]^\Gamma ,\lambda ^\Gamma )$ be the Bernoulli shift action of $\Gamma$, where $\lambda$ denotes Lebesgue measure on $[0,1]$. Let $\bm{a}=\Gamma \cc ^a (X,\mu )$ be a free measure preserving action of $\Gamma$ on a non-atomic standard probability space $(X,\mu )$. Then $\bm{s}_\Gamma \times \bm{a}$ is weakly equivalent to $\bm{a}$.
\end{corollary}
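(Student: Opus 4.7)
The plan is to derive Corollary \ref{thm:shift} as the special case of Theorem \ref{thm:randshift} in which $\theta = \updelta_{\langle e\rangle}$ and $\eta = \lambda$.

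First I would verify that the hypotheses of Theorem \ref{thm:randshift} are met. Since $\bm{a}$ is free, $\mbox{stab}_a(x) = \langle e\rangle$ for $\mu$-a.e.\ $x \in X$, so $\mbox{type}(\bm{a}) = (\mbox{stab}_a)_*\mu = \updelta_{\langle e\rangle}$. Because $(X,\mu)$ is non-atomic, $\bm{a}$ is a non-atomic measure preserving action of type $\theta := \updelta_{\langle e\rangle}$, exactly the setting of the theorem with $(Z,\eta) = ([0,1],\lambda)$.

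Next I would identify the random Bernoulli shift and the relatively independent joining in this degenerate case. As the excerpt already notes, when $\bm{a}$ is free so that $\mbox{type}(\bm{a}) = \updelta_{\langle e\rangle}$, the $\theta$-random Bernoulli shift $\bm{s}_{\updelta_{\langle e\rangle},\lambda}$ over $([0,1],\lambda)$ is (canonically isomorphic to) the usual Bernoulli shift $\bm{s}_\Gamma = \Gamma \cc^{s_\Gamma} ([0,1]^\Gamma, \lambda^\Gamma)$. Moreover, the common factor $\Gamma \cc (\mbox{Sub}(\Gamma),\theta)$ is a point mass at $\langle e\rangle$, hence is (isomorphic to) the trivial one-point system. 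A relatively independent joining over a one-point factor is just the ordinary product joining, so the joining described in Theorem \ref{thm:randshift} becomes $\bm{s}_\Gamma \times \bm{a}$.

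Applying Theorem \ref{thm:randshift} to $\bm{a}$ with $(Z,\eta) = ([0,1],\lambda)$ then yields $\bm{s}_\Gamma \times \bm{a} \sim \bm{a}$, which is Corollary \ref{thm:shift}. The only step requiring any care is verifying that $\bm{s}_{\updelta_{\langle e\rangle},\lambda}$ really is (isomorphic to) the Bernoulli shift $\bm{s}_\Gamma$ and that the relatively independent joining over a trivial common factor collapses to the product; both of these are immediate from the construction of $\bm{s}_{\theta,\eta}$ established in \S\ref{subsection:randbern}, so no serious obstacle arises.
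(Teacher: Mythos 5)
Your proposal is correct and is exactly how the paper obtains Corollary \ref{thm:shift}: the text preceding the corollary derives it from Theorem \ref{thm:randshift} by observing that for a free action $\mbox{type}(\bm{a})=\updelta_{\langle e\rangle}$, the random Bernoulli shift $\bm{s}_{\updelta_{\langle e\rangle},\eta}$ is the usual Bernoulli shift, and the relatively independent joining over the trivial common factor collapses to the product.
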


Several invariants of measure preserving actions such as groupoid cost \cite{AW11} (\cite{Ke10} for the case of free actions) and independence number \cite{CK10} are known to increase or decrease with weak containment (see also \cite{AE11a} and \cite{CKT-D11} for other examples). A consequence of Theorem \ref{thm:randshift} is that, for a finitely generated group $\Gamma$, among all non-atomic measure preserving actions of type $\theta$, the groupoid cost attains its maximum and the independence number attains its minimum on $\bm{s}_{\theta ,\lambda}$. Likewise, Corollary \ref{thm:shift} implies that for any free measure preserving action $\bm{a}$ of $\Gamma$, both $\bm{a}$ and $\bm{s}_\Gamma\times\bm{a}$ have the same independence number, and the orbit equivalence relation associated to $\bm{a}$ and $\bm{s}_\Gamma \times \bm{a}$ have the same cost.

In \S\ref{section5} we address the question of how many isomorphism classes of actions are contained in a given weak equivalence class. We answer a question of Ab\'{e}rt-Elek \cite[Question 6.1]{AE11a}, showing that the weak equivalence class of any free action always contains non-isomorphic actions. Our arguments show that there are in fact continuum many isomorphism classes of actions in any free weak equivalence class, and from the perspective of Borel reducibility we can strengthen this even further. Let $A(\Gamma ,X,\mu )$ denote the Polish space of measure preserving actions of $\Gamma$ on $(X,\mu )$ and let $\bm{a}, \bm{b} \in A(\Gamma ,X,\mu )$. Then $\bm{a}$ and $\bm{b}$ are called \emph{weakly isomorphic}, written $\bm{a}\cong ^w \bm{b}$, if both $\bm{a}\sqsubseteq \bm{b}$ and $\bm{b}\sqsubseteq \bm{a}$. We call $\bm{a}$ and $\bm{b}$ \emph{unitarily equivalent}, written $\bm{a}\cong ^\U \bm{b}$, if the corresponding Koopman representations $\kappa ^{\bm{a}}_0$ and $\kappa ^{\bm{b}}_0$ are unitarily equivalent. We let $\cong$ denote isomorphism of actions. Then $\bm{a}\cong \bm{b} \, \Ra \, \bm{a}\cong ^w \bm{b} \, \Ra \, \bm{a}\cong ^\U \bm{b}$. We now have the following.

\begin{theorem}\label{thm:classif}
Let $\bm{a}=\Gamma \cc ^a (X,\mu )$ be a free action of a countably infinite group $\Gamma$ and let $[\bm{a}] = \{ \bm{b}\in A(\Gamma ,X,\mu ) \csuchthat \bm{b}\sim \bm{a} \}$ be the weak equivalence class of $\bm{a}$.  Then isomorphism on $[\bm{a}]$ does not admit classification by countable structures. The same holds for both weak isomorphism and unitary equivalence on $[\bm{a}]$.
\end{theorem}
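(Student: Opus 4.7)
The plan is to apply Hjorth's theory of turbulence: I want to exhibit a Borel function from a Polish space carrying a turbulent Polish-group action into the weak equivalence class $[\bm{a}] \subseteq A(\Gamma, X, \mu)$ such that each of the three target relations pulls back to something coarser than the orbit equivalence relation downstairs. Then Hjorth's theorem (turbulent orbit equivalence relations are never Borel reducible to isomorphism of countable structures) gives the conclusion for all three relations at once.

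The main tool for populating $[\bm{a}]$ is Corollary \ref{thm:shift}. For any measure preserving action $\bm{c}$ with $\bm{c} \prec \bm{s}_\Gamma$, we have $\bm{c} \times \bm{a} \prec \bm{s}_\Gamma \times \bm{a} \sim \bm{a}$, while $\bm{a} \prec \bm{c} \times \bm{a}$ trivially; hence every such $\bm{c} \times \bm{a}$ is weakly equivalent to $\bm{a}$. This gives a Borel-parametrizable family of actions inside $[\bm{a}]$, as $\bm{c}$ ranges over a Polish space of actions weakly contained in $\bm{s}_\Gamma$.

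For the turbulent parameter space I would use Gaussian actions. Given an orthogonal representation $\pi$ of $\Gamma$, form the associated Gaussian action $\bm{a}_\pi$; for $\pi$ weakly contained in an infinite multiple of the left regular representation, $\bm{a}_\pi$ is weakly contained in $\bm{s}_\Gamma$, and its Koopman representation $\kappa^{\bm{a}_\pi}_0$ is naturally isomorphic to the symmetric Fock space over $\pi$, from which $\pi$ can be recovered up to unitary equivalence. Kechris--Sofronidis-style turbulence of spectral measures (equivalently, of the conjugation action of the orthogonal group on a natural Polish invariant subspace of representations) then provides a turbulent Polish-group action whose orbit equivalence relation refines unitary equivalence of $\pi$'s. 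The map $\pi \mapsto \bm{c}_\pi := \bm{a}_\pi \times \bm{a}$ is then a Borel map into $[\bm{a}]$.

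The final step is to check that $\bm{c}_\pi \cong^\U \bm{c}_\rho$ (and a fortiori $\bm{c}_\pi \cong^w \bm{c}_\rho$ or $\bm{c}_\pi \cong \bm{c}_\rho$) implies that $\pi$ and $\rho$ lie in the same turbulent orbit. For this one uses the decomposition
\[
L^2_0(\bm{a}_\pi \times \bm{a}) \;\cong\; L^2_0(\bm{a}_\pi) \;\oplus\; L^2_0(\bm{a}) \;\oplus\; \bigl(L^2_0(\bm{a}_\pi) \otimes L^2_0(\bm{a})\bigr)
\]
as Koopman $\Gamma$-representations, so that after subtracting off the fixed contribution coming from $\bm{a}$, one reads off the unitary-equivalence class of $\pi$ (and of its tensor powers). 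This last step is the main obstacle: one must arrange the Polish parameter space carefully enough that the perturbation by $L^2_0(\bm{a})$ does not identify distinct turbulent orbits. Choosing, say, a spectral-type parametrization concentrated on a set disjoint from the spectrum of $\bm{a}$ (which is possible because $\Gamma$ is countable and the spectrum of $\bm{a}$ is separable) resolves this, and Hjorth's turbulence theorem then finishes the proof.
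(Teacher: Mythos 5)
Your overall architecture --- Gaussian actions $\bm{g}(\pi )$ of representations $\pi \prec \lambda _\Gamma$, multiplied by $\bm{a}$ and placed inside $[\bm{a}]$ via Corollary \ref{thm:shift}, with turbulence of unitary conjugacy upstairs --- is exactly the one the paper uses. The gap is in your final step, where you try to upgrade the homomorphism $\pi \mapsto \bm{a}_\pi \times \bm{a}$ to a \emph{reduction} by recovering the unitary equivalence class of $\pi$ from the Koopman representation of $\bm{a}_\pi \times \bm{a}$. This does not go through as described. First, unitary equivalence of representations is not cancellative: $\sigma \oplus \rho \cong \sigma \oplus \rho '$ does not imply $\rho \cong \rho '$ (take $\sigma = \infty\cdot\rho \oplus \infty\cdot\rho '$), so ``subtracting off the fixed contribution coming from $\bm{a}$'' is not a legitimate operation. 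Second, the summand $L^2_0(\bm{a}_\pi )\otimes L^2_0(\bm{a})$ is not a fixed contribution at all --- it depends on $\pi$ --- and your proposed remedy of choosing $\pi$ with ``spectrum'' disjoint from that of $\bm{a}$ is not meaningful for a general countable non-abelian group, and even in the abelian case does not help because spectral supports are convolved, not preserved, under tensor products, so the cross term can overlap everything. Third, even the summand $L^2_0(\bm{a}_\pi )\cong \bigoplus _{n\geq 1}\pi ^{\odot n}$ does not obviously determine $\pi$ up to unitary equivalence, for the same non-cancellation reason.

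The paper avoids all of this by never attempting a reduction. It suffices to produce a Baire measurable homomorphism $\psi$ from unitary conjugacy on $\mbox{Rep}_\lambda (\Gamma ,\H )$ to $\cong\! |[\bm{a}]$ (and to $\cong ^w$, $\cong ^\U$) under which the preimage of every single target class is meager: this shows the turbulent relation is not generically $F|[\bm{a}]$-ergodic, and Lemma \ref{cor:generg} then rules out classification by countable structures. Meagerness of fibers comes from the disjointness Lemma \ref{KLPlem2.4}: for fixed $\bm{c}\in [\bm{a}]$ the set $\{ \pi \csuchthat \pi \perp \kappa ^{\bm{c}}_0 \}$ is comeager, while $\psi (\pi )\cong ^\U \bm{c}$ forces $\pi \leq \kappa ^{\bm{g}(\pi )}_0 \leq \kappa ^{\bm{a}\times \bm{g}(\pi )}_0\cong \kappa ^{\bm{c}}_0$, hence $\pi \not\perp \kappa ^{\bm{c}}_0$. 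If you replace your final step with this containment-plus-disjointness argument, your proof closes; as written, the reduction you rely on is not established.
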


Any two free actions of an infinite amenable group are weakly equivalent (\cite{FW04}, see also Remark \ref{rem:amen1} and Theorem \ref{thm:typeamen} below), so for amenable $\Gamma$ Theorem \ref{thm:classif} follows from \cite{FW04}, \cite{Hj97} and \cite[13.7, 13.8,  13.9]{Ke10} (see also \cite[4.4]{KLP10}), while for non-amenable $\Gamma$ there are continuum-many weak equivalence classes of free actions (see Remark \ref{rem:continuum} below), and Theorem \ref{thm:classif} is therefore a refinement of the existing results. The proof of \ref{thm:classif} uses the methods of \cite[13.7]{Ke10} and \cite{KLP10}. We fix an infinite dimensional separable Hilbert space $\H$, and denote by $\mbox{Rep}_\lambda (\Gamma ,\H )$ the Polish space of unitary representations of $\Gamma$ on $\H$ that are weakly contained in the left regular representation $\lambda _\Gamma$ of $\Gamma$.  The conjugacy action of the unitary group $\U (\H )$ on $\mbox{Rep}_\lambda (\Gamma ,\H )$ is generically turbulent by \cite[3.3]{KLP10}, so Theorem \ref{thm:classif} will follow by showing that unitary conjugacy on $\mbox{Rep}_\lambda (\Gamma ,\H )$ is not generically $\cong \! |[\bm{a}]$-ergodic (and that the same holds for $\cong ^w$ and $\cong ^\U$ in place of $\cong$). For this we find a continuous homomorphism $\psi$ from unitary conjugacy on $\mbox{Rep}_\lambda (\Gamma ,\H )$ to isomorphism on $[\bm{a}]$ 
 with the property that the inverse image of each $\cong ^\U$-class is meager. 
The main new ingredient that is needed in the proof of Theorem \ref{thm:classif} is Corollary \ref{thm:shift}, which shows that the homomorphism $\psi$ we define takes values in $[\bm{a}]$.

In \S\ref{sec:amenable} we show that when $\Gamma$ is amenable, $\mbox{type}(\bm{a})$ completely determines the stable weak equivalence class (definition \ref{def:swc}) of a measure preserving action $\bm{a}$ of $\Gamma$.

\begin{theorem}\label{thm:typeamen} Let $\bm{a}$ and $\bm{b}$ be two measure preserving actions of an amenable group $\Gamma$. Then
\begin{enumerate}
\item $\mbox{type}(\bm{a})=\mbox{type}(\bm{b})$ if and only if $\bm{a}\sim _s \bm{b}$.
\item Suppose that $\mbox{type}(\bm{a}) = \mbox{type}(\bm{b})$ concentrates on the infinite index subgroups of $\Gamma$. Then $\bm{a}\sim \bm{b}$.
\end{enumerate}
\end{theorem}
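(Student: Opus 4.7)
The plan is to establish a structural statement that subsumes both parts: for an amenable group, every measure preserving action of type $\theta$ is stably weakly equivalent to the random Bernoulli shift $\bm{s}_{\theta,\eta}$ (for $\eta$ non-atomic). Part (1) then follows by transitivity, and part (2) follows by showing that the infinite-index hypothesis collapses $\sim_s$ to $\sim$ on the weak equivalence class of $\bm{s}_{\theta,\eta}$.

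For the forward direction of (1), first observe from Theorem~\ref{thm:conv} that $\bm{a}\sim_s\bm{b}$ is equivalent to $\bm{\iota}\times\bm{a}\sim\bm{\iota}\times\bm{b}$: one direction uses that $\bm{c}$ is a factor of $\bm{\iota}\times\bm{c}$, and the other uses that $\ol{E(\bm{\iota}\times\bm{c},K)}=\ol{\mbox{co}}\,E(\bm{c},K)$ is stable under taking closed convex hulls. Since the identity action does not alter stabilizers, $\mbox{type}(\bm{\iota}\times\bm{c})=\mbox{type}(\bm{c})$, and the invariance of type under weak equivalence (\cite{AE11a}, re-proved as Theorem~\ref{thm:contin}) gives $\mbox{type}(\bm{a})=\mbox{type}(\bm{b})$.

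For the reverse direction of (1), it suffices to show $\bm{a}\sim_s\bm{s}_{\theta,\eta}$ whenever $\mbox{type}(\bm{a})=\theta$ and $\eta$ is non-atomic. The containment $\bm{s}_{\theta,\eta}\prec\bm{a}$ (hence $\prec_s$) is given by Theorem~\ref{thm:randshift}, after replacing $\bm{a}$ by $\bm{\iota}\times\bm{a}$ if needed to ensure non-atomicity (which does not alter the stable weak equivalence class). The main point is $\bm{a}\prec_s\bm{s}_{\theta,\eta}$, which by Theorem~\ref{thm:conv} reduces to $E(\bm{a},K)\subseteq\ol{\mbox{co}}\,E(\bm{s}_{\theta,\eta},K)$ for every finite $K$. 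This is where amenability is essential. Given a Borel map $\phi\colon X\to K$ and $\epsilon>0$, I would use the Ornstein--Weiss quasi-tiling theorem to approximately tile $X$ by $\Gamma$-translates of a large F\o lner set $F$. On each tile the local joint distribution of $(\phi,\mbox{stab}_a)$ over $F$ yields a pattern compatible with a tile-specific conditional IRS $\theta'$, and such a pattern can be realized as a shift-invariant factor of $\bm{s}_{\theta',\eta}$ (by extending outside $F$ via the independence built into the random Bernoulli shift construction of \S\ref{subsection:randbern}). Averaging over tiles then realizes $(\Phi^{\phi,a})_*\mu$ as a convex combination of measures in $E(\bm{s}_{\theta,\eta},K)$, using that the global IRS $\theta$ is the barycenter of the tile-local conditional $\theta'$.

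For part (2), the infinite-index hypothesis ensures that $\theta$-a.e.\ subgroup $H$ has infinite coset space $\Gamma/H$, so that $\bm{s}_{\theta,\eta}$ has almost-surely-infinite orbits. A relative Rohlin-tower / Ornstein--Weiss argument, fiberwise over $(\mbox{Sub}(\Gamma),\theta)$ and in the spirit of Foreman--Weiss \cite{FW04}, then yields $\bm{\iota}\times\bm{s}_{\theta,\eta}\prec\bm{s}_{\theta,\eta}$ and hence $\bm{\iota}\times\bm{s}_{\theta,\eta}\sim\bm{s}_{\theta,\eta}$. Combined with (1), this upgrades $\bm{a}\sim_s\bm{b}$ to $\bm{a}\sim\bm{b}$. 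The principal obstacle is the quasi-tiling step in the type-$\theta$ setting: the tilings must respect the stabilizer structure so that the locally constructed shift-invariant measures genuinely lie in $E(\bm{s}_{\theta,\eta},K)$ rather than some strictly larger class, requiring a careful interplay between the classical amenable Rohlin machinery and the IRS-indexed Bernoulli shift construction.
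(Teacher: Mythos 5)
Your overall skeleton (compare everything to $\bm{s}_{\theta,\eta}$; get $\bm{s}_{\theta,\eta}\prec\bm{a}$ from Theorem \ref{thm:randshift}; use amenability for the converse; use the infinite-index hypothesis plus non--strong-ergodicity to drop the $\bm{\iota}\times$ factor in part (2)) is the right one, and your forward direction of (1) is fine. But the heart of the theorem is the containment $\bm{a}\prec_s\bm{s}_{\theta,\eta}$, and there your argument has a genuine gap. The step ``such a pattern can be realized as a shift-invariant factor of $\bm{s}_{\theta',\eta}$ by extending outside $F$ via the independence built into the random Bernoulli shift'' is precisely the difficulty, not a routine extension: factors of the $\theta$-random Bernoulli shift are measures of the form $(\Phi^{\psi,s})_*\eta^{\theta\backslash\Gamma}$ for a \emph{single} measurable $\psi$, and there is no a priori reason an arbitrary local $(\phi,\mbox{stab}_a)$-pattern on a F\o lner tile is even approximately of this form. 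Moreover, averaging over tiles of one ergodic system does not produce convex combinations in $\ol{\mbox{co}}\,E(\bm{s}_{\theta,\eta},K)$: by the extreme-point argument of Theorem \ref{thm:main}, an ergodic $(\Phi^{\phi,a})_*\mu$ lying in $\ol{\mbox{co}}\,E(\bm{s}_{\theta,\eta},K)$ must already lie in $\ol{E(\bm{s}_{\theta,\eta},K)}$, so the convex hull cannot absorb the difficulty; the convexity is only needed to glue ergodic components (via Theorem \ref{thm:more}), not to average over tiles.

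The paper's mechanism for this step is quite different and is worth internalizing. Amenability enters through Lemma \ref{prop:Jfunc} (a consequence of Ornstein--Weiss hyperfiniteness): there is a measurable coset-selector $J:Z^{\leq\backslash\Gamma}\to\mbox{COS}(\Gamma)$ with $J(g)\in H_g\backslash\Gamma$ that is $(F,\delta)$-equivariant. One then builds an auxiliary action $\bm{b}$ on $\bigsqcup_H\{f\in X^{H\backslash\Gamma}:\mbox{stab}_a(f(H\delta))=H\}$ with measure $\int_H\mu_H^{H\backslash\Gamma}\,d\theta$, and the map $\varphi(f,g)=f(J(g))$ approximately factors the relatively independent joining $\bm{b}\otimes_{\bm{\theta}}\bm{s}_{\theta,\eta}$ onto $\bm{a}$; the Rokhlin skew-product theorem identifies $\bm{b}\cong\bm{s}_{\theta,\eta_1}$, whence $\bm{b}\otimes_{\bm{\theta}}\bm{s}_{\theta,\eta}\cong\bm{s}_{\theta,\eta_1\times\eta}\cong\bm{s}_{\theta,\eta}$. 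This is a co-induction-style argument (generalizing the proof that $\bm{b}\prec\bm{s}_\Gamma$ for amenable $\Gamma$), not a quasi-tiling one. Separately: in part (2) your fiberwise Rohlin-tower construction of $\bm{\iota}\times\bm{s}_{\theta,\eta}\prec\bm{s}_{\theta,\eta}$ is unnecessary machinery --- after reducing to ergodic $\bm{\theta}$ and ergodic components, the infinite-index hypothesis makes the components non-atomic and amenability makes them non--strongly-ergodic, so \cite[Theorem 3]{AW11} gives $\bm{d}\sim\bm{\iota}\times\bm{d}$ directly --- but you would still need the ergodic decomposition step (and the finite-index case of part (1), where orbits are finite and tiling degenerates) spelled out to pass from $\bm{\iota}\times\bm{a}\sim\bm{\iota}\times\bm{b}$ to $\bm{a}\sim\bm{b}$.
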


Combining this with the results of \S\ref{subsection:compspace} (in particular, Remark \ref{rem:stableconv}) shows that when $\Gamma$ is amenable, the type map $[\bm{a}]_s\mapsto \mbox{type}(\bm{a})$, from the compact space $A_{\sim _s}(\Gamma ,Y,\nu )$ of all stable weak equivalence classes of measure preserving actions of $\Gamma$, to the space $\mbox{IRS}(\Gamma )$, is a homeomorphism.

We end with two appendices, one on ultraproducts of measure preserving actions, and one on stable weak containment.

\begin{remark}
After sending G\'{a}bor Elek a preliminary version of this paper, I was informed by him that he has independently obtained a version of Theorem \ref{thm:typeamen}. See \cite{El12}. 
\end{remark}

\medskip

{\bf Acknowledgments}. I would like to thank my advisor Alexander Kechris for his encouragement and support and for many valuable discussions related to this paper.  I would also like to thank Miklos Ab\'{e}rt for very useful feedback and discussions. The research of the author was partially supported by NSF Grant DMS-0968710.

\section{Preliminaries and notation}\label{sectionP}

$\Gamma$ will always denote a countable group, and $e$ will always denote the identity element of $\Gamma$.

\subsection{Measure algebras and standard probability spaces} All measures will be probability measures unless explicitly stated otherwise. A \emph{standard probability space} is a probability measure space $(X, \mu ) = (X, \bm{B}(X), \mu )$ where $X$ is a standard Borel space and $\mu$ is a probability measure on the $\sigma$-algebra $\bm{B}(X)$ of Borel subsets of $X$. In what follows, $(X,\mu )$, $(Y,\nu )$, and $(Z, \eta )$ will always denote standard probability spaces. Though we mainly focus on standard probability spaces we will make use of nonstandard probability spaces arising as ultraproducts of standard probability spaces. We will write $(W,\rho )$ for a probability space that may or may not be standard.

The \emph{measure algebra} $\mbox{MALG}_\rho$ of a probability space $(W, \rho )$ is the $\sigma$-algebra of $\rho$-measurable sets modulo the $\sigma$-ideal of null sets, equipped with the measure $\rho$. We also equip $\mbox{MALG}_\rho$ with the metric $d_\rho (A, B) = \rho (A\Delta B)$. We will sometimes abuse notation and identify a measurable set $A\subseteq W$ with its equivalence class in $\mbox{MALG}_\rho$ when there is no danger of confusion.

\subsection{Measure preserving actions} Let $\Gamma$ be a countable group. A \emph{measure preserving action} of $\Gamma$ 
is a triple $(\Gamma , a , (X,\mu ))$, which we write as $\Gamma \cc ^ a (X,\mu )$, where $(X,\mu )$ is a standard probability space and $a:\Gamma\times X\ra X$ is a Borel action of $\Gamma$ on $X$ that preserves the probability measure $\mu$. A measure preserving action $\Gamma \cc ^a (X ,\mu )$ will often also be denoted by a boldface letter such as $\bm{a}$ or $\bm{\mu}$ depending on whether we want to emphasize the underlying action or the underlying probability measure. 
When $\gamma \in \Gamma$ and $x\in X$ we write $\gamma ^a \cdot x$ or $\gamma ^ax$ for $a(\gamma ,x)$. In what follows, $\bm{a}$, $\bm{b}$, and $\bm{c}$ and $\bm{d}$ will always denote measure preserving actions of $\Gamma$.

We will also make use of actions of $\Gamma$ on nonstandard probability spaces. When $(W,\rho )$ is a probability space and $o:\Gamma \times W\ra W$ is a measurable action of $\Gamma$ on $W$ that preserves $\rho$ then we will still use the notations $\bm{o} = \Gamma \cc ^o (W,\rho )$, $\gamma ^o$, etc., from above, \emph{though we reserve the phrase ``{measure preserving action}" for the case when the underlying probability space is standard}.
%
%
%
%

\subsection{The space of measure preserving actions} We let $A(\Gamma ,X,\mu )$ denote the set of all measure preserving actions of $\Gamma$ on $(X,\mu )$ modulo almost everywhere equality. That is, two measure preserving actions $\bm{a}$ and $\bm{b}$ of $\Gamma$ on $(X,\mu )$ are equivalent if $\mu (\{ x\in X\csuchthat \gamma ^ax \neq \gamma ^b x \} ) =0$ for all $\gamma \in \Gamma$. Though elements of $A(\Gamma ,X,\mu )$ are equivalence classes of measure preserving actions we will abuse notation and confuse elements of $A(\Gamma ,X,\mu )$ with their Borel representatives, making sure our statements and definitions are independent of the choice of representative when it is not obvious. We equip $A(\Gamma ,X,\mu )$ with the \emph{weak topology}, which is a Polish topology generated by the maps $\bm{a} \mapsto \gamma ^a A  \in \mbox{MALG}_\mu$, with $A$ ranging over $\mbox{MALG}_\mu$ and $\gamma$ ranging over elements of $\Gamma$.

\medskip

{\bf Notation.} For $\bm{a} \in A(\Gamma ,X,\mu )$ and $\bm{b}\in A(\Gamma ,Y,\nu )$ we let $\bm{a}\sqsubseteq \bm{b}$ denote that $\bm{a}$ is a factor of $\bm{b}$ and we let $\bm{a}\cong \bm{b}$ denote that $\bm{a}$ and $\bm{b}$ are isomorphic. We let $\bm{\iota}_\eta \in A(\Gamma ,Z,\eta )$ denote the trivial (identity) system $\Gamma \cc ^{\iota _\eta} (Z,\eta )$, and we write $\bm{\iota}$ for $\bm{\iota}_\eta$ when $\eta$ is non-atomic. We call $\Gamma \cc ^a (X,\mu )$ \emph{non-atomic} if the probability space $(X,\mu )$ is non-atomic. If $T:X\ra X$ then we let $\mbox{supp}(T) = \{ x\in X\csuchthat T(x)\neq x \}$. For a $A\subseteq X$ we denote by $\mu |A$ the restriction of $\mu$ to $A$ given by $(\mu |A )(B)= \mu (B\cap A)$ and we denote by $\mu _A$ the conditional probability measure $\mu _A (B) =\frac{\mu (B\cap A)}{\mu (A)}$ where we use the convention that $\mu _A \equiv 0$ when $A\subseteq X$ is null.

\medskip

{\bf Convention.} We will regularly neglect null sets when there is no danger of confusion.

\section{Proofs of Theorems \ref{thm:conv} and \ref{thm:main}}\label{section2}

\subsection{Weak containment and shift-invariant factors} Let $K$ be a compact Polish space and equip $K^\Gamma$ with the product topology so that it is also a compact Polish space. Then $\Gamma$ acts continuously on $K^\Gamma$ by the shift action $s$, given by $(\delta ^s f)(\gamma ) = f(\delta ^{-1}\gamma )$ for $\delta ,\gamma\in \Gamma$, $f\in K^\Gamma$. Let $(W,\rho )$ be a probability space and let $\bm{o} =\Gamma \cc ^o (W,\rho )$ be a measurable action of $\Gamma$ on $W$ that preserves $\rho$. For each measurable function $\phi :W \ra K$ we define $\Phi ^{\phi ,o}: W\ra K^\Gamma$ by $\Phi ^{\phi ,o} (w)(\gamma ) = \phi ((\gamma ^{-1})^o\cdot w)$, and we let
\[
E(\bm{o},K) = \{ (\Phi ^{\phi ,o})  _*\rho \csuchthat \phi :W\ra K\mbox{ is } \rho\mbox{{}-measurable}\} .
\]
Each map $\Phi ^{\phi ,o}$ is a factor map from $\bm{o}$ to $\Gamma \cc ^s (K^\Gamma ,(\Phi ^{a,\phi })_*\mu )$ since
\begin{align*}
\Phi ^{\phi ,o} (\delta ^o\cdot w)(\gamma ) &= \phi ((\gamma ^{-1}\delta )^o\cdot w) = \phi (((\delta ^{-1}\gamma )^{-1})^o\cdot w ) = \Phi ^{\phi ,o} (w)(\delta ^{-1}\gamma ) = (\delta ^s \cdot \Phi ^{\phi ,o}(w))(\gamma ) .
\end{align*}
Conversely, given any measurable factor map $\psi : \Gamma \cc ^o (W,\rho ) \ra \Gamma \cc ^s (K^\Gamma , \pi _*\mu )$ the map $\phi (w) = \psi (w)(e)$ is also measurable, and for almost all $w\in W$ and all $\gamma \in \Gamma$ we have $\Phi ^{\phi ,o} (w)(\gamma ^{-1} ) = \phi (\gamma ^a\cdot w) = \psi (\gamma ^o \cdot w)(e) = (\gamma ^s \cdot \psi (w))(e) = \psi (w) (\gamma ^{-1} )$ so that $\psi _*\rho = (\Phi ^{\phi ,o })_*\rho$.  It follows that $E(\bm{o},K)$ is the set of all shift-invariant Borel probability measures on $K^\Gamma$ that are factors of $\bm{o}$. We let $M_s(K^\Gamma )$ denote the convex set of all shift-invariant Borel probability measures on $K^\Gamma$. Equipped with the weak${}^*$ topology this is a compact metrizable subset of $C(K^\Gamma )^*$. If $E\subseteq M_s(K ^\Gamma )$ we let $\mbox{co}E$ denote the convex hull of $E$ and we let $\ol{\mbox{co}}E$ denote the closed convex hull of $E$. For $\gamma \in \Gamma$ we let $\pi _\gamma : K^\Gamma \ra K$ denote the projection map $\pi _\gamma (f)=f(\gamma )$. 

\begin{lemma}\label{lem:convermeas}
Suppose that $\phi _n : W\ra K$, $n\in \N$, is a sequence of measurable functions that converge in measure to the measurable function $\phi : W\ra K$ . Then $(\Phi ^{\phi _n ,o})_*\rho \ra (\Phi ^{\phi ,o})_*\rho$ in $M_s(K^\Gamma )$.
\end{lemma}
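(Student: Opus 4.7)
The plan is to reduce the question to testing against a dense family of continuous test functions on $K^\Gamma$ and then to deduce convergence of the corresponding integrals from convergence in measure plus bounded convergence.

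First I would recall that $M_s(K^\Gamma)$ carries the weak${}^*$ topology inherited from $C(K^\Gamma)^*$, so it suffices to show that $\int f \, d(\Phi^{\phi_n,o})_*\rho \to \int f \, d(\Phi^{\phi,o})_*\rho$ for every $f \in C(K^\Gamma)$. By Stone--Weierstrass, the algebra generated by the cylinder functions of the form
\[
f(g) = \prod_{i=1}^{m} h_i\bigl(g(\gamma_i)\bigr), \qquad h_i \in C(K), \ \gamma_i \in \Gamma,
\]
is uniformly dense in $C(K^\Gamma)$, so it suffices to verify the convergence for such $f$. Unwinding the definition of $\Phi^{\phi,o}$,
\[
\int f \, d(\Phi^{\phi,o})_*\rho = \int_W \prod_{i=1}^{m} h_i\bigl(\phi((\gamma_i^{-1})^o \cdot w)\bigr)\, d\rho(w),
\]
and similarly with $\phi$ replaced by $\phi_n$.

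Next I would argue that each factor $h_i(\phi_n((\gamma_i^{-1})^o \cdot w))$ converges to $h_i(\phi((\gamma_i^{-1})^o \cdot w))$ in $\rho$-measure. Since $\bm{o}$ preserves $\rho$, the map $w \mapsto (\gamma_i^{-1})^o \cdot w$ is measure preserving, so $\phi_n \circ (\gamma_i^{-1})^o \to \phi \circ (\gamma_i^{-1})^o$ in $\rho$-measure whenever $\phi_n \to \phi$ in $\rho$-measure. Because $K$ is compact, each $h_i$ is uniformly continuous and bounded, whence composition with $h_i$ preserves convergence in measure and the resulting sequence is uniformly bounded by $\|h_i\|_\infty$.

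Finally, the product $\prod_i h_i(\phi_n((\gamma_i^{-1})^o \cdot w))$ converges in $\rho$-measure to $\prod_i h_i(\phi((\gamma_i^{-1})^o \cdot w))$ and is uniformly bounded by $\prod_i \|h_i\|_\infty$. The bounded convergence theorem (applied along arbitrary subsequences to extract a.e.\ convergence) then yields convergence of the integrals, completing the proof. I do not anticipate any serious obstacle; the only point requiring a bit of care is recording that the shift action on the test side corresponds to evaluating $\phi$ at shifted points of $W$, which is handled by the measure preservation of $\bm{o}$.
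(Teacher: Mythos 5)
Your proof is correct and follows essentially the same route as the paper: both arguments reduce convergence in measure to almost-everywhere convergence along subsequences, use the measure preservation of $\bm{o}$ to handle the shifted coordinates, and conclude weak${}^*$ convergence of the pushforwards. The only difference is cosmetic — the paper simply cites that convergence in measure implies convergence in distribution, whereas you unpack that final step explicitly via Stone--Weierstrass and bounded convergence.
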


\begin{proof}
$\phi _n$ converges to $\phi$ in measure if and only if for every subsequence $\{ n_i \}$ there is a further subsequence $\{ m_{i}\}$ such that $\phi _{m_i} \ra \phi$ almost surely. If $\phi _{m_i} \ra \phi$ almost surely then for all $\gamma \in \Gamma$, $\Phi ^{\phi _{m_i},o}(w)(\gamma )\ra \Phi ^{\phi ,o}(w)(\gamma )$ almost surely, and so $\Phi ^{\phi _{m_i},o}(w)\ra \Phi ^{\phi ,o}(w)$ almost surely. It follows that $\Phi ^{\phi _n,o}\ra \Phi ^{\phi ,o}$ in measure. Since convergence in measure implies convergence in distribution it follows that $(\Phi ^{\phi _n,o})_*\rho \ra (\Phi ^{\phi ,o})_*\rho$ in $M_s(K^\Gamma )$.
\end{proof}

\begin{remark}\label{rem:RVspace}
We may form the space $L(W,\rho , K)$ of all measurable maps $\phi : W\ra K$, where we identify two such maps if they agree $\rho$-almost everywhere. If $d\leq 1$ is a compatible metric for $K$ then we equip $L(W,\rho ,K)$ with the metric $\tilde{d} (\phi ,\psi ) = \int _W d(\phi (w),\psi (w)) \, d\rho (w)$, and then $\phi _n\ra \phi$ in this topology if and only if $\phi _n$ converges to $\phi$ in measure. Then Lemma \ref{lem:convermeas} says that for each measure preserving action $\Gamma \cc ^o (W,\rho )$, the map $\phi \mapsto (\Phi ^{\phi ,o})_*\rho$ from $L(W,\rho ,K)$ to $M_s(K^\Gamma )$ is continuous. The metric $\tilde{d}$ is complete, and $\tilde{d}$ is separable when $(W,\rho )$ is standard. We note for later use that the set of all $\phi \in L(W,\rho ,K)$ with finite range is dense in $L(W,\rho ,K)$ (this follows from $d$ being separable). Proofs of these facts may be found in \cite[Section 19]{Ke10} and \cite{Mo76} (these references assume that the space $(W,\rho )$ is standard, but this assumption is not used to prove the facts mentioned here).
\end{remark}

We will find the following generalization of weak containment useful.

\begin{definition}
Let $\mc{A}$ and $\mc{B}$ be two sets of measure preserving actions of $\Gamma$. We say that $\mc{A}$ is \emph{weakly contained in} $\mc{B}$, written $\mc{A}\prec \mc{B}$, if for every $\Gamma \cc ^a (X,\mu ) = \bm{a} \in \mc{A}$, for any Borel partition $A_0,\dots ,A_{k-1}$ of $X$, $F\subseteq \Gamma$ finite, and $\epsilon >0$, there exists $\Gamma \cc ^b (Y,\nu ) = \bm{b}\in \mc{B}$ and a Borel partition $B_0,\dots ,B_{k-1}$ of $Y$ such that
\[
|\mu (\gamma ^aA_i\cap A_j) - \nu (\gamma ^bB_i\cap B_j) |<\epsilon
\]
for all $i,j< k$ and $\gamma \in F$.
\end{definition}

This is a generalization of weak containment in the sense that when $\mc{A} = \{ \bm{a}\} $ and $\mc{B} = \{ \bm{b} \}$ are both singletons then $\mc{A}\prec \mc{B}$ if and only if $\bm{a}\prec \bm{b}$ in the original sense defined in the introduction. We write $\bm{a} \prec \mc{B}$ for $\{ \bm{a} \} \prec \mc{B}$, and $\mc{A}\prec \bm{b}$ for $\mc{A}\prec \{ \bm{b} \}$. If both $\mc{A}\prec \mc{B}$ and $\mc{B}\prec \mc{A}$ then we put $\mc{A}\sim \mc{B}$. It is clear that $\prec$ is a reflexive and transitive relation on sets of actions. The arguments in 10.1 of \cite{Ke10} show the following.

\begin{proposition}\label{lem:10.1}$\ $ Let $\mc{A}$ and $\mc{B}$ be sets of non-atomic measure preserving actions of $\Gamma$.
Then $\mc{A}\prec \mc{B}$ if and only if for every $\Gamma \cc ^a (X,\mu ) = \bm{a}\in \mc{A}$, there exists a sequence $\bm{a}_n \in A(\Gamma ,X,\mu )$, $n\in \N$, converging to $\bm{a}$ such that each $\bm{a}_n$ is isomorphic to some $\bm{b}_n \in \mc{B}$. In particular, $\bm{a}\prec \mc{B}$ if and only if $\bm{a}\in \ol{\{ \bm{d}\in A(\Gamma ,X,\mu ) \csuchthat \exists\bm{b}\in \mc{B} \ \bm{d}\cong \bm{b} \}}$.
\end{proposition}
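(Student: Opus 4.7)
The plan is to prove the two directions of the equivalence separately, with the reverse direction being essentially a continuity argument and the forward direction requiring a transfer of partitions between standard non-atomic probability spaces.

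For the ($\Leftarrow$) direction, suppose $\bm{a}\in\mc{A}$ and a sequence $\bm{a}_n\to \bm{a}$ in $A(\Gamma,X,\mu)$ with $\bm{a}_n\cong \bm{b}_n\in \mc{B}$ is given. Given any Borel partition $(A_i)_{i<k}$ of $X$, finite $F\subseteq \Gamma$, and $\epsilon>0$, the defining subbasis for the weak topology on $A(\Gamma,X,\mu)$ gives that for $n$ large, $|\mu(\gamma^{a_n}A_i\cap A_j)-\mu(\gamma^a A_i\cap A_j)|<\epsilon$ for all $\gamma\in F$ and $i,j<k$. Transporting $(A_i)$ through the isomorphism $\bm{a}_n\cong\bm{b}_n$ produces the Borel partition of the underlying space of $\bm{b}_n$ witnessing $\mc{A}\prec \mc{B}$ for this data.

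For the ($\Rightarrow$) direction, fix $\bm{a}=\Gamma\cc^a(X,\mu)\in \mc{A}$. Since the weak topology on $A(\Gamma,X,\mu)$ is Polish and $\MALG_\mu$ is separable, $\bm{a}$ has a countable neighborhood base $\{U_n\}$ of sets of the form $U_n=\{\bm{c}:d_\mu(\gamma^c A_i^n,\gamma^a A_i^n)<\epsilon_n\text{ for }\gamma\in F_n,\, i<k_n\}$ for finite partitions $(A_i^n)_{i<k_n}$, finite $F_n\subseteq \Gamma$, and $\epsilon_n>0$. For each $n$, I would use $\mc{A}\prec \mc{B}$ applied to a sufficiently fine refinement of the data $((A_i^n),F_n,\epsilon_n')$ (with $\epsilon_n'\ll \epsilon_n$) to produce some $\bm{b}_n=\Gamma\cc^{b_n}(Y_n,\nu_n)\in\mc{B}$ and a Borel partition $(B_i^n)_{i<k_n}$ of $Y_n$ satisfying the required proximity of partition counts, including (taking $\gamma=e$) that $|\mu(A_i^n)-\nu_n(B_i^n)|$ is very small. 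Then $\bm{a}_n$ will be produced by transporting $\bm{b}_n$ to a suitable action on $(X,\mu)$ via an isomorphism of probability spaces.

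The main technical step is this transfer. Since $(X,\mu)$ and $(Y_n,\nu_n)$ are both standard non-atomic, they are isomorphic. More precisely, I claim there is a measure-preserving Borel isomorphism $\psi_n:(Y_n,\nu_n)\to(X,\mu)$ with $d_\mu(\psi_n(B_i^n),A_i^n)$ arbitrarily small: first slightly enlarge/shrink the $B_i^n$ (or the $A_i^n$) to align their measures exactly, then use the standard fact that for any two Borel partitions of a standard non-atomic space with equal matching measures there is a measure-preserving automorphism mapping one to the other (obtained by patching together measure-preserving isomorphisms between corresponding pieces). Setting $\bm{a}_n=\psi_n\cdot \bm{b}_n$ (the conjugate action $\gamma^{a_n}=\psi_n\gamma^{b_n}\psi_n^{-1}$), we have $\bm{a}_n\cong \bm{b}_n\in \mc{B}$, and
\[
\mu(\gamma^{a_n}A_i^n\cap A_j^n)\approx \mu(\gamma^{a_n}\psi_n(B_i^n)\cap \psi_n(B_j^n))=\nu_n(\gamma^{b_n}B_i^n\cap B_j^n)\approx \mu(\gamma^a A_i^n\cap A_j^n),
\]
so $\bm{a}_n\in U_n$. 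Hence $\bm{a}_n\to \bm{a}$, proving the characterization; the second assertion is then a restatement applied to the singleton $\mc{A}=\{\bm{a}\}$. The only delicate point in the argument is the simultaneous control of the measures of the $\psi_n(B_i^n)$ and their symmetric differences with the $A_i^n$, which is a bookkeeping exercise once the measures of the $B_i^n$ are close enough to those of the $A_i^n$.
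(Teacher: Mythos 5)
Your $(\Leftarrow)$ direction is fine, and your overall plan for $(\Rightarrow)$ — produce $\bm{b}_n\in\mc{B}$ with a partition whose correlation numbers approximate those of $\bm{a}$, then transport $\bm{b}_n$ to $(X,\mu)$ by a measure isomorphism aligning the partitions — is exactly the route the paper takes (it simply cites the argument of \cite[10.1]{Ke10}). However, there is a genuine gap in your final verification. The sets $U_n$ you (correctly) take as a neighborhood base for the weak topology require $d_\mu(\gamma^{a_n}A_i^n,\gamma^a A_i^n)<\epsilon_n$, i.e.\ the \emph{sets} $\gamma^{a_n}A_i^n$ and $\gamma^aA_i^n$ must be close in $\mbox{MALG}_\mu$. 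But the chain of approximations you display only shows that the \emph{numbers} $\mu(\gamma^{a_n}A_i^n\cap A_j^n)$ and $\mu(\gamma^aA_i^n\cap A_j^n)$ are close. This does not imply the former: two subsets of $A_j^n$ of equal measure can be disjoint, so closeness of all correlations with a fixed partition gives no bound on $\mu(\gamma^{a_n}A_i^n\,\Delta\,\gamma^aA_i^n)$. Your phrase ``sufficiently fine refinement'' gestures at the repair but is never used, and as written the conclusion ``so $\bm{a}_n\in U_n$'' is a non sequitur.

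The missing idea is to refine \emph{using the action $a$ itself}: with $e\in F_n$, pass to the partition $\{A_\sigma\}_{\sigma\in k_n^{F_n}}$ where $A_\sigma=\bigcap_{\gamma\in F_n}\gamma^aA^n_{\sigma(\gamma)}$, apply $\mc{A}\prec\mc{B}$ to \emph{this} partition (and a suitably enlarged finite set, e.g.\ $F_nF_n$) to get $\{B_\sigma\}$ in $Y_n$, and set $B_i=\bigsqcup\{B_\sigma\csuchthat\sigma(e)=i\}$. The point of the refinement is that $\gamma^aA_i$ is now a \emph{union of cells} of the refined partition, namely $\bigsqcup\{A_{\gamma\cdot\sigma}\csuchthat\sigma(e)=i\}$; for a set that is a union of cells, closeness of correlation numbers does upgrade to closeness in $d_\mu$, via the estimate $d_{\nu_n}(\gamma^{b_n}B_\sigma, B_{\gamma\cdot\sigma})\leq \nu_n(B_\sigma)+\nu_n(B_{\gamma\cdot\sigma})-2\nu_n(\gamma^{b_n}B_\sigma\cap B_{\gamma\cdot\sigma})$, which is small because each term is within $\delta$ of the corresponding quantity for $a$ and $\gamma^aA_\sigma=A_{\gamma\cdot\sigma}$ exactly (this is precisely estimate (\ref{eqn:est}) in the proof of Proposition \ref{prop:AWgen}). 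Choosing $\psi_n$ to align every $B_\sigma$ with $A_\sigma$ simultaneously (after the measure adjustment you describe), one then gets $\gamma^{a_n}A_i\approx\gamma^{a_n}\psi_n(B_i)=\psi_n(\gamma^{b_n}B_i)\approx\psi_n(\bigsqcup_{\sigma(e)=i}B_{\gamma\cdot\sigma})\approx\bigsqcup_{\sigma(e)=i}A_{\gamma\cdot\sigma}=\gamma^aA_i$ in $d_\mu$, which is what membership in $U_n$ actually demands.
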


We also have the corresponding generalization of \cite[Lemma 8]{AW11}.

\begin{proposition}\label{prop:AWgen}
Let $\mc{A}$ and $\mc{B}$ be sets of measure preserving actions of $\Gamma$. Then the following are equivalent
\begin{enumerate}
\item $\mc{A}$ is weakly contained in $\mc{B}$
\item $\bigcup _{\bm{d}\in \mc{A}} E(\bm{d} , K) \subseteq \ol{\bigcup _{\bm{b}\in \mc{B}}E(\bm{b},K)}$ for every finite $K$.
\item $\bigcup _{\bm{d}\in \mc{A}} E(\bm{d} , K) \subseteq \ol{\bigcup _{\bm{b}\in \mc{B}}E(\bm{b},K)}$ for every compact Polish $K$.
\item $\bigcup _{\bm{d}\in \mc{A}} E(\bm{d} , 2^\N ) \subseteq \ol{\bigcup _{\bm{b}\in \mc{B}}E(\bm{b},2^\N )}$.
\end{enumerate}
\end{proposition}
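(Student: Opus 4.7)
The plan is to prove $(1)\Leftrightarrow (2)$, $(2)\Leftrightarrow (3)$, and $(3)\Leftrightarrow (4)$ in that order. The equivalence $(1)\Leftrightarrow (2)$ is essentially the single-action characterization of Ab\'{e}rt--Weiss \cite[Lemma 8]{AW11} with the target action allowed to range over a family $\mc{B}$, while the remaining equivalences reduce to Lemma \ref{lem:convermeas} and Remark \ref{rem:RVspace} together with a pushforward argument along a continuous retract between $2^\N$ and a finite $K$.

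For $(1)\Leftrightarrow (2)$, I would begin by observing that both conditions split source-by-source: they hold for $\mc{A}$ against $\mc{B}$ iff they hold with $\mc{A}$ replaced by $\{\bm{d}\}$ for each $\bm{d}\in \mc{A}$. So one can fix a single $\bm{d} = \Gamma \cc ^d (X,\mu )\in \mc{A}$. For finite $K = \{0,\dots ,k-1\}$, a Borel map $\phi :X\ra K$ determines both a partition $A_i = \phi ^{-1}(\{i\})$ and the measure $\lambda = (\Phi ^{\phi ,d})_*\mu \in E(\bm{d},K)$, and one has the identity $\lambda \bigl(\bigcap _{\gamma \in F}\pi _\gamma ^{-1}(\{s(\gamma )\})\bigr) = \mu \bigl(\bigcap _{\gamma \in F}\gamma ^d A_{s(\gamma )}\bigr)$ relating cylinder probabilities of $\lambda$ to joint intersection probabilities of the partition. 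Basic weak-$*$ neighborhoods of $\lambda$ in $M_s(K^\Gamma )$ are cut out by these cylinder probabilities, so (2) translates into: for any partition $\{A_i\}$ of $X$, finite $F\subseteq \Gamma$, $\epsilon >0$, and $s_0 :F\ra K$, there is $\bm{b}\in \mc{B}$ and a partition $\{B_i\}$ of the underlying space of $\bm{b}$ with $|\mu (\bigcap _\gamma \gamma ^d A_{s_0(\gamma )}) - \nu (\bigcap _\gamma \gamma ^b B_{s_0(\gamma )})| < \epsilon$, while (1) is the ``pairwise'' version ($|F|=2$). The equivalence of these two formulations for a fixed source is exactly the content of the partition-refinement argument in \cite[Lemma 8]{AW11}; that argument approximates $\lambda$ one basic neighborhood at a time, hence uses only a single target partition at each step, and therefore goes through unchanged when the target partition is allowed to live in the space of any $\bm{b}\in \mc{B}$.

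For $(2)\Leftrightarrow (3)$, the direction $(3)\Ra (2)$ is trivial by restriction to finite $K$. Conversely, given compact Polish $K$ and $\lambda = (\Phi ^{\phi ,d})_*\mu \in E(\bm{d},K)$, I would use Remark \ref{rem:RVspace} to approximate $\phi$ in $L(X,\mu ,K)$ by finite-range Borel maps $\phi _n :X\ra K$ with ranges $K_n \subseteq K$. By Lemma \ref{lem:convermeas}, $\lambda _n := (\Phi ^{\phi _n,d})_*\mu \ra \lambda$ in $M_s(K^\Gamma )$, and each $\lambda _n \in E(\bm{d},K_n) \subseteq \ol{\bigcup _{\bm{b}\in \mc{B}} E(\bm{b},K_n)} \subseteq \ol{\bigcup _{\bm{b}\in \mc{B}} E(\bm{b},K)}$ by (2) applied to $K_n$ (the last inclusion because $K_n\subseteq K$ induces an embedding $M_s(K_n^\Gamma )\hookrightarrow M_s(K^\Gamma )$), so $\lambda \in \ol{\bigcup _{\bm{b}\in \mc{B}} E(\bm{b},K)}$.

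For $(3)\Leftrightarrow (4)$, $(3)\Ra (4)$ is immediate. For $(4)\Ra (2)$, fix finite $K = \{0,\dots ,k-1\}$ and pick continuous maps $\iota :K\ra 2^\N$ and $r:2^\N \ra K$ with $r\circ \iota = \mathrm{id}_K$ (for instance $\iota (i) = 0^i 1 0^\infty$ and $r(x) = \min (k-1,\, \inf \{ n : x(n)=1 \} )$, both clearly continuous). The coordinatewise map $r^\Gamma : (2^\N )^\Gamma \ra K^\Gamma$ is continuous and $\Gamma$-equivariant, so the pushforward $(r^\Gamma )_* : M_s((2^\N )^\Gamma )\ra M_s(K^\Gamma )$ is weak-$*$ continuous and satisfies $(r^\Gamma )_*(\Phi ^{\psi ,b})_*\nu = (\Phi ^{r\circ \psi ,b})_*\nu$. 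Given $\lambda = (\Phi ^{\phi ,d})_*\mu \in E(\bm{d},K)$, set $\tilde{\lambda} = (\Phi ^{\iota \circ \phi ,d})_*\mu \in E(\bm{d},2^\N )$; by (4) there is a sequence $\tilde{\lambda}_n \ra \tilde{\lambda}$ with $\tilde{\lambda}_n \in \bigcup _{\bm{b}\in \mc{B}} E(\bm{b},2^\N )$, and then $(r^\Gamma )_*\tilde{\lambda}_n \ra (r^\Gamma )_*\tilde{\lambda} = \lambda$ (using $r\circ \iota = \mathrm{id}_K$) with each $(r^\Gamma )_*\tilde{\lambda}_n \in \bigcup _{\bm{b}\in \mc{B}} E(\bm{b},K)$. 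The hard part is thus the partition-refinement step hidden inside $(1)\Leftrightarrow (2)$, but since that is already handled by Ab\'{e}rt--Weiss and since the passage from a single target action to a set of target actions is automatic, the proof will reduce to routine bookkeeping.
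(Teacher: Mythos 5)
Your proposal is correct and follows essentially the same route as the paper: the paper likewise reduces to a singleton source, carries out the Ab\'ert--Weiss partition-refinement argument for $(1)\Ra(2)$ (writing it out in full rather than citing \cite{AW11}, but relying on exactly the observation you make, namely that a different target $\bm{b}_n\in\mc{B}$ may be chosen at each approximation stage), handles $(2)\Ra(3)$ by finite-range density exactly as you do, and returns from $2^\N$ via disjoint clopen sets $C_i$ around distinct points $k_i$, which is essentially your retraction $r$ in thin disguise. One small caution: in your restatement of $(2)$ for finite $K$ the quantifiers should read ``for every finite $F$ and $\epsilon>0$ there is a \emph{single} $\bm{b}\in\mc{B}$ and a single partition matching \emph{all} cylinder probabilities $s_0\in K^F$ simultaneously'' --- the one-cylinder-at-a-time version you wrote is formally weaker, though the refinement argument you invoke (applied to the joint partition $\{A_\tau\}_{\tau\in K^F}$) does deliver the simultaneous version, so this is a slip of phrasing rather than a gap.
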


\begin{proof}
It suffices to show this for the case $\mc{A}= \{ \bm{d} \}$ is a singleton. We let $(X,\mu )$ be the space of $\bm{d}$.

We begin with the implication (1)$\Ra$(2). It suffices to show (2) for the case $K=k=\{ 0,1,\dots ,k-1\}$ for some $k\in \N$. Fix a Borel function $\phi : X\ra k$, let $\lambda = (\Phi ^{\phi , d})_*\mu$, and let $A_i = \phi ^{-1}(\{ i \} )$ for $i<k$. 
Fix an exhaustive sequence $e\in F_0\subseteq F_1\subseteq \cdots$ of finite subsets of $\Gamma$.
For each finite $F\subseteq \Gamma$ and function $\tau :F \ra k$ let $A_\tau = \bigcap _{\gamma \in F} \gamma ^d A_{\tau (\gamma )}$. As $\bm{d}\prec \mc{B}$ we may find for each $n\in \N$ a measure preserving action $\bm{b}_n=\Gamma \cc ^{b_n}(Y_n,\nu _n)$ in $\mc{B}$ along with Borel partitions $\{ B_\tau ^n \} _{\tau \in k^{F_n}}$ of $Y_n$ such that
\begin{equation}\label{eqn:wc}
|\mu (\gamma ^a A_{\tau _1}\cap A_{\tau _2})- \nu _n (\gamma ^{b_n} B^n_{\tau _1}\cap B^n_{\tau _2}) |<\epsilon _n
\end{equation}
for all $\tau _1, \tau _2\in k^{F_n}$, and where $\epsilon _n$ is small depending on $n$, $k$, and $|F_n|$. 
Define $\psi _n :Y_n \ra k$ by $\psi _n (y) = i$ if $y\in B^n_{\tau}$ for some $\tau\in k^{F_n}$ with $\tau (e) = i$, and let $\lambda _n = (\Phi ^{\psi _n,b_n})_*\nu _n$. To show that $\lambda _n \ra \lambda$ it suffices to show that $\lambda _n (A) \ra \lambda (A)$ for every basic clopen set $A\subseteq k^\Gamma$ of the form $A= \bigcap _{\gamma \in F}\pi _\gamma ^{-1}(\{ i_\gamma \} )$, where $e\in F\subseteq \Gamma$ is finite 
and $i_\gamma < k$ for each $\gamma \in F$.  We let $\upupsilon\in k^F$ be the function $\upupsilon (\gamma ) = i_\gamma$.

For $i<k$ let $B^n_i = \bigsqcup \{ B_{\tau}\csuchthat \tau \in k^{F_n} \mbox{ and } \tau (e) = i \}$. Let $n_0$ be so large that $F^2\subseteq F_{n_0}$ and for all $n>n_0$ and each $\sigma \in k^J$, $J\subseteq F_n$, let $B^n_\sigma = \bigsqcup \{ B_\tau \csuchthat \tau \in k^{F_n} \mbox{ and } \sigma \sqsubseteq \tau \}$ and let $\tilde{B}^n_{\sigma} = \bigcap _{\gamma \in J}\gamma ^d B^n_{\sigma (\gamma )}$. Then $B^n_i = \bigsqcup \{ B^n_\sigma \csuchthat \sigma \in k^F\mbox{ and }\sigma (e) =i \}$. For $\gamma \in \Gamma$, $J\subseteq \Gamma$ and $\sigma \in k^J$ let $\gamma \cdot \sigma \in k^{\gamma J}$ be given by  $(\gamma\cdot \sigma )(\delta ) = \sigma (\gamma ^{-1}\delta )$ for all $\delta \in \gamma J$. For $\sigma \in k^F$ and $\gamma \in F$ we have  $|\nu _n (\gamma ^{b_n}B^n_\sigma \cap B^n_{\gamma \cdot\sigma} ) - \mu (\gamma ^dA_{\sigma} \cap A_{\gamma\cdot \sigma} )| \leq \sum _{\{ \tau \in k^{F_n}\csuchthat \sigma\sqsubseteq \tau \}}\sum _{\{ \tau '\in k^{F_n}\csuchthat \gamma\cdot \sigma\sqsubseteq \tau' \} } |\nu _n(\gamma ^{b_n}B^n_\tau\cap B^n_{\tau '}) - \mu (\gamma ^dA_\tau \cap A_{\tau '})| \leq \epsilon _nk^{2|F_n|}$.
Similarly, $|\nu _n (B^n_\sigma ) - \mu (A_\sigma )|<\epsilon _n k^{2|F_n|}$ and $|\nu _n (B^n_{\gamma \cdot\sigma}) - \mu (A_{\gamma \cdot\sigma})|<\epsilon _nk^{2|F_n|}$. Since $\gamma ^d A_\sigma = A_{\gamma\cdot\sigma}$ we obtain from this the estimate
\begin{equation}\label{eqn:est}
d_{\nu _n}(\gamma ^{d_n}(B^n_\sigma ) , \, B^n_{\gamma\cdot \sigma})= \nu _n (B^n_\sigma ) + \nu _n(B^n_{\gamma\cdot \sigma}) - 2\nu _n(\gamma ^{d_n}(B^n_\sigma )\cap B^n_{\gamma\cdot \sigma}) < 3\epsilon _nk^{2|F_n|}.
\end{equation}
Since $\{ B^n _\tau \} _{\tau \in k^{F_n}}$ is a partition of $Y_n$ and $F^2\subseteq F_n$ we have the set identities
\begin{align*}
B^n_{\upupsilon} = \bigsqcup _{\substack{\tau \in k^{F_n} \\ \upupsilon \sqsubseteq \tau}} B^n_\tau
&= \bigcap _{\gamma \in F}\bigsqcup _{\substack{\sigma \in k^{\gamma F} \\ \sigma (\gamma ) =\upupsilon (\gamma )}} B^n _\sigma
= \bigcap _{\gamma \in F} \bigsqcup _{\substack{\sigma \in k^{F} \\ \sigma (e) =\upupsilon (\gamma )}} B^n _{\gamma \cdot \sigma }.
\end{align*}
By (\ref{eqn:est}) the $d_{\nu _n}$-distance of this is no more than $3|F|\epsilon _nk^{3|F_n|}$ from the set
\begin{align*}
\bigcap _{\gamma \in F} \bigsqcup _{\substack{\sigma \in k^{F} \\ \sigma (e) =\upupsilon (\gamma )}} \gamma ^{d_n}B^n _{\sigma } &= \bigcap _{\gamma \in F} \gamma ^{d_n} B^n_{\upupsilon (\gamma )} = \tilde{B}^n_{\upupsilon} .
\end{align*}
Thus $|\lambda _n(A) - \lambda (A)| = |\nu _n ( \tilde{B}^n_{\upupsilon} ) -\mu ( A_{\upupsilon} ) |
\leq 3|F|\epsilon _nk^{3|F_n|} + |\nu _n (B^n_{\upupsilon}) - \mu (A_{\upupsilon})| < 3|F|\epsilon _nk^{3|F_n|} + \epsilon _n k^{2|F_n|} \ra 0$ by our choice of $\epsilon _n$.

(2)$\Ra$(3): Let $K$ be a compact Polish space. It follows from Lemma \ref{lem:convermeas} and Remark \ref{rem:RVspace} that the set $E_f(\bm{d},K)$ of all measures $\lambda \in E(\bm{d},K)$ coming from Borel $\phi : X\ra K$ with finite range is dense in $E(\bm{d},K)$. By (2) we then have $E_f(\bm{d},K)\subseteq \ol{\bigcup _{\bm{b}\in\mc{B}}E_f(\bm{b},K)} \subseteq \ol{\bigcup _{\bm{b}\in\mc{B}}E(\bm{b},K)}$, and (3) now follows.

The implication (3)$\Ra$(4) is trivial. (4)$\Ra$(1): Given a Borel partition $A_0,\dots ,A_{m-1}$ of $X$, $F\subseteq \Gamma$ finite, and $\epsilon >0$, let $k_0,\dots ,k_{m-1} \in 2^\N$ be distinct and define the function $\phi : X\ra 2^\N$ by $\phi (x) = i$ if $x\in A_i$. Then $\lambda = (\Phi ^{\phi ,d})_*\mu \in E(\bm{d},2^\N )$ so by (4) there exists a sequence $\Gamma \cc ^{b_n}(Y_n ,\nu _n) = \bm{b}_n \in \mc{B}$, along with $\phi _n :Y_n\ra 2^\N$ such that $\lambda _n \ra \lambda$, where $\lambda _n =(\Phi ^{\phi _n , b_n}) _* \nu _n$. Let $C_0,\dots ,C_{m-1}$ disjoint clopen subsets of $2^\N$ with $k_i \in C_i$ and for each $n\in \N$ let $B^n_i = \phi _n ^{-1}(C_i)$. Then for all $\gamma \in F$ we have
\begin{align*}
|\mu (\gamma ^dA_i\cap A_j ) - &\nu _n (\gamma ^{b_n}B^n_i \cap B^n_j )| = |\lambda (\pi _\gamma ^{-1}(C_i)\cap \pi _e^{-1}(C_j)) - \lambda _n (\pi _\gamma ^{-1}(C_i)\cap \pi _e^{-1}(C_j))| \ra 0 ,
\end{align*}
so for large enough $n$ this quantity is smaller than $\epsilon$.
%
\end{proof}

\subsection{Convexity in the space of actions}\label{subsection:convexity}

The convex sum of measure preserving actions is defined as follows (see also \cite[10.{\bf (F)}]{Ke10}). Let $N\in \{ 1,2,\dots , \infty = \N \}$ and let $\bm{\alpha} = (\alpha _0, \alpha _1\dots )\in [0,1]^N$ be a finite or countably infinite sequence of non-negative real numbers with $\sum _{i< N}\alpha _i = 1$. Given actions $\bm{b}_i = \Gamma \cc ^{b_i}(X_i ,\mu _i)$, $i<N$, we let $\sum _{i<N}X_i = \{ (i,x)\csuchthat i<N\mbox{ and }x\in X_i \}$ and we let $\tilde{\mu} _i$ be the image measure of $\mu _i$ under the inclusion map $X_i\hookrightarrow \sum _{i<N}X_i$, $\ x\mapsto (i,x)$.  We obtain a measure preserving action $\sum _{i<N}\alpha _i \bm{b}_i = \Gamma \cc ^{\sum _{i<N}b_i}(\sum _{i<N} X_i , \sum _{i<N}\alpha _i \tilde{\mu} _i )$ defined by $\gamma ^{\sum _{i<N}b_i} \cdot (i,x) = (i, \gamma ^{b_i}\cdot x)$.  If $(X_i,\mu _i)= (X,\mu )$ for each $i<N$ then $(\sum _{i<N}X_i ,\sum _{i<N}\alpha _i\tilde{\mu} _i )= (N\times X , \eta _{\bm{\alpha}} \times \mu )$ where $\eta _{\bm{\alpha}}$ is the discrete probability measure on $N$ given by $\eta _{\bm{\alpha} } (\{ i \} ) = \alpha _i$. If furthermore $\bm{b}_i=\bm{b}$ for each $i<N$ then $\sum _{i<N}\alpha _i \bm{b}_i =\bm{\iota} _{\eta _{\bm{\alpha}}} \times \bm{b}$ is simply the product action.

\begin{lemma}\label{lem:subsets}
Let $\bm{b}\in A(\Gamma ,X,\mu )$ and let $\bm{d} = \bm{\iota} _{\eta _{\bm{\alpha} }} \times \bm{b} = \sum _{i=0}^{n-1} \alpha _i\bm{b}$. Then $E(\bm{d},K)\subseteq \mbox{co}E(\bm{b},K) \subseteq E(\bm{\iota }\times \bm{b},K)$ for every compact Polish $K$.
\end{lemma}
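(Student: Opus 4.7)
The proof splits naturally along the two inclusions, and both are straightforward once one unpacks the definition of $\Phi^{\phi,o}$ and the product action structure. I do not expect either direction to present a genuine obstacle; the argument is essentially a bookkeeping exercise in how factor maps on a product space decompose.

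For the first inclusion $E(\bm{d},K)\subseteq \mathrm{co}\,E(\bm{b},K)$, I would fix a Borel map $\phi:n\times X\to K$ and set $\lambda = (\Phi^{\phi,d})_*(\eta_{\bm\alpha}\times\mu)$. The point is that for each $i<n$ the ``slice'' $\phi_i:X\to K$ defined by $\phi_i(x)=\phi(i,x)$ is Borel, and a direct calculation using the fact that $\Gamma$ acts trivially on the first coordinate of $\bm{d}$ gives
\[
\Phi^{\phi,d}(i,x)(\gamma) = \phi((\gamma^{-1})^d\cdot(i,x)) = \phi_i((\gamma^{-1})^b\cdot x) = \Phi^{\phi_i,b}(x)(\gamma),
\]
so that the pushforward measure disintegrates as
\[
\lambda = \sum_{i<n}\alpha_i\,(\Phi^{\phi_i,b})_*\mu.
\]
Since each $(\Phi^{\phi_i,b})_*\mu$ lies in $E(\bm{b},K)$, this exhibits $\lambda$ as a finite convex combination of elements of $E(\bm{b},K)$.

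For the second inclusion $\mathrm{co}\,E(\bm{b},K)\subseteq E(\bm{\iota}\times\bm{b},K)$, I would reverse this construction. Take a convex combination $\lambda = \sum_{i<n}\alpha_i\lambda_i$ with $\lambda_i = (\Phi^{\phi_i,b})_*\mu$ for Borel $\phi_i:X\to K$, and write $\bm{\iota}\times\bm{b}$ as acting on $(Z\times X,\eta\times\mu)$ with $\eta$ non-atomic. Using that $\eta$ is non-atomic, partition $Z$ into Borel pieces $Z_0,\dots,Z_{n-1}$ with $\eta(Z_i)=\alpha_i$, and define $\phi:Z\times X\to K$ by $\phi(z,x)=\phi_i(x)$ for $z\in Z_i$. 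The same sort of computation as above shows $\Phi^{\phi,\iota\times b}(z,x) = \Phi^{\phi_i,b}(x)$ whenever $z\in Z_i$, so
\[
(\Phi^{\phi,\iota\times b})_*(\eta\times\mu) = \sum_{i<n}\eta(Z_i)(\Phi^{\phi_i,b})_*\mu = \sum_{i<n}\alpha_i\lambda_i = \lambda,
\]
placing $\lambda$ in $E(\bm{\iota}\times\bm{b},K)$.

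The only mildly subtle point is making sure the decomposition in the first step is genuinely interpretable as a finite convex combination (which it is, since $n$ is finite as written) and that the target space for the second step admits the needed partition (which is exactly what non-atomicity of $\eta$ provides). No further machinery is required.
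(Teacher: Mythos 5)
Your proof is correct and follows essentially the same route as the paper's: slicing $\phi$ over the finite fiber for the first inclusion, and using non-atomicity of $\eta$ to partition $Z$ into pieces of measure $\alpha_i$ and glue the $\phi_i$ together for the second. Nothing is missing.
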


\begin{proof}
Given $\phi : n\times X \ra K$, we want to show that $(\Phi ^{\phi ,d})_*(\eta _{\bm{\alpha} } \times \mu ) \in \mbox{co}E(\bm{b},K)$.  Let $\phi _i : X\ra K$ be given by $\phi _i (x) = \phi (i,x)$.  Then $(\Phi ^{\phi ,d})^{-1} (A) = \bigsqcup _{i=0}^{n-1} \{ i\} \times (\Phi ^{\phi _i , b})^{-1}(A)$ for $A\subseteq K^\Gamma$ and it follows that $(\Phi ^{\phi ,d})_*(\eta _{\bm{\alpha} } \times \mu ) = \sum _{i=0}^{n-1} \alpha _i (\Phi ^{\phi _i, b})_*\mu$, which shows the first inclusion.

Let the underlying space of $\bm{\iota}$ be $(Z,\eta )$. Given Borel functions $\phi _0,\dots ,\phi _{n-1} :X\ra K$ and $\alpha _0,\dots ,\alpha _{n-1}\geq 0$ with $\sum _{i=0}^{n-1}\alpha _i =1$, we want to show that $\sum _{i=0}^{n-1} \alpha _i (\Phi ^{\phi _i,b})_*\mu \in E(\bm{\iota} \times \bm{b},K)$. Let $C_0,\dots ,C_{n-1}$ be a Borel partition of $Z$ with $\eta (C_i) = \alpha _i$ for $i=0,\dots ,n-1$. Define $i:Z\ra n$ by $i(z) = i$ if $z\in C_i$ and let $\phi : Z\times X \ra K$ be the map $\phi (z,x) = \phi _{i(z)} (x)$. Then
\[
\Phi ^{\phi , \iota\times b} (z,x)(\gamma ) = \phi (\gamma ^{\iota \times b}\cdot (z,x)) = \phi (z,\gamma ^b\cdot x) = \phi _{i(z)}(\gamma ^b\cdot x) = \Phi ^{\phi _{i(z)},b}(x)(\gamma ),
\]
and so $(\Phi ^{\phi ,\iota\times b})^{-1}(A) = \bigsqcup _{i=0}^{n-1} C_i \times (\Phi ^{\phi _i,b})^{-1}(A)$ for all $A\subseteq K^\Gamma$. It now follows that $\sum _{i=0}^{n-1} \alpha _i (\Phi ^{\phi _i,b})_*\mu = (\Phi ^{\phi , \iota\times b}) _*(\eta \times \mu )$.
\end{proof}

\begin{lemma}\label{lem:setcon}
Let $\bm{b}\in A(\Gamma ,X,\mu )$, let $\bm{\alpha}(n) = (\tfrac{1}{n},\dots ,\tfrac{1}{n}) \in [0,1]^n$, and let
\begin{align*}
& \mc{B}_1 = \{ \bm{\iota} _{\eta _{\bm{\alpha}(n)}}\times \bm{b} \csuchthat n\geq 1 \} , \ \ \ \mc{B}_2 = \big\{ \bm{\iota} _{\eta _{\bm{\alpha}}}\times \bm{b} \csuchthat n\geq 1, \ \bm{\alpha} \in [0,1]^n, \ \textstyle{\sum }_{i=0}^{n-1}\alpha _i =1 \big\} . \nonumber
\end{align*}
Then $\bm{\iota }\times \bm{b}\sim \mc{B}_1\sim \mc{B}_2$.
\end{lemma}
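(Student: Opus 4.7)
The plan is to use Proposition \ref{prop:AWgen} throughout to translate each weak-containment statement into an inclusion of the $E(\cdot,K)$ sets in $M_s(K^\Gamma)$. Since $\mc{B}_1\subseteq \mc{B}_2$, we get $\mc{B}_1\prec \mc{B}_2$ immediately. From Lemma \ref{lem:subsets}, for every $\bm{d}\in \mc{B}_2$ we have $E(\bm{d},K)\subseteq \mbox{co}E(\bm{b},K)\subseteq E(\bm{\iota}\times \bm{b},K)$ for every compact Polish $K$, so $\bigcup _{\bm{d}\in \mc{B}_2}E(\bm{d},K)\subseteq E(\bm{\iota}\times \bm{b},K)$, yielding $\mc{B}_2\prec \bm{\iota}\times \bm{b}$ (and a fortiori $\mc{B}_1\prec \bm{\iota}\times \bm{b}$). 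Transitivity will then give $\mc{B}_2\prec \mc{B}_1$ as soon as $\bm{\iota}\times \bm{b}\prec \mc{B}_1$ is established, so this is the only nontrivial direction.

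To show $\bm{\iota}\times \bm{b}\prec \mc{B}_1$ it suffices, by Proposition \ref{prop:AWgen}, to prove
\[
E(\bm{\iota}\times \bm{b},K)\subseteq \ol{\textstyle\bigcup _{n\geq 1} E(\bm{\iota}_{\eta _{\bm{\alpha}(n)}}\times \bm{b},K)}
\]
for every compact Polish $K$. Fix $\lambda = (\Phi ^{\phi ,\iota \times b})_*(\eta \times \mu )\in E(\bm{\iota}\times \bm{b},K)$. Using Lemma \ref{lem:convermeas} and Remark \ref{rem:RVspace}, I would approximate $\phi :Z\times X\ra K$ in measure by Borel functions of the ``step'' form $\phi '(z,x) = \psi _{i(z)}(x)$ associated to a finite Borel partition $\{ C_0,\dots ,C_{k-1}\}$ of $Z$ and Borel maps $\psi _0,\dots ,\psi _{k-1}:X\ra K$; the computation in the second half of the proof of Lemma \ref{lem:subsets} identifies the corresponding measure as $\sum _{i<k}\eta (C_i)(\Phi ^{\psi _i,b})_*\mu$. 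So it is enough to show that every such finite convex combination of elements of $E(\bm{b},K)$ with arbitrary positive weights lies in the weak${}^*$-closure of $\bigcup _n E(\bm{\iota}_{\eta _{\bm{\alpha}(n)}}\times \bm{b},K)$.

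For this, I would approximate the weights by rationals with a common denominator: for each $m$, choose integers $k_i^{(m)}\geq 0$ with $\sum _i k_i^{(m)} = m$ and $k_i^{(m)}/m\to \eta (C_i)$. Then
\[
\sum _{i<k}(k_i^{(m)}/m)(\Phi ^{\psi _i,b})_*\mu \;\lra \;\sum _{i<k}\eta (C_i)(\Phi ^{\psi _i,b})_*\mu
\]
in the weak${}^*$-topology, and the left-hand side rewrites as $\sum _{j<m}(1/m)(\Phi ^{\psi '_j,b})_*\mu$, where $\psi '_j$ lists each $\psi _i$ with multiplicity $k_i^{(m)}$. Defining $\phi _m:m\times X\ra K$ by $\phi _m(j,x)=\psi '_j(x)$, the direct calculation of $\Phi ^{\phi _m,\iota _{\eta _{\bm{\alpha}(m)}}\times b}$ exactly as in the proof of Lemma \ref{lem:subsets} shows that this uniform convex sum equals $(\Phi ^{\phi _m,\iota _{\eta _{\bm{\alpha}(m)}}\times b})_*(\eta _{\bm{\alpha}(m)}\times \mu )\in E(\bm{\iota}_{\eta _{\bm{\alpha}(m)}}\times \bm{b},K)$, which completes the argument. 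There is no real obstacle here—the only care needed is the bookkeeping in the rational-approximation step and the verification that a uniform convex combination with denominator $m$ of elements of $E(\bm{b},K)$ really does fall in $E(\bm{\iota}_{\eta _{\bm{\alpha}(m)}}\times \bm{b},K)$, which is immediate from the ``gluing'' construction used in Lemma \ref{lem:subsets}.
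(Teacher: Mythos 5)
Your proposal is correct: every containment you need is established, and the chain $\mc{B}_1\prec\mc{B}_2\prec\bm{\iota}\times\bm{b}\prec\mc{B}_1$ does yield all three equivalences. The route differs from the paper's in formalism rather than in substance. The paper proves $\bm{\iota}\times\bm{b}\prec\mc{B}_1$ by verifying the partition definition of weak containment directly: it reduces (without loss of generality) to a rectangular partition $\{A_i\times B_j\}$ of $Z\times X$ in which the $Z$-parts $A_i$ all have equal measure $\tfrac{1}{n}$, and then exhibits a partition of $n\times X$ matching it \emph{exactly}, with zero error in the quantities $\lambda(\gamma^{\iota\times b}(A_i\times B_j)\cap(A_{i'}\times B_{j'}))$. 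You instead work in $M_s(K^\Gamma)$ via Proposition \ref{prop:AWgen}: approximate $\phi$ in measure by step functions, identify the resulting measure as a convex combination $\sum_i\eta(C_i)(\Phi^{\psi_i,b})_*\mu$, and approximate the weights by rationals with common denominator $m$ so as to land in $E(\bm{\iota}_{\eta_{\bm{\alpha}(m)}}\times\bm{b},K)$. These are really the same idea — non-atomicity of $(Z,\eta)$ lets you discretize the trivial factor into uniform pieces — and indeed the paper's ``without loss of generality'' quietly performs the same rational approximation that you carry out explicitly (one cannot in general refine arbitrary $A_i$ into exactly equal-measure pieces; one approximates them by unions of such pieces, at the cost of an $\epsilon$). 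What your version buys is that this approximation is made visible and the bookkeeping happens at the level of finitely many fixed measures with converging weights, which is clean; what it costs is invoking the $E(\cdot,K)$ machinery (Proposition \ref{prop:AWgen}, Lemma \ref{lem:convermeas}, Remark \ref{rem:RVspace}) for a statement the paper checks bare-handed in a few lines.
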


\begin{proof}
$\mc{B}_1\prec \mc{B}_2$ is trivial. $\mc{B}_2 \prec \bm{\iota}\times \bm{b}$ is clear (in fact, $\bm{d} \sqsubseteq \bm{\iota}\times \bm{b}$ for every $\bm{d}\in \mc{B}_2$). It remains to show that $\bm{\iota }\times \bm{b}\prec \mc{B}_1$. Let $(Z ,\eta )$ be the underlying non-atomic probability space of $\bm{\iota}$ and let $\lambda = \eta \times \mu$. Fix a partition $\mc{P}$ of $Z\times X$, $F\subseteq \Gamma$ finite and $\epsilon >0$. We may assume without loss of generality that $\mc{P}$ is of the form $\mc{P} = \{ A_i\times B_j \csuchthat 0\leq i<n ,\  0\leq j< m \}$ where $\{ A_i \} _{i =0}^{n-1}$ is a partition of $Z$, $\{ B_j \} _{j=0}^{m-1}$ is a partition of $X$, and all the sets $A_0,\dots ,A_{n-1}$ have equal measure. Let $C_{i,j} = \{ (i,x)\in n\times X\csuchthat x\in B_j \}$. Then, letting $\bm{d}=\bm{\iota} _{\eta _{\bm{\alpha}(n) }}\times \bm{b}$, for all $\gamma \in F$ and $i,i'\leq n$, $j,j'\leq m$, if $i\neq i'$ we have $\gamma ^dC_{i,j}\cap C_{i',j'} = \emptyset = \gamma ^{\iota\times b}(A_i\times B_j) \cap (A_{i'}\cap B_{j'})$, while if $i=i'$ we have
\begin{align*}
(\eta _{\bm{\alpha}(n)} \times \mu )(\gamma ^{d}C_{i,j}\cap C_{i,j'})&= \tfrac{1}{n}\mu (\gamma ^bB_j\cap B_{j'}) \\
&= \eta (A_i)\mu (\gamma ^b B_j\cap B_{j'}) =\lambda (\gamma ^{\iota \times b} (A_i\times B_j)\cap (A_{i}\times B_{j'})),
\end{align*}
showing that $\bm{\iota }\times \bm{b} \prec \mc{B}_1$.
\end{proof}

\begin{proof}[Proof of Theorem \ref{thm:conv}]
We apply \ref{prop:AWgen} and \ref{lem:setcon}, then \ref{lem:subsets} to obtain
\[
E(\bm{\iota }\times \bm{b}, K) \subseteq \ol{{\textstyle{\bigcup _{n\geq 1} }}E(\bm{\iota }_{\eta _{\bm{\alpha}(n) }}\times \bm{b},K)} \subseteq \ol{\mbox{co}}E(\bm{b},K) \subseteq \ol{E(\bm{\iota }\times \bm{b} , K)}
\]
and so $\ol{E(\bm{\iota}\times \bm{b} ,K)} = \ol{\mbox{co}}E(\bm{b},K)$.
\end{proof}

The proof of Theorem \ref{thm:main} now proceeds in analogy with the proof of the corresponding fact for unitary representations (see \cite[F.1.4]{BdlHV08}).

\begin{proof}[Proof of Theorem \ref{thm:main}]
Suppose that $\bm{a}$ is ergodic and $\bm{a}\prec \bm{\iota }\times \bm{b}$. We want to show that $\bm{a}\prec \bm{b}$, or equivalently $E(\bm{a},K) \subseteq \ol{E,(\bm{b},K)}$ for every compact Polish $K$. By hypothesis we have that $E(\bm{a},K) \subseteq \ol{E(\bm{\iota}\times \bm{b} ,K )}$, so by Theorem \ref{thm:conv}, $E(\bm{a},K) \subseteq \ol{\mbox{co}}E(\bm{b},K)$. Since every element of $E(\bm{a},K)$ is ergodic, $E(\bm{a},K)$ is contained in the extreme points of $M_s(K^\Gamma )$, and so \emph{a fortiori} $E(\bm{a},K)$ is contained in the extreme points of $\ol{\mbox{co}}E(\bm{b},K)$. Since in a locally convex space the extreme points of a given compact convex set are contained in every closed set generating that convex set (see, e.g., \cite[Proposition 1.5]{Ph02}), it follows that $E(\bm{a},K) \subseteq \ol{E(\bm{b},K)}$ as was to be shown.
\end{proof}

\subsection{Ergodic decomposition and weak containment}\label{subsection:ergdec} We begin with the following observation about factors.

\begin{proposition}\label{prop:factor}
Let ${\bm d}$ be a measure preserving action of $\Gamma$ on $(Y,\nu )$ and suppose $\pi :(Y,\nu ) \ra (Z,\eta )$ is a factor map from $\bm{d}$ onto an identity action $\Gamma \cc ^{\iota _\eta } (Z,\eta )$. Let $\nu = \int _z \nu _z \, d\eta$ be the disintegration of $\nu$ with respect to $\pi$ and let $\bm{d}_z = \Gamma \cc ^d (Y,\nu _z)$. Suppose that $\bm{a}= \Gamma \cc ^a (X,\mu )$ is an ergodic factor of ${\bm d}$ via the map $\varphi :(Y,\nu )\ra (X,\mu )$. Then for $\eta$-almost every $z \in Z$, $\bm{a}$ is a factor of $\bm{d}_z$ via the map $\varphi$.
\end{proposition}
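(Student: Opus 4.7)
The plan is to verify the two requirements for $\varphi$ to be a factor map from $\bm{d}_z$ to $\bm{a}$ for $\eta$-almost every $z$: (i) that $\nu_z$ is $\Gamma$-invariant (so $\bm{d}_z$ is an honest measure preserving action), and (ii) that $\varphi_*\nu_z = \mu$. Equivariance of $\varphi$ at the level of the fibers is essentially automatic: for each $\gamma\in \Gamma$ the set $N_\gamma = \{ y\csuchthat \varphi (\gamma ^dy) \ne \gamma ^a\varphi (y)\}$ is $\nu$-null, and since $\nu = \int \nu _z\, d\eta$, we have $\nu _z (N_\gamma ) = 0$ for $\eta$-a.e.\ $z$; intersecting over the countable group $\Gamma$ yields a conull set of $z$ where $\varphi$ is $\nu _z$-a.e.\ equivariant.

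For (i), the key point is that $\Gamma$ acts trivially on $(Z,\eta )$, so equivariance of $\pi$ gives $\pi \circ \gamma ^d = \pi$ for every $\gamma$. Consequently the pushforward $\gamma ^d_*\nu$ still projects to $\eta$ under $\pi$, and its disintegration along $\pi$ has fibers $\gamma ^d_*\nu _z$. Uniqueness of disintegration forces $\gamma ^d_*\nu _z = \nu _z$ for $\eta$-a.e.\ $z$, and taking a countable intersection over $\gamma \in \Gamma$ gives a full measure set where $\nu _z$ is $\Gamma$-invariant.

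For (ii), set $\mu _z := \varphi _*\nu _z$. Pushing forward the disintegration $\nu = \int \nu _z\, d\eta$ under $\varphi$ yields $\mu = \int \mu _z\, d\eta$, and by (i) each $\mu _z$ is $\Gamma$-invariant for $\eta$-a.e.\ $z$. Fix a countable algebra $\mc{A}\subseteq \mbox{MALG}_\mu$ generating the measure algebra. For any $A\in \mc{A}$ with $\mu (A)=0$ we have $\int \mu _z(A)\, d\eta = 0$, so $\mu _z(A)=0$ for $\eta$-a.e.\ $z$; a countable intersection gives a conull set of $z$ on which $\mu _z \ll \mu$. For such $z$ the Radon--Nikodym derivative $f_z = d\mu _z/d\mu$ is $\Gamma$-invariant in $L^1(\mu )$ because both $\mu _z$ and $\mu$ are, and ergodicity of $\mu$ forces $f_z$ to be $\mu$-a.e.\ constant; as $\mu _z$ is a probability measure this constant is $1$, so $\mu _z = \mu$.

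The main obstacle is purely bookkeeping with null sets --- moving between the $\nu$-a.e.\ statements that come from hypotheses and $\nu _z$-a.e.\ statements for almost every $z$ --- which is handled by the separability of $\mbox{MALG}_\mu$ and countability of $\Gamma$. No deeper machinery than the ergodic decomposition argument (in its Radon--Nikodym form) is needed, because ergodicity of $\bm{a}$ immediately kills the only obstruction to $\mu _z=\mu$.
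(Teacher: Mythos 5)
Your outline is mostly sound --- the a.e.\ invariance of $\nu _z$ via uniqueness of disintegration, the a.e.\ equivariance of $\varphi$ with respect to $\nu _z$, and the final Radon--Nikodym/ergodicity step are all correct --- but there is a genuine gap exactly where you claim $\mu _z \ll \mu$ for $\eta$-a.e.\ $z$. Verifying $\mu _z(A)=0$ for the $\mu$-null members $A$ of a countable generating algebra $\mc{A}$ does not yield absolute continuity: if $\mc{A}\subseteq \mbox{MALG}_\mu$ then its only null element is the class of $\emptyset$, so the condition you check is vacuous, and even for a countable algebra of genuine Borel sets, its null members are a meagre sample of the uncountably many Borel $\mu$-null sets, each of which contributes its own exceptional set of $z$'s that cannot all be collected. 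The identity $\mu =\int \mu _z\, d\eta$ by itself never forces $\mu _z\ll \mu$: Lebesgue measure on the square disintegrates over the first coordinate into the fiber measures $\updelta _x\times \lambda$, every one of which is singular with respect to the ambient measure. Since $\mu _z\ll\mu$ is precisely what your Radon--Nikodym step needs to get started, this is the mathematical heart of the proposition rather than null-set bookkeeping.

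The standard repair --- and essentially what the paper does --- is to condition on positive-measure subsets of $Z$ rather than on points. For Borel $B\subseteq Z$ set $\mu _B(A)=\nu (\pi ^{-1}(B)\cap \varphi ^{-1}(A))$; then $\mu _B$ is an $a$-invariant measure on $X$ with $\mu _B\leq \mu$, so absolute continuity is now automatic, its density is invariant and bounded, and ergodicity of $\bm{a}$ forces $\mu _B = \eta (B)\,\mu$. This says exactly that $(\pi\times\varphi )_*\nu =\eta\times\mu$, i.e.\ that the joining of $\bm{\iota}_\eta$ and $\bm{a}$ induced by $\pi\times\varphi$ is the product joining (the paper simply quotes disjointness of identity and ergodic actions at this point). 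The pointwise conclusion is then recovered from uniqueness of disintegration: $(\pi\times\varphi )_*\nu _z$ lives on $\{ z\}\times X$ and integrates to $\eta\times\mu$, hence equals $\updelta _z\times\mu$ almost surely, giving $\varphi _*\nu _z=\mu$ a.e. Alternatively, you could keep your fiberwise setup but replace the Radon--Nikodym step by the fact that an ergodic invariant measure is an extreme point of the simplex of invariant measures and therefore admits only the trivial representing measure, so $\mu =\int\mu _z\, d\eta$ with each $\mu _z$ invariant already forces $\mu _z=\mu$ a.e.; but that substitutes Choquet-type machinery for the absolute continuity you cannot establish directly.
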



\begin{proof}
The map $\pi\times \varphi : (Y,\nu ) \ra (Z\times X, (\pi\times \varphi )_*\nu )$, $y\mapsto (\pi (y),\varphi (y))$, factors ${\bm d}$ onto a joining $\bm{b}$ of the identity action $\bm{\iota}_{\eta}$ and the ergodic action $\bm{a}$. Since ergodic and identity actions are disjoint (\cite[6.24]{Gl03}) we have that $(\pi\times \varphi )_*\nu = \eta \times \mu$ and $\bm{b}=\bm{\iota}_\eta \times \bm{a}$.  The measure $(\pi\times \varphi )_*\nu _z$ lives on $\{ z \} \times X$ almost surely, and $\eta \times \mu = (\pi \times \varphi )_*\nu = \int _Z (\pi \times \varphi )_*\nu _z \, d\eta$,
so by uniqueness of disintegration $(\pi \times \varphi )_* \nu _z = \updelta _z \times \mu$ almost surely. Since $\mbox{proj}_X\circ (\pi\times \varphi ) = \varphi$ we have that $\varphi _*\nu _z = (\mbox{proj}_X)_*(\updelta _z \times \mu )= \mu $ almost surely.
\end{proof}

\begin{corollary}
If $\bm{a}$ is ergodic and $\varphi$ factors $\bm{d}$ onto $\bm{a}$ then $\varphi$ factors almost every ergodic component of $\bm{d}$ onto $\bm{a}$.
\end{corollary}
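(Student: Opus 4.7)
The plan is to apply Proposition \ref{prop:factor} directly to the ergodic decomposition of $\bm{d}$. Recall that for any measure preserving action $\bm{d} = \Gamma \cc^d (Y,\nu)$ on a standard probability space, there is a standard probability space $(Z,\eta)$, a $\Gamma$-invariant (hence measurable with respect to the trivial action) factor map $\pi : (Y,\nu) \to (Z,\eta)$, and a disintegration $\nu = \int_Z \nu_z \, d\eta(z)$ such that each $\nu_z$ is $d$-invariant and the action $\bm{d}_z = \Gamma \cc^d (Y,\nu_z)$ is ergodic for $\eta$-almost every $z$. In particular, $\pi$ factors $\bm{d}$ onto the identity action $\bm{\iota}_\eta = \Gamma \cc^{\iota_\eta}(Z,\eta)$.

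Now given the hypothesis that $\varphi : (Y,\nu) \to (X,\mu)$ factors $\bm{d}$ onto the ergodic action $\bm{a}$, Proposition \ref{prop:factor} applied to $\pi$ and $\varphi$ yields directly that for $\eta$-almost every $z \in Z$, the map $\varphi$ factors $\bm{d}_z$ onto $\bm{a}$. Since $\{\bm{d}_z\}_{z\in Z}$ is precisely the family of ergodic components of $\bm{d}$, this is the desired conclusion.

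The only subtlety worth highlighting is verifying that the ergodic decomposition of $\bm{d}$ does indeed come from a factor map onto a trivial action in the sense required by Proposition \ref{prop:factor}; this is classical (see, e.g., Glasner \cite{Gl03}), and the $\sigma$-algebra on $Z$ identifies with the $\Gamma$-invariant sub-$\sigma$-algebra of $\bm{B}(Y)$ modulo null sets. So no real obstacle remains; the corollary is essentially a direct specialization of the proposition.
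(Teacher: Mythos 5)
Your proposal is correct and matches the paper's intended argument: the corollary is stated there as an immediate consequence of Proposition \ref{prop:factor}, obtained exactly as you describe by taking $\pi$ to be the factor map onto the (identity action on the) space of ergodic components. Your added remark that the ergodic decomposition is indeed realized by a factor map onto a trivial action on a standard probability space is the only point needing verification, and you handle it correctly.
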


Using ultraproducts of measure preserving actions (see Appendix \ref{app:ultra}) we can prove an analogous result for weak containment which generalizes Theorem \ref{thm:main}. For the remainder of this section we fix a nonprincipal ultrafilter $\mc{U}$ on $\N$ and we also fix a compact Polish space $K$ homeomorphic to $2^\N$. Let $\bm{a}_n = \Gamma \cc ^{a_n} (Y_n, \nu)$, $n\in \N$, be a sequence of measure preserving actions of $\Gamma$ and let $\bm{a}_{\mc{U}}= \Gamma \cc ^{a_\mc{U}}(Y_{\mc{U}},\mu _{\mc{U}})$ be the ultraproduct of the sequence $\bm{a}_n$ with respect to the nonprincipal ultrafilter $\mc{U}$ on $\N$. Let $\phi _n :Y_n \ra K$ be a sequence of Borel functions and let $\Phi _n= \Phi ^{\phi _n,a_n} : Y_n \ra K^\Gamma$. We let $\phi$ denote the ultralimit function determined by the sequence $\phi _n$, i.e., $\phi : Y_{\mc{U}}\ra K$ is the function given by
\[
\phi ([y_n]) = \lim _{n\ra \mc{U}} \phi _n (y_n)
\]
for $[y_n]\in Y_{\mc{U}}$. The function $\phi$ is $\bm{B}_{\mc{U}}$-measurable since $\phi ^{-1}(V) = [\phi _n ^{-1}(V)]$ whenever $V\subseteq K$ is open.

\begin{proposition}\label{prop:ultrafunct} Let $\Phi = \Phi ^{\phi , a_{\mc{U}}}$. Then
\begin{enumerate}
\item $\Phi ([y_n]) = \lim _{n\ra \mc{U}} \Phi _n(y_n)$ for all $[y_n]\in Y_{\mc{U}}$;
\item $\Phi _* \nu _{\mc{U}} = \lim _{n\ra \mc{U}}(\Phi _n)_*\nu _n$;
\item For every $\bm{B}_{\mc{U}}$-measurable function $\psi : Y_{\mc{U}}\ra K$ there exists a sequence $\varphi _n : Y_n \ra K$ of Borel functions such that $\psi ([y_n]) = \lim _{n\ra \mc{U}} \varphi _n (y_n)$ for $\nu _{\mc{U}}$-almost every $[y_n]\in Y_{\mc{U}}$.
\end{enumerate}
\end{proposition}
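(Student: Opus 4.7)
For part (1), I would unpack definitions: for each $\gamma \in \Gamma$, the $\gamma$-coordinate of $\Phi([y_n])$ is $\phi((\gamma^{-1})^{a_\mc{U}}\cdot [y_n])$, which by the coordinatewise action of $\bm{a}_\mc{U}$ equals $\phi([(\gamma^{-1})^{a_n}\cdot y_n]) = \lim_{n\to\mc{U}}\phi_n((\gamma^{-1})^{a_n}\cdot y_n) = \lim_{n\to\mc{U}}\Phi_n(y_n)(\gamma)$. Because $K^\Gamma$ carries the product topology and $K$ is compact, pointwise ultralimits in each coordinate compute the ultralimit in $K^\Gamma$, giving (1). For part (2), it suffices to check weak${}^*$-convergence against $f\in C(K^\Gamma)$: by (1) and continuity of $f$, the Borel functions $f\circ\Phi_n : Y_n \to \R$ have pointwise ultralimit $f\circ\Phi$, so the standard ultraproduct integration formula $\int_{Y_\mc{U}} (\lim_{n\to\mc{U}} h_n)\,d\nu_\mc{U} = \lim_{n\to\mc{U}} \int_{Y_n} h_n\,d\nu_n$ (for uniformly bounded Borel $h_n$; Appendix \ref{app:ultra}) yields $\int f\,d(\Phi_*\nu_\mc{U}) = \lim_{n\to\mc{U}} \int f\,d((\Phi_n)_*\nu_n)$.

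For part (3) I first handle the case that $\psi$ has finite range. By the structure of the ultraproduct measure algebra (Appendix \ref{app:ultra}), every $\bm{B}_\mc{U}$-measurable set equals some internal set $[A_n]$ up to a $\nu_\mc{U}$-null set. So writing $\psi = \sum_{i<m} c_i \mathbf{1}_{B_i}$ with $\{B_i\}_{i<m}$ a measurable partition of $Y_\mc{U}$ and $B_i = [B_i^n]$, I modify the $B_i^n$ on null sets to ensure $\{B_i^n\}_{i<m}$ partitions $Y_n$ for each $n$, then take $\varphi_n := \sum_i c_i \mathbf{1}_{B_i^n}$; the ultralimit of $\varphi_n(y_n)$ then equals $c_i = \psi([y_n])$ whenever $[y_n] \in B_i$.

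For general $\psi$, use Remark \ref{rem:RVspace} to choose finite-range $\psi^k$ with $\tilde{d}(\psi,\psi^k) < 2^{-2k-1}$, and apply the previous case to get $\varphi_n^k : Y_n\to K$ with $\psi^k = \lim_{n\to\mc{U}}\varphi_n^k$ a.e. Then $\tilde{d}(\psi^j,\psi^{j+1}) < 2^{-j}$, so by the integration formula applied to $d(\varphi_n^j,\varphi_n^{j+1})$ the sets $U_k := \{n\geq k \csuchthat \tilde{d}_n(\varphi_n^j,\varphi_n^{j+1}) < 2^{-j}\mbox{ for all }j<k\}$ all lie in $\mc{U}$; they are decreasing with empty intersection, so $f(n):=\max\{k : n\in U_k\}$ is a finite-valued function with $\{n : f(n)\geq k\} = U_k \in \mc{U}$ for every $k$. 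Set $\varphi_n := \varphi_n^{f(n)}$. For $n\in U_k$, telescoping yields $\tilde{d}_n(\varphi_n^k,\varphi_n) \leq \sum_{j=k}^{f(n)-1} 2^{-j} < 2^{-k+1}$, and applying the integration formula once more gives $\tilde{d}(\psi^k,\psi') \leq 2^{-k+1}$ where $\psi'$ denotes the pointwise ultralimit of $\varphi_n$; combined with $\tilde{d}(\psi,\psi^k)\to 0$ this forces $\psi = \psi'$ a.e. The main obstacle is this diagonal step: assembling a single sequence $(\varphi_n)$ whose ultralimit is $\psi$ itself rather than any single approximant $\psi^k$ requires the countable incompleteness of $\mc{U}$ together with careful bookkeeping of the telescoping rates.
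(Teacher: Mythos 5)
Your argument is correct, and parts (1) is essentially identical to the paper's; but for (2) and especially (3) you take a genuinely different route. For (2), the paper exploits the standing assumption that $K$ is homeomorphic to $2^{\N}$: it checks equality of the two measures only on clopen subsets of $K^\Gamma$ (which determine elements of $M_s(K^\Gamma)$ in this zero-dimensional setting), using that $\Phi^{-1}(C)=[\Phi_n^{-1}(C)]$ for clopen $C$; you instead test against all of $C(K^\Gamma)$ via the bounded-ultralimit integration identity, which is heavier but works verbatim for an arbitrary compact Polish $K$. For (3), the paper again uses $K=2^{\N}$ directly: it writes $\psi$ as the sequence of its $\{0,1\}$-valued coordinate functions $\psi_m$, replaces each level set $\psi_m^{-1}(\{i\})$ by an internal set $[B_n^{m,i}]$ modulo a null set (arranging $\{B_n^{m,0},B_n^{m,1}\}$ to partition each $Y_n$ exactly as you do in your finite-range step), defines $\varphi_n(y)(m)=i$ iff $y\in B_n^{m,i}$, and discards the countable union of null sets --- no metric approximation and no diagonalization. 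Your two-stage argument (finite-range density in $(L(Y_{\mc{U}},\nu_{\mc{U}},K),\tilde{d})$, then a diagonal choice $\varphi_n=\varphi_n^{f(n)}$ using the decreasing sets $U_k\in\mc{U}$ with empty intersection) is correct --- the telescoping estimate and the final identification $\psi=\psi'$ a.e.\ both go through --- and it buys you independence from the homeomorphism $K\cong 2^{\N}$, at the cost of invoking countable incompleteness of $\mc{U}$ and the integration formula three times where the paper needs only the fact that every $\bm{B}_{\mc{U}}$-set is internal modulo a null set.
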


\begin{proof}
(1): For each $[y_n]\in Y_{\mc{U}}$ and $\gamma \in \Gamma$ we have $\Phi ([y_n])(\gamma ^{-1}) = \phi (\gamma ^{a_\mc{U}}[y_n]) = \phi ([\gamma ^{a_n}y_n]) = \lim _{n\ra \mc{U}} \phi (\gamma ^{a_n}y_n) = \lim _{n\ra \mc{U}} \Phi _n(y_n)(\gamma ) = (\lim _{n\ra \mc{U}} \Phi _n (y_n) )(\gamma )$, the last equality following by continuity of the evaluation map $f\mapsto f(\gamma )$ on $K^\Gamma$.

(2): Let $\lambda = \lim _{n\ra \mc{U}}(\Phi _n)_*\nu _n$. Then $\lambda$ is the unique element of $M_s(K^\Gamma )$ such that $\lambda (C) = \lim _{n\ra\mc{U}} ((\Phi _n)_*\nu _n (C))$ for all clopen $C\subseteq K^\Gamma$. Part (1) implies that $\Phi ^{-1}(C) = [\Phi _n ^{-1}(C)]$ whenever $C\subseteq K^\Gamma$ is clopen, and so $\Phi _*\nu _{\mc{U}} (C) = \lim _{n\ra \mc{U}} \nu _n (\Phi _n^{-1}(C)) = \lim _{n\ra \mc{U}} ((\Phi _n)_*\nu _n(C))$.

(3): We may assume $K= 2^\N$. For $m\in \N$ define $\psi _m : Y_{\mc{U}}\ra K$ by $\psi _m ([y_n])=\psi ([y_n])(m)$. For $i\in \{ 0,1 \}$ let $A^{m,i} = \psi _m^{-1} (\{ i \} )\in \bm{B}_{\mc{U}}$ and fix $[A^{m,i}_n] \in \bm{A}_\mc{U}$ such that $\nu _{\mc{U}}(A^{m,i} \Delta [A^{m,i}_n]) =0$. For each $m,n\in \N$ let $B^{m,0}_n= A^{m,0}_n\setminus A^{m,1}_n$ and let $B^{m,1}_n = Y_n\setminus B^{m,0}_n$ so that $\{ B^{m,0}_n , B^{m,1}_n \}$ is a Borel partition of $Y_n$. Then for each $m\in \N$ we have $\nu _{\mc{U}}(A^{m,0}\Delta [B^{m,0}_n]) = 0 = \nu _{\mc{U}}(A^{m,1} \Delta [B^{m,1}_n])$. Define $\varphi _n :Y_n\ra K$ by taking $\varphi _n (y)(m) = i$ if and only if $y\in B^{m,i}_n$. Let $\varphi :Y_{\mc{U}}\ra K$ be the ultralimit function $\varphi ([y_n]) = \lim _{n\ra \mc{U}} \varphi _n (y_n)$. Then for $i\in \{ 0,1 \}$ we have
\begin{align*}
\varphi ([y_n])(m) = i \ \Leftrightarrow \ \lim _{n\ra \mc{U}} (\varphi _n (y_n)(m)) = i \ \Leftrightarrow \ \{ n \csuchthat y_n\in B^{m,i}_n \} \in \mc{U} \ \Leftrightarrow \ [y_n] \in [B^{m,i}_n] ,
\end{align*}
and so $\varphi$ is equal to $\psi$ off the null set $\bigcup _{m\in \N , i\in \{ 0,1\} } A^{m,i}\Delta [B^{m,i}_n]$.
\end{proof}

\begin{theorem}\label{thm:aeweakcont}
Let ${\bm d}$ be a measure preserving action of $\Gamma$ on $(Y,\nu )$ and suppose $\pi :(Y,\nu ) \ra (Z,\eta )$ is a factor map from $\bm{d}$ onto an identity action $\Gamma \cc ^{\iota _\eta } (Z,\eta )$. Let $\nu = \int _z \nu _z \, d\eta$ be the disintegration of $\nu$ with respect to $\pi$ and let $\bm{d}_z = \Gamma \cc ^d (Y,\nu _z)$. Suppose that $\bm{a}= \Gamma \cc ^a (X,\mu )$ is ergodic and is weakly contained in ${\bm d}$. Then $\bm{a}$ is weakly contained in $\bm{d}_z$ for almost all $z\in Z$.
\end{theorem}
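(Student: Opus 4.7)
The plan is to apply Proposition \ref{prop:AWgen}(4), so it suffices to show $E(\bm{a},2^\N) \subseteq \ol{E(\bm{d}_z,2^\N)}$ for $\eta$-a.e.\ $z$. First, since $\pi$ factors $\bm{d}$ onto the identity action, each fiber $\pi^{-1}(z)$ is $\Gamma$-invariant; applying $\gamma^d$ to the disintegration $\nu = \int \nu_z\, d\eta(z)$ and using its uniqueness together with countability of $\Gamma$ yields that $\nu_z$ is $\Gamma$-invariant for $\eta$-a.e.\ $z$, so $\bm{d}_z$ is genuinely a measure preserving action on a conull set. Fix a countable dense set $D \subseteq E(\bm{a},2^\N)$, which exists since $M_s((2^\N)^\Gamma)$ is a compact metrizable space. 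It suffices to prove, for each individual $\sigma \in D$, that $\sigma \in \ol{E(\bm{d}_z,2^\N)}$ for $\eta$-a.e.\ $z$, and then to take a countable intersection.

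Fix $\sigma \in D$. By Proposition \ref{prop:AWgen} applied to $\bm{a}\prec \bm{d}$ there exist Borel maps $\phi_n \colon Y\to 2^\N$ with $\lambda_n := (\Phi^{\phi_n,d})_*\nu \to \sigma$ in $M_s((2^\N)^\Gamma)$. Set $\lambda_n(z) := (\Phi^{\phi_n,d})_*\nu_z \in E(\bm{d}_z,2^\N)$. The map $z \mapsto \lambda_n(z)$ is Borel into $M_s((2^\N)^\Gamma)$, and pushing the disintegration of $\nu$ through $\Phi^{\phi_n,d}$ yields $\lambda_n = \int \lambda_n(z)\, d\eta(z)$. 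Let $\rho_n$ denote the Borel probability measure on $M_s((2^\N)^\Gamma)$ obtained as the pushforward of $\eta$ by $z\mapsto \lambda_n(z)$; by construction the barycenter of $\rho_n$ equals $\lambda_n$. Since the space of Borel probability measures on $M_s((2^\N)^\Gamma)$ is weak-$*$ compact, pass to a subsequential cluster point $\rho$; continuity of the barycenter map then gives that the barycenter of $\rho$ is $\sigma$.

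The crucial step, and the main potential obstacle, is to extract $\rho = \updelta_\sigma$ from this. Because $\bm{a}$ is ergodic, every measure in $E(\bm{a},2^\N)$ is ergodic, so $\sigma$ is an extreme point of $M_s((2^\N)^\Gamma)$. Writing $\mc{E}$ for the Borel set of ergodic shift-invariant measures and $e$ for the Borel ergodic decomposition map sending $\mu$ to the unique Borel probability measure on $\mc{E}$ with barycenter $\mu$, form $\bar{\rho} := \int e(\mu)\, d\rho(\mu)$. This measure is supported on $\mc{E}$, has barycenter $\sigma$, and therefore by uniqueness of ergodic decomposition satisfies $\bar{\rho} = e(\sigma) = \updelta_\sigma$. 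Hence $e(\mu) = \updelta_\sigma$ for $\rho$-a.e.\ $\mu$, forcing $\mu = \sigma$, so $\rho = \updelta_\sigma$. Since every cluster point of $(\rho_n)$ is $\updelta_\sigma$, in fact $\rho_n \to \updelta_\sigma$ weak-$*$; as $M_s((2^\N)^\Gamma)$ is metrizable, this means $\lambda_n(\cdot) \to \sigma$ in $\eta$-measure on $Z$. Passing to a subsequence, $\lambda_{n_k}(z)\to \sigma$ for $\eta$-a.e.\ $z$, placing $\sigma \in \ol{E(\bm{d}_z,2^\N)}$ almost surely. Intersecting the resulting conull sets over $\sigma\in D$ and invoking Proposition \ref{prop:AWgen} completes the proof.
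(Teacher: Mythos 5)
Your proof is correct, and it takes a genuinely different route from the paper's. The paper proves this theorem with the ultraproduct machinery of Appendix \ref{app:ultra}: it forms the ultrapower $\bm{d}_{\mc{U}}$, takes the ultralimit $\Phi$ of the maps $\Phi^{\phi_n,d}$, identifies the joining $(\pi_{\mc{U}}\times\Phi)_*\nu_{\mc{U}}$ with the product $\eta_{\mc{U}}\times\lambda$ using disjointness of the ergodic action $\bm{a}$ from identity actions, and then reads off $\lim_{n\ra\mc{U}}(\Phi_n)_*\nu_{z_n}=\lambda$ almost surely from the lifted disintegration, finishing with a contradiction argument against a positive-measure bad set. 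You instead stay entirely within standard spaces: you package the fiberwise measures $\lambda_n(z)$ into probability measures $\rho_n$ on the compact metrizable space $M_s(K^\Gamma)$, use compactness of $P(M_s(K^\Gamma))$ and continuity of the barycenter map, and then exploit ergodicity of $\sigma$ via uniqueness of the representing measure at an extreme point to force $\rho_n\ra\updelta_\sigma$, hence convergence in $\eta$-measure of $\lambda_n(\cdot)$ to $\sigma$. Both arguments hinge on the same fact --- ergodicity of $\bm{a}$ prevents the fiberwise approximations from spreading out --- but the paper realizes it through disjointness of joinings while you realize it through Choquet-theoretic uniqueness (your ergodic-decomposition detour could even be replaced by a direct appeal to Bauer's characterization of extreme points, essentially the same \cite[Proposition 1.5]{Ph02}-type fact the paper already invokes in the proof of Theorem \ref{thm:main}). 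What your approach buys is independence from the ultraproduct formalism; what the paper's buys is uniformity with the rest of \S\ref{subsection:ergdec} and \S\ref{subsection:compspace}, where the same ultraproduct lemmas are reused. The only points worth making explicit in a polished write-up are the Borel measurability of $z\mapsto\lambda_n(z)$ (which follows from measurability of $z\mapsto\int g\,d\nu_z$ for bounded Borel $g$) and of the ergodic decomposition map, both standard.
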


\begin{proof}
Taking $K= 2^\N$ it suffices to show for each $\lambda \in E(\bm{a},K)$ that $\eta (\{ z \csuchthat \lambda \in \ol{E(\bm{d}_z,K)} \} ) =1$. Let $\mc{U}$ be a nonprincipal ultrafilter on $\N$ and let $\bm{d}_\mc{U} = \Gamma \cc ^{d_\mc{U}} (Y_\mc{U} ,\nu _\mc{U})$ and $\bm{\iota}_{\mc{U}} = \Gamma \cc ^{\iota _\mc{U}}(Z_\mc{U} , \eta _{\mc{U}})$ be the ultrapowers of $\bm{d}$ and $\bm{\iota}_\eta$ respectively. The map $\pi _{\mc{U}}: Y_\mc{U}\ra Z_{\mc{U}}$ defined by $\pi _{\mc{U}}([y_n]) = [\pi(y_n)]$ factors $\bm{d}_\mc{U}$ onto $\bm{\iota}_{\mc{U}}$.

Given any $\lambda \in E(\bm{a},K)$, since $\bm{a}\prec \bm{d}$ there exists $\phi _n : Y\ra K$ such that $(\Phi ^{\phi _n, d})_*\nu \ra \lambda$. Let $\phi : Y_\mc{U}\ra K$ be the ultralimit of the functions $\phi _n$, let $\Phi _n = \Phi ^{\phi _n ,d}$ and let $\Phi = \Phi ^{\phi , d_{\mc{U}}} : Y_\mc{U}\ra K^\Gamma$. By Proposition \ref{prop:ultrafunct}.(2), $\Phi$ factors $\bm{d}_\mc{U}$ onto $\Gamma \cc ^s (K^\Gamma , \lambda )$.

Let $\rho = \sigma _* \nu _{\mc{U}}$, 
where $\sigma =\pi _\mc{U} \times \Phi : Y_\mc{U} \ra Z_{\mc{U}}\times K^\Gamma$ is the map $\sigma ([y_n]) = ( \pi _\mc{U} ([y_n]) , \Phi ([y_n]) )$. Then $\rho = \eta _{\mc{U}}\times \lambda$ since each standard factor of $\bm{\iota}_{\mc{U}}$ is an identity action so is disjoint from $\bm{a}$.
 Let $\nu _{[z_n]} = \prod _n \nu _{z_n} / \mc{U}$, so that $\nu _{[z_n]}$ is a probability measure on $\bm{B}_{\mc{U}}(Y_{\mc{U}})$ for all $[z_n]\in Z_{\mc{U}}$.

\begin{claim}
$\lim _{n\ra \mc{U}} (\Phi _n)_*\nu _{z_n} = \lambda$ for $\eta _{\mc{U}}$-almost every $[z_n] \in Z_{\mc{U}}$.
\end{claim}

\begin{proof}[Proof of Claim]
By Proposition \ref{prop:ultrafactor}, $\nu _{\mc{U}}(A) = \int _{[z_n]} \nu _{[z_n]} (A) \, d\eta_{\mc{U}}$ for all $A\in \bm{B}_{\mc{U}} (Y_{\mc{U}})$.
As $\sigma _* \nu _{[z_n]}$ lives on $\{ [z_n] \} \times K^\Gamma$ it follows for $D\in \bm{B}_\mc{U} (Z_{\mc{U}})$ and $C\subseteq K^\Gamma$ clopen that
\begin{align*}
\int _{[z_n]\in D} \lambda (C) \, d\eta _{\mc{U}} 
&= \eta _{\mc{U}}(D)\lambda (C) = \rho (D\times C) = \int _{[z_n]} \sigma _*\nu _{[z_n]} (D\times C) \, d\eta _{\mc{U}} \\
&= \int _{[z_n]\in D} \sigma _* \nu _{[z_n]} (Z_{\mc{U}}\times C)\, d\eta _{\mc{U}} = \int _{[z_n]\in D}\Phi _* \nu _{[z_n]} (C) \, d\eta _{\mc{U}} .
\end{align*}
Thus for each clopen $C\subseteq K^\Gamma$, $\Phi _*\nu _{[z_n]}(C) = \lambda (C)$ for $\eta _{\mc{U}}$ almost every $[z_n] \in Z_{\mc{U}}$. It follows that $\Phi _*\nu _{[z_n]} = \lambda$ for $\eta _{\mc{U}}$ almost every $[z_n]\in Z_{\mc{U}}$. By Proposition \ref{prop:ultrafunct}.(2), $\lim _{n\ra \mc{U}} (\Phi _n)_*\nu _{z_n} = \lambda$ for $\eta _{\mc{U}}$ almost every $[z_n] \in Z_{\mc{U}}$. \qedhere[Claim]
\end{proof}

If now $V$ is any open neighborhood of $\lambda$ in $M_s(K^\Gamma )$ then let $B= \{ z \in Z \csuchthat E(\bm{d}_z , K)\cap V =\emptyset \}$. If $\eta (B) >0$ then let $B_n = B$ for all $n$ so that $[B_n]\in \bm{A}_{\mc{U}}(Z_{\mc{U}})$ and $\eta _{\mc{U}} ([B_n])>0$. Thus, for some $[z_n]\in [B_n]$ we have $\lim _{n\ra \mc{U}} (\Phi _n)_*\nu _{z_n} = \lambda$ and so $(\Phi _n )_*\nu _{z_n} \in E(\bm{d}_{z_n},K)\cap V$ for some $n\in \N$. Since $z_n \in B_n=B$ this is a contradiction. Thus, $\eta (B)=0$. It follows that $\lambda \in \ol{E(\bm{d}_z, K)}$ almost surely.
\end{proof}

\begin{theorem}\label{thm:more}
Let $\varphi : \Gamma \cc ^b (X,\mu ) \ra \Gamma \cc ^{\iota _\eta} (Z,\eta )$ and $\psi : \Gamma \cc ^d (Y,\nu )\ra \Gamma \cc ^{\iota _\eta } (Z,\eta )$ be factor maps from $\bm{b}$ and $\bm{d}$ respectively onto $\bm{\iota}_\eta$. Let $\mu = \int _z \mu _z \, d\eta$ and $\nu = \int _z \nu _z \, d\eta$ be the disintegrations of $\mu$ and $\nu$ via $\varphi$ and $\psi$ respectively, and for each $z\in Z$ let $\bm{b}_z = \Gamma \cc ^b (X, \mu _z)$ and let $\bm{d}_z = \Gamma \cc ^d (Y, \nu _z)$. Then
\begin{enumerate}
\item If $\bm{b}_z\prec \bm{d}_z$ for all $z\in Z$ then $\bm{b} \prec \bm{d}$.
\item If $\bm{b}\prec \bm{d}_z$ for all $z\in Z$ the $\bm{\iota}_\eta \times \bm{b}\prec \bm{d}$ and if $\bm{b}_z \prec \bm{d}$ for all $z\in Z$ then $\bm{b}\prec \bm{\iota}_\eta \times \bm{d}$.
\item If $\bm{b}_z\sim \bm{d}_z$ for all $z\in Z$ then $\bm{b}\sim \bm{d}$ and if $\bm{b}\sim \bm{d}_z$ for all $z\in Z$ then $\bm{\iota}_\eta \times \bm{b}\sim \bm{d}$.
\end{enumerate}
We also have the following version for stable weak containment (see Appendix \ref{app:stable}):
\begin{enumerate}
\item[(4)] If $\bm{b}_z\prec _s \bm{d}_z$ for all $z\in Z$ then $\bm{b}\prec _s \bm{d}$.
\item[(5)] If $\bm{b}_z\prec _s\bm{d}$ for all $z\in Z$ then $\bm{b}\prec _s \bm{d}$ and if $\bm{b}_z\prec _s \bm{d}$ for all $z\in Z$ then $\bm{b}\prec _s \bm{d}$.
\item[(6)] If $\bm{b}_z\sim _s\bm{d}_z$ for all $z\in Z$ then $\bm{b}\sim _s \bm{d}$ and if $\bm{b}\sim _s\bm{d}_z$ for all $z\in Z$ then $\bm{b}\sim _s\bm{d}$.
\end{enumerate}
\end{theorem}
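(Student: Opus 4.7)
The six claims split naturally into the weak containment cases (1)--(3) and the stable weak containment cases (4)--(6). I would first establish (1), derive (2) as a corollary, obtain (3) by applying (1) and (2) in both directions, and then reduce (4)--(6) to (1) and (2) via the characterization of Theorem \ref{thm:conv}: $\bm{a}\prec _s\bm{b}$ if and only if $\bm{a}\prec \bm{\iota}\times \bm{b}$. The key observation for that reduction is that if $\psi :\bm{d}\to \bm{\iota}_\eta$ is a factor map, then $\bm{\iota}\times \bm{d}$ also factors onto $\bm{\iota}_\eta$ (via $\psi$ on the second coordinate), and its disintegration has fibers $(\bm{\iota}\times \bm{d})_z = \bm{\iota}\times \bm{d}_z$. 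Thus a stable hypothesis $\bm{b}_z\prec _s\bm{d}_z$ rewrites as the unstable $\bm{b}_z\prec (\bm{\iota}\times \bm{d})_z$, and (1) then yields $\bm{b}\prec \bm{\iota}\times \bm{d}$, i.e., $\bm{b}\prec _s\bm{d}$, which is (4). The $\bm{\iota}_\eta \times$ factors appearing in the statements of (2) are absorbed stably, since $\bm{\iota}_\eta \times \bm{b}\sim _s\bm{b}$ (a direct consequence of Lemma \ref{lem:subsets} combined with the fact that $\bm{b}$ is itself a factor of $\bm{\iota}_\eta\times \bm{b}$), which handles (5) and (6).

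To reduce (2) to (1), I would observe that $\bm{\iota}_\eta \times \bm{b}$ carries the canonical factor map onto $\bm{\iota}_\eta$ via the first coordinate, and since $\eta \times \mu = \int _z \updelta _z\times \mu \, d\eta$, the fibers of this factor map are $(\bm{\iota}_\eta \times \bm{b})_z = \bm{b}$ (not $\bm{b}_z$). The hypothesis ``$\bm{b}\prec \bm{d}_z$ for all $z$'' of the first clause of (2) therefore rewrites as ``$(\bm{\iota}_\eta \times \bm{b})_z\prec \bm{d}_z$ for all $z$,'' and (1) gives $\bm{\iota}_\eta \times \bm{b}\prec \bm{d}$. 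The second clause is symmetric, using the canonical factor map $\bm{\iota}_\eta \times \bm{d}\to \bm{\iota}_\eta$.

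The bulk of the argument is in (1). By Proposition \ref{prop:AWgen} it suffices to show $E(\bm{b},K)\subseteq \ol{E(\bm{d},K)}$ for every finite $K$. Fix a Borel $\phi :X\to K$ and set $\lambda = (\Phi ^{\phi ,b})_*\mu$ and $\lambda _z =(\Phi ^{\phi ,b})_*\mu _z$; since $\bm{b}$ preserves each $\mu _z$ (because $\varphi$ is equivariant to the identity action), one has $\lambda _z\in E(\bm{b}_z,K)\subseteq \ol{E(\bm{d}_z,K)}$ for every $z$ and $\lambda =\int _Z \lambda _z\, d\eta$ in $M_s(K^\Gamma )$. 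Given $\epsilon >0$, I would produce a jointly Borel $g:Z\times Y\to K$ with the property that, writing $g_z(y):= g(z,y)$, the measure $(\Phi ^{g_z,d})_*\nu _z$ lies within $\epsilon$ of $\lambda _z$ in a fixed compatible metric on $M_s(K^\Gamma )$ for $\eta$-a.e.\ $z$. Define $h:Y\to K$ by $h(y) = g(\psi (y),y)$; the equivariance $\psi \circ \gamma ^d = \psi$ gives $\Phi ^{h,d}(y) = \Phi ^{g_{\psi (y)},d}(y)$ pointwise, and since $\nu _z$ concentrates on $\psi ^{-1}(z)$ one obtains
\[
(\Phi ^{h,d})_*\nu = \int _Z (\Phi ^{g_z,d})_*\nu _z\, d\eta ,
\]
which is within $\epsilon$ of $\int _Z\lambda _z\, d\eta = \lambda$ in the chosen metric. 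Letting $\epsilon \to 0$ places $\lambda$ in $\ol{E(\bm{d},K)}$.

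The main obstacle is the construction of the jointly measurable $g$. I would apply measurable uniformization to the set
\[
T= \{ (z,f)\in Z\times L(Y,\nu ,K)\csuchthat d((\Phi ^{f,d})_*\nu _z,\, \lambda _z)< \epsilon \} ,
\]
where $L(Y,\nu ,K)$ is the Polish space from Remark \ref{rem:RVspace}. The hypothesis $\bm{b}_z\prec \bm{d}_z$ ensures $T$ has full projection onto $Z$, and one verifies that $T$ is Borel (or at worst analytic) by combining the Borel measurability of the disintegration maps $z\mapsto \nu _z$ and $z\mapsto \lambda _z$ with the Borel dependence of pushforward measures on the underlying measurable function. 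Jankov--von Neumann uniformization then supplies a Borel section $z\mapsto f_z$, which I would realize as a jointly Borel $g:Z\times Y\to K$ using the fiber structure $\nu =\int \nu _z\, d\eta$ coming from the factor map $\psi$.
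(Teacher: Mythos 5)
Your reductions of (2)--(6) to (1) are correct and coincide with the paper's (the paper proves (1) and notes that the rest follows), and your overall plan for (1) --- choose witnesses fiberwise over $z$, glue them along $\psi$ into a single global object, and integrate against $\eta$ --- is also the paper's plan; your identity $(\Phi ^{h,d})_*\nu =\int _Z(\Phi ^{g_z,d})_*\nu _z\,d\eta$ with $h(y)=g(\psi (y),y)$ is the measure-theoretic form of the paper's computation. The gap is in how you execute the selection. The set $T=\{ (z,f)\in Z\times L(Y,\nu ,K): d((\Phi ^{f,d})_*\nu _z,\lambda _z)<\epsilon \}$ is not well defined: an element of $L(Y,\nu ,K)$ is a class modulo $\nu$-null sets, but for a fixed $z$ the fiber measure $\nu _z$ concentrates on $\psi ^{-1}(z)$, which is $\nu$-null whenever $\eta (\{ z\} )=0$, so two representatives of the same class can have completely different pushforwards under $\nu _z$ and membership in $T$ depends on the representative. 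For the same reason the ``Borel dependence of pushforward measures on the underlying measurable function'' you invoke is not available --- for fixed $z$ the assignment $f\mapsto (\Phi ^{f,d})_*\nu _z$ is not even a function on $L(Y,\nu ,K)$ --- so there is nothing to which Jankov--von Neumann can be applied. (A smaller point: you also need the compatible metric on $M_s(K^\Gamma )$ to satisfy $d(\int \rho _z\,d\eta ,\int \sigma _z\,d\eta )\leq \int d(\rho _z,\sigma _z)\,d\eta$, which holds for the standard weak${}^*$ metric built from a countable family of bounded continuous functions, but not for an arbitrary compatible metric.)

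The repair is to shrink the parameter space to a countable family of genuine Borel objects, after which the selection is elementary and no uniformization theorem is needed; this is exactly the paper's device. Fix a countable algebra $\{ B_n\} _{n\in \N}$ generating $\bm{B}(Y)$; it is dense in $\mbox{MALG}_{\nu _z}$ for \emph{every} $z$ simultaneously, so for each $z$ some tuple $(B_{n_0},\dots ,B_{n_{k-1}})$ from the algebra witnesses $\bm{b}_z\prec \bm{d}_z$ for the given finite data $(A_i,F,\epsilon )$. Each such condition is Borel in $z$ by measurability of the disintegrations, so the lexicographically least witnessing tuple $n(z)$ is a measurable selection, and one glues via $D_j=\bigsqcup _z(B_{n_j(z)}\cap \psi ^{-1}(z))$, which is measurable, and integrates. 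If you prefer to stay at the level of $E(\cdot ,K)$, the same fix works with the countable family of finite-range functions whose level sets lie in the generating algebra in place of all of $L(Y,\nu ,K)$.
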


\begin{proof}

(1): Let $\{ B_n \} _{n\in \N}$ be a countable algebra of subsets of $Y$ generating the Borel $\sigma$-algebra of $Y$. 
Fix a partition $A_0,\dots ,A_{k-1}$ of Borel subsets of $X$ along with $F\subseteq \Gamma$ finite and $\epsilon >0$. For each $z$ there exists a $k$-tuple $(n_0,\dots ,n_{k-1}) \in \N ^k$ such that the sets $B_{n_0},\dots ,B_{n_{k-1}}\subseteq Y$ witness that $\bm{b}_z\prec \bm{d}_z$ with respect to the parameters $A_0,\dots ,A_{k-1}$, $F$, and $\epsilon$. We let $n(z) = (n_0(z),\dots ,n_{k-1}(z))$ be the lexicographically least $k$-tuple that satisfies this for $z$. For each $j<k$ the set
\[
D_j = \{ y \in Y\csuchthat \exists z\in Z (\psi (y) =z \mbox{ and } y\in B_{n_j(z)}) \} =\bigsqcup _z (B_{n_j(z)}\cap \psi ^{-1}(z))
\]
is analytic and so is measurable. For all $z\in Z$, $\gamma \in \Gamma$, and $j<k$ we then have that $\gamma ^d D_j \cap \psi ^{-1}(z) = \gamma ^{d_z} B_{n_j(z)}\cap \psi ^{-1}(z)$ and it follows that $\nu _z (\gamma ^d D_j \cap D_{j'}) = \nu _z (\gamma ^{d_z}B_{n_j(z)} \cap B_{n_{j'}(z)})$, since $\nu _z$ concentrates on $\psi ^{-1}(z)$.  For $\gamma \in F$ and $i,j < k$ we then have
\begin{align*}
|\nu (\gamma ^d D_i\cap D_j)& - \mu (\gamma ^{b}A_i \cap A_j )| = \big| \int _{z\in Z} \nu _z (\gamma ^d D_{i}\cap D_{j}) \, d\eta (z) - \int _{z\in Z} \mu _z (\gamma ^{b} A_i \cap A_{j}) \, d\eta (z) \big| \\
& \leq  \int _{z\in Z} |\nu _z(\gamma ^{d_z} B_{n_i(z)} \cap B_{n_{j}(z)}) - \mu _z (\gamma ^{b_z} A_i \cap A_{j})| \, d\eta (z) \leq \eta (Z)\epsilon <\epsilon
\end{align*}
which finishes the proof of (1).

Statements (2) through (6) now follow from (1).
\end{proof}

%
%
%

\begin{question}\label{question2}
Is every measure preserving action $\bm{d}$ of $\Gamma$ stably weakly equivalent to an action with countable ergodic decomposition?
\end{question}

A positive answer to Question \ref{question2} would be an ergodic theoretic analogue of the fact that every unitary representation of $\Gamma$ on a separable Hilbert space is weakly equivalent to a countable direct sum of irreducible representations (\cite{Di77}, this also follows from \cite[F.2.7]{BdlHV08}). 
We also mention the following related problem.

\begin{problem}\label{problempts}
Describe the set $\mbox{ex}(\ol{co}E(\bm{a}, 2^\N ))$ of extreme points of $\ol{co}E(\bm{a}, 2^\N )$ for $\bm{a} \in A(\Gamma ,X,\mu )$.
\end{problem}
%
%

\section{Consequences of Theorem \ref{thm:main} and applications to MD and EMD}\label{section3}

\subsection{Free, non-ergodic weak equivalence classes} We can now prove Theorem \ref{thm:nonerg}.
%

\begin{proof}[Proof of Theorem \ref{thm:nonerg}]
If $\bm{a}$ is any ergodic action of $\Gamma$ and $\bm{a}\prec {\bm \iota}\times \bm{b}$ then by Theorem \ref{thm:main} $\bm{a}\prec \bm{b}$, and so $\bm{a}$ is strongly ergodic. It follows that we cannot also have ${\bm \iota}\times \bm{b}\prec \bm{a}$, otherwise $\bm{a}$ would not be strongly ergodic.
\end{proof}

\begin{remark}\label{rem:amen1} Foreman and Weiss \cite[Claim 18]{FW04} show that for any free measure preserving action $\bm{a} = \Gamma \cc ^a (X,\mu )$ of an infinite amenable group $\bm{b}\prec \bm{a}$ for every $\bm{b}\in A(\Gamma ,X,\mu )$. We note that a quick alternative proof of this follows from \cite[Theorem 1.2]{BT-D11}, which says that if $\Delta$ is a normal subgroup of a countably infinite group $\Gamma$ and $\Gamma /\Delta$ is amenable, then $\bm{b}\prec \mbox{CInd}_\Delta ^\Gamma ((\bm{\iota }\times \bm{b})|\Delta )$ for every $\bm{b}\in A(\Gamma ,X,\mu )$. Taking $\Gamma$ to be an infinite amenable group and $\Delta = \langle e \rangle $ the trivial group, the restriction $({\bm \iota} \times \bm{b} )|\langle e \rangle$ is trivial, so $\mbox{CInd}_{\langle e \rangle }^\Gamma ( ({\bm \iota} \times \bm{b}) |\langle e \rangle )$ is the Bernoulli shift action $\bm{s}_\Gamma$ of $\Gamma$. Thus, $\bm{b}\prec \bm{s}_\Gamma$. By \cite[Theorem 1]{AW11} (or alternatively, Corollary \ref{thm:shift} below), since $\bm{a}$ is free, we have $\bm{s}_\Gamma \prec \bm{a}$ and so $\bm{b}\prec \bm{a}$.

Combining this with Theorem \ref{thm:nonerg} gives a new characterization of (non-)amenability for a countable group $\Gamma$.

\begin{corollary}\label{cor:newchar}
A countably infinite group $\Gamma$ is non-amenable if and only if there exists a free measure preserving action of $\Gamma$ that is not weakly equivalent to any ergodic action.
\end{corollary}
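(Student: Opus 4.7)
The plan is to handle the two implications separately, in each case packaging a result already established in the excerpt.

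For the ``only if'' direction, assume $\Gamma$ is non-amenable. I would simply invoke Theorem \ref{thm:nonerg}: the Bernoulli shift $\bm{s}_\Gamma$ is strongly ergodic for any non-amenable $\Gamma$ (the classical Schmidt/Losert--Rindler fact), so that theorem's ``In particular'' clause already delivers a free action, namely $\bm{\iota}\times \bm{s}_\Gamma$, that fails to be weakly equivalent to any ergodic action. So this direction is essentially a direct citation.

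For the ``if'' direction, assume $\Gamma$ is countably infinite and amenable, and let $\bm{a}=\Gamma\cc^a(X,\mu)$ be a free measure preserving action. I want to exhibit an ergodic action $\bm{b}$ with $\bm{a}\sim \bm{b}$. The natural candidate is the Bernoulli shift $\bm{s}_\Gamma$, which is free (since $[0,1]$ is non-atomic) and ergodic (classical). Remark \ref{rem:amen1} (the Foreman--Weiss theorem) says that for any free action of an infinite amenable group, every element of $A(\Gamma,X,\mu)$ is weakly contained in it. Applying this to $\bm{a}$ yields $\bm{s}_\Gamma \prec \bm{a}$, and applying the same remark with the roles of $\bm{a}$ and $\bm{s}_\Gamma$ reversed (which is legitimate because $\bm{s}_\Gamma$ is itself a free action of $\Gamma$) yields $\bm{a}\prec \bm{s}_\Gamma$. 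Hence $\bm{a}\sim \bm{s}_\Gamma$, and $\bm{s}_\Gamma$ is ergodic, as required.

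There is no real obstacle here: the corollary is essentially a repackaging of Theorem \ref{thm:nonerg} together with Remark \ref{rem:amen1}. The only subtle point, already addressed in Remark \ref{rem:amen1} itself, is that the Foreman--Weiss universality statement is needed for the amenable direction; once one has it, the argument is a one-line symmetry argument using two free actions.
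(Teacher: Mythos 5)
Your proof is correct and follows exactly the route the paper intends: the corollary is stated in the paper immediately after Remark \ref{rem:amen1} with the phrase ``combining this with Theorem \ref{thm:nonerg},'' which is precisely your two-citation argument (Theorem \ref{thm:nonerg} applied to the strongly ergodic Bernoulli shift for the non-amenable direction, and the Foreman--Weiss universality of free actions of amenable groups, applied symmetrically to $\bm{a}$ and $\bm{s}_\Gamma$, for the amenable direction). No further comment is needed.
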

\end{remark}

\begin{remark}\label{rem:continuum}
It is noted in \cite[4.({\bf C})]{CK10} that if $\Gamma$ is a non-amenable group, and if $S\subseteq \Gamma$ is a set of generators for $\Gamma$ such that the Cayley graph $\mbox{Cay}(\Gamma ,S)$ is bipartite, then there are continuum-many weak equivalence classes of free measure preserving actions of $\Gamma$. Their method of using convex combinations of actions can be used to show that this holds for \emph{all} non-amenable $\Gamma$, and in fact the proof shows that there exists a collection $\{ \bm{a}_\alpha \csuchthat 0<\alpha \leq \tfrac{1}{2}\}$ with $\bm{a}_\alpha$ and $\bm{a}_\beta$ weakly incomparable when $\alpha\neq \beta$. Indeed, if $\bm{a} =\Gamma \cc ^a (X,\mu )$ is any free strongly ergodic action of $\Gamma$ (which exists when $\Gamma$ is non-amenable), then for any $0<\alpha <\beta \leq \tfrac{1}{2}$ the actions $\bm{a}_\alpha = \alpha \bm{a}+ (1-\alpha )\bm{a}$ and $\bm{a}_\beta = \beta \bm{a}+(1-\beta )\bm{a}$ are weakly incomparable. To see this note that any action weakly containing $\bm{a}_\alpha$ has a sequence of asymptotically invariant sets with measures converging to $\alpha$. Since $\bm{a}$ is strongly ergodic it is clear that no such sequence exists for $\bm{a}_\beta$, and so $\bm{a}_\alpha \not\prec \bm{a}_\beta$. Similarly, $\bm{a}_\beta\not\prec \bm{a}_\alpha$.

It is open whether every non-amenable group has continuum-many weak equivalence classes of free \emph{ergodic} measure preserving actions. It is in fact unknown whether there exists a non-amenable group with just one free ergodic action up to weak equivalence (though it is shown in the fourth remark after 13.2 of \cite{Ke10} that any such group must, among other things, have property (T) and cannot contain a non-abelian free group).  Ab\'{e}rt-Elek \cite{AE11b} show that $\Gamma$ has continuum-many pairwise weakly incomparable (hence inequivalent) free ergodic actions when $\Gamma$ is a finitely generated free group or a linear group with property (T). Their result also holds for stable weak equivalence in view of  the following consequence of Theorem \ref{thm:main}.

\begin{corollary}\label{cor:sweakerg}
Let $\bm{a}$ and $\bm{b}$ be ergodic measure preserving actions of $\Gamma$ and let $(Z,\eta )$ be a standard probability space. Then $\bm{a}\sim \bm{b}$ if and only if ${\bm \iota} _\eta \times \bm{a} \sim {\bm \iota} _\eta\times \bm{b}$. In particular $\bm{a}\sim \bm{b}$ if and only if $\bm{a}\sim _s \bm{b}$.
\end{corollary}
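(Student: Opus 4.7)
The plan is to extract both assertions directly from Theorem \ref{thm:main}, with the remaining work being just to massage the given hypotheses into the form $\bm{a}\prec \bm{\iota}\times \bm{b}$. For the forward direction $\bm{a}\sim \bm{b}\Rightarrow \bm{\iota}_\eta\times \bm{a}\sim \bm{\iota}_\eta\times \bm{b}$, I would first note that weak containment is preserved under taking a product with any fixed action: given a finite Borel partition $\mc{P}$ of $Z\times X$, approximate it in $d_{\eta\times\mu}$ by a rectangular partition $\{C_i\times A_j\}$, apply $\bm{a}\prec \bm{b}$ to $\{A_j\}$ to obtain a partition $\{B_j\}$ of $Y$ matching the relevant partition measures, and use $\{C_i\times B_j\}$ to witness $\bm{\iota}_\eta\times \bm{a}\prec \bm{\iota}_\eta\times \bm{b}$. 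The symmetric statement gives the equivalence.

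For the substantive reverse direction, assume $\bm{\iota}_\eta\times \bm{a}\sim \bm{\iota}_\eta\times \bm{b}$. Projection onto the $X$-coordinate factors $\bm{\iota}_\eta\times \bm{a}$ onto $\bm{a}$, so $\bm{a}\sqsubseteq \bm{\iota}_\eta\times \bm{a}$ and hence
\[
\bm{a}\prec \bm{\iota}_\eta\times \bm{a}\prec \bm{\iota}_\eta\times \bm{b}.
\]
Since any standard probability space is a measure-preserving Borel factor of $([0,1],\lambda)$, one has $\bm{\iota}_\eta\sqsubseteq \bm{\iota}$ and therefore $\bm{\iota}_\eta\times \bm{b}\sqsubseteq \bm{\iota}\times \bm{b}$. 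Combining these gives $\bm{a}\prec \bm{\iota}\times \bm{b}$, and Theorem \ref{thm:main}, applied using ergodicity of $\bm{a}$, upgrades this to $\bm{a}\prec \bm{b}$. Interchanging the roles of $\bm{a}$ and $\bm{b}$ and using ergodicity of $\bm{b}$ yields $\bm{b}\prec \bm{a}$, so $\bm{a}\sim \bm{b}$.

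The ``in particular'' clause follows by unpacking stable weak containment. By Theorem \ref{thm:conv}, the condition $\bm{a}\prec_s \bm{b}$ is exactly $\bm{a}\prec \bm{\iota}\times \bm{b}$, so if $\bm{a}\sim_s \bm{b}$ then Theorem \ref{thm:main} applied to the ergodic action $\bm{a}$ gives $\bm{a}\prec \bm{b}$, and symmetrically $\bm{b}\prec \bm{a}$. The reverse implication is immediate since $\bm{b}\sqsubseteq \bm{\iota}\times \bm{b}$ implies $\bm{a}\prec \bm{b}\Rightarrow \bm{a}\prec \bm{\iota}\times \bm{b}$, i.e.\ $\bm{a}\prec_s \bm{b}$.

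The only nontrivial ergodic-theoretic input is Theorem \ref{thm:main} itself, which does all the heavy lifting; the rest is bookkeeping about factors and products. The mildest technical step is the reduction $\bm{\iota}_\eta\sqsubseteq \bm{\iota}$, which is a standard fact about standard Borel probability spaces, and I expect no real obstacle beyond that.
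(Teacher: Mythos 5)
Your proposal is correct and follows essentially the same route as the paper: the forward direction is the preservation of weak containment under products with a fixed action, and the reverse direction is the chain $\bm{a}\prec\bm{\iota}_\eta\times\bm{a}\prec\bm{\iota}_\eta\times\bm{b}$ followed by Theorem \ref{thm:main}. You are slightly more careful than the paper in passing from $\bm{\iota}_\eta$ to the non-atomic $\bm{\iota}$ via $\bm{\iota}_\eta\sqsubseteq\bm{\iota}$ before invoking Theorem \ref{thm:main}, a step the paper leaves implicit.
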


\begin{proof}
If $\bm{a}\sim \bm{b}$ then $\bm{\iota}_\eta \times \bm{a} \sim \bm{\iota} _\eta \times \bm{b}$ by continuity of the product operation. Conversely, if $\bm{\iota}_\eta \times \bm{a} \sim \bm{\iota} _\eta \times \bm{b}$ then $\bm{a}\prec \bm{\iota}_\eta \times \bm{a} \prec \bm{\iota}_\eta \times \bm{b}$ so that $\bm{a}\prec \bm{b}$ by Theorem \ref{thm:main}. Likewise, $\bm{b}\prec \bm{a}$, so $\bm{a}\sim \bm{b}$.
\end{proof}

I also do not know whether every non-amenable group has continuum-many stable weak equivalence classes of free measure preserving actions, or whether there exists a non-amenable group all of whose free measure preserving actions are stably weakly equivalent.
\end{remark}

%
%

\subsection{The properties MD and EMD}

\begin{definition}
Let $\mc{B}$ be a class of measure preserving actions of a countable group $\Gamma$. If $\bm{a}\in \mc{B}$ then $\bm{a}$ is called \emph{universal for $\mc{B}$} if $\bm{b}\prec \bm{a}$ for every $\bm{b}\in \mc{B}$. When $\bm{a}$ is universal for the class of \emph{all} measure preserving actions of $\Gamma$ then $\bm{a}$ is simply called \emph{universal}.
\end{definition}

\begin{definition}[\cite{Ke11}]
Let $\Gamma$ be a countably infinite group. Then $\Gamma$ is said to have property $\mbox{EMD}$ if the measure preserving action $\bm{p}_\Gamma$ of $\Gamma$ on its profinite completion is universal. $\Gamma$ is said to have property $\mbox{EMD}^*$ if $\bm{p}_\Gamma$ is universal for the class of all ergodic measure preserving actions of $\Gamma$. $\Gamma$ is said to have property $\mbox{MD}$ if $\bm{\iota}\times \bm{p}_\Gamma$ is universal.
\end{definition}

If $\Gamma$ has property $\mbox{EMD}$, $\mbox{EMD}^*$, or $\mbox{MD}$, then $\bm{p}_\Gamma$ must be free (this follows, e.g., from the \ref{lem:monot} below) and so $\Gamma$ must be residually finite. It is also clear that $\mbox{EMD}$ implies both $\mbox{EMD}^*$ and $\mbox{MD}$. We now show that $\mbox{EMD}^*$ and $\mbox{MD}$ are equivalent.
%
%

\begin{proof}[Proof of Theorem \ref{thm:MD}]
The implication $\mbox{EMD}^*\Ra \mbox{MD}$ is shown in \cite{Ke11}, but also follows from Theorem \ref{thm:more} above. For the converse, suppose $\Gamma$ has $\mbox{MD}$ so that ${\bm \iota}\times \bm{p}_\Gamma$ is universal and let $\bm{a}$ be an ergodic action of $\Gamma$. Then $\bm{a}\prec {\bm \iota}\times \bm{p}_\Gamma$, so since $\bm{a}$ is ergodic, Theorem \ref{thm:main} implies $\bm{a}\prec \bm{p}_\Gamma$. Thus $\bm{p}_\Gamma$ is universal for ergodic actions of $\Gamma$, and so $\Gamma$ has $\mbox{EMD}^*$.
\end{proof}

\begin{corollary}\label{cor:EMD(T)}
EMD and MD are equivalent for groups without property \emph{(T)}.
\end{corollary}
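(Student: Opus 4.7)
The plan is to establish the non-trivial direction: for a $\Gamma$ without property (T), MD $\Ra$ EMD (the reverse implication is immediate). By Theorem \ref{thm:MD}, MD is equivalent to EMD$^*$, so what must be upgraded is weak universality of $\bm{p}_\Gamma$ from the class of \emph{ergodic} actions to the class of \emph{all} measure preserving actions. The argument I would give has three steps: (i) deduce $\bm{\iota}\prec\bm{p}_\Gamma$ from the failure of (T); (ii) show that any ergodic $\bm{a}$ with $\bm{\iota}\prec\bm{a}$ satisfies $\bm{\iota}\times\bm{a}\sim\bm{a}$; (iii) combine these with MD.

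For (i), invoke the standard characterization that $\Gamma$ has property (T) if and only if every ergodic measure preserving action of $\Gamma$ is strongly ergodic. Failing (T), one may choose an ergodic action $\bm{a}_0$ which admits a non-trivial asymptotically invariant sequence of Borel sets; a routine splitting/averaging argument then produces asymptotically invariant partitions of any prescribed shape, which is exactly $\bm{\iota}\prec\bm{a}_0$. Since $\Gamma$ has EMD$^*$, $\bm{a}_0\prec\bm{p}_\Gamma$, and transitivity of $\prec$ gives $\bm{\iota}\prec\bm{p}_\Gamma$.

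Step (ii) is the heart of the proof and the step I expect to be the main obstacle. Let $\bm{a}=\Gamma\cc^a(X,\mu)$ be ergodic with $\bm{\iota}\prec\bm{a}$. Since $\bm{a}\prec \bm{\iota}\times\bm{a}$ is trivial, by Theorem \ref{thm:conv} it suffices to show that $\ol{E(\bm{a},K)}$ is convex for every compact Polish $K$, for then $\ol{\mbox{co}}\,E(\bm{a},K)=\ol{E(\bm{a},K)}=\ol{E(\bm{\iota}\times\bm{a},K)}$. Given $\lambda_i=(\Phi^{\phi_i,a})_*\mu\in E(\bm{a},K)$ for $i=1,2$ and $\alpha\in(0,1)$, use the hypothesis $\bm{\iota}\prec\bm{a}$ together with ergodicity to produce an asymptotically invariant sequence $A_n\subseteq X$ with $\mu(A_n)\to\alpha$ (a Connes--Jones style fact: an ergodic non-strongly-ergodic action admits asymptotically invariant sequences of any target density). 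Define
\[
\psi_n(x)=\phi_1(x)\ \text{for}\ x\in A_n,\qquad \psi_n(x)=\phi_2(x)\ \text{for}\ x\notin A_n.
\]
The partition $\{A_n,X\setminus A_n\}$ is almost $\gamma^a$-invariant for every $\gamma$ in any fixed finite $F\subseteq\Gamma$, and by ergodicity $\mathbb{1}_{A_n}\to\alpha$ weakly in $L^2(\mu)$, so $\mu(A_n\cap B)\to\alpha\mu(B)$ for every Borel $B$. A direct unpacking of $(\Phi^{\psi_n,a})^{-1}(C)$ for a basic clopen $C=\bigcap_{\gamma\in F}\pi_\gamma^{-1}(V_\gamma)\subseteq K^\Gamma$ then shows, using both asymptotic invariance of $A_n$ under $F$ and the equidistribution above, that $(\Phi^{\psi_n,a})_*\mu\,(C)\to\alpha\lambda_1(C)+(1-\alpha)\lambda_2(C)$. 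Hence the weak${}^*$ limit of $(\Phi^{\psi_n,a})_*\mu$ is $\alpha\lambda_1+(1-\alpha)\lambda_2$, giving convexity of $\ol{E(\bm{a},K)}$; iterating the binary case handles all finite convex combinations.

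Finally for (iii), apply step (ii) to the ergodic action $\bm{a}=\bm{p}_\Gamma$, which satisfies $\bm{\iota}\prec\bm{p}_\Gamma$ by step (i). This yields $\bm{\iota}\times\bm{p}_\Gamma\sim\bm{p}_\Gamma$. Since MD says $\bm{\iota}\times\bm{p}_\Gamma$ is universal, $\bm{p}_\Gamma$ is universal as well, i.e., $\Gamma$ has EMD. The delicate point throughout is the interpolation in step (ii): one must simultaneously control the asymptotic invariance of $A_n$ on the (large but finite) set of group elements appearing in a basic clopen $C$, and the asymptotic independence of $A_n$ from the factor partitions attached to $\phi_1,\phi_2$; both ingredients are standard consequences of ergodicity, but chaining them through the set-theoretic expression for $(\Phi^{\psi_n,a})^{-1}(C)$ is what must be verified carefully.
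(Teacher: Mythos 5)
Your proposal is correct and follows the same route as the paper: MD gives EMD$^*$ via Theorem \ref{thm:MD}, failure of (T) produces an ergodic non-strongly-ergodic action and hence $\bm{\iota}\prec\bm{p}_\Gamma$, and the step $\bm{\iota}\times\bm{p}_\Gamma\prec\bm{p}_\Gamma$ then yields universality of $\bm{p}_\Gamma$. The only divergence is that the paper simply cites Ab\'{e}rt--Weiss \cite[Theorem 3]{AW11} for that last step, whereas you prove it inline (correctly) by showing $\ol{E(\bm{a},K)}$ is convex using asymptotically invariant sets of prescribed density $\alpha$ together with the fact that ergodicity forces $\mathbb{1}_{A_n}\to\alpha$ weakly; this is essentially the Ab\'{e}rt--Weiss argument itself, so the gain is only self-containedness.
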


\begin{proof}
Suppose $\Gamma$ has MD and does not have (T). Then $\bm{\iota}\times \bm{p}_\Gamma$ is universal and by Theorem \ref{thm:MD}, $\bm{p}_\Gamma$ is universal for ergodic measure preserving actions. Since $\Gamma$ does not have property (T) there exists an ergodic $\bm{a} = \Gamma \cc ^a(X,\mu )$ with ${\bm \iota} \prec \bm{a}$, and so ${\bm \iota}\prec \bm{a} \prec \bm{p}_\Gamma$. Since $\bm{p}_\Gamma$ is ergodic with ${\bm \iota}\prec \bm{p}_\Gamma$ it follows that ${\bm \iota}\times \bm{p}_\Gamma \prec \bm{p}_\Gamma$ (see \cite[Theorem 3]{AW11}) and so $\bm{p}_\Gamma$ is universal.
\end{proof}

In what follows, if $\varphi : \Gamma \ra \Delta$ is group homomorphism then for each $\bm{a}\in A(\Delta ,X,\mu )$ we let $\bm{a}^\varphi \in A(\Gamma ,X,\mu )$ denote the action that is the composition of $\bm{a}$ with $\varphi$, i.e., $\gamma ^{a^\varphi} = \varphi (\gamma )^a$. Also, we note that for any two countable groups $\Gamma _1, \Gamma _2$, there is a natural equivariant homeomorphism from the diagonal action $\Aut (X,\mu )\cc A(\Gamma _1  ,X,\mu )\times A(\Gamma _2 ,X,\mu )$ to $\Aut (X,\mu )\cc A(\Gamma _1 \ast \Gamma _2  , X,\mu )$. We denote this map by $(\bm{a}_1,\bm{a}_2)\mapsto \bm{a}_1\ast \bm{a}_2$. See \cite[10.{\bf (G)}]{Ke10}. We also refer to \cite[Appendix G]{Ke10} and \cite{Zi84} for information about induced actions.

\begin{theorem}\label{thm:subgrpEMD}
Suppose $\Gamma _1$ and $\Gamma _2$ are nontrivial countable groups and that for each $i\in \{ 1,2 \}$, $\Gamma _i$ is either finite or has property MD. Then $\Gamma _1\ast \Gamma _2$ has property EMD.
\end{theorem}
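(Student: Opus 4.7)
The plan is to reduce EMD to MD via Corollary \ref{cor:EMD(T)}, then establish MD for $\Gamma := \Gamma _1 \ast \Gamma _2$ by approximating any action through the free product structure together with the finite-quotient approximations afforded by MD of each factor.

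First, I observe that $\Gamma _1 \ast \Gamma _2$ never has property (T) when both $\Gamma _i$ are nontrivial: either $\Gamma \iso \Z /2 \ast \Z /2$, which is infinite amenable, or $\Gamma$ contains a non-abelian free subgroup of finite index (by the Kurosh subgroup theorem applied to the kernel of a surjection onto a finite quotient), and both cases preclude (T). Hence by Corollary \ref{cor:EMD(T)}, it suffices to prove that $\Gamma$ has MD.

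Next, fix $\bm{a} \in A(\Gamma , X, \mu )$. Using the equivariant homeomorphism $A(\Gamma , X, \mu ) \iso A(\Gamma _1 , X, \mu ) \times A(\Gamma _2 , X, \mu )$ recalled immediately before the theorem statement, I write $\bm{a} = \bm{a}_1 \ast \bm{a}_2$ with $\bm{a}_i \in A(\Gamma _i, X, \mu )$. Under the hypothesis on each $\Gamma _i$ (either finite, or MD) we have $\bm{a}_i \prec \bm{\iota} \times \bm{p}_{\Gamma _i}$ (finite $\Gamma_i$ being trivial since every action of a finite group is a factor of $\bm{\iota} \times \bm{p}_{\Gamma_i}$). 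By Proposition \ref{lem:10.1}, there is a sequence $\bm{b}_i^{(n)} \in A(\Gamma _i, X, \mu )$ converging to $\bm{a}_i$ such that each $\bm{b}_i^{(n)}$ is isomorphic to an action $\bm{\iota} \times \bm{d}_i^{(n)}$ where $\bm{d}_i^{(n)}$ factors through a finite quotient of $\Gamma _i$. By continuity of the free product operation on $A(\cdot , X, \mu )$, $\bm{b}^{(n)} := \bm{b}_1^{(n)} \ast \bm{b}_2^{(n)} \to \bm{a}$ in $A(\Gamma , X, \mu )$, so it suffices to show that each $\bm{b}^{(n)}$ is weakly contained in $\bm{\iota} \times \bm{p}_\Gamma$.

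Fixing such a $\bm{b} = \bm{b}_1 \ast \bm{b}_2$ and applying Theorem \ref{thm:conv}, the target becomes $E(\bm{b}, K) \subseteq \overline{\mbox{co}}\, E(\bm{p}_\Gamma , K)$ for every compact Polish $K$. The strategy is: discretize the $\bm{\iota}$-factor in each $\bm{b}_i$ by partitioning its identity-space into finitely many equal-measure atoms, which expresses $\bm{b}_i$ (up to arbitrarily small error in the weak topology of $A(\Gamma _i, X, \mu )$) as a convex combination of actions of $\Gamma _i$ factoring through a common finite quotient $Q_i$. Given any finite Borel partition of $X$ and finite $F \subseteq \Gamma$, I embed both $Q_1$ and $Q_2$ into a common finite symmetric group $Q$ acting on a fine joint refinement of the relevant partitions, and invoke the universal property of the free product to obtain a homomorphism $\Gamma \to Q$ whose pullback action on $X$ witnesses the desired convex-combination approximation, producing a measure in $\overline{\mbox{co}}\, E(\bm{p}_\Gamma , K)$.

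The main obstacle is that the two isomorphisms $\bm{b}_i \iso \bm{\iota} \times \bm{d}_i$ live on $(X, \mu )$ but the induced identity-factor decompositions of $X$ need not be aligned across $i = 1, 2$. The joint discretization must therefore be carried out on $(X, \mu )$ itself rather than on the product-space targets of the two isomorphisms, and the flexibility to pass to convex combinations provided by Theorem \ref{thm:conv} is precisely what is needed to overcome this potential incompatibility and to assemble the finite quotient $Q$ of $\Gamma$ consistently with both actions.
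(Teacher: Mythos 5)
Your opening reduction (establish MD for $\Gamma=\Gamma_1\ast\Gamma_2$ and upgrade to EMD via Corollary \ref{cor:EMD(T)}) is a legitimate route --- the Note following the theorem in the paper records exactly this option --- but your justification that $\Gamma$ lacks property (T) is incorrect when a factor is infinite. The Kurosh subgroup theorem describes the kernel of $\Gamma\ra Q$ ($Q$ finite) as a free product of a free group with conjugates of \emph{finite-index subgroups of the} $\Gamma_i$; these are trivial only when the $\Gamma_i$ are finite, so for instance $\Z^2\ast(\Z/2)$ has no finite-index free subgroup at all, and merely containing a non-abelian free subgroup does not preclude (T). The standard (and easily repaired) argument is via Serre's property (FA): a nontrivial free product acts on its Bass--Serre tree with no global fixed point, whereas Kazhdan groups have (FA).

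The genuine gap is the last step. After discretizing, you must show that $\bm{b}=\bm{b}_1\ast\bm{b}_2$, where $\bm{b}_i=\bm{c}_i^{\varphi_i}$ is pulled back from an action $\bm{c}_i$ of a finite quotient $F_i$ of $\Gamma_i$, satisfies $\bm{b}\prec\bm{\iota}\times\bm{p}_\Gamma$; equivalently, that the action $\bm{c}_1\ast\bm{c}_2$ of the \emph{infinite} group $F_1\ast F_2$ is approximated by finite actions. Your device of embedding $Q_1$ and $Q_2$ into a common symmetric group $Q$ and invoking the universal property produces some homomorphism $\Gamma\ra Q$, but gives no reason for the resulting finite action to reproduce the quantities $\mu(\gamma^{b}A_i\cap A_j)$ when $\gamma$ is a long alternating word: a finite partition invariant under $Q_1$ would have to be saturated under $Q_2$, then under $Q_1$ again, and so on, and this process generates the orbit partition of the (typically infinite) subgroup of $\Aut (X,\mu )$ generated by the two finite groups --- it does not terminate. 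Passing to convex combinations via Theorem \ref{thm:conv} does nothing to address this; the obstruction is not convexity but the fact that ``finite actions are weakly dense'' for $F_1\ast F_2$ is precisely property MD for $F_1\ast F_2$, i.e.\ the actual content of the theorem. The paper supplies the missing input by observing that $\F=\ker (F_1\ast F_2\ra F_1\times F_2)$ is a \emph{finite-index free} subgroup (Kurosh does apply there, since the $F_i$ are finite), invoking EMD for free groups \cite[Theorem 1]{Ke11} to get $\bm{b}|\F\prec\bm{p}_\F$, and then inducing: $\bm{b}\sqsubseteq\bm{b}\times\bm{a}_{F/\F}\cong\mbox{Ind}_{\F}^{F}(\bm{b}|\F)\prec\mbox{Ind}_{\F}^{F}(\bm{p}_\F)$, which is an ergodic profinite action. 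Without this ingredient, or an equivalent substitute, your argument does not close.
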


\begin{proof}
Let $(\bm{a}_1,\bm{a}_2)\in A(\Gamma _1 ,X,\mu )\times A(\Gamma _2  ,X,\mu )$ be given and let $U= U_1\times U_2$ be an open neighborhood of $(\bm{a}_1,\bm{a}_2)$ where $U_i$ is a open neighborhood of $\bm{a}_i$ for $i=1,2$.  By hypothesis, for each $i=1,2$ there exists a finite group $F_i\neq \{ e \}$ along with a homomorphism $\varphi _i : \Gamma _i  \ra F_i$ and $\bm{b}_i\in A(F_i , X,\mu )$ such that the corresponding measure preserving action $\bm{b}_i ^{\varphi _i}$ of $\Gamma _i$ is in $U_i$.  Let $\varphi = \varphi _1 \ast \varphi _2 : \Gamma _1\ast \Gamma _2 \ra F_1\ast F_2$ and let $\bm{b}= \bm{b}_1\ast \bm{b}_2$. Then $\bm{b}^\varphi = \bm{b}_1^{\varphi _1}\ast \bm{b}_2^{\varphi _2}\in U_1\times U_2$. Let $V_1,V_2$ be open subsets about $\bm{b}_1\in A(F_1 ,X,\mu )$ and $\bm{b}_2\in A(F_2,X,\mu )$ respectively such that $\{ \bm{a}^{\varphi _i}\csuchthat \bm{a}\in V_i \} \subseteq U_i$ for $i=1,2$ (this is possible since the map $\bm{a}\mapsto \bm{a}^{\varphi _i}$ is continuous). Then $\bm{b}\in V_1\times V_2$ and for all $\bm{d}\in V_1\times V_2$ we have $\bm{d}^\varphi \in U_1\times U_2$.

There is a (possibly abelian) free subgroup $\F \leq F = F_1\ast F_2$ of finite index (explicitly: $\F =\mbox{ker}(\psi ) = [F,F]$ where $\psi : F_1\ast F_2 \ra F_1\times F_2$ is the natural projection map), and since $\F$ has EMD \cite[Theorem 1]{Ke11} we have $\bm{b}|\F \prec \bm{p}_\F$. Letting $\bm{a}_{F/ \F}$ denote the action of $F$ on $F/ \F$ with normalized counting measure we now have
\[
\bm{b}\sqsubseteq \bm{b}\times \bm{a}_{F/ \F } \cong \mbox{Ind}_\F ^F (\bm{b}|\Gamma )\prec \mbox{Ind}_\F ^F (\bm{p}_\F ).
\]
The action $\bm{d}=\mbox{Ind}_{\F} ^F (\bm{p}_\F )$ is a profinite action, and $\bm{d}$ is ergodic since $\bm{p}_\F$ is ergodic.  As $\bm{b}\prec \bm{d}$ there exists an isomorphic copy $\bm{d}_0$ of ${\bm d}$ in $V_1\times V_2$. Then $\bm{d}_0^{\varphi} \in U_1\times U_2$ and $\bm{d}_0^{\varphi}$ is ergodic since $\bm{d}_0$ is ergodic. Thus $U_1\times U_2$ contains an ergodic profinite action.
\end{proof}

\begin{note}
The group $\Gamma _1 \ast \Gamma _2 $ never has property (T) when $\Gamma _1$ and $\Gamma _2$ are nontrivial, so by Corollary \ref{cor:EMD(T)} it would have been enough to show in the above proof that $\Gamma _1 \ast \Gamma _2$ has MD, and then EMD would follow.
\end{note}

\begin{theorem}\label{thm:MDEMD}
The following are equivalent
\begin{enumerate}
\item MD and EMD are equivalent properties for any countably infinite group $\Gamma$.
\item EMD passes to subgroups.
\item MD is incompatible with property (T).
\end{enumerate}
\end{theorem}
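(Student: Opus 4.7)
The plan is to prove Theorem~\ref{thm:MDEMD} as a cycle (3)$\Rightarrow$(1)$\Rightarrow$(2)$\Rightarrow$(3), leveraging Corollary~\ref{cor:EMD(T)}, Theorem~\ref{thm:subgrpEMD}, and two standard facts from \cite{Ke11}: namely, that MD passes to subgroups, and that EMD is incompatible with property (T) (this latter fact follows quickly: if $\Gamma$ has EMD, then $\bm{\iota}\prec \bm{p}_\Gamma$, but $\bm{p}_\Gamma$ is ergodic, so this forces $\bm{p}_\Gamma$ to fail strong ergodicity, which contradicts (T)).

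For (3)$\Rightarrow$(1), I would assume MD is incompatible with (T) and let $\Gamma$ be any group with MD. Then $\Gamma$ cannot have (T), so by Corollary~\ref{cor:EMD(T)} MD and EMD are equivalent for $\Gamma$ and hence $\Gamma$ has EMD. Since EMD trivially implies MD, this gives (1). For (1)$\Rightarrow$(2), I would just invoke the fact from \cite{Ke11} that MD passes to subgroups; if MD and EMD coincide universally, then EMD likewise passes to subgroups.

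The key step is (2)$\Rightarrow$(3). Assume EMD passes to subgroups and, for contradiction, suppose there is a countably infinite group $\Gamma$ with both MD and (T). Consider the free product $\Gamma \ast \Z$: since $\Z$ is amenable it has MD (in fact EMD), and $\Gamma$ has MD by hypothesis, so Theorem~\ref{thm:subgrpEMD} applies to give that $\Gamma \ast \Z$ has EMD. By assumption (2), the subgroup $\Gamma \leq \Gamma \ast \Z$ then has EMD as well. But $\Gamma$ has property (T), and EMD is incompatible with (T), contradiction. Therefore no such $\Gamma$ exists, establishing (3).

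The only substantive content beyond the cited results is this last implication; the main thing to be careful about is checking that the hypotheses of Theorem~\ref{thm:subgrpEMD} are met (both factors nontrivial, each finite or with MD), which they are. I do not anticipate a genuine obstacle here—the real mathematical work is already contained in Theorem~\ref{thm:MD}, Corollary~\ref{cor:EMD(T)}, and Theorem~\ref{thm:subgrpEMD}; Theorem~\ref{thm:MDEMD} is essentially a bookkeeping consequence that reformulates the open problem ``does MD imply EMD?'' into the two equivalent open questions of whether EMD is subgroup-hereditary and whether (T) obstructs MD.
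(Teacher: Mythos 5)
Your proof is correct and uses exactly the same ingredients as the paper (MD passes to subgroups, Theorem~\ref{thm:subgrpEMD}, Corollary~\ref{cor:EMD(T)}, and the strong ergodicity of $\bm{p}_\Gamma$ for infinite residually finite property (T) groups); the only difference is cosmetic: you close the cycle via (2)$\Rightarrow$(3) using $\Gamma\ast\Z$, whereas the paper proves (2)$\Rightarrow$(1) directly using $\Gamma\ast\Gamma$ and gets (1)$\Rightarrow$(3) separately. Both arrangements are valid and of essentially equal length.
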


\begin{proof}
(1)$\Ra$(2): Property MD passes to subgroups, so if MD and EMD are equivalent, then EMD passes subgroups. (2)$\Ra$(1): If $\Gamma$ is a countable group with MD then $\Gamma\ast \Gamma$ has EMD, so if EMD passes to subgroups then $\Gamma$ actually has EMD. (1)$\Ra$(3): EMD is incompatible with property (T) since if $\Gamma$ is an infinite residually finite group with property (T) then $\bm{p}_\Gamma$ is strongly ergodic so that $\bm{\iota}\not\prec \bm{p}_\Gamma$. Thus, if MD and EMD are equivalent then MD is also incompatible with property (T). (3)$\Ra$(1): This follows immediately from Corollary \ref{cor:EMD(T)}.
\end{proof}

Note also that Theorem \ref{thm:main} gives the following
\begin{proposition}
MD is incompatible with $((\tau )$ and $\neg (\mbox{\emph{T}}))$. That is, if a group $\Gamma$ has both MD and property $(\tau )$, then $\Gamma$ actually has property $(\mbox{\emph{T}})$.
\end{proposition}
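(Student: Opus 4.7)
The plan is to prove the contrapositive: assuming $\Gamma$ has MD and property $(\tau)$, I will deduce that $\Gamma$ has property (T). The idea is that MD together with ergodicity lets one replace $\bm\iota\times\bm{p}_\Gamma$ by $\bm{p}_\Gamma$ (via Theorem \ref{thm:main}), and $(\tau)$ then prevents any ergodic action from weakly containing $\bm\iota$, which is exactly property (T).

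First I would recall the standard reformulations of the two properties in terms of weak containment. An ergodic measure preserving action $\bm{a}$ of $\Gamma$ is strongly ergodic if and only if $\bm\iota\not\prec\bm{a}$; consequently, $\Gamma$ has property (T) if and only if $\bm\iota\not\prec\bm{a}$ for every ergodic measure preserving $\bm{a}$, and $\Gamma$ has property $(\tau)$ (being residually finite, since MD is assumed) if and only if $\bm\iota\not\prec\bm{p}_\Gamma$, since $\bm{p}_\Gamma$ is itself ergodic.

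Now suppose $\Gamma$ has MD. For an arbitrary ergodic measure preserving action $\bm{a}$ of $\Gamma$, universality gives $\bm{a}\prec \bm\iota\times\bm{p}_\Gamma$. Since $\bm{a}$ is ergodic, Theorem \ref{thm:main} upgrades this to $\bm{a}\prec\bm{p}_\Gamma$. Assuming further that $\Gamma$ has $(\tau)$, so that $\bm\iota\not\prec\bm{p}_\Gamma$, if there were any ergodic $\bm{a}$ with $\bm\iota\prec\bm{a}$, then transitivity of weak containment would yield $\bm\iota\prec\bm{p}_\Gamma$, a contradiction. Hence no ergodic action of $\Gamma$ weakly contains $\bm\iota$, and $\Gamma$ has property (T).

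There is no real obstacle here beyond invoking Theorem \ref{thm:main}: the whole argument amounts to one application of that theorem together with the known characterizations of (T) and $(\tau)$ in the language of weak containment. Indeed, the result is essentially parallel to the proof of Corollary \ref{cor:EMD(T)}, which runs the same ergodic-action argument in the opposite direction to conclude $\mbox{MD}\Leftrightarrow\mbox{EMD}$ in the absence of (T).
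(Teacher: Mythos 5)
Your proof is correct and follows essentially the same route as the paper: the paper deduces from MD that $\bm{p}_\Gamma$ is universal for ergodic actions (via Theorem \ref{thm:MD}, itself an application of Theorem \ref{thm:main}) and then argues that $\neg$(T) produces an ergodic $\bm{a}$ with $\bm{\iota}\prec\bm{a}\prec\bm{p}_\Gamma$, contradicting $(\tau)$ — exactly your chain of implications, stated in contrapositive form. The only cosmetic difference is that you invoke Theorem \ref{thm:main} directly rather than through the statement that MD implies $\mbox{EMD}^*$.
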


\begin{proof}
If $\Gamma$ has MD then by \ref{cor:EMD(T)}, $\bm{p}_\Gamma$ is universal for ergodic actions, so if $\Gamma$ does not have (T) then there exists an ergodic $\bm{a}$ with ${\bm \iota}\prec \bm{a}$. This implies ${\bm \iota}\prec \bm{p}_\Gamma$ so that $\Gamma$ does not have property $(\tau )$.
\end{proof}

\section{Weak equivalence and invariant random subgroups}\label{section4}

\subsection{Invariant random subgroups}\label{subsection:IRS} We let $\mbox{Sub}(\Gamma )$ denote the set of all subgroups of $\Gamma$. This is a compact subset of $2^\Gamma$ with the product topology, and is invariant under the left conjugation action of $\Gamma$, which is continuous, and which we denote by $c$, i.e., $\gamma ^c\cdot H = \gamma H \gamma ^{-1}$. \emph{We will always view $\Gamma$ as acting on $\mbox{\emph{Sub}}(\Gamma )$ by conjugation, though the underlying measure on $\mbox{\emph{Sub}}(\Gamma )$ will vary}.  By an \emph{invariant random subgroup} (IRS) of $\Gamma$ we mean a conjugation-invariant Borel probability measure $\theta$ on $\mbox{Sub}(\Gamma )$. Invariant random subgroups are studied in \cite{AGV11} as a stochastic generalization of normal subgroups. See also \cite{AE11a}, \cite{Bo10} and \cite{Ve11}. We let $\mbox{IRS}(\Gamma )$ denote the space of all invariant random subgroups of $\Gamma$. When $\theta\in \mbox{IRS}(\Gamma )$ we will let $\bm{\theta}$ denote the measure preserving action $\Gamma \cc ^c (\mbox{Sub}(\Gamma ),\theta )$. For a measure preserving action $\bm{a}=\Gamma \cc ^a (X,\mu )$ we let $\mbox{type}(\bm{a})$ denote the \emph{type} of $\bm{a}$, which is defined to be the measure $(\mbox{stab}_a)_*\mu$ on $\mbox{Sub}(\Gamma )$, where $\mbox{stab}_a : X\ra \mbox{Sub}(\Gamma )$ is the stabilizer map $x\mapsto \mbox{stab}_a(x) = \Gamma _x= \{ \gamma \in \Gamma \csuchthat \gamma ^ax =x \} \in \mbox{Sub}(\Gamma )$. It is clear that $\mbox{type}(\bm{a})$ is always an IRS of $\Gamma$. Types are studied in \cite{AE11a} in order to examine freeness properties of measure preserving actions.

\subsection{The compact space of weak equivalence classes}\label{subsection:compspace} Ab\'{e}rt and Elek (\cite{AE11a}) define a compact Polish topology on the set of weak equivalence classes of measure preserving actions of $\Gamma$. We define this topology below and provide a variation of their proof showing that it is a compact Polish topology.

For this subsection we fix a standard probability space $(X,\mu )$ and a compact zero-dimensional Polish space $K$ homeomorphic to Cantor space $2^\N$. We let $\K = \K (M_s(K^\Gamma ))$ denote the space of all nonempty compact subsets of $M_s(K^\Gamma )$, equipped with the Vietoris topology $\tau _V$ which makes $\K$ into a compact Polish space. Since $M_s(K^\Gamma )$ is a compact metric space, convergence in this topology may be described as follows. A sequence $L_n \in \K$, $n\in \N$ converges if and only if the sets
\begin{align*}
\ol{\mbox{Tlim}}_nL_n &= \{ \lambda \in M_s(K^\Gamma )\csuchthat \exists (\lambda _n)\, [\forall n \, \lambda _n \in L_n, \mbox{ and } \lambda _n \ra \lambda ] \} \\
\underline{\mbox{Tlim}}_n L_n
&= \{ \lambda \in M_s(K^\Gamma )\csuchthat \exists (\lambda _n)\, [\forall n \, \lambda _n \in L_n,\mbox{ and for some subsequence }(\lambda _{n_k}), \ \lambda _{n_k}\ra \lambda ] \}
\end{align*}
are equal, in which case their common value is the limit of the sequence $L_n$ (see e.g., \cite[4.F]{Ke95}). 

Let $\Phi: A(\Gamma ,X,\mu )\ra \K$ be the map
\[
\Phi (\bm{a}) = \ol{E(\bm{a},K)} .
\]
By Proposition \ref{prop:AWgen}, $\Phi (\bm{a}) = \Phi (\bm{b})$ if and only if $\bm{a}\sim \bm{b}$. We now have

\begin{theorem}
The image of $\Phi$ in $\K$ is a closed, hence compact subset of $(\K ,\tau _V)$.
\end{theorem}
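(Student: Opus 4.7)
The plan is to take an arbitrary Vietoris-convergent sequence $L_n = \Phi(\bm{a}_n)\ra L$ in $\K$ and to produce a standard measure preserving action $\bm{a}$ with $\Phi(\bm{a}) = L$, using ultraproducts as the main tool. Fix a nonprincipal ultrafilter $\mc{U}$ on $\N$ and form the ultraproduct action $\bm{a}_\mc{U} = \Gamma \cc^{a_\mc{U}}(X_\mc{U},\mu_\mc{U})$ of $(\bm{a}_n)$ from Appendix \ref{app:ultra}, and write $E(\bm{a}_\mc{U},K)$ for the set of pushforwards $(\Phi^{\phi,a_\mc{U}})_*\mu_\mc{U}$ by $\mu_\mc{U}$-measurable $\phi:X_\mc{U}\ra K$. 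First I would verify $E(\bm{a}_\mc{U},K) = L$. The inclusion $E(\bm{a}_\mc{U},K)\subseteq L$ uses Proposition \ref{prop:ultrafunct}.(3) to write any such $\phi$ as the almost-everywhere ultralimit of Borel maps $\phi_n:X\ra K$, so that Proposition \ref{prop:ultrafunct}.(2) gives $(\Phi^{\phi,a_\mc{U}})_*\mu_\mc{U} = \lim_{n\ra\mc{U}}(\Phi^{\phi_n,a_n})_*\mu$, with each term in $E(\bm{a}_n,K)\subseteq L_n$; extracting a $\mc{U}$-large subsequence along which these converge and invoking $\underline{\mbox{Tlim}}_n L_n\subseteq L$ yields membership in $L$. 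For the reverse inclusion, given $\lambda\in L$, use $L\subseteq \ol{\mbox{Tlim}}_n L_n$ to select $\lambda_n\in L_n$ with $\lambda_n\ra\lambda$, approximate each to within $2^{-n}$ by some $(\Phi^{\phi_n,a_n})_*\mu \in E(\bm{a}_n,K)$, and take the ultralimit function $\phi$ of $(\phi_n)$; Proposition \ref{prop:ultrafunct}.(2) then produces $\lambda$ as $(\Phi^{\phi,a_\mc{U}})_*\mu_\mc{U}$.

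With $E(\bm{a}_\mc{U},K)=L$ in hand, I would extract a standard quotient of $\bm{a}_\mc{U}$ that captures all of $L$ up to closure. Choose a countable dense set $\{\lambda_i\}_{i\in\N}\subseteq L$, each represented as $\lambda_i = (\Phi^{\phi_i,a_\mc{U}})_*\mu_\mc{U}$, and combine the $\phi_i$ into a single measurable map $\psi:X_\mc{U}\ra K^\N$ by $\psi(x)(i)=\phi_i(x)$. Since $K\iso 2^\N$, $K^\N$ is itself homeomorphic to $K$, so $\lambda^* = (\Phi^{\psi,a_\mc{U}})_*\mu_\mc{U}$ is a shift-invariant Borel probability measure on the compact Polish space $(K^\N)^\Gamma$. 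Let $\bm{a}$ denote the shift action on $((K^\N)^\Gamma,\lambda^*)$, a measure preserving action on a standard probability space that is a factor of $\bm{a}_\mc{U}$ via $\Phi^{\psi,a_\mc{U}}$. On the one hand, any shift-invariant factor of $\bm{a}$ composes to a shift-invariant factor of $\bm{a}_\mc{U}$, so $E(\bm{a},K)\subseteq E(\bm{a}_\mc{U},K)=L$ and $\ol{E(\bm{a},K)}\subseteq L$. On the other hand, for each $i$ the $i$-th coordinate projection $p_i:K^\N\ra K$ induces a $\Gamma$-equivariant continuous factor map $(K^\N)^\Gamma\ra K^\Gamma$ pushing $\lambda^*$ forward to $\lambda_i$, so $\lambda_i \in E(\bm{a},K)$; since $\{\lambda_i\}$ is dense in $L$, this gives $L\subseteq\ol{E(\bm{a},K)}$ and hence $\Phi(\bm{a}) = \ol{E(\bm{a},K)} = L$.

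The main obstacle I expect is precisely the transition from the non-standard ultrapower $\bm{a}_\mc{U}$ to a genuinely standard quotient still realizing the entire closed set $L$; this is resolved by exploiting separability of $L\subseteq M_s(K^\Gamma)$, so that a single countable family of ultraproduct factors, bundled into one $\psi$, suffices. A minor remaining point is that $\bm{a}$ is initially constructed on $((K^\N)^\Gamma,\lambda^*)$ rather than on $(X,\mu)$; assuming $(X,\mu)$ is standard and non-atomic, one transfers $\bm{a}$ via an isomorphism of measure algebras to a representative of its weak equivalence class in $A(\Gamma,X,\mu)$, which is all that is needed since $\Phi$ depends only on the weak equivalence class.
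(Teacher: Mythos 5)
Your first paragraph is correct and is essentially the paper's own argument: pass to the ultraproduct $\bm{a}_{\mc{U}}$ and verify, via Proposition \ref{prop:ultrafunct}, that $E(\bm{a}_{\mc{U}},K)=L$. Where you diverge is in extracting a standard action: you build an explicit symbolic factor $\bm{a}$ of $\bm{a}_{\mc{U}}$ on $\big((K^{\N})^{\Gamma},\lambda^{*}\big)$ by bundling a countable dense family of $L$ into one map $\psi$, whereas the paper applies Theorem \ref{thm:genmeasalg} to produce an invariant measure subalgebra of $\mbox{MALG}_{\mu_{\mc{U}}}$ that contains the relevant sets \emph{and is isomorphic to} $\mbox{MALG}_{\mu}$, and then takes a point realization on $(X,\mu)$. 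Your verification that $\ol{E(\bm{a},K)}=L$ for the symbolic factor is fine.

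The gap is the final transfer to $A(\Gamma,X,\mu)$. You assert that $\bm{a}$ can be moved ``via an isomorphism of measure algebras'' to a representative of its weak equivalence class on $(X,\mu)$, but nothing in your construction guarantees that $\mbox{MALG}_{\lambda^{*}}\cong\mbox{MALG}_{\mu}$: the measure $\lambda^{*}$ is just the joint distribution of whatever countable family $\{\phi_{i}\}$ you happened to choose, and a priori it could fail to reproduce the measure-space type of $(X,\mu)$ (for instance, its atomic part need not match that of $\mu$, and the section fixes an arbitrary standard $(X,\mu)$, not a non-atomic one). This is not a cosmetic point: by Proposition \ref{prop:weakcontspace} the underlying measure space is an invariant of weak equivalence, so if $\big((K^{\N})^{\Gamma},\lambda^{*}\big)\not\cong(X,\mu)$ then \emph{no} element of $A(\Gamma,X,\mu)$ is weakly equivalent to your $\bm{a}$ and the desired representative simply does not exist. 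One can argue that density of $\{\lambda_{i}\}$ in $L$, together with the fact that each $E(\bm{a}_{n},K)$ contains the image of $\mu$ under a fixed Borel injection $X\ra K$, ultimately forces $\lambda^{*}$ to have the right measure-space type, but that argument is a genuine piece of the proof and is exactly what Theorem \ref{thm:genmeasalg} packages (including the bookkeeping for atoms); your write-up skips it. The cleanest repair within your framework is to adjoin to the family $\{\phi_{i}\}$ one further coordinate witnessing a copy of $\mbox{MALG}_{\mu}$ inside $\mbox{MALG}_{\mu_{\mc{U}}}$ before forming $\psi$, so that $\lambda^{*}$ is guaranteed to give a measure space isomorphic to $(X,\mu)$.
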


\begin{proof}
Let $\bm{a}_0,\bm{a}_1,\bm{a}_2,\dots$ be a sequence in $A(\Gamma ,X,\mu )$ and suppose that $\Phi (\bm{a}_n)$ converges in $(\K ,\tau _V)$ to the compact set $L\in \K$. We will show that there exists $\bm{a}_\infty \in A(\Gamma ,X,\mu )$ such that $\Phi (\bm{a}_\infty ) = L$.  Since $E(\bm{a}_n,K)$ is dense in $\Phi (\bm{a}_n)$ we may write $L$ as
\begin{align*}
L &= \{ \lambda \in M_s(K^\Gamma )\csuchthat \exists (\lambda _n)\, [\forall n \, \lambda _n \in E(\bm{a}_n,K),\mbox{ and }\lambda _n \ra \lambda ] \} \\
  &= \{ \lambda \in M_s(K^\Gamma )\csuchthat \exists (\lambda _n)\, [\forall n \, \lambda _n \in E(\bm{a}_n,K),\mbox{ and for some subsequence }(\lambda _{n_k}), \ \lambda _{n_k}\ra \lambda ] \} .
\end{align*}
Fix a nonprincipal ultrafilter $\mc{U}$ on $\N$, let $(X_\mc{U} , \mu _{\mc{U}})$ be the ultrapower of the measure space $(X,\mu )$, and let $\bm{a}_\mc{U} =\Gamma \cc ^{a_{\mc{U}}}(X_\mc{U}, \mu _{\mc{U}})$ denote the ultraproduct $\prod _n \bm{a}_n/\mc{U}$ of the sequence $\{ \bm{a}_n\} _{n\in \N}$. 
\begin{claim}
$L = E(\bm{a}_{\mc{U}} ,K)$.
\end{claim}

\begin{proof}[Proof of Claim]
Let $\lambda \in L$ and let $\lambda _n \in E(\bm{a}_n, K)$, $n\in \N$, with $\lambda _n \ra \lambda$. For each $n$ there exists $\phi _n :X\ra K$ such that $\lambda _n = (\Phi ^{\phi _n ,a_n})_*\mu$. Let $\phi : X_{\mc{U}}\ra K$ be the ultralimit of the functions $\phi _n$. By Proposition \ref{prop:ultrafunct}.(2) $(\Phi ^{\phi ,a_{\mc{U}}})_*\mu _{\mc{U}} = \lim _{n\ra \mc{U}} (\Phi ^{\phi _n, a_n})_*\mu = \lim _{n\ra \mc{U}}\lambda _n = \lambda$. This shows $\lambda \in E(\bm{a}_{\mc{U}},K)$, and thus $L\subseteq E(\bm{a}_{\mc{U}}, K)$.
%

Conversely, let $\lambda \in E(\bm{a}_\mc{U},K)$, say $\lambda = (\Phi ^{\psi , a_\mc{U}})_*\mu _{\mc{U}}$ for some $\bm{B}_{\mc{U}}$-measurable $\psi : X_{\mc{U}}\ra K$. By Proposition \ref{prop:ultrafunct}.(3)
we may find a sequence $\phi _n:X\ra K$, $n\in \N$, of Borel functions such that, letting $\phi$ denote the ultralimit of the $\phi _n$, $\mu _\mc{U}$-almost everywhere $\psi ([x_n]) = \phi ([x_n])$.  Let $\Phi _n = \Phi ^{\phi _n ,a_n}$, let $\Phi = \Phi ^{\phi ,a_{\mc{U}}}$, and let $\lambda _n = (\Phi _n)_*\mu \in E(\bm{a}_n ,K)$. Then $\Phi ^{\psi , a_{\mc{U}}} ([x_n]) = \Phi ([x_n])$ almost everywhere, so by Proposition \ref{prop:ultrafunct}.(2) we have $\lambda = (\Phi ^{\psi , a_{\mc{U}}})_* \mu _{\mc{U}} = \Phi _* \mu _{\mc{U}} = \lim _{n\ra \mc{U}} \lambda _n$ so there exists a subsequence $n_0<n_1<\cdots$ such that $\lambda _{n_k}\ra \lambda$. Hence $\lambda \in L$ and so $E(\bm{a}_{\mc{U}},K)\subseteq L$.
\qedhere[Claim]
\end{proof}

Let $D \subseteq L$ be a countable dense subset of $L = E(\bm{a}_\mc{U},K)$. For each $\lambda \in D$ we choose some $\bm{B} _\mc{U}$-measurable $\phi _\lambda : X_\mc{U} \ra K$ with $(\Phi ^{\phi _\lambda ,a_{\mc{U}}}) _* \mu _{\mc{U}} = \lambda$, and we also choose a sequence $\phi _{\lambda ,m}: X_\mc{U} \ra K$, $m\in \N$, of functions converging in measure to $\phi _\lambda$, such that each $\phi _{\lambda ,m}$ is constant on some $\bm{B} _\mc{U}$-measurable finite partition $\mc{P}^{(\lambda ,m)}$ of $X_\mc{U}$. By Theorem \ref{thm:genmeasalg} there exists a countably generated standard factor $\bm{M}$ of $\mbox{MALG}_{\mu _\mc{U}}$ containing $\bigcup _{\lambda \in D }\bigcup _{m\in \N} \mc{P}^{(\lambda ,m)}$ that is isomorphic to $\mbox{MALG}_{\mu}$. Let $\bm{a}_\infty$ be an action on $(X,\mu )$ corresponding to a point realization of the action of $\Gamma$ on $\bm{M}$ by measure algebra automorphisms. It is clear that $E(\bm{a}_\infty ,K ) \subseteq E(\bm{a}_\mc{U} ,K) = L$. We show that $D\subseteq \ol{E(\bm{a}_\infty ,K)}$. Given $\lambda \in D$, each of the functions $\phi _{\lambda ,m}$ is $\bm{M}$-measurable, so $(\Phi ^{\phi _{\lambda ,m},a_\mc{U}})_*\mu _{\mc{U}} \in E(\bm{a}_\infty ,K)$ for all $m$. Since $\phi _{\lambda ,m}\ra \phi _\lambda$ in measure it follows that $(\Phi ^{\phi _{\lambda ,m},a_\mc{U}})_*\mu_{\mc{U}} \ra \lambda$, and thus $\lambda \in \ol{E(\bm{a}_\infty ,K)}$. Thus $L = \ol{E(\bm{a} _\infty ,K)}$.
\end{proof}

For $\bm{a} \in A(\Gamma ,X,\mu )$ let $[\bm{a}]\subseteq A(\Gamma ,X,\mu )$ denote the weak equivalence class of $\bm{a}$ in $A(\Gamma ,X,\mu )$. Let $A_\sim (\Gamma ,X,\mu ) = \{ [\bm{a}]\csuchthat \bm{a}\in A(\Gamma ,X,\mu ) \}$ be the set of all weak equivalence classes of elements of $A(\Gamma ,X,\mu )$, and let $\tau$ denote the topology on $A_\sim (\Gamma ,X,\mu )$ obtained by identifying $A_\sim (\Gamma ,X,\mu )$ with a closed subset of $(\K ,\tau _V)$ via $\Phi$. This makes $A_\sim (\Gamma ,X,\mu )$ into a compact metrizable space.

\begin{theorem}\label{thm:contin}$\ $
\begin{enumerate}
\item[(1)] \cite{AE11a} The type, $\mbox{\emph{type}}(\bm{a})$, of a measure preserving action is an invariant of weak equivalence.
\item[(2)] The map $[\bm{a}]\mapsto \mbox{\emph{type}}(\bm{a})$ is a continuous map from the space $(A_\sim (\Gamma ,X,\mu ), \tau )$ of weak equivalence classes of measure preserving actions of $\Gamma$ to the space $\mbox{\emph{IRS}}(\Gamma )$ of invariant random subgroups of $\Gamma$ equipped with the weak${}^*$-topology.
\end{enumerate}
\end{theorem}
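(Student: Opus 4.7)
The plan is to prove both parts from a single formula expressing, for each finite $F \subseteq \Gamma$, the measure $\mu\bigl(\bigcap_{\gamma \in F} \mbox{Fix}(\gamma^a)\bigr)$ as the infimum of a continuous functional over $\ol{E(\bm{a}, k_0)}$ for a $k_0$ depending only on $|F|$. Concretely, I would establish
\[
\mu\Bigl(\bigcap_{\gamma \in F} \mbox{Fix}(\gamma^a)\Bigr) = \min_{\mc{P}} \sum_{A \in \mc{P}} \mu\Bigl(A \cap \bigcap_{\gamma \in F} \gamma^a A\Bigr),
\]
where $\mc{P}$ ranges over finite Borel partitions of $X$, and show that the minimum is realized by some $\mc{P}$ with at most $4^{|F|}$ pieces. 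The $\leq$ inequality is immediate, since $x \in A \cap \bigcap_{\gamma \in F} \mbox{Fix}(\gamma^a)$ gives $\gamma^a x = x \in A$ for all $\gamma \in F$, hence $x \in A \cap \bigcap_\gamma \gamma^a A$. For the reverse I would invoke the classical Borel partitioning fact that whenever $T: X \to X$ is a Borel automorphism with no fixed points on a Borel set $E \subseteq X$, there is a Borel partition $E = C_1 \sqcup C_2 \sqcup C_3$ with $T(C_i) \cap C_i = \emptyset$; applying this to each $\gamma^a$ on $X \setminus \mbox{Fix}(\gamma^a)$ yields a $4$-piece partition $\{C_1^\gamma, C_2^\gamma, C_3^\gamma, \mbox{Fix}(\gamma^a)\}$ of $X$, and the common refinement over $\gamma \in F$ is a partition with at most $4^{|F|}$ pieces that realizes the minimum exactly.

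Each partition $\mc{P} = \{A_1, \dots, A_k\}$ gives an associated $\lambda_{\mc{P}} \in E(\bm{a}, k)$ for which $\sum_i \mu(A_i \cap \bigcap_\gamma \gamma^a A_i) = h_{F \cup \{e\}, k}(\lambda_{\mc{P}})$, where
\[
h_{F', k}(\lambda) := \sum_{i=0}^{k-1} \lambda\bigl(\{f \in k^\Gamma : f|_{F'} \equiv i\}\bigr)
\]
is continuous on $M_s(k^\Gamma)$. Setting $k_0 = 4^{|F|}$, the formula therefore rewrites as $\mu(\bigcap_\gamma \mbox{Fix}(\gamma^a)) = \inf_{\lambda \in \ol{E(\bm{a}, k_0)}} h_{F \cup \{e\}, k_0}(\lambda)$. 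Part (1) is now immediate: if $\bm{a} \sim \bm{b}$, Proposition \ref{prop:AWgen} gives $\ol{E(\bm{a}, k_0)} = \ol{E(\bm{b}, k_0)}$, so the infimums coincide; since the cylinders $\{H \in \mbox{Sub}(\Gamma) : F \subseteq H\}$ generate the clopen algebra of $\mbox{Sub}(\Gamma)$, inclusion-exclusion (M\"{o}bius inversion) yields $\mbox{type}(\bm{a}) = \mbox{type}(\bm{b})$.

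For part (2), suppose $\Phi(\bm{a}_n) = \ol{E(\bm{a}_n, K)} \to \ol{E(\bm{a}, K)} = \Phi(\bm{a})$ in the Vietoris topology on $\K(M_s(K^\Gamma))$, where $K \cong 2^\N$. Fixing a clopen partition $\{U_1, \dots, U_{k_0}\}$ of $K$ and a section $k_0 \hookrightarrow K$, I obtain a continuous quantization $\pi : K \to k_0$ and a continuous pushforward $\pi^\Gamma_* : M_s(K^\Gamma) \to M_s(k_0^\Gamma)$. A routine computation gives $\pi^\Gamma_*(\ol{E(\bm{a}, K)}) = \ol{E(\bm{a}, k_0)}$, and since continuous maps preserve Vietoris convergence, one gets $\ol{E(\bm{a}_n, k_0)} \to \ol{E(\bm{a}, k_0)}$ in $\K(M_s(k_0^\Gamma))$. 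Continuity of $h_{F \cup \{e\}, k_0}$ together with the standard fact that the infimum of a continuous function over a Vietoris-convergent sequence of compacts converges yields $\mu_n(\bigcap_\gamma \mbox{Fix}(\gamma^{a_n})) \to \mu(\bigcap_\gamma \mbox{Fix}(\gamma^a))$ for each finite $F$. M\"{o}bius inversion across the clopen cylinders of $\mbox{Sub}(\Gamma)$ then upgrades this to weak${}^*$ convergence $\mbox{type}(\bm{a}_n) \to \mbox{type}(\bm{a})$ in $\mbox{IRS}(\Gamma)$.

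The main obstacle is establishing the exact identity above with the infimum attained by a partition whose size $k_0$ is independent of $\bm{a}$; without such uniform control one only obtains upper semi-continuity of $[\bm{a}] \mapsto \mu(\bigcap_\gamma \mbox{Fix}(\gamma^a))$ in $\tau$, which does not by itself imply weak${}^*$ convergence of types. The classical three-piece Borel partitioning lemma supplies exactly the required bound. A minor supporting technicality is the transfer of Vietoris convergence from $K$-valued data to $k_0$-valued data through the quantization map $\pi^\Gamma_*$, which allows the passage from the Cantor-space definition of $\tau$ to the finite-partition formula, but this step is essentially formal.
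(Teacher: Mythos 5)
Your argument is correct, but it takes a genuinely different route from the paper's. The paper works with the complementary quantities $\mu(A_F)$ where $A_F=\bigcap_{\gamma\in F}\mbox{supp}(\gamma^a)$, and proves $\mu(A_F^n)\to\mu(A_F)$ by two separate one-sided estimates: a $\liminf$ bound (Lemma \ref{lem:monot}) obtained by writing $A_F$ as a \emph{countable} disjoint union of sets moved off themselves, truncating, and transferring finitely many pieces through weak containment with an explicit cleaning-up step, and a $\limsup$ bound (Lemma \ref{lem:sup}) via a portmanteau-style argument comparing a closed set $B_F$ with an open set $U_F$ in $K^\Gamma$. You instead establish the single exact identity $\mu(\bigcap_{\gamma\in F}\Fix(\gamma^a))=\min_{\lambda\in\ol{E(\bm{a},k_0)}}h_{F\cup\{e\},k_0}(\lambda)$ with $k_0=4^{|F|}$ \emph{uniform} in $\bm{a}$, the uniformity coming from the Kechris--Solecki--Todorcevic three-coloring lemma for fixed-point-free Borel automorphisms; both semicontinuity directions then fall out of the standard fact that the minimum of a continuous function over Vietoris-convergent compacta converges, and the reduction from $K\cong 2^\N$ to the finite alphabet $k_0$ via the quantization $\pi^\Gamma_*$ is, as you say, formal (one only needs $\pi^\Gamma_*(\ol{E(\bm{a},K)})=\ol{E(\bm{a},k_0)}$, which follows from continuity and the section $k_0\hookrightarrow K$). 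What your approach buys is a clean variational characterization of $\theta(\{H: F\subseteq H\})$ as a continuous functional of the invariant $\ol{E(\bm{a},k_0)}$, which makes both parts (1) and (2) immediate and dispenses with the paper's two asymmetric lemmas; the cost is the reliance on the three-sets lemma to bound the partition size, which is exactly the point you correctly identify as essential (without it one only gets upper semicontinuity). The passage from the cylinders $\{H: F\subseteq H\}$ to general clopen sets $N_{F,G}$ by inclusion-exclusion is the same closing move the paper makes.
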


\begin{proof}
Let $\bm{b}_n \in A(\Gamma ,X,\mu )$, $n\in \N$, and suppose that $[\bm{b}_n]\ra [\bm{b}]$ in $\tau$, i.e., $\ol{E(\bm{b}_n,K)}\ra \ol{E(\bm{b},K)}$ in $\tau _V$. In light of Proposition \ref{prop:weakcontspace}, both (1) and (2) will follow once we show that $\mbox{type}(\bm{a}_n)\ra \mbox{type}(\bm{a})$ for all $\bm{a}_n \in [\bm{b}_n]$ and $\bm{a}\in [\bm{b}]$.  Let $\theta _n = \mbox{type}(\bm{a}_n )$ and let $\theta =\mbox{type}(\bm{a} )$. Let $F,G\subseteq \Gamma$ be finite. We define $N_F = \{ H\in \mbox{Sub}(\Gamma )\csuchthat F\cap H =\emptyset \}$, $N_{F,G} = \{ H \in\mbox{Sub}(\Gamma )\csuchthat F\cap H=\emptyset \mbox{ and } G\subseteq H \}$ and
\begin{align*}
A^n_F = \bigcap _{\gamma \in F} \mbox{supp}(\gamma ^{a_n})  \ \ \ &A^n_{F,G} =\bigcap _{\gamma \in F} \mbox{supp}(\gamma ^{a_n}) \cap \bigcap _{\gamma \in G} X\setminus \mbox{supp}(\gamma ^{a_n}) \\
A_F = \bigcap _{\gamma \in F} \mbox{supp}(\gamma ^{a})  \ \ \ &A_{F,G} =\bigcap _{\gamma \in F} \mbox{supp}(\gamma ^{a}) \cap \bigcap _{\gamma \in G} X\setminus \mbox{supp}(\gamma ^{a}) .
\end{align*}
Then $\theta _n (N_F)=\mu (A^n_F)$, $\theta _n(N_{F,G}) = \mu (A^n_{F,G})$, $\theta (N_F)= \mu (A_F)$, and $\theta (N_{F,G}) = \mu (A_{F,G})$. We will be done once we show that $\mu (A^n_{F,G}) \ra \mu (A_{F,G})$ for all finite $F,G\subseteq \Gamma$.

We first show that $\mu (A^n_{F}) \ra \mu (A_{F})$ for all finite $F\subseteq \Gamma$. 

\begin{lemma}\label{lem:monot}
$\mu (A_F)\leq \liminf _n \mu (A^n_F)$ for all finite $F\subseteq \Gamma$. 
\end{lemma}

\begin{proof}
We may write $A_F$ as a countable disjoint union $A_F=\bigsqcup _{m\geq 0} A_m$ where $\mu (\gamma ^aA_m \cap A_m )= 0$ for all $\gamma \in F$ and $m\in \N$. Then for any $\epsilon >0$ we can find $M$ so large that $\sum _{m\geq M}\mu (A_m) <\tfrac{\epsilon }{2|F|}$. Since $[\bm{a_n}]\ra [\bm{a}]$ in $\tau$ we have that $E(\bm{a} ,K) \subseteq \mbox{TLim}_n \ol{E(\bm{a}, K)}$ so by Proposition \ref{prop:AWgen} $\bm{a}\prec \{ \bm{a}_n\csuchthat n\in I\}$ for any infinite $I\subseteq \N$. Thus there exists $N$ such that for each $n>N$ we can find $A_0^n,\dots , A^n_{M-1}$ such that for all $\gamma \in F\cup \{ e \}$ and $i,j<M$ we have
\[
|\mu (\gamma ^a A_i \cap A_j) - \mu (\gamma ^{a_n} A_i^n\cap A_j^n ) |< \frac{\epsilon}{2M^2|F|} .
\]
Then, fixing $n$ with $n>N$, in particular we have $\mu (\gamma ^{a_n} A^n_i \cap A^n_i)<\tfrac{\epsilon}{2M ^2|F|}$ and $|\mu (A_i )-\mu (A^n_i) |<\frac{\epsilon}{2M^2|F|}$ for all $\gamma \in F$ and $i<M$, and $\mu (A^n_i\cap A^n_j) < \tfrac{\epsilon}{2M^2|F|}$ for all $i,j<M$, $i\neq j$. Define for $i<M$ the sets
\[
{B^n_i} = A^n_i \setminus \big( \bigcup _{\gamma \in F} \gamma ^{a_n} A^n_i \cup \bigcup _{j\neq i}A^n_j \big) .
\]
Then for $\gamma \in F$, $\gamma ^{a_n} B^n_i{} \cap {B^n_i} =\emptyset$ and for $i\neq j$, $B^n_i{}\cap B^n_j{} = \emptyset$. Thus $\bigsqcup B^n_i{} \subseteq A^n_F$. Since $\mu (B^n_i{})\geq \mu (A^n_i) - ((M - 1)+|F|)\tfrac{\epsilon}{2M^2|F|} > \mu (A_i) -\tfrac{\epsilon}{2M}$ it follows that $\mu (A^n_F) \geq \sum _{i<M} \mu (B^n_i{}) > (\sum _{i<M}\mu (A_i)) - \tfrac{\epsilon}{2} > \mu (A_F) - \epsilon$. Since this holds for all $n>N$ and since $\epsilon >0$ was arbitrary we are done. \qedhere[Lemma]
\end{proof}

\begin{lemma}\label{lem:sup}
$\limsup _n \mu (A^n_F) \leq \mu (A_F)$ for all finite $F\subseteq \Gamma$.
\end{lemma}

\begin{proof}
We may write each $A^{n}_{F}$ as a countable disjoint union $A^{n}_{F} = \bigsqcup _{m=0}^\infty A^{n}_m$ where for all $n,m\in \N$, $\gamma ^{a_n}\cdot A^{n}_m \cap A^{n}_m  = \emptyset$.  We also define $A^n_{-1} = X\setminus A^{n}_F$. Let $B_{-1},B_0,B_1,B_2,\dots$ be a sequence of disjoint nonempty clopen subsets of $K$, let $k_m \in B_m$, and define $\phi _n :X\ra K$ by $\phi _n (x) = k_m$ for $x\in A^{n}_m$. The set
\begin{align*}
B_{F} &= \{ f\in K^\Gamma \csuchthat (\forall m\geq -1) \, [f(e) \in B_m \Ra (\forall \gamma \in F ) (f(\gamma )\not\in B_m )] \} \\
			&= K^\Gamma\setminus \bigcup _{m\geq -1 } (\pi _e ^{-1}(B_m) \cap \bigcup _{\gamma \in F}\pi _\gamma ^{-1}(B_m))
\end{align*}
is closed and contained in the open set $U_F = \{ f\csuchthat \forall \gamma \in F \ f(\gamma ) \neq f(e) \}$. Fixing $n$, for each $m\geq 0$ we have that
\[
(\Phi ^{\phi _n,a_n})^{-1}(\pi _e ^{-1}(B_m) \cap \bigcup _{\gamma \in F}\pi _\gamma ^{-1}(B_m))=A^{n}_m \cap \bigcup _{\gamma \in F} \gamma ^{a_n}A^{n}_F =\emptyset ,
\]
while for $m=-1$ we have that $(\Phi ^{\phi _n , a_n})^{-1}(\pi _e ^{-1}(B_{-1}) \cap \bigcup _{\gamma \in F}\pi _\gamma ^{-1}(B_{-1}))= A^{n}_{-1}$ since $A^{n}_{-1}\subseteq \bigcup _{\gamma \in F} \gamma ^{a_n}A^{n}_{-1}$. It follows that $(\Phi ^{\phi _n, a_n})^{-1}(B_F) = A_F^{n}$. Let $\lambda _n = (\Phi ^{\phi _n , a_n})_*\mu \in E(\bm{a}_n,K)$. Take any convergent subsequence $\{\lambda _{n_k} \}$, and let $\lambda = \lim _k \lambda _{n_k}$. Since $\ol{E(\bm{a}_n,K)}\ra \ol{E(\bm{a},K)}$ we have that $\lambda \in \ol{E(\bm{a},K)}$, so let $\rho _n =(\Phi ^{\psi _n, a})_*\mu \in E(\bm{a},K)$ be such that $\rho _n \ra \lambda$. We now have
\begin{align*}
\textstyle{\limsup _k}\mu (A_F^{n_k})&=\textstyle{\limsup _k} \lambda _{n_k}(B_F) \leq \lambda (B_F) \leq \lambda ( U_F ) \\
&\leq \textstyle{\liminf _n}\rho _n(U_F)= \textstyle{\liminf _n}\mu ( \{ x\csuchthat \forall \gamma \in F \ \psi _n((\gamma ^{-1})^a x)\neq \psi _n(x) \} ) \leq \mu (A_F) .
\end{align*}
Since the convergent subsequence $ (\lambda _{n_k} )$ was arbitrary we conclude that $\limsup_n  \mu (A_F^{n})\leq \mu (A_F)$.
\end{proof}

It follows from the above two lemmas that $\mu (A_F)=\lim_n\mu (A_F^{n})$ for all finite $F\subseteq \Gamma$. Now let $F,G\subseteq \Gamma$ be finite and note that $A_{F}^{n} = A_{F , G}^{n} \sqcup \bigcup _{\gamma \in G}A_{F \cup \{ \gamma \} } ^{n}$ and $A_{F} = A_{F, G} \sqcup \bigcup _{\gamma \in G}A_{F \cup \{ \gamma \} }$. We have just shown that $\mu (A_{F})=\lim _n \mu (A_{F}^n)$. By the inclusion-exclusion principle we have $\mu (\bigcup _{\gamma \in G}A_{F \cup \{ \gamma \} }^{n}) = \sum_{k=1}^{|G|}(-1)^{k-1}\sum_{\{ J\subseteq G :|J|=k \}} \mu (A_{F\cup J}^{n})$, and since $\mu (A_{F\cup J}^{n})\ra \mu (A_{F\cup J})$ for each $J\subseteq G$ it follows after another application of inclusion-exclusion that $\mu (\bigcup _{\gamma \in G}A_{F \cup \{ \gamma \} }^{n} ) \ra \mu (\bigcup _{\gamma \in G}A_{F \cup \{ \gamma \} } )$. Thus $\mu (A_{F,G}^{n}) \ra \mu (A_{F,G})$.
\end{proof}

\begin{corollary}[\cite{AE11a}] For each $\theta \in \mbox{\emph{IRS}}(\Gamma )$, $\{ [\bm{a}] \csuchthat \mbox{\emph{type}}(\bm{a}) = \theta \} \subseteq A_\sim (\Gamma ,X, \mu )$ is compact in $\tau$. In particular $\{ [\bm{a}] \csuchthat [\bm{a}] \mbox{ is free} \}$ is compact in $\tau$.
\end{corollary}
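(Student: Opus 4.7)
The plan is to deduce this corollary immediately from the two parts of Theorem \ref{thm:contin} together with the compactness of $(A_\sim(\Gamma, X, \mu), \tau)$ that was just established. Since the type map $t: (A_\sim(\Gamma, X, \mu), \tau) \to \mathrm{IRS}(\Gamma)$, $[\bm{a}] \mapsto \mathrm{type}(\bm{a})$, is well-defined (by part (1)) and continuous (by part (2)), and since $\mathrm{IRS}(\Gamma)$ with the weak${}^*$-topology is Hausdorff, the singleton $\{\theta\}$ is closed in $\mathrm{IRS}(\Gamma)$. Therefore $t^{-1}(\{\theta\}) = \{[\bm{a}] \csuchthat \mathrm{type}(\bm{a}) = \theta\}$ is closed in $A_\sim(\Gamma, X, \mu)$.

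Next, I invoke the fact that $A_\sim(\Gamma, X, \mu)$ is compact (as a closed subset of the compact Polish space $(\K, \tau_V)$ under the embedding $\Phi$). A closed subset of a compact space is compact, so the set in question is compact in $\tau$, which establishes the first claim.

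For the ``in particular'' statement, I would observe that a measure preserving action $\bm{a} = \Gamma \cc^a (X,\mu)$ is (essentially) free if and only if $\mu(\{x \csuchthat \mathrm{stab}_a(x) = \langle e \rangle\}) = 1$, which is equivalent to $\mathrm{type}(\bm{a}) = (\mathrm{stab}_a)_*\mu = \updelta_{\langle e \rangle}$, the point mass on the trivial subgroup. By part (1) of Theorem \ref{thm:contin}, freeness is therefore a weak-equivalence invariant, and the set of free weak equivalence classes coincides with $t^{-1}(\{\updelta_{\langle e \rangle}\})$. This is the special case $\theta = \updelta_{\langle e \rangle}$ of the first statement, so it is compact.

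There is no real obstacle here: everything has been done in Theorem \ref{thm:contin} and the compactness of $A_\sim(\Gamma, X, \mu)$. The only minor point worth making explicit is that ``free'' in the sense used in the paper is synonymous with ``type equal to $\updelta_{\langle e \rangle}$,'' which is immediate from the definition of type.
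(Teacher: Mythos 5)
Your proposal is correct and is exactly the argument the paper intends: the corollary is stated without proof precisely because it follows immediately from the continuity of the type map (Theorem \ref{thm:contin}), the compactness of $(A_\sim(\Gamma,X,\mu),\tau)$ established just before, and the closedness of singletons in the Hausdorff space $\mbox{IRS}(\Gamma)$. Your identification of the free classes with the fiber over $\updelta_{\langle e\rangle}$ is likewise the intended reading of the ``in particular'' clause.
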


%
%
%
\begin{remark}
The technique used in the proof of Theorem \ref{thm:contin} can be used to show that combinatorial invariants of measure preserving actions such as independence number (see \cite{CK10} and \cite{CKT-D11}) are continuous functions on $(A_\sim (\Gamma ,X,\mu ),\tau )$.
\end{remark}

\begin{theorem}Let $\Gamma$ be a countable group.
\begin{enumerate}
\item The map $( A(\Gamma ,X,\mu ) ,w ) \ra (A_{\sim}(\Gamma ,X,\mu ) , \tau )$, $\bm{a}\mapsto [\bm{a}]$, is Baire class 1. In particular, for each $\theta \in \mbox{\emph{IRS}}(\Gamma )$ the space $\{ \bm{a} \in A(\Gamma ,X,\mu )\csuchthat \mbox{\emph{type}}(\bm{a} ) =\theta \}$ is a $G_\delta$ hence Polish subspace of $(A(\Gamma ,X,\mu ),w)$.
\item The topology $\tau$ is a refinement of the quotient topology on $A_\sim (\Gamma ,X,\mu )$ induced by $w$. If $(X,\mu )$ is not a discrete space and $\Gamma \neq \{ e \}$ then the $\tau$ topology is strictly finer than the quotient topology.
\end{enumerate}
\end{theorem}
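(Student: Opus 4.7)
For part (1), my plan is to realize $\Phi(\bm{a})=\overline{E(\bm{a},K)}$ as a pointwise limit of continuous $\K$-valued maps. Fix, by Remark~\ref{rem:RVspace}, a countable sequence $\phi_1,\phi_2,\dots$ of finite-range Borel maps $X\to K$ that is dense in $L(X,\mu,K)$, and put $f_n(\bm{a})=(\Phi^{\phi_n,a})_*\mu\in M_s(K^\Gamma)$. Since $\phi_n$ takes only finitely many values, the measure of each cylinder set under $f_n(\bm{a})$ is a polynomial in the numbers $\mu(\gamma^aA_i\cap A_j)$, so $f_n\colon(A(\Gamma,X,\mu),w)\to M_s(K^\Gamma)$ is continuous. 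Consequently $\Phi_N(\bm{a}):=\{f_1(\bm{a}),\dots,f_N(\bm{a})\}$ is continuous into the Vietoris hyperspace $\K=\K(M_s(K^\Gamma))$. Density of $\{\phi_n\}$ in $L(X,\mu,K)$ together with Lemma~\ref{lem:convermeas} makes $\{f_n(\bm{a}):n\in\N\}$ dense in $E(\bm{a},K)$, and compactness of $\Phi(\bm{a})$ then upgrades pointwise density to Hausdorff convergence $\Phi_N(\bm{a})\to\Phi(\bm{a})$. A pointwise limit of continuous maps into a metric space is Baire class~$1$, so $\Phi$ is Baire class~$1$. The ``in particular'' statement follows by composing with the continuous type map $(A_\sim,\tau)\to\mathrm{IRS}(\Gamma)$ of Theorem~\ref{thm:contin}: the composite $\mathrm{type}\colon(A(\Gamma,X,\mu),w)\to\mathrm{IRS}(\Gamma)$ is Baire class~$1$, so the preimage of the closed singleton $\{\theta\}$ is $G_\delta$, hence Polish.

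For the first assertion of (2), let $U\subseteq A_\sim$ with $q^{-1}(U)$ being $w$-open and suppose $[\bm{a}_n]\to[\bm{a}]$ in $\tau$ with $[\bm{a}]\in U$; my goal is $[\bm{a}_n]\in U$ eventually. The lower-limit half of Vietoris convergence gives $E(\bm{a},K)\subseteq\overline{\bigcup_nE(\bm{a}_n,K)}$, hence $\bm{a}\prec\{\bm{a}_n:n\in\N\}$ by Proposition~\ref{prop:AWgen}. Proposition~\ref{lem:10.1} then produces $\bm{c}_k\in A(\Gamma,X,\mu)$ with $\bm{c}_k\cong\bm{a}_{n_k}$ and $\bm{c}_k\to\bm{a}$ in $w$. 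Since $q^{-1}(U)$ is $w$-open and contains $\bm{a}$, eventually $\bm{c}_k\in q^{-1}(U)$, so $[\bm{a}_{n_k}]=[\bm{c}_k]\in U$ eventually. Running this argument on every subsequence of $(\bm{a}_n)$ and applying the standard subsequence principle (if no tail of $[\bm{a}_n]$ lies in $U$, extract the sub-subsequence outside $U$ and derive a contradiction) yields $[\bm{a}_n]\in U$ for all large $n$. Since $\tau$ is metrizable, sequential openness suffices, so $U\in\tau$.

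For strictness under the hypothesis that $(X,\mu)$ is non-discrete and $\Gamma\neq\{e\}$, I would exhibit a $w$-convergent sequence whose $\tau$-image does not converge. Using the standard density of free actions in $A(\Gamma,X,\mu)$ on a non-atomic space, choose free $\bm{a}_n\to\bm{\iota}$ in $w$. Then $\mathrm{type}(\bm{a}_n)=\updelta_{\langle e\rangle}$ is constant, while $\mathrm{type}(\bm{\iota})=\updelta_\Gamma$, and these differ because $\Gamma\neq\{e\}$. If $[\bm{a}_n]\to[\bm{\iota}]$ in $\tau$ held, Theorem~\ref{thm:contin} would force $\updelta_{\langle e\rangle}\to\updelta_\Gamma$ in $\mathrm{IRS}(\Gamma)$, a contradiction. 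But $\bm{a}_n\to\bm{\iota}$ in $w$ does imply $[\bm{a}_n]\to[\bm{\iota}]$ in the quotient topology by definition, so the two topologies separate and $\tau\supsetneq$ quotient. The main delicate point is the density of free actions on a non-atomic probability space for arbitrary countable $\Gamma\neq\{e\}$: for infinite $\Gamma$ this is classical via Bernoulli-shift perturbations, and for finite nontrivial $\Gamma$ it follows from a direct partition argument on $(X,\mu)$. Any other weak-equivalence invariant that is continuous on $(A_\sim,\tau)$ but discontinuous in $w$ would give an alternative route.
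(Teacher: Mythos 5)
Parts (1) and the first assertion of (2) are correct, and part (1) follows a genuinely different route from the paper's. The paper verifies directly that the $\Phi$-preimage of each sub-basic Vietoris-open set of the form $\{ L\csuchthat L\cap U\neq \emptyset \}$ is open and of each $\{ L \csuchthat L\subseteq U \}$ is $F_\sigma$; you instead exhibit $\Phi$ as a pointwise limit of the continuous finite-set-valued maps $\Phi _N$, which is arguably cleaner and yields Baire class $1$ in the stronger sense of a pointwise limit of continuous maps (from which the $F_\sigma$/$G_\delta$ consequences follow). One justification needs repair: the mass that $f_n(\bm{a})$ assigns to a cylinder over a finite set $F$ is $\mu \big( \bigcap _{\gamma \in F}\gamma ^a\phi _n^{-1}(V_\gamma )\big)$, which for $|F|>2$ is not a function of the pairwise quantities $\mu (\gamma ^aA_i\cap A_j)$; continuity of $f_n$ follows instead directly from the definition of the weak topology (each $\bm{a}\mapsto \gamma ^aA$ is continuous into $\mbox{MALG}_\mu$, and finite intersections and $\mu$ are continuous there). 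Your sequential proof that quotient-open sets are $\tau$-open is the exact dual of the paper's proof that weakly closed $\sim$-invariant sets have $\tau$-closed images, and the subsequence bookkeeping is handled correctly; both versions lean on Proposition \ref{lem:10.1} in the same way.

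The strictness argument has a genuine gap. The hypothesis is only that $(X,\mu )$ is not discrete, so $\mu$ may have atoms alongside a non-atomic part, whereas your construction tacitly upgrades this to non-atomicity. If $\Gamma$ is infinite and $\mu$ has an atom, then $A(\Gamma ,X,\mu )$ contains no free actions at all (a free orbit of an atom would consist of infinitely many atoms of equal positive mass), so there is no sequence of free actions converging to $\bm{\iota}_\mu$; for finite nontrivial $\Gamma$ the same problem occurs unless the atoms happen to admit a partition into free $\Gamma$-orbits of equal masses. A smaller related point: even on a non-atomic space the free actions accumulating at $\bm{\iota}$ must be chosen with care, since a strongly ergodic free action does not weakly contain $\bm{\iota}$ and hence has no isomorphic copies near it; the density theorem you invoke works because it uses actions like $\bm{\iota}\times \bm{s}_\Gamma$. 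The repair for the main gap is the paper's device: let $C$ be the continuous part of $(X,\mu )$, and let $\bm{b}$ act on $(C,\mu _C)$ by a free action weakly containing $\bm{\iota}_{\mu _C}$ and by the identity on $X\setminus C$. Then $\mbox{type}(\bm{b}) = \mu (C)\updelta _{\langle e\rangle} + (1-\mu (C))\updelta _{\Gamma}\neq \updelta _\Gamma = \mbox{type}(\bm{\iota}_\mu )$, while Proposition \ref{lem:10.1} applied on $(C,\mu _C)$ produces isomorphic copies of $\bm{b}$ converging weakly to $\bm{\iota}_\mu$; your type-continuity argument via Theorem \ref{thm:contin} then applies verbatim.
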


\begin{proof}
We begin with (1). For this we show that $\bm{a}\mapsto \ol{E(\bm{a},K)} \in \K$ is Baire class 1. We observe that $\{ \bm{a} \csuchthat \ol{E(\bm{a},K)} \subseteq C \}$ is closed in $( A(\Gamma ,X,\mu ), w)$ whenever $C\subseteq M_s(K^\Gamma )$ is closed.  This is because if $\bm{a}_n \in A(\Gamma ,X,\mu )$, $n\in \N$, is such that $\ol{E(\bm{a},K)} \subseteq C$ and $\bm{a}_n\ra \bm{a} \in A(\Gamma ,X,\mu )$ in the weak topology then $E(\bm{a},K) \subseteq \ol{\bigcup _n E(\bm{a}_n,K)} \subseteq C$. 

The topology $\tau _V$ on $\K$ is generated by the sets $\{ L\csuchthat L\subseteq U \}$ and $\{ L \csuchthat L\cap U\neq \emptyset \}$, where $U$ ranges over all open subsets of $M_s(K^\Gamma )$. For any open $U\subseteq M_s(K^\Gamma )$ the above observation shows that $\{ \bm{a} \csuchthat \ol{E(\bm{a},K)}\cap U \neq \emptyset \}$ is open, and if we write $U=\bigcup _n C_n$ where each $C_n$ is closed and $C_n\subseteq\mbox{int}(C_{n+1})$ 
then $\{ \bm{a}\csuchthat \ol{E(\bm{a},K)}\subseteq U \} = \bigcup _n \{ \bm{a}\csuchthat \ol{E(\bm{a},K)} \subseteq C_n \}$, which is $F_\sigma$.

For the first part of (2) we note that the following are equivalent for a subset $\mc{B}$ of $A(\Gamma ,X,\mu )$:
\begin{enumerate}
\item[(i)] $\mc{B}$ is weakly closed and for all $\bm{a},\bm{b}\in A(\Gamma ,X,\mu )$, $\bm{a}\in \mc{B}$ and $\bm{b}\sim \bm{a}$ implies $\bm{b}\in \mc{B}$.
\item[(ii)] $\mc{B}$ is weakly closed and for all $\bm{a},\bm{b}\in A(\Gamma ,X,\mu )$, $\bm{a}\in \mc{B}$ and $\bm{b}\cong \bm{a}$ implies $\bm{b}\in \mc{B}$.
\item[(iii)] For all $\bm{a}\in A(\Gamma ,X,\mu )$, $\bm{a}\prec \mc{B}$ implies $\bm{a}\in \mc{B}$.
\end{enumerate}
The implication (i)$\Ra$(ii) is trivial, (ii)$\Ra$(iii) follows from Proposition \ref{prop:AWgen}, and (iii)$\Ra$(i) follows from the fact that if $\bm{a}_n\ra \bm{a}$ in $A(\Gamma ,X,\mu )$ then $\bm{a}\prec \{ \bm{a}_n \} _{n\in \N}$. To show the first part of (2) it suffices to show that if $\mc{B}$ satisfies the above equivalent properties, then $\mc{B}_\sim = \{ [\bm{a}]\csuchthat \bm{a}\in \mc{B} \}$ is closed in $\tau$. Let $L = \ol{\bigcup _{\bm{a}\in \mc{B}}E(\bm{a},K)}$. Then $L\subseteq M_s(K^\Gamma )$ is closed and property (iii) tells us that $\mc{B}_\sim  = \{ [\bm{a}] \in A_\sim (\Gamma ,X,\mu ) \csuchthat \ol{E(\bm{a},K)}\subseteq L \}$, which is exactly the definition of a basic closed set in $\tau _V$.

Suppose that $(X,\mu )$ is not discrete and let $C\subseteq X$ be the continuous part of $X$ so that $\mu (C)>0$. Then $(C, \mu _C)$ is a standard non-atomic probability space so there exists a universal measure preserving action $\bm{a} = \Gamma \cc ^a (C,\mu _C)$ weakly containing all other measure preserving actions of $\Gamma$.  Let $b$ be the action of $\Gamma$ on $(X,\mu )$ whose restriction to $C$ is equal to $a$ and whose restriction to $X\setminus C$ is identity and let $\bm{b}= \Gamma \cc ^b (X,\mu )$. As $\bm{\iota}_{\mu _C}\prec \bm{a}$ by Lemma \ref{lem:10.1} there exist isomorphic copies of $\bm{a}$ converging to $\bm{\iota}_{\mu _C}$ in $A(\Gamma ,C, \mu _C)$. This yields isomorphic copies of $\bm{b}$ converging to $\bm{\iota}_\mu$ in $A(\Gamma ,X,\mu )$. Thus $[\bm{\iota}_\mu ]$ is in the closure of $\{ [\bm{b}] \}$ in the quotient topology, but $[\bm{\iota} _\mu ]$ is not in the $\tau$ topology closure of $\{ [\bm{b}]\}$ since $\Gamma \neq \{ e \}$ so that $[\bm{\iota}_\mu ]\neq [\bm{b}]$.
\end{proof}

\begin{remark}\label{rem:stableconv}
The map $\K \ra \K$ sending $L\mapsto \ol{\mbox{co}}L$ is continuous in the Vietoris topology $\tau _V$. Indeed, if $L_n\ra L_\infty$ we show that $\underline{\mbox{Tlim}}_n\ol{\mbox{co}}L_n \subseteq \ol{\mbox{co}}L_\infty\subseteq \ol{\mbox{Tlim}}_n \ol{\mbox{co}}L_n$. Let $\lambda \in \underline{\mbox{TLim}}_n \ol{\mbox{co}}L_n$ so that there exists $\lambda _{n_k}\in \ol{\mbox{co}}L_{n_k}$ with $\lambda _{n_k}\ra \lambda$. Then there exist probability measures $\mu_{n_k}$ on $M_s(K^\Gamma )$ supported on $L_{n_k}$ with $\lambda _{n_k}= \int _{\rho \in M_s(K^\Gamma )} \rho \, d\mu _{n_k}$ and (after moving to a subsequence if necessary) we may assume that $\mu _{n_k}$ converges to some measure $\mu$ on $M_s(K^\Gamma )$. Then $\lambda = \int _{\rho \in M_s(K^\Gamma )}\rho \, d\mu$. Let $C_0 \supseteq C_1\supseteq \cdots$ be a sequence of closed subsets of $M_s(K^\Gamma )$ with $L_\infty\subseteq \mbox{int}(C_m)$ for all $m$ and $L_\infty = \bigcap _m C_m$. For each $m$ the set $\{ L \in \K \csuchthat L\subseteq C_m \}$ is a neighborhood of $L_\infty$ in $\K$ and so contains $L_{n_k}$ for all large enough $k$. It follows that $\mu (C_m)\geq \liminf _k \mu _{n_k}(C_m) = 1$, and so $\mu (L_\infty ) = \lim _m \mu (C_m) = 1$. Since $\mu$ is supported on $L_\infty$ and has barycenter $\lambda$, it follows that $\lambda \in \ol{\mbox{co}}L_\infty$. For the second inclusion it is easy to see that $\mbox{co}L_\infty \subseteq \ol{\mbox{Tlim}}_n \ol{\mbox{co}}L_n$ and since the latter set is closed it follows that $\ol{\mbox{co}}L_\infty\subseteq \ol{\mbox{Tlim}}_n\ol{\mbox{co}}L_n$.

If now $\bm{a}$ is a measure preserving action of $\Gamma$ and $(Y,\nu )$ is non-atomic then $\bm{a}$ is stably weakly equivalent to an action on $(Y,\nu )$ and we let $[\bm{a}]_s= \{ \bm{b}\in A(\Gamma ,Y,\nu )\csuchthat \bm{b}\sim _s \bm{a} \}$ denote the stable weak equivalence class of $\bm{a}$ in $(Y,\nu )$ (see Definition \ref{def:swc}). It follows that the space $A_{\sim _s}(\Gamma ,Y,\nu ) = \{ [\bm{a}]_s \csuchthat \bm{a} \mbox{ is a measure preserving action of }\Gamma \}$ of all stable weak equivalence classes of measure preserving actions of $\Gamma$ may be viewed as a compact subset of $\K$ via the map $[\bm{a}]_s \mapsto \ol{\mbox{co}}E(\bm{a} ,K)$. Since $\mbox{type}(\bm{a} ) = \mbox{type}(\bm{\iota}\times \bm{a})$ it follows that $\mbox{type}(\bm{a} )$ is an invariant of stable weak equivalence.  The map $[\bm{a}]\mapsto \mbox{type}(\bm{a})$ then factors through $[\bm{a}]\mapsto [\bm{a}]_s$, and so Theorem \ref{thm:contin} also holds for stable weak equivalence.
\end{remark}

\subsection{Random Bernoulli shifts}\label{subsection:randbern} Given $\theta \in \mbox{IRS}(\Gamma )$, one constructs a measure preserving action of $\Gamma$ of type $\theta$ as follows (see {\cite[Proposition 45]{AGV11}}).

Fix a standard probability space $(Z,\eta )$ and let $Z^{\leq\backslash\Gamma} =\bigsqcup _{H\in\mbox{\tiny{Sub}}(\Gamma )} Z^{H\backslash \Gamma}$. Here, $H\backslash\Gamma$ denotes the collection of right cosets of $H$ in $\Gamma$. We define the projection map $Z^{\leq \backslash \Gamma}\ra \mbox{Sub}(\Gamma )$, $f\mapsto H_f\in \mbox{Sub}(\Gamma )$, where $H_f = H$ when $f\in Z^{H\backslash \Gamma }$. We endow $Z^{\leq\backslash \Gamma}$ with the standard Borel structure it inherits as a Borel subset of $Z^\Gamma \times \mbox{Sub}(\Gamma )$ via the injection $f\mapsto ((\gamma \mapsto f(H_f\gamma ) ), H_f )$. The image of $Z^{\leq\backslash\Gamma}$ under this map is invariant under the product action $\tilde{s}\times c$ of $\Gamma$ on $Z^\Gamma \times \mbox{Sub}(\Gamma )$ (where $\tilde{s}$ denotes the shift action of $\Gamma$ on $Z^\Gamma$), and we let $s$ denote the corresponding action of $\Gamma$ on $Z^{\leq\backslash\Gamma}$. We have that $H_{\gamma ^sf} = \gamma H_f \gamma ^{-1}$ for each $\gamma \in \Gamma$ and $f\in Z^{\leq\backslash\Gamma}$ and $(\gamma ^sf)(\gamma H_f\gamma ^{-1}\delta ) = f(H_f\gamma ^{-1}\delta )$. 
Let $\eta ^{H\backslash \Gamma}$ denote the product measure on $Z^{H\backslash\Gamma}\subseteq Z^{\leq\backslash\Gamma}$, and observe that under this action we have $(\gamma ^s)_*\eta ^{H\backslash \Gamma} = \eta ^{(\gamma H \gamma ^{-1})\backslash \Gamma }$. It follows that the measure $\eta ^{\theta\backslash \Gamma}$ on $Z^{\leq\backslash\Gamma}$ defined by
\[
\eta ^{\theta\backslash \Gamma} = \int _H\eta ^{H\backslash\Gamma} \, d\theta (H)
\]
is invariant under the action of $\Gamma$. We let $\bm{s}_{\theta ,\eta}$ denote the measure preserving action $\Gamma \cc ^s (Z^{\leq\backslash\Gamma}, \eta ^{\theta\backslash \Gamma} )$, and we call $\bm{s}_{\theta ,\eta }$ the \emph{$\theta$-random Bernoulli shift of $\Gamma$ over $(Z,\eta )$}. This action always contains $\bm{\theta}$ as a factor via the ``projection" map $f\mapsto H_f$. When $\eta$ is non-atomic then the stabilizer map $f\mapsto \Gamma _f$ of $\bm{s}_{\theta ,\eta}$ coincides almost everywhere with this projection. Indeed, if $\eta$ is non-atomic then for $\eta ^{\theta\backslash \Gamma}$-almost every $f$ the function $f: H\backslash \Gamma \ra Z$ is injective. Since every $\gamma \in \Gamma _{f}$ satisfies $f(H\gamma ^{-1})= f(H)$, the inclusion $\Gamma _{f}\subseteq H_f$ is immediate for injective $f$, and as $H_f\subseteq \Gamma _{f}$ always holds we conclude that $\Gamma _{f} =H_f$ almost surely. In particular $\mbox{type}(\bm{s}_{\theta ,\eta})=\theta$. We have thus shown the following.

\begin{proposition}[{\cite[Proposition 45]{AGV11}}]\label{proposition:stab1} Let $\Gamma$ be a countable group. For every $\theta \in \mbox{\emph{IRS}}(\Gamma )$ there exists a measure preserving action of type $\theta$. Namely, the $\theta$-random Bernoulli shift $\bm{s}_{\theta ,\eta}$ over a non-atomic base space $(Z,\eta )$ has type $\theta$.
\end{proposition}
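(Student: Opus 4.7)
The plan is to realize $\theta$ as the type of an explicit action built from $\theta$ itself: the $\theta$-random Bernoulli shift. First I would form the disjoint union $Z^{\leq\backslash\Gamma} = \bigsqcup_{H \in \mbox{Sub}(\Gamma)} Z^{H\backslash\Gamma}$ and equip it with the standard Borel structure inherited from its embedding into $Z^\Gamma \times \mbox{Sub}(\Gamma)$ via $f \mapsto ((\gamma \mapsto f(H_f\gamma)), H_f)$. The image is invariant under the product action $\tilde{s}\times c$ of $\Gamma$ on $Z^\Gamma \times \mbox{Sub}(\Gamma)$, and transporting back gives a Borel action $s$ of $\Gamma$ on $Z^{\leq\backslash\Gamma}$ determined by $H_{\gamma^s f} = \gamma H_f \gamma^{-1}$ and $(\gamma^s f)(\gamma H_f \gamma^{-1}\delta) = f(H_f\gamma^{-1}\delta)$.

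The crux of the construction is the measure $\eta^{\theta\backslash\Gamma} = \int_H \eta^{H\backslash\Gamma}\, d\theta(H)$. To check $\Gamma$-invariance, the key computation is the fiberwise identity $(\gamma^s)_* \eta^{H\backslash\Gamma} = \eta^{(\gamma H\gamma^{-1})\backslash\Gamma}$, which just records that left multiplication by $\gamma$ identifies the coset spaces $H\backslash\Gamma$ and $(\gamma H\gamma^{-1})\backslash\Gamma$ and transports product measure to product measure. Combined with the conjugation invariance of $\theta$, this yields
\[
(\gamma^s)_* \eta^{\theta\backslash\Gamma} = \int_H \eta^{(\gamma H\gamma^{-1})\backslash\Gamma}\, d\theta(H) = \int_H \eta^{H\backslash\Gamma}\, d((\gamma^c)_*\theta)(H) = \eta^{\theta\backslash\Gamma}.
\]
By construction the projection $f\mapsto H_f$ is $\Gamma$-equivariant and pushes $\eta^{\theta\backslash\Gamma}$ forward to $\theta$, so $\bm{\theta}$ is a factor of $\bm{s}_{\theta,\eta}$.

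The last step is to pin down the stabilizer map. Since $f$ is defined on $H_f\backslash\Gamma$, the inclusion $H_f \subseteq \Gamma_f$ is automatic, so what remains is to show $\Gamma_f \subseteq H_f$ almost surely; this is where I expect the non-atomic hypothesis on $\eta$ to be essential and is the main (though mild) obstacle. For fixed $H$, if $\eta$ is non-atomic then each pair of distinct cosets $c_1 \ne c_2$ in $H\backslash\Gamma$ satisfies $\eta^{H\backslash\Gamma}(\{f : f(c_1)=f(c_2)\}) = 0$, and a countable union argument shows that $\eta^{H\backslash\Gamma}$-a.e.\ $f : H\backslash\Gamma \to Z$ is injective. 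Integrating over $\theta$, $\eta^{\theta\backslash\Gamma}$-a.e.\ $f$ is injective as a map out of $H_f\backslash\Gamma$. For such $f$, any $\gamma \in \Gamma_f$ satisfies $f(H_f\gamma^{-1}) = f(H_f)$, hence $H_f\gamma^{-1}=H_f$, i.e., $\gamma \in H_f$. Thus $\Gamma_f = H_f$ almost surely, and pushing forward gives $\mbox{type}(\bm{s}_{\theta,\eta}) = \theta$.
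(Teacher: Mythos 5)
Your proposal is correct and follows essentially the same route as the paper: the same embedding of $Z^{\leq\backslash\Gamma}$ into $Z^\Gamma\times\mbox{Sub}(\Gamma)$, the same fiberwise identity $(\gamma^s)_*\eta^{H\backslash\Gamma}=\eta^{(\gamma H\gamma^{-1})\backslash\Gamma}$ combined with conjugation-invariance of $\theta$, and the same a.e.-injectivity argument (using non-atomicity of $\eta$) to conclude $\Gamma_f=H_f$ almost surely. No gaps.
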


It is clear that an isomorphism $(Z_1, \eta _1) \cong (Z_2, \eta _2)$ of measure spaces induces an isomorphism $\bm{s}_{\theta ,\eta _1}\cong \bm{s}_{\theta ,\eta _2}$.  
The next proposition characterizes precisely when $\mbox{type}(\bm{s} _{\theta ,\eta}) =\theta$ for various $\eta$. Below, we write $N(H)$ for the normalizer of a subgroup $H$ of $\Gamma$.

\begin{proposition}\label{proposition:stab2} Let $\Gamma$ be a countable group, let $\theta \in \mbox{IRS}(\Gamma )$, and let $(Z,\eta )$ be a standard probability space.
\begin{enumerate}
\item If $\eta$ is non-atomic then $\Gamma _f = H_f$ almost surely;

\item If $\eta$ is a point mass then $\Gamma _f = N(H_f)$ almost everywhere and the map $f\mapsto H_f$ is an isomorphism $\bm{s}_{\theta ,\eta} \cong \bm{\theta}$ so that $\mbox{\emph{type}}(\bm{s}_{\theta ,\eta}) = \mbox{\emph{type}}(\bm{\theta})$. 

\item Suppose $\eta$ is not a point mass. Then for each infinite index subgroup of $H\leq \Gamma$, $\Gamma _f = H_f$ for $\eta ^{H\backslash \Gamma}$-almost every $f\in Z^{H\backslash \Gamma}$. Thus, if
\[
\theta ( \{ H \csuchthat [\Gamma :H]<\infty \mbox{ and }N(H)\neq H \} ) = 0
\]
then $\Gamma _f = H_f$ almost surely. In particular if $\theta$ concentrates on the infinite index subgroups of $\Gamma$ then $\Gamma _f = H_f$ almost surely.

\item Suppose that $\eta$ contains atoms. If
\[
\theta ( \{ H\csuchthat [\Gamma :H]<\infty \mbox{ and } N(H)\neq H \} ) >0
\]
then $\mbox{\emph{type}}(\bm{s}_{\theta ,\eta}) \neq \theta$.
\end{enumerate}
In particular, $\mbox{\emph{type}}(\bm{s}_{\theta ,\eta})=\theta$ if and only if $H_f=\Gamma _f$ almost surely.
\end{proposition}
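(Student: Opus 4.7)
\emph{Proof proposal.} The starting point is the pair of universal inclusions
\[
H_f\subseteq \Gamma _f\subseteq N(H_f),
\]
valid for every $f\in Z^{\leq\backslash\Gamma}$: the first because $(h^sf)(H_f\delta )=f(H_fh^{-1}\delta )=f(H_f\delta )$ whenever $h\in H_f$, and the second because $\gamma\in \Gamma _f$ forces $\gamma H_f\gamma ^{-1}=H_{\gamma ^sf}=H_f$. Hence for a fiber $f\in Z^{H\backslash \Gamma}$, deciding whether $\Gamma _f=H$ reduces to checking, for each $\gamma \in N(H)\setminus H$, whether $\gamma ^sf=f$; unravelling the formula $(\gamma ^sf)(H\delta )=f(H\gamma ^{-1}\delta )$, this is the condition that $f$ be constant on each orbit of the permutation $\sigma _\gamma$ of $H\backslash \Gamma$ defined by $H\delta \mapsto H\gamma ^{-1}\delta$.

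The main technical observation I would establish next is that all orbits of $\sigma _\gamma$ share a common size $k$, equal to the order of $\gamma H$ in $N(H)/H$, with $k=\infty$ allowed. This is because $H\gamma ^{-j}\delta =H\gamma ^{-i}\delta$ iff $\gamma ^{i-j}\in H$, a condition not depending on $\delta$; and $\gamma\notin H$ forces $k\geq 2$. With this uniformity in hand, parts (1) and (2) follow quickly: for non-atomic $\eta$ almost every $f$ is injective, which forces $\gamma ^{-1}\in H$ whenever $f(H\delta )=f(H\gamma ^{-1}\delta )$ for even a single $\delta$, contradicting $k\geq 2$; for point-mass $\eta$ each $\eta ^{H\backslash \Gamma}$ concentrates on the unique constant map $f_H$, which is fixed by every $\gamma ^s$ with $\gamma\in N(H)$, and $f_H\mapsto H$ is a $\Gamma$-equivariant Borel isomorphism onto $\bm{\theta}$, so that $\mbox{type}(\bm{s}_{\theta ,\eta })=\mbox{type}(\bm{\theta})=N_*\theta$.

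For part (3), when $[\Gamma :H]=\infty$ the coset space $H\backslash \Gamma$ is infinite, so regardless of whether $k$ is finite or infinite one can extract an infinite family of pairwise disjoint two-element subsets $\{ H\gamma ^{-2j}\delta _i , H\gamma ^{-2j-1}\delta _i\}$ of $H\backslash \Gamma$ each of which lies in a single $\sigma _\gamma$-orbit (either infinitely many such pairs inside one infinite orbit, or one such pair from each of infinitely many finite orbits). The event that $f$ agrees on every such pair is an intersection of independent events each of probability $\int _Z\eta (\{ z\} )\, d\eta (z)\leq \max _{z\in Z}\eta (\{ z\} )<1$, and hence has probability $0$; summing over the countable set $N(H)\setminus H$ gives $\Gamma _f=H_f$ for $\eta ^{H\backslash \Gamma}$-a.e.\ $f$. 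The displayed consequence then follows on integrating over $\theta$, noting that when $N(H)=H$ the chain $H\subseteq \Gamma _f\subseteq N(H)=H$ already forces $\Gamma _f=H_f$ regardless of $\eta$. For part (4), with $n=[\Gamma :H]$ finite, $k\geq 2$ still finite, and some atom $z_0$ of $\eta$, the joint probability $(\int _Z\eta (\{ z\} )^{k-1}\, d\eta (z))^{n/k}\geq \eta (\{ z_0\} )^{n}>0$ shows that $\Gamma _f\supsetneq H_f$ on a positive-measure set of fibers.

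For the concluding equivalence, the backward direction is immediate from $(H_{\cdot })_*\eta ^{\theta\backslash \Gamma}=\theta$. For the forward direction, parts (1)--(4) reduce the situation to one of the two sub-cases in which $\Gamma _f\supsetneq H_f$ on a set $B$ of positive $\eta ^{\theta\backslash\Gamma}$-measure, and writing $B=\bigcup _{\gamma\in\Gamma}\{ f\csuchthat \gamma \in \Gamma _f\setminus H_f\}$ yields some $\gamma$ for which the individual level set has positive measure. The universal inclusion $H_f\subseteq \Gamma _f$ then gives
\[
\theta (\{ H\csuchthat\gamma\in H\} )<\mbox{type}(\bm{s}_{\theta ,\eta })(\{ K\csuchthat\gamma\in K\} ),
\]
separating the two pushforward measures. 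The main obstacle I anticipate is the uniform orbit-size computation together with the attendant interplay between the atom structure of $\eta$ and the coset-index structure of $\theta$, which is precisely what forces the fourfold case split in the statement.
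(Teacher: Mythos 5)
Your proposal is correct, and while the overall architecture matches the paper's (fiberwise analysis over each $Z^{H\backslash\Gamma}$, the universal inclusions $H_f\subseteq \Gamma _f\subseteq N(H_f)$, injectivity of a.e.\ $f$ in the non-atomic case, and product-measure independence estimates), two of your technical ingredients genuinely diverge from the paper's. First, for part (3) the paper does not reduce to $\gamma \in N(H)\setminus H$; it takes an arbitrary $\gamma\notin H$ and inductively builds a sequence $\delta _0,\delta _1,\dots$ avoiding finitely many right cosets of the infinite-index subgroups $H$ and $\gamma H\gamma ^{-1}$ at each stage (using that such cosets generate a proper ideal), so as to produce infinitely many pairwise disjoint pairs $\{ H\delta _n , H\gamma ^{-1}\delta _n \}$. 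Your route instead exploits $\Gamma _f\subseteq N(H_f)$ to restrict to $\gamma\in N(H)\setminus H$, where the map $H\delta\mapsto H\gamma ^{-1}\delta$ is a genuine permutation of $H\backslash\Gamma$ with all orbits of one common size $k\geq 2$; the disjoint pairs then come for free from the orbit structure, and the same uniformity gives you the exact probability $\big( \int _Z\eta (\{ z\} )^{k-1}\, d\eta \big) ^{n/k}$ needed in (4). Second, and more substantially, your conclusion of (4) and of the final equivalence differs from the paper's: the paper distinguishes $\mbox{type}(\bm{s}_{\theta ,\eta})$ from $\theta$ by integrating the function $H\mapsto 1/[\Gamma :H]$ against both measures and using the strict inequality $[\Gamma :N(H_f)]<[\Gamma :H_f]$ on a non-null set, whereas you locate a single $\gamma$ with $\eta ^{\theta\backslash\Gamma}(\{ f\csuchthat \gamma\in\Gamma _f\setminus H_f \} )>0$ and separate the two measures on the clopen set $\{ K\csuchthat \gamma\in K \}$. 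Your test is more elementary and has the added benefit that it proves the concluding ``$\mbox{type}(\bm{s}_{\theta ,\eta})=\theta$ iff $H_f=\Gamma _f$ a.s.''\ directly from the inclusion $H_f\subseteq\Gamma _f$ and countable additivity, without the paper's detour back through parts (1), (3) and (4); what the paper's $1/[\Gamma :H]$ integral buys in exchange is a single numerical invariant of an IRS that is monotone under passing to stabilizers, which is a reusable tool in its own right.
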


\begin{proof}
We have already shown (1) in Proposition \ref{proposition:stab1} and (2) is clear. 
%
For (3) fix an infinite index $H\leq \Gamma$ along with some $\gamma \not\in H$ and inductively define an infinite sequence $\{ \delta _n \} _{n\in\N}$ by taking $\delta _{n+1}\in \Gamma$ to be any element of the complement of $\bigcup _{i\leq n} (H\delta _i \cup H\gamma ^{-1}\delta _i \cup (\gamma H\gamma ^{-1})\delta _i \cup (\gamma H\gamma ^{-1})(\gamma \delta _i) )$ (we are using here the fact that the collection $\{ H\delta \csuchthat H\in \mbox{Sub}(\Gamma ), \, \delta \in \Gamma , \, \mbox{ and }[\Gamma :H] = \infty \}$ of all right cosets of infinite index subgroups of $\Gamma$ generates a proper ideal of $\Gamma$ (see, e.g., the proof of Lemma 4.4 in \cite{Ke05})). By construction all of the cosets $H\delta _0$, $H\gamma ^{-1}\delta _0$, $H\delta _1$, $H\gamma ^{-1}\delta _1 ,\dots$ are distinct so, letting $A\subseteq Z$ be any set with $0<\eta (A)<1$, it follows that
\begin{equation*}
\begin{split}
\eta ^{H\backslash \Gamma} &( \{ f\csuchthat \gamma \in \Gamma _{f} \} ) \leq \eta ^{H\backslash \Gamma} (\{ f \csuchthat \forall \delta \in \Gamma \, (f(H\delta )=f(H\gamma ^{-1}\delta ) )\} ) \\
&\leq \eta ^{H\backslash \Gamma} ( \bigcap _{n\in \N} \{ f\csuchthat f(H\delta _n), f(H\gamma ^{-1}\delta _n)\in A \mbox{ or } f(H\delta _n), f(H\gamma ^{-1}\delta _n) \not\in A \} ) \\
&= \lim _{N\ra \infty} (\eta (A)^2 + (1-\eta (A))^2)^N = 0 .
\end{split}
\end{equation*}
Thus $\gamma \not\in \Gamma _{f}$ for $\eta ^{H\backslash\Gamma}$-almost every $f$, and since this is true for each $\gamma\not\in H$ we obtain $\Gamma _{f}\subseteq H$ for $\eta ^{H\backslash\Gamma}$-almost every $f$.


We now prove (4). Let $\theta _{\bm{s}} =\mbox{type}(\bm{s}_{\theta ,\eta})$. 
Let $z_0 \in Z$ be an atom for the measure $\eta$. The set $A = \{ f\in Z^{\leq \backslash \Gamma}\csuchthat [\Gamma : H_f ] <\infty , \ N(H_f)\neq H_f \mbox{ and }\forall \gamma \in \Gamma \, (f(H_f\gamma ) = z_0) \}$ is $\eta ^{\theta \backslash \Gamma}$-non-null and $\Gamma _f = N(H_f)\neq H_f$ for each $f\in A$. Thus $[\Gamma :\Gamma _f] = [\Gamma :N(H_f)]< [\Gamma :H_f ]$ for each $f\in A$. When $f\not\in A$ we still have $[\Gamma :\Gamma _f]\leq [\Gamma :H_f]$. It follows that
\begin{align*}
\int _H &\frac{1}{[\Gamma :H]}\, d\theta _{\bm{s}}
		= \int _{f\in A} \frac{1}{[\Gamma :\Gamma _f]} \, d\eta ^{\theta \backslash \Gamma} + \int _{f\not\in A}\frac{1}{[\Gamma :\Gamma _f]} \, d\eta ^{\theta \backslash \Gamma} \\
		&> \int _{f\in A}\frac{1}{[\Gamma :H_f]}\, d\eta ^{\theta \backslash \Gamma} + \int _{f\not\in A}\frac{1}{[\Gamma :H_f]} \, d\eta ^{\theta \backslash \Gamma} = \int _f \frac{1}{[\Gamma :H_f]} \, d\eta ^{\theta \backslash \Gamma} = \int _H \frac{1}{[\Gamma :H]}\, d\theta
\end{align*}
and so $\theta _{\bm{s}}\neq \theta$, which finishes (4).
%

It is clear that $\Gamma _f = H_f$ almost everywhere implies $\mbox{type}(\bm{s}_{\theta ,\eta}) =\theta$. Suppose now that $\Gamma _f \neq H_f$ for a non-null set of $f\in Z^{\leq \backslash \Gamma}$. Then (1) implies that $\eta$ contains atoms and (3) implies that the set $J = \{ f\in Z^{\leq \backslash\Gamma} \csuchthat [\Gamma :H_f ] <\infty \mbox{ and } \Gamma _f \neq H_f \}$ is non-null. The inclusions $H_f\subseteq \Gamma _f\subseteq N(H_f)$ holds for all $f\in Z^{\leq \backslash \Gamma}$ and so
\[
\theta ( \{ H \csuchthat [\Gamma : H ] <\infty \mbox{ and } N(H) \neq H \} ) 
\geq \eta ^{\theta \backslash \Gamma} (J) >0.
\]
Part (4) now implies that $\mbox{type}(\bm{s}_{\theta ,\eta })\neq \theta$.
\end{proof}

\begin{theorem}
Let $\Gamma$ be a countable group, let $\theta \in \mbox{\emph{IRS}}(\Gamma )$, and let $\bm{s}_{\theta ,\eta}$ be the $\theta$-random Bernoulli shift over the standard measure space $(Z,\eta )$. Let $p :Z^{\leq \backslash \Gamma }\ra \mbox{\emph{Sub}}(\Gamma )$ denote the projection $p(f)= H_f$ factoring $\bm{s}_{\theta ,\eta}$ onto $\bm{\theta}$. Assume that $\eta$ is not a point mass. Then the following are equivalent
\begin{enumerate}
\item $\theta$ concentrates on the infinite index subgroups of $\Gamma$.
\item The extension $p :\bm{s}_{\theta ,\eta } \ra \bm{\theta}$ is ergodic.
\item The extension $p:\bm{s}_{\theta ,\eta } \ra \bm{\theta}$ is weak mixing.
\end{enumerate}
In particular, if $\bm{\theta}$ is infinite index then $\bm{s}_{\theta ,\eta}$ is ergodic if and only if  $\bm{\theta}$ is ergodic.
\end{theorem}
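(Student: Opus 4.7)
My plan is to prove the equivalences in the cyclic order (3)$\Rightarrow$(2)$\Rightarrow$(1)$\Rightarrow$(3). The first implication is immediate since weak mixing of an extension always entails its relative ergodicity. For (2)$\Rightarrow$(1), I proceed by contrapositive: suppose $\theta$ gives positive mass to the $\Gamma$-invariant Borel set $\{H : [\Gamma : H] < \infty\}$. Any $H$ in this set has finite conjugacy orbit (of size $[\Gamma : N(H)] \le [\Gamma : H] < \infty$), and the corresponding fiber $Z^{H \backslash \Gamma}$ is a finite product on which $N(H)$ acts by permuting coordinates via the finite $N(H)$-set $H\backslash\Gamma$. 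Because $\eta$ is not a point mass, this generalized Bernoulli action on a finite product admits non-constant $N(H)$-invariant functions (for instance, averaging a non-constant observable along an orbit in $H\backslash\Gamma$). Extending any such fiber function equivariantly across the finite $\Gamma$-orbit of $H$ in $\mbox{Sub}(\Gamma)$ and by zero on the complement yields a non-trivial $\Gamma$-invariant function on $Z^{\leq \backslash \Gamma}$ that is not $p$-measurable, which contradicts relative ergodicity of $p$.

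For (1)$\Rightarrow$(3), the first step is the Furstenberg--Zimmer reduction of relative weak mixing to relative ergodicity via fiber products: relative weak mixing of $p$ is equivalent to relative ergodicity of every $n$-fold relative fiber product $\bm{s}_{\theta,\eta} \times_{\bm{\theta}} \cdots \times_{\bm{\theta}} \bm{s}_{\theta,\eta}$ over $\bm{\theta}$, and this $n$-fold product is canonically identified with $\bm{s}_{\theta,\eta^{n}}$ because fibrewise $(Z^{H\backslash\Gamma})^{n} = (Z^{n})^{H\backslash\Gamma}$ carries the product measure $(\eta^{n})^{H\backslash\Gamma}$, which is exactly the relative independent joining. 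Since $\eta^{n}$ is never a point mass when $\eta$ is not, it suffices to prove that (1) implies $\bm{s}_{\theta,\rho} \to \bm{\theta}$ is relatively ergodic for every non-point-mass probability measure $\rho$ on any standard space.

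For this reduced statement, given a $\Gamma$-invariant Borel set $A \subseteq Z^{\leq \backslash \Gamma}$ with restrictions $A_{H} = A \cap Z^{H \backslash \Gamma}$, I aim to show that $\rho^{H\backslash\Gamma}(A_{H}) \in \{0,1\}$ for $\theta$-almost every $H$. Since $[\Gamma : H] = \infty$ makes $H \backslash \Gamma$ infinite, the Kolmogorov zero-one law on $(Z^{H \backslash \Gamma}, \rho^{H\backslash\Gamma})$ applies, reducing the question to proving that each $A_{H}$ is a tail event---independent of the $\sigma$-algebra $\mathcal{F}_{T}$ generated by the coordinate evaluations at every finite $T \subseteq H \backslash \Gamma$. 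Here the global $\Gamma$-invariance of $A$, and not merely the $N(H)$-invariance of $A_{H}$, is essential: conjugation by suitable $\gamma \in \Gamma$ transports the dependence of $A_{H}$ on $T$ across conjugate fibers $Z^{(\gamma H \gamma^{-1})\backslash\Gamma}$, and combined with the Bernoulli product structure this yields the needed independence from $\mathcal{F}_{T}$. The main obstacle is the case in which the $N(H)$-action on $H \backslash \Gamma$ has finite orbits (for instance when $H$ is self-normalizing with infinite index), so that the fibrewise Bernoulli action of $N(H)$ fails to be ergodic; this is overcome by exploiting $\Gamma$-equivariance between conjugate fibers together with the $\Gamma$-invariance of $\theta$ on $\mbox{Sub}(\Gamma)$, which blocks measurable globalization of purely fibrewise $N(H)$-invariants into non-trivial $\Gamma$-invariant functions on $Z^{\leq \backslash \Gamma}$.
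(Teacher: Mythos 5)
Your (3)$\Rightarrow$(2) is immediate, and your (2)$\Rightarrow$(1) is essentially the paper's argument, with one repair needed: if $\theta$ restricted to the finite-index subgroups is non-atomic (possible, e.g., for $\bigoplus_{\N}\Z/2\Z$), then every single conjugation orbit there is finite and hence $\theta$-null, so ``extending equivariantly across the finite $\Gamma$-orbit of $H$ and by zero on the complement'' produces an a.e.\ zero function. You must build the witness measurably over a positive-measure set of $H$ at once; summing a non-constant observable over \emph{all} of $H_f\backslash\Gamma$, or the paper's set $\{f : [\Gamma:H_f]<\infty \mbox{ and } \mathrm{ran}(f)\subseteq A\}$ with $0<\eta(A)<1$, does exactly this. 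Your reduction of (1)$\Rightarrow$(3) to relative ergodicity of $\bm{s}_{\theta,\rho}\to\bm{\theta}$ for a non-point-mass base, via the identification of the relative fiber product with $\bm{s}_{\theta,\eta^n}$, is also the paper's reduction (the paper needs only $n=2$).

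The genuine gap is the core of (1)$\Rightarrow$(3). You assert that for $\theta$-a.e.\ $H$ the fiber $A_H$ of a $\Gamma$-invariant set is a tail event and then invoke the Kolmogorov zero--one law, but this claim is never proved, and it cannot be established fiberwise: when $N(H)=H$ the group $N(H)$ acts \emph{trivially} on $H\backslash\Gamma$ (since $Hh^{-1}\delta=H\delta$ for $h\in H$), so invariance imposes no constraint at all on the individual set $A_H$, which can be an arbitrary measurable subset of $Z^{H\backslash\Gamma}$; the triviality of $\rho^{H\backslash\Gamma}(A_H)$ is forced only by the interplay of global measurability, the equivariance $A_{\gamma H\gamma^{-1}}=\gamma^sA_H$ along an infinite conjugation orbit, and the invariance of $\theta$. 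Your closing sentence (``conjugation transports the dependence \dots and yields the needed independence'') restates the goal rather than proving it. The paper fills this gap with a quantitative argument: after passing to ergodic $\theta$ and disposing of the case $[\Gamma:N(H)]<\infty$ (where $\bm{s}_{\theta,\nu}$ is induced from an ergodic generalized Bernoulli shift of $N(H_0)$), it approximates the invariant set $B$ by a set $A$ depending on the coordinates $\{H\delta\}_{\delta\in F}$ over each cell $C_i$ of a partition of $\mathrm{Sub}(\Gamma)$, proves a lemma producing for a.e.\ $H$ some $\gamma$ with $\gamma H\gamma^{-1}\in C_i$ and $\{H\alpha\}_{\alpha\in F}\cap\{H\gamma^{-1}\delta\}_{\delta\in F}=\emptyset$ (this uses $[\Gamma:N(H)]=\infty$ and ergodicity of $\bm{\theta}$), and assembles these choices into a measure-preserving injection $\varphi$ for which $A$ and $\varphi(A)$ are fiberwise independent, giving $r=\mu(B)\le 2\epsilon^2+2\epsilon+(r+\epsilon)^2$ and hence $r\le r^2$. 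Some argument of this type---placing two copies of (an approximation to) the invariant set on disjoint coordinate windows of the \emph{same} fiber and integrating over $\theta$---is what actually does the work, and it is absent from your proposal.
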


\begin{proof}
(3)$\Ra$(2) is trivial. (2)$\Ra$(1): Suppose that $\theta (C) >0$ where $C= \{ H\csuchthat [\Gamma :H]<\infty \}$ and let $A\subseteq Z$ be any measurable set with $0<\eta (A)<1$. Then the set $B = \{ f\in Z^{\leq \backslash\Gamma}\csuchthat H_f\in C \mbox{ and } \mbox{ran}(f)\subseteq A\}$ is a nontrivial invariant set that is not $p$-measurable.

(1)$\Ra$(3): We must show that the extension $\tilde{p} : \bm{s}_{\theta ,\eta }\otimes _{\bm{\theta}} \bm{s}_{\theta ,\eta} \ra \bm{\theta}$ is ergodic, where
\[
\bm{s}_{\theta ,\eta }\otimes _{\bm{\theta}} \bm{s}_{\theta ,\eta} = \Gamma \cc ^{s\times s} \big( Z^{\leq \backslash \Gamma}\times Z^{\leq \backslash \Gamma} , \int _H \eta ^{H\backslash \Gamma} \times \eta ^{H\backslash \Gamma}\, d\theta \big)
\]
and $\tilde{p}(f,g) = p(f)$.  Let $(Y,\nu ) = (Z\times Z, \eta \times \eta )$. Then we have the natural isomorphism $\varphi : \bm{s}_{\theta ,\eta }\otimes _{\bm{\theta}} \bm{s}_{\theta ,\eta} \cong \bm{s}_{\theta ,\nu }$ such that $\tilde{p}(f,g) = p\circ \varphi (f,g)$ almost surely, so it suffices to show that the extension $p:\bm{s}_{\theta ,\nu} \ra \bm{\theta}$ is ergodic. If $\theta = \int _{w\in W} \theta (w) \, d\rho (w)$ is the ergodic decomposition of $\theta$ then $\bm{s}_{\theta ,\nu}$ decomposes as $\bm{s}_{\theta ,\nu } = \int _{w\in W} \bm{s}_{\theta _w ,\nu } \, d\rho (w)$ and $p : Y ^{\leq \backslash \Gamma}\ra \mbox{Sub}(\Gamma )$ factors $\bm{s}_{\theta _w ,\nu }$ onto $\bm{\theta} _w$ almost surely. We may therefore assume that $\bm{\theta}$ is ergodic toward the goal of showing that $\bm{s}_{\theta ,\nu }$ is ergodic as well.

Since $\bm{\theta}$ is ergodic, the index $i$ of $N(H)$ in $\Gamma$ is constant on a $\theta$-conull set.  If $i<\infty$ then the orbit of almost every $H$ is finite and ergodicity implies that there exists an $H_0\in \mbox{Sub}(\Gamma )$ such that $\theta$ concentrates on the conjugates of $H_0$. Then $H_0$ is an infinite index normal subgroup of $K_0=N(H_0)$ which implies that the generalized Bernoulli shift action $\bm{s} = K_0 \cc ^s (Y ^{H_0\backslash \Gamma} , \eta ^{H_0\backslash \Gamma})$ is ergodic (see e.g., \cite{KT09}). Example \ref{example:ergodicIRS} below then shows that $\bm{s}_{\theta , \nu }\cong \mbox{Ind}_{K_0}^\Gamma (\bm{s})$, and so $\bm{s}_{\theta ,\nu }$ is ergodic.

If $i=\infty$ then we proceed as follows. Let $(X,\mu )= (Y^{\leq \backslash \Gamma} , \nu ^{\leq \backslash \Gamma})$ and suppose toward contradiction that $B\subseteq X$ is invariant and $0< \mu(B)=r < 1$.  
The map $H\mapsto \nu ^{H\backslash \Gamma} (B)$ is conjugation invariant so ergodicity of $\bm{\theta}$ implies that $\nu ^{H\backslash \Gamma } (B) = \mu (B) = r$ almost surely. Let $\epsilon >0$ be small depending on $r$. Fix some countable Boolean algebra $\bm{A}_0$ generating $\bm{B}(Y)$ and let $\bm{A}$ be the countable Boolean algebra of subsets of $X$ generated by $\{ \pi _\gamma ^{-1} (D) \csuchthat D \in \bm{A}_0\mbox{ and } \gamma \in \Gamma \}$ where $\pi _\gamma (f)= f(H_f\gamma )$ for $f\in X$. Then for every $\epsilon >0$ there exists $A_1,\dots ,A_n\in \bm{A}$ and a partition $C_0,\dots, C_{n-1}$ of $\mbox{Sub}(\Gamma )$ into non-null measurable sets such that $\mu (A\Delta B) <\epsilon ^2$ where $A=\bigsqcup _{i<n} (A_i\cap p^{-1}(C_i))$.
%
%
There exists a finite $F\subseteq \Gamma$ and a collection $\{ D^{i,j}_\delta \csuchthat \delta \in F, \ j< n_i, \ i< n \} \subseteq \bm{A}_0$ such that $A_i= \bigcup _{0\leq j<n_i} \bigcap _{\delta\in F} \pi _\delta ^{-1}(D^{i,j}_\delta )$ for each $i<n$.

\begin{lemma}
Let $C\subseteq \mbox{\emph{Sub}}(\Gamma )$ be any non-null measurable set. Then for $\theta$-almost every $H\in \mbox{\emph{Sub}}(\Gamma )$ there exists $\gamma \in \Gamma$ such that $\{ H\alpha \}_{\alpha \in F} \cap \{ H\gamma ^{-1}\delta \} _{\delta \in F} = \emptyset$ and $\gamma H \gamma ^{-1} \in C$.
\end{lemma}

\begin{proof}
Since $\bm{\theta}$ is ergodic and $[\Gamma : N(H)] =\infty$ almost surely, the intersection $C^H$, of $C$ with the orbit of $H$, is almost surely infinite. 
Fix such an $H$ with both $[\Gamma :N(H)] =\infty$ and $C^H$ infinite. Since the set $FF^{-1}\cdot H = \{ \delta \alpha ^{-1}H\alpha \delta ^{-1} \csuchthat \alpha ,\delta \in F \}$ is finite there exists $\gamma \in \Gamma$ with $\gamma H \gamma ^{-1} \in C^H \setminus (FF^{-1}\cdot H )$. This $\gamma$ works: $\gamma H\gamma ^{-1} \not\in FF^{-1} \cdot H$ is equivalent to $\gamma \not\in \bigcup _{\alpha ,\delta \in F} \delta \alpha ^{-1}N(H)$, so if $\alpha , \delta \in F$ then $\gamma \not\in \delta\alpha ^{-1}N(H)$ and thus $H \alpha \neq H\gamma ^{-1}\delta$.
%
\end{proof}


Using this lemma and measure-theoretic exhaustion we may find a Borel function $\mbox{Sub}(\Gamma ) \ra \Gamma$, $H\mapsto \gamma _H$, with $\{ H \alpha \} _{\alpha \in F} \cap \{ H\gamma _H ^{-1}\delta \} _{\delta\in F} = \emptyset$ and $\gamma _H H \gamma _H ^{-1} \in C_i$ for almost every $H\in C_i$, and such that the function $\psi : \mbox{Sub}(\Gamma ) \ra \mbox{Sub}(\Gamma )$, $H\mapsto \gamma _H H\gamma _H ^{-1}$, is injective on a conull set. In particular, $\psi$ is measure preserving. Let $\varphi : X\ra X$ be given by $\varphi (f) = (\gamma _{H_f})^s\cdot f$ so that $\varphi$ is also injective on a conull set and measure preserving.

For $H\leq \Gamma$ and $D\subseteq X$ let $D_H = D\cap Y^{H\backslash \Gamma}$. Then for each $i<n$ and almost every $H\in C_i$ we have $\gamma _HH\gamma _H ^{-1}\in C_{i}$ and
\begin{align*}
\varphi (A)_{\gamma _HH \gamma _H ^{-1}} 
&= (\gamma _H )^s\cdot ((A_i)_H)
 = \bigcup _{j<n_i} \bigcap _{\alpha \in F}
\{ f \in Y ^{\gamma _H H \gamma _H^{-1} \backslash \Gamma} \csuchthat f(\gamma _H H \gamma _H^{-1} \gamma _H\alpha ) \in D^{i,j}_\alpha \} \\
A_{\gamma _HH \gamma _H ^{-1}} &= (A_{i})_{\gamma _HH \gamma _H ^{-1}} = \bigcup _{j<n_i} \bigcap _{\delta \in F}
\{ f \in Y ^{\gamma _H H \gamma _H^{-1} \backslash \Gamma} \csuchthat f(\gamma _H H \gamma _H^{-1} \delta ) \in D^{i,j}_\delta \}
\end{align*}
By our choice of $\gamma _H$ the sets $\{ \gamma _H H \gamma _H ^{-1}\gamma _H\alpha \} _{\alpha \in F}$ and $\{ \gamma _HH \gamma _H ^{-1}\delta \} _{\delta \in F}$ are almost surely disjoint and it follows that the sets $A$ and $\varphi (A)$ are $\nu ^{\gamma _H H \gamma _H ^{-1}\backslash \Gamma}$-independent almost surely. Since $H\mapsto \gamma _H H\gamma _H ^{-1}$ is a measure preserving injection it follows that $A$ and $\varphi (A)$ are $\nu ^{H\backslash \Gamma}$-independent almost surely.

We have $\epsilon ^2 > \mu (A\Delta B) = \int _H \nu ^{H\backslash \Gamma} (A\Delta B)\, d\theta \geq \int _H |\nu ^{H\backslash \Gamma } (A) - r| \, d\theta$ so that $\theta ( \{ H \csuchthat |\nu ^{H\backslash \Gamma}(A) -r|\leq \epsilon \} ) \geq 1-\epsilon$ and since $\mu (A\Delta B) = \mu (\varphi (A)\Delta B)$ we also have $\theta (\{ H \csuchthat |\nu ^{H\backslash \Gamma} (\varphi (A)) - r| \leq \epsilon \} ) \geq  1- \epsilon$. Then
\begin{align*}
r = \mu (B) &\leq \mu (A\Delta B) + \mu (\varphi (A)\Delta B) + \mu (A\cap\varphi (A))\\
&< 2\epsilon ^2 + \int _H \nu ^{H\backslash \Gamma} (A)\nu ^{H\backslash \Gamma} (\varphi (A)) \, d\theta
\leq 2\epsilon ^2 + 2\epsilon + (r+\epsilon )^2 \ra _{\epsilon \ra 0} r^2.
\end{align*}
This is a contradiction for small enough $\epsilon$ since $0<r<1$.
\end{proof}

\begin{example}\label{example:ergodicIRS}
%
The simplest example of an ergodic $\theta \in \mbox{IRS}(\Gamma )$ is a point mass $\theta =\updelta _N$ on some normal subgroup $N\triangleleft \Gamma$. The corresponding random Bernoulli shift $\bm{s}_{\updelta _N , \eta }$ is isomorphic to the usual generalized shift action of $\Gamma$ on $(Z^{\Gamma /N} , \eta ^{\Gamma /N})$.

Almost as simple is when $\theta \in \mbox{IRS}(\Gamma )$ has the form $\theta = \tfrac{1}{n}\sum _{i=0}^{n-1} \updelta _{\gamma _iH\gamma _i^{-1}}$ where $H\leq \Gamma$ is a subgroup with finitely many conjugates $\gamma _0H\gamma _0^{-1}, \, \gamma _1 H \gamma _1^{-1}, \, \gamma _2H\gamma _2^{-1}, \dots \gamma _{n-1}H\gamma _{n-1}^{-1}$. Clearly $\bm{\theta}$ is ergodic. In this case the random Bernoulli shift $\bm{s}_{\theta ,\eta}$ may be described as follows. The set $T=\{ \gamma _i \} _{i<n}$ is a transversal for the left cosets of the normalizer $K = N(H)$ of $H$ in $\Gamma$, and the natural action of $\Gamma$ on $T$ given by $\gamma\cdot t \in \gamma tK\cap T$ for $\gamma \in \Gamma$ and $t\in T$ preserves normalized counting measure $\nu _T$ on $T$. Since $H$ is normal in $K$, the restriction to $K$ of the action $s$ leaves $Z^{H\backslash \Gamma}$ invariant and preserves the measure $\eta ^{H\backslash \Gamma}$ so that $\bm{s} = K\cc ^s (Z^{H\backslash \Gamma },\eta ^{H\backslash \Gamma})$ becomes the usual generalized Bernoulli shift. We let $\bm{b}$ denote the induced action $\bm{b}=\mbox{Ind}_K^\Gamma (\bm{s})$, which is the measure preserving action $\Gamma \cc ^b (Z^{H\backslash \Gamma} \times T , \, \eta ^{H\backslash \Gamma} \times \nu _T)$ given by
\[
\gamma ^b (f, t) = (\rho (\gamma , t)^{s}f , \gamma \cdot t)
\]
where $\rho :\Gamma \times T \ra K$ is the cocycle given by $\rho (\gamma ,t) = (\gamma \cdot t)^{-1}\gamma t$. The map $\pi : Z^{H\backslash \Gamma}\times T \ra Z^{\leq\backslash\Gamma}$ given by $\pi (f,t) = t^{s}f \in Z^{tHt^{-1}\backslash \Gamma}$ is an isomorphism of $\bm{b}$ with $\bm{s}_{\theta ,\eta}$. Indeed, $\pi$ is equivariant since
\begin{align*}
\pi (\gamma ^b (f,t)) 	&=\pi (\rho (\gamma ,t)^s f, \gamma \cdot t)
						= (\gamma \cdot t)^s\rho (\gamma ,t)^sf 
						= (\gamma t)^sf 
						= \gamma ^st^s f =\gamma ^s \pi (f,t)
\end{align*}
and $\pi$ is measure preserving since
\begin{align*}
\pi _* ( \eta ^{H\backslash\Gamma}\times \nu _T) &= \frac{1}{n}\sum _{t\in T}\pi _*(\eta ^{H\backslash \Gamma} \times \updelta _{t }) = \frac{1}{n}\sum _{t\in T} \eta ^{tHt^{-1}\backslash \Gamma}  = \eta ^{\theta\backslash \Gamma}.
\end{align*}
It is also clear that $\pi$ is injective since $t\mapsto tHt^{-1}$ is a bijection of $T$ with the conjugates of $H$.
\end{example}

\subsection{A sufficient condition for weak containment}

\begin{notation}
For sets $A$ and $B$ we let $A^{\subseteq B} = \bigcup _{C\subseteq B}A^C$. We identify $k\in \N$ with $k=\{ 0,1,\dots ,k-1 \}$. A \emph{partition} of $(X,\mu )$ will always mean a finite partition of $X$ into Borel sets. When $\mc{P}$ is a partition of $(X ,\mu )$ we will often identify elements of $\mc{P}$ with their equivalence class in $\mbox{MALG}_\mu$. We use the script letters $\mc{N}, \mc{O}, \mc{P}$, $\mc{Q}$, $\mc{R}$, $\mc{S}$ and $\mc{T}$ to denote partitions, and the printed letters $N$, $O$, $P$, $Q$, $R$, $S$ and $T$ respectively to denote their corresponding elements. If $\mc{P}$ and $\mc{Q}$ are two partitions of $(X,\mu )$ then we let $\mc{P}\vee \mc{Q} = \{ P\cap Q\csuchthat P\in \mc{P}, \ Q\in \mc{Q}\}$ denote their join. We write $\mc{P}\leq \mc{Q}$ if $\mc{Q}$ is a refinement of $\mc{P}$, i.e., if every $Q\in \mc{Q}$ is contained, modulo null sets, in some $P\in\mc{P}$.

Suppose $\Gamma \cc ^a (X,\mu )$ and $\mc{P} = \{ P_0,\dots ,P_{k-1} \}$ is a partition of $X$. If $J$ is a finite subset of $\Gamma$ and $\tau \in k^J$ then we define
\[
P^a_\tau = \bigcap _{\gamma \in J} \gamma ^a\cdot P_{\tau (\gamma )}.
\]
We will write $P_\tau$ when the action $a$ is understood. Note that $P_\varnothing = X$. We let $\Gamma$ act on the set $\bigcup \{ k^ J \csuchthat J\subseteq \Gamma \mbox{ is finite}\}$ by shift, i.e., $(\gamma \cdot \tau )(\delta ) = \tau (\gamma ^{-1}\delta )$. Then $\dom (\gamma \cdot \tau )=\gamma\dom (\tau )$.
\end{notation}

The following lemma establishes a sufficient condition for a measure preserving action $\bm{a}$ to be weakly contained in $\mc{B}$ which will be used in the proof of Theorem \ref{thm:randshift}.  This lemma is inspired by \cite[Lemma 5]{AW11}.

\begin{lemma}\label{lem:suff}
Suppose $\bm{a}=\Gamma \cc ^a (X,\mu )$ and $\mc{B}$ is a collection of measure preserving actions of $\Gamma$. Suppose $\mc{P} ^{(0)}\leq \mc{P} ^{(1)}\leq \cdots$ is a sequence of partitions of $X$ such that the smallest $\bm{a}$-invariant measure algebra containing $\bigcup _n \mc{P}^{(n)}$ is all of $\mbox{\emph{MALG}}_\mu$. Then $\bm{a}\prec \mc{B}$ if for any $n$, writing $\mc{P}^{(n)} =\mc{P}= \{ P_0,\dots ,P_{k-1} \}$, for all finite subsets $F\subseteq \Gamma$ and all $\delta >0$, there exists some $\Gamma \cc ^b (Y,\nu )=\bm{b}\in \mc{B}$ and a partition $\mc{Q}= \{ Q_0,\dots ,Q_{k-1} \}$ of $Y$ such that for all $\tau \in k^{\subseteq F}$, $| \mu (P_\tau ) - \nu (Q_\tau ) | <\delta$.
\end{lemma}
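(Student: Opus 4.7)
The plan is to verify the definition of weak containment directly: given a finite Borel partition $\mc{R}=\{R_0,\dots,R_{m-1}\}$ of $X$, a finite $F\subseteq\Gamma$, and $\epsilon>0$, produce $\bm{b}\in\mc{B}$ and a partition $\mc{R}''$ of $Y$ matching the relevant joint moments within $\epsilon$. The idea is to replace $\mc{R}$ by a coarsening $\mc{R}'$ of $\bigvee_{\gamma\in F'}\gamma^a\mc{P}^{(n)}$ for sufficiently large $n$ and $F'$, so that all the quantities $\mu(\gamma^a R_i\cap R_j)$ become finite linear combinations of atom probabilities $\mu(P_\tau^a)$ that the hypothesis directly controls.

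First I would use the generating assumption to approximate $\mc{R}$. Since $\bigcup_n\mc{P}^{(n)}$ generates $\MALG_\mu$ as an $\bm{a}$-invariant algebra and the $\mc{P}^{(n)}$ are nested, for any $\epsilon'>0$ I can find $n$, a finite $F'\subseteq\Gamma$, and a partition $V_0,\dots,V_{m-1}$ of $k^{F'}$ (with $\mc{P}^{(n)}=\{P_0,\dots,P_{k-1}\}$) such that the sets $R_i':=\bigsqcup_{\tau\in V_i}P_\tau^a$ satisfy $\mu(R_i\triangle R_i')<\epsilon'$ for every $i$. By the triangle inequality this replaces each $\mu(\gamma^a R_i\cap R_j)$ by $\mu(\gamma^a R_i'\cap R_j')$ up to error $2\epsilon'$.

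Next, since $\gamma^a P_\tau^a=P_{\gamma\cdot\tau}^a$, the quantity $\mu(\gamma^a R_i'\cap R_j')$ expands as a sum over $(\tau,\sigma)\in V_i\times V_j$ of at most $k^{2|F'|}$ terms of the form $\mu(P_\rho^a)$ with $\rho\in k^{\subseteq F'\cup\gamma F'}$ (taking $P_\rho^a=\emptyset$ when $\gamma\cdot\tau$ and $\sigma$ disagree on $\gamma F'\cap F'$). I would then apply the hypothesis to $\mc{P}^{(n)}$ with the finite set $F'':=F'\cup\bigcup_{\gamma\in F}\gamma F'$ and with $\delta>0$ small enough that $k^{2|F'|}\delta<\epsilon'$, obtaining $\bm{b}\in\mc{B}$ and a partition $\mc{Q}=\{Q_0,\dots,Q_{k-1}\}$ of $Y$ with $|\mu(P_\tau^a)-\nu(Q_\tau^b)|<\delta$ for all $\tau\in k^{\subseteq F''}$. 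Setting $R_i'':=\bigsqcup_{\tau\in V_i}Q_\tau^b$ yields a genuine partition of $Y$ because $\{Q_\tau\}_{\tau\in k^{F'}}$ partitions $Y$ and $\{V_i\}$ partitions $k^{F'}$; term-by-term comparison of the analogous expansion of $\nu(\gamma^b R_i''\cap R_j'')$ then gives $|\mu(\gamma^a R_i'\cap R_j')-\nu(\gamma^b R_i''\cap R_j'')|<\epsilon'$. Combining with the first step and fixing $\epsilon'<\epsilon/3$ at the outset finishes the argument.

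I do not expect a serious obstacle; the argument is essentially bookkeeping in the spirit of \cite[Lemma 5]{AW11}. The only step needing a moment of care is forcing the approximating sets in the first step to form an honest partition rather than merely an approximately disjoint cover: Boolean combinations of $\gamma^a P_i$'s approximate each $R_i$ individually but the approximants need not be disjoint, so I would reassign each atom of $\bigvee_{\gamma\in F'}\gamma^a\mc{P}^{(n)}$ to the $V_i$ for which its overlap with $R_i$ is maximal, which costs only a bounded multiplicative factor in $\epsilon'$ and so is absorbed by shrinking $\epsilon'$ further.
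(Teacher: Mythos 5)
Your proposal is correct and follows essentially the same route as the paper's proof: approximate the given sets by unions of atoms $P_\tau$ of $\bigvee_{\gamma\in F'}\gamma^a\mc{P}^{(n)}$, apply the hypothesis to the enlarged finite set $F\cdot F'$ with $\delta$ beaten down by the number of atoms, and transfer via the identity $\gamma^a P_\sigma\cap P_{\sigma'}=P_{\gamma\cdot\sigma\cup\sigma'}$ (or $\emptyset$). The only cosmetic difference is your explicit reassignment step to make the approximants a genuine partition, a point the paper passes over by working with arbitrary finite families of sets rather than partitions.
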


\begin{proof}
Suppose the condition is satisfied and let $A_1,\dots ,A_m\in\mbox{MALG}_\mu$, $F_0\subseteq \Gamma$ finite with $e\in F_0$, and $\epsilon >0$ be given. Let $e\in G_0\subseteq G_1\subseteq \cdots$ be an increasing exhaustive sequence of finite subsets of $\Gamma$, and let $G_n\cdot \mc{P}^{(n)} = \bigvee _{\gamma \in G_n} \gamma ^a \cdot \mc{P}^{(n)}$. Then $G_n\cdot \mc{P}^{(n)}$, $n=0,1,2,\dots$, is a sequence of finer and finer partitions of $X$ and the algebra generated by $\bigcup _n G_n\cdot \mc{P}^{(n)}$ is dense in $\mbox{MALG}_\mu$. There exists an $N$ and $D_1,\dots ,D_m$ in the algebra generated by $G_N\cdot \mc{P}^{(N)}$ such that $\mu (A_i \Delta D_i )< \tfrac{\epsilon}{4}$ for all $i\leq m$. Let $G=G_N$ and $\mc{P}=\mc{P}^{(N)} = \{ P_0,\dots , P_{k-1} \}$.

We can express each $D_i$ as a finite disjoint union of sets of the form $P_\sigma$, $\sigma \in k^T$, i.e., $D_i = \bigsqcup \{ P_\sigma \csuchthat \sigma \in I_i \}$ for some $I_i\subseteq k^G$. Applying the condition given by the lemma to $F=F_0G$ and $0<\delta < \tfrac{\epsilon}{2k^{|G|}}$ we obtain $\Gamma \cc ^b (Y,\nu )= \bm{b}\in \mc{B}$ and a partition $\mc{Q} = \{ Q_0,\dots ,Q_{k-1} \} \subseteq \mbox{MALG}_\nu$ such that for all $\tau \in k^{\subseteq F_0G}$, $| \mu (P_\tau ) - \nu (Q_{\tau}) | <\delta$. For $i\leq m$ we let $B_i = \bigsqcup \{ Q_\sigma \csuchthat \sigma \in I_i \}$. Note that for $\gamma \in F_0$ and $\sigma ,\sigma '\in k^G$ we have $\mbox{dom}(\gamma \cdot \sigma ) =\gamma G \subseteq F_0G$ and
\[
\gamma ^a P_\sigma \cap P_{\sigma '} = P_{\gamma \cdot \sigma } \cap P_{\sigma '} =
\begin{cases}
P_{\gamma\cdot \sigma \cup \sigma '} &\mbox{ if }\gamma \cdot \sigma\mbox{ and }\sigma ' \mbox{ are compatible} \\
\varnothing &\mbox{ otherwise.}
\end{cases}
\]
Similarly $\gamma ^b\cdot Q_\sigma \cap Q_{\sigma '}$ equals either $Q_{\gamma\cdot \sigma \cup \sigma '}$ or $\varnothing$ depending on whether or not $\gamma\cdot \sigma$ and $\sigma '$ are compatible partial functions. It then follows from our choice of $F$ that $|\mu (\gamma ^a P_\sigma \cap P_{\sigma '} ) - \nu (\gamma ^b Q_\sigma \cap Q_{\sigma '})| <\delta$ for all $\sigma ,\sigma ' \in k^G$.  We now have for $i,j\leq m$ and $\gamma \in F_0$ that
\begin{align*}
|\mu (\gamma ^a A_i \cap A_j) - \mu (\gamma ^bB_i\cap B_j)| &\leq \frac{\epsilon}{2}
			+ |\mu (\bigsqcup _{\substack{\sigma \in I_i , \\ \sigma ' \in I_j}}\gamma ^a P_\sigma \cap P_{\sigma '})
						- \nu (\bigsqcup _{\substack{\sigma \in I_i, \\ \sigma ' \in I_j }}\gamma ^bQ_\sigma \cap Q_{\sigma '}) | \\
&\leq \frac{\epsilon}{2} + |I_i||I_j|\delta < \epsilon .\qedhere
\end{align*}
\end{proof}

\subsection{Independent joinings over an IRS and the proof of Theorem \ref{thm:randshift}}\label{subsection:randshift} Let $\bm{a}=\Gamma \cc ^a (Y,\nu )$ be a non-atomic measure preserving action of $\Gamma$, and let $\theta = \mbox{type}(\bm{a})$. The stabilizer map $y\mapsto \Gamma _y$ factors $\bm{a}$ onto $\bm{\theta}$ and we let $\nu = \int _H \nu _H \, d\theta$ be the corresponding disintegration of $\nu$ over $\theta$. Fix a standard probability space $(Z,\eta )$ and let $\bm{s}_{\theta ,\eta} =\Gamma \cc ^s (Z^{\leq \backslash \Gamma},\eta ^{\theta \backslash\Gamma})$ be the $\theta$-random Bernoulli shift over $(Z,\eta )$. The map $f \mapsto H_f$ factors $\bm{s}_{\theta ,\eta}$ onto $\bm{\theta}$ and the corresponding disintegration is given by $\eta ^{\theta \backslash \Gamma}= \int _H \eta ^{H\backslash \Gamma}\, d\theta$. The \emph{relatively independent joining} of $\bm{s}_{\theta ,\eta}$ and $\bm{a}$ over $\bm{\theta}$ is then the action $\Gamma \cc ^{s\times a} (Z^{\leq \backslash \Gamma}\times Y , \eta ^{\theta\backslash \Gamma} \otimes _{\bm{\theta}} \nu )$ where
\begin{align*}
{\textstyle{\eta ^{\theta\backslash \Gamma}\otimes _{\bm{\theta}} \nu = \int _H (\eta ^{H\backslash \Gamma} \times \nu _H )\, d\theta
= \int _H (\eta ^{H\backslash \Gamma}\times \int _{\{ y:\Gamma _y = H\} }\updelta _y \, d\nu _H(y))\, d\theta = \int _y (\eta ^{\Gamma _y\backslash \Gamma}\times \updelta _y)\, d\nu .}}
\end{align*}
It is clear that $\eta ^{\theta\backslash \Gamma}\otimes _{\bm{\theta}} \nu$ concentrates on the set $Z^{\leq \backslash \Gamma}\otimes _a Y = \{ (f,y ) \csuchthat H_f = \Gamma _y \}$. We write $\bm{b}=\Gamma \cc ^{b} (X ,\mu )$ for $\Gamma \cc ^{s\times a} (Z^{\leq \backslash \Gamma}\otimes _a Y , \eta ^{\theta \backslash \Gamma} \otimes _{\bm{\theta}} \nu )$, so that $b=s\times a$, $X = Z^{\leq\backslash \Gamma}\otimes _a Y$, and
\[
\mu = \int _{y\in Y} \eta ^{\Gamma _y \backslash \Gamma} \times \updelta _y \, d\nu (y) .
\]
Theorem \ref{thm:randshift} then says that $\bm{b}$ is weakly equivalent to $\bm{a}$.

\begin{proof}[Proof of Theorem \ref{thm:randshift}]
It suffices to show that $\bm{b}\prec \bm{a}$. Let $\mc{N}^{(0)}\leq \mc{N}^{(1)}\leq \cdots$ and $\mc{R}^{(0)}\leq \mc{R}^{(1)}\leq \cdots$ be sequences of finite partitions of $Z$ and $Y$ respectively such that $\bigcup _n \mc{N}^{(n)}$ generates $\mbox{MALG}_\eta$ and $\bigcup _n \mc{R}^{(n)}$ generates $\mbox{MALG}_\nu$ (for example if $Z=Y= 2^\N$ then we can let $\mc{N}^{(n)}=\mc{R}^{(n)}$ consist of the rank $n$ basic clopen sets). For each $\gamma \in \Gamma$ let $\pi _\gamma : X \ra Z$ be the projection $\pi _\gamma (f,y) = f(\Gamma _y \gamma )$ and define the finite partitions $\mc{S}^{(0)}\leq \mc{S}^{(1)}\leq \cdots$ of $X$ by
\[
\mc{S}^{(n)} = \{ \pi _e ^{-1}(N)\csuchthat N \in \mc{N}^{(n)} \} .
\]
For $A\subseteq Y$ let $\tilde{A}\subseteq X$ denote the inverse image of $A$ under the projection map $(f,y)\mapsto y \in Y$ and define
\[
\tilde{\mc{R}}^{(n)} = \{ \tilde{R}\csuchthat R\in\mc{R}^{(n)} \} .
\]
Then the smallest $\bm{b}$-invariant measure algebra containing the partitions $\mc{P} ^{(n)} = \mc{S} ^{(n)}\vee \tilde{\mc{R}}^{(n)}$, $n\in \N$ of $X$ is all of $\mbox{MALG}_{\mu}$.  Fix $n$, define $\mc{N}=\mc{N}^{(n)} = \{ N_0,\dots , N_{d-1} \}$ and for $i<d$ define
\begin{align*}
S_i&=\pi _e^{-1}(N_i) \\ 
\alpha _i &=\mu (S_i)=\eta (N_i) 
\end{align*}
along with
\begin{align*}
\mc{S} &= \mc{S}^{(n)}=\{ S_0,\dots ,S_{d-1} \} \\
\mc{R} &= \mc{R}^{(n)} = \{ R_0,\dots ,R_{k-1} \} \\
\mc{P} &= \mc{P}^{(n)} = \{ P_{i,j} = S_i\cap \tilde{R}_j \csuchthat i< d , \, j< k \} .
\end{align*}
For $F\subseteq \Gamma$ finite we naturally identify $(d\times k)^{\subseteq F}$ with $\bigcup _{J\subseteq F} d^J \times k^J$. Under this identification, for $J\subseteq F$ and $(\tau, \sigma ) \in d^{J}\times k^{J}$ we have
\begin{align*}
P^b_{(\tau ,\sigma )} = \bigcap _{\gamma \in J} \gamma ^{s\times a}P_{\tau (\gamma ), \sigma (\gamma )} &= \bigcap _{\gamma \in J}\big( \gamma ^{s\times a}S_{\tau (\gamma )} \cap \gamma ^{s\times a} \tilde{R}_{\sigma (\gamma )} \big) \\
&= \big( \bigcap _{\gamma \in J}\gamma ^{s\times a} S_{\tau (\gamma )}\big) \cap \big( \bigcap _{\gamma \in J} \gamma ^{s\times a} \tilde{R}_{\sigma  (\gamma )}\big) = S^b_{\tau }\cap \tilde{R}^b_{\sigma } .
\end{align*}
By Lemma \ref{lem:suff}, to show that $\bm{b}\prec \bm{a}$ it suffices to show that for every $F\subseteq \Gamma$ finite, and $\epsilon >0$, there exists a partition $\mc{Q}=\{ Q_{i,j}\csuchthat i<d, \ j\leq k \}$ of $Y$ such that for all $J\subseteq F$, $(\tau , \sigma ) \in d^{J}\times k^{J}$
\[
|\mu (S_{\tau } \cap \tilde{R}_{\sigma }) - \nu (Q_{(\tau ,\sigma )}) | <\epsilon .
\]
Fix such an $F\subseteq \Gamma$ finite and $\epsilon >0$. We will proceed by finding a partition $\mc{T} = \{ T_0,\dots , T_{d-1} \}$ of $Y$ and then taking $Q_{i,j} = T_i\cap R_j$, in which case we will have $Q_{(\tau , \sigma )} = (\bigcap _{\gamma \in J} \gamma ^a T_{\tau (\gamma )} )\cap (\bigcap _{\gamma \in J} \gamma ^a R_{\sigma (\gamma )}) = T_{\tau }\cap R_{\sigma }$. We are therefore looking for a partition $\mc{T}$ so that
\begin{equation}\label{condition}
\forall (\tau ,\sigma ) \in (d\times k)^{\subseteq F} \  \, |\mu (S_\tau \cap \tilde{R}_\sigma ) - \nu (T_{\tau }\cap R_{\sigma })| <\epsilon .
\end{equation}
We first calculate the value of $\mu (S_\tau \cap \tilde{A})$ for $\tau \in d ^J$ ($J\subseteq F$) and $A\subseteq Y$. Let $\mc{E}_J$ denote the finite collection of all equivalence relations on the set $J$. For $E\in \mc{E}_J$ let us say that $\tau \in d^J$ \emph{respects} $E$, written $\tau \ll E$, if $\tau$ is constant on each $E$-equivalence class. For a subgroup $H\leq \Gamma$ let $E_J(H) \in \mc{E}_J$ denote the equivalence relation determined by $tE_J(H)s$ if and only if $Ht=Hs$ (if and only if $t^{-1}H = s^{-1}H$). We write $E_J(y)$ for $E_J(\Gamma _y)$. For each $E\in \mc{E}_J$ we fix a transversal $T_E \subseteq J$ for $E$. We then have
\begin{align}\label{eqn:value}
\mu &(S_\tau \cap \tilde{A})
= \int _{y\in A} \eta ^{\Gamma _y\backslash \Gamma}\big( \{ f \in Z^{\Gamma _y\backslash \Gamma} \csuchthat \forall t \in J \ (f(\Gamma _y t ) \in N_{\tau (t)}) \} \big) \, d\nu (y)  \nonumber \\
&= \sum _{\{ E\in \mc{E}_J \csuchthat \tau \ll E \} }
		\int _{\{ y\in A\csuchthat E_J(y)=E \} } \eta ^{\Gamma _y \backslash \Gamma} \big( \{ f \in Z^{\Gamma _y\backslash \Gamma} \csuchthat \forall t \in T_E \ (f(\Gamma _y t) \in N_{\tau (t)} ) \} \big) \, d\nu (y) \nonumber \\
&= \sum _{\{ E\in \mc{E}_J \csuchthat \tau \ll E \} } \nu (A\cap \{ y\csuchthat E_J(y)=E \} )\prod _{t \in T_E} \alpha _{\tau (t)}
\end{align}

We now proceed as in the proof of \cite[Theorem 1]{AW11}. Without loss of generality $Y$ is a compact metric space with compatible metric $d_Y\leq 1$. 
Fix some $\epsilon _0>0$ such that $\epsilon _0 ^{1/2}< \frac{\epsilon}{2(dk)^{|F|/2}2^{|F|+1}}$. For $\delta \geq 0$ define the sets
\begin{align*}
D_\delta &= \{ y\in Y\csuchthat \forall s, t\in F \ (t^{-1}y\neq s^{-1}y \Ra d_Y(t ^{-1} y,s ^{-1} y)>\delta ) \}  \\
E_\delta &= \{ (y,y')\in D_\delta \times D_\delta \csuchthat \forall s, t\in F \ (d_Y(s^{-1}y, t^{-1}y') >\delta ) \} .
\end{align*}
Then $\nu (D_0)= 1$ by definition, and $\nu ^2(E_0) = 1$ since $\nu$ is non-atomic. Thus there exists $\delta >0$ such that $\nu (D_\delta )> 1- \tfrac{\epsilon _0}{4|\mc{E}_F|}$ and $\nu ^2(E_\delta )> 1-\tfrac{\epsilon _0 }{4|\mc{E}_F|^2}$.

Fix a finite Borel partition $\{ O_m \csuchthat 1\leq m \leq M \}$ of $Y$ with $\mbox{diam}(O_m ) <\delta$ for each $m$.  For $y\in Y$ let $\alpha (y) = m$ if and only if $y\in O_m$.  Let $(\Omega , \P ) =  ( d ^M , \rho ^M )$ and let $Y_m (\omega ) = \omega (m)$, so that $\{ Y_m\csuchthat 1\leq m\leq M \}$ are i.i.d. random variables. For $\omega \in \Omega$ and $i=0,\dots ,d-1$ define
\[
T_i (\omega ) =  \{ y\in Y\csuchthat \omega (\alpha (y))= i \} .
\]
Then each $\omega \in \Omega$ defines the partition $\mc{T}(\omega ) = \{ T_0(\omega ),\dots ,T_{d-1}(\omega ) \}$ of $Y$. Let ${T}_i = \{ (\omega ,y)\csuchthat y\in T_i(\omega )\}$ and let ${T}_\tau = \{ (\omega ,y) \in \Omega \times Y \csuchthat y\in T_\tau (\omega ) = \bigcap _{t\in J} t^a\cdot (T_{\tau (t)}(\omega )) \}$, $\tau \in d^{\subseteq F}$.  We view $\mc{T}$ as a ``random partition'' of $Y$. We let $\Gamma$ act on $\Omega$ trivially so that, e.g., $\gamma\cdot (T_\tau (\omega )) = (\gamma \cdot T_\tau )(\omega )$, and for $B\subseteq \Omega \times Y$ and $y\in Y$ we let $B^y$ denote the section $B^y = \{ \omega\csuchthat (\omega ,y)\in B \}$. We show that $\mc{T}$ satisfies (\ref{condition}) with high probability.

Fix now some $A\subseteq Y$ and $\tau \in d^J$, $J\subseteq F$. Note that if $y\in Y$ and $\tau$ does not respect $E_J(y)$ then there exist $t,s\in J$ with $t^{-1}y = s^{-1}y$ and $\tau (t)\neq \tau (s)$, so that $(T_{\tau (t)})^{t^{-1}y} \cap (T_{\tau (s)})^{s^{-1}y} =\emptyset$ and thus $(T_{\tau})^y= \bigcap _{t\in J} (t\cdot T_{\tau (t)})^y = \bigcap _{t\in J}(T_{\tau (t)})^{t^{-1}y}=\emptyset$. It follows that the expected measure of ${T}_\tau (\omega )\cap A$ is
\begin{align}\label{expectation}
\E [\nu (T_\tau (\omega )\cap A)] &= \textstyle{\int _{A} \Big( \int _\Omega 1_{{T}_\tau}(\omega , y) \, d\P (\omega ) \Big) \, d\nu (y)} \nonumber  \\
& = \textstyle{\int _A \P ((T_\tau )^y)\, d\nu (y) \nonumber =\sum _{\{ E\in \mc{E}_J \csuchthat \tau \ll E \} } \int _{\{ y \in A \csuchthat E_J(y)=E \} } \P ((T_\tau )^y )\, d\nu (y)} \nonumber \\
&= \textstyle{\sum _{\{ E\in \mc{E}_J \csuchthat \tau \ll E \} } \Big( \int _{\{ y\in A\cap D_\delta \csuchthat E_J(y)=E \} } \P ((T_\tau )^y) \, d\nu \Big) + \int _{A\setminus D_\delta} \P ((T_\tau )^y) \, d\nu .}
\end{align}
Fix some $E\in \mc{E}_J$ with $\tau \ll E$ and some $y\in D_\delta$ with $E_J(y)=E$. For $t,s\in J$, if $t$ and $s$ are not $E$-related then $t^{-1}y\neq s^{-1}y$ and so $d_Y(t^{-1} y , s^{-1} y) > \delta$. It follows that $O_{\alpha (t^{-1} y)}\neq O_{\alpha (s^{-1} y)}$ since each $O_\alpha$ has diameter smaller than $\delta$.  So as $t$ ranges over $T_E$, the numbers $\alpha (t^{-1}y)$ are all distinct and the variables $Y_{\alpha (t^{-1}y)} : \omega \mapsto \omega (\alpha (t^{-1}y))$, $t\in T_E$, are therefore independent. We have $t^{-1}y \in T_{\tau (t)}(\omega )$ if and only if $\omega (\alpha (t^{-1}y )) =\tau (t)$, so the sets $(t \cdot T_{\tau (t)})^y = (T_{\tau (t)})^{t^{-1}y}$, $t\in T_E$, are all independent. If $tEs$ then as $\tau \ll E$ we have that $(T_{\tau (t)})^{t^{-1}y} = (T_{\tau (s)})^{s^{-1}y}$. It follows that
\begin{align} \label{fixedy}
& \textstyle{\P ((T_\tau )^y) = \P \big( \bigcap _{t\in J} (t\cdot T_{\tau (t)})^y \big) = \prod _{t \in T_E} \P ((T_{\tau (t)})^{t^{-1}y}) = \prod _{t\in T_E}\alpha _{\tau (t)} .}
\end{align}
Continuing the computation, the second integral in (\ref{expectation}) is no greater than $\nu (A\setminus D_\delta ) < \tfrac{\epsilon _0}{4}$ and $\nu (A\cap D_\delta \cap \{ y\csuchthat E_J(y)=E \} )$ is within $\tfrac{\epsilon _0}{4 |\mc{E}_F|}$ of $\nu (A\cap \{ y\csuchthat E_J(y)=E \} )$, so after summing over all $E\in \mc{E}_J$ we see that (\ref{expectation}) is within $\tfrac{\epsilon _0}{2}$ of (\ref{eqn:value}), i.e.,
\begin{equation}\label{expvalue}
\big| \E [\nu (T_\tau (\omega )\cap A) ] - \mu(S_\tau \cap \tilde{A}) \big| < \frac{\epsilon _0}{2} .
\end{equation}

Now we compute the second moment of $\nu (T_\tau (\omega ) \cap A)$.
\begin{align}\label{variance}
\textstyle{\E [ \nu (T_\tau (\omega ) \cap A) ^2 ]} &=\textstyle{\int _\Omega \big( \int _{y\in A} 1_{T_\tau }(\omega ,y) \, d\nu (y) \big)\big( \int _{y'\in A} 1_{B_\tau }(\omega ,y') \, d\nu (y') \big) \, d\P }\nonumber \\
&=\textstyle{\int _{(y,y')\in A\times A} \big( \int _\Omega 1_{T_\tau }(\omega ,y)1_{T_\tau }(\omega ,y')\, d\P \big) \, d\nu ^2}\nonumber \\
&=\textstyle{ \int _{(y,y')\in A\times A} \P ((T_\tau )^y \cap (T_\tau )^{y'}) \, d\nu ^2}
\end{align}
For $(y,y')\in E_\delta$, if $t, s \in J$ then $d_Y(t^{-1} y, s^{-1} y' ) > \delta$, so that $O_{\alpha (t^{-1}y)}$ and $O_{\alpha (s^{-1}x' )}$ are disjoint. It follows that the two events $\{ \omega \csuchthat \forall t\in J \, (Y_{\alpha (t^{-1}y)}(\omega ) = \tau (t) )\} =\bigcap _{t\in J} (T_{\tau (t)})^{t^{-1}y} = (T_\tau )^y$ and $\{ \omega \csuchthat \forall s\in J \, (Y_{\alpha (s^{-1}y')}(\omega )= \tau (s)) \} =\bigcap _{s\in J} (T_{\tau (s)})^{s^{-1}y} =(T_{\tau} )^{y'}$ are independent. We obtain that the part of (\ref{variance}) integrated over $(A\times A)\cap E_\delta$ is equal to
\begin{align}
&\textstyle{\int _{(y,y')\in (A\times A) \cap E_\delta} \P ((T_\tau )^y \cap (T_\tau )^{y'}) \, d\nu ^2  = \int _{(y,y')\in (A\times A) \cap E_\delta} \P ((T_\tau )^y) \P ((T_\tau )^{y'})\, d\nu ^2} \nonumber \\
= \textstyle{\sum}&_{\tau \ll E,E' \in \mc{E}_J} \textstyle{\nu ^2 ( (A\times A)\cap E_\delta \cap \{ (y,y'): E_J(y)=E,\, E_J(y')=E' \} )\prod _{t\in T_E}\alpha _{\tau (t)}\prod _{s\in T_{E'}}\alpha _{\tau (s)} }\nonumber
\end{align}
where we used the fact that $E_\delta \subseteq D_\delta \times D_\delta$ along with the known values from (\ref{expectation}) and (\ref{fixedy}). The part of (\ref{variance}) integrated over $(A\times A)\setminus E_\delta$ is no greater than $\tfrac{\epsilon _0}{4}$, and for each pair $E,E' \in \mc{E}_J$ with $\tau \ll E,E'$, the value of $\nu ^2( (A\times A)\cap E_\delta \cap \{ (y,y')\csuchthat E_J(y)=E,\, E_J(y')=E' \} )$ is within $\tfrac{\epsilon _0}{4|\mc{E}_F|^2}$ of $\nu (A\cap \{ y\csuchthat E_J(y)=E \} )\nu (A\cap \{ y'\csuchthat E_J(y')=E' \} )$. Summing over all such $E,E'\in \mc{E}_J$ we obtain that (\ref{variance}) is within $\tfrac{\epsilon _0}{2}$ of the square of (\ref{eqn:value}), i.e.,
\begin{equation}\label{secvalue}
\big| \E [ \nu (T_\tau (\omega ) \cap A) ^2 ] - \mu(S_\tau \cap \tilde{A})^2 \big| < \frac{\epsilon _0}{2} .
\end{equation}
From (\ref{expvalue}) and (\ref{secvalue}) it follows that the variance of $\nu (T_\tau (\omega ) \cap A)$ is no greater than $\epsilon _0$. By Chebyshev's inequality we then have
\begin{align}
\P &\textstyle{\big( |\nu (T_\tau (\omega ) \cap A) - \mu(S_\tau \cap \tilde{A}) | \geq \epsilon \big) \leq \P \big( |\nu (T_\tau (\omega )\cap A) - \E [\nu (T_\tau (\omega )\cap A)] | \geq \tfrac{\epsilon}{2}\big) } \nonumber \\
&\textstyle{\leq \P \big( |\nu (T_\tau (\omega )\cap A) - \E [\nu (T_\tau (\omega )\cap A)] | \geq (kd)^{|F|/2}2^{|F|+1}\epsilon _0^{1/2} \big) \leq \frac{1}{(kd)^{|F|}2^{2|F|+2}}} \nonumber
\end{align}
and since this is true for each $\tau \in d^{\subseteq F}$ and $|d^{\subseteq F}|\leq 2^{|F|}d^{|F|}$, we find that
\[
\P \big( \exists \tau \in d^{\subseteq F} \ ( |\nu (T_\tau (\omega ) \cap A) - \mu(S_\tau \cap \tilde{A})| \geq \epsilon ) \big) \leq \frac{1}{2^{|F|+2}k^{|F|}} .
\]
Since $A\subseteq Y$ was arbitrary, this is in particular true for each $A=R_\sigma$, $\sigma \in k^{\subseteq F}$, so that
\[
\P \big( \exists \tau \in d^{\subseteq F}, \, \sigma \in k^{\subseteq F} \ ( |\nu (T_\tau (\omega ) \cap R_\sigma) - \mu ( S_\tau \cap \tilde{R}_\sigma )| > \epsilon ) \big) \leq \frac{1}{4} .
\]
So taking any $\omega _0$ in the complement of the above set, we obtain a partition $\mc{T} = \mc{T}(\omega _0)$ satisfying (\ref{condition}).
\end{proof}

Theorem \ref{thm:randshift} shows that among all non-atomic weak equivalence classes of type $\theta$ there is a least, in the sense of weak containment. Namely $\bm{s}_{\theta ,\lambda}$ where $\lambda$ is Lebesgue measure on $[0,1]$. We note that there is also a greatest.

\begin{theorem}\label{thm:greatest}
Let $\theta \in \mbox{\emph{IRS}}(\Gamma )$. Then there exists a measure preserving action $\bm{a}_\theta$ of $\Gamma$ with $\mbox{type}(\bm{a}_\theta ) =\theta$ such that for all measure preserving actions $\bm{b}$ of $\Gamma$, if $\mbox{type}(\bm{b})=\theta$ then $\bm{b}\prec \bm{a}_\theta$.
\end{theorem}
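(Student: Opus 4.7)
The plan is to construct $\bm{a}_\theta$ as a countable relatively independent joining, over the common factor $\bm{\theta}$, of a dense sequence of type-$\theta$ actions on a fixed non-atomic base space, in analogy with the standard construction of universal actions but with every joining performed relative to $\bm{\theta}$ so that the type is preserved.

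First I would reduce to the case of actions on a fixed non-atomic standard probability space $(X,\mu )$. If $\bm{b}$ is any measure preserving action of type $\theta$, then the product $\bm{\iota}\times \bm{b}$ is non-atomic, has exactly the same stabilizer map as $\bm{b}$ (since $\iota$ fixes every point), and therefore still has type $\theta$; and $\bm{b}\sqsubseteq \bm{\iota}\times \bm{b}$, so $\bm{b}\prec \bm{\iota}\times \bm{b}$. Thus it suffices to find an $\bm{a}_\theta$ of type $\theta$ that weakly contains every type-$\theta$ action on $(X,\mu )$.

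Next I would invoke the earlier result that $A_\theta (\Gamma ,X,\mu ):=\{ \bm{b}\in A(\Gamma ,X,\mu )\csuchthat \mbox{type}(\bm{b})=\theta \}$ is a $G_\delta$, hence Polish, subspace of $A(\Gamma ,X,\mu )$, and fix a countable dense sequence $\{ \bm{b}_n \}_{n\in \N}$ in it. Each $\bm{b}_n$ factors onto $\bm{\theta}$ via its stabilizer map $\pi _n(y)=\Gamma _y^{b_n}$; disintegrating $\mu$ along $\pi _n$ gives $\mu = \int _H \mu _H^{(n)}\, d\theta (H)$, where $\mu _H^{(n)}$ concentrates on $\{ y\csuchthat \Gamma _y^{b_n}=H \}$. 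I then define $\bm{a}_\theta$ as the diagonal action on $(X^\N , \nu _\theta )$, where
\[
\nu _\theta = \int _H \prod _{n\in \N} \mu _H^{(n)}\, d\theta (H);
\]
invariance follows from $\gamma _*^{b_n}\mu _H^{(n)} = \mu _{\gamma H\gamma ^{-1}}^{(n)}$ combined with the conjugation-invariance of $\theta$. Under $\nu _\theta$, almost surely every coordinate $y_n$ lies in the fiber over a \emph{common} $H$, so $\Gamma _{y_n}^{b_n}=H$ for every $n$, and hence the joint stabilizer collapses to $\bigcap _n \Gamma _{y_n}^{b_n}=H$. Pushing forward under the stabilizer map, this gives $\mbox{type}(\bm{a}_\theta )=\theta$.

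Universality is then immediate: each coordinate projection $(y_n)_n \mapsto y_m$ is a factor map $\bm{a}_\theta \ra \bm{b}_m$, so $\bm{b}_m \prec \bm{a}_\theta$ for every $m$, and by density together with Proposition \ref{lem:10.1} every $\bm{b}\in A_\theta (\Gamma ,X,\mu )$ satisfies $\bm{b}\prec \{ \bm{b}_n \}_{n\in \N}\prec \bm{a}_\theta$. Combined with the first step, this yields the full theorem. The main obstacle I expect is the type computation: one must justify the countable relatively independent joining as an honest Borel probability measure on $X^\N$, check measurability of $H\mapsto \mu _H^{(n)}$ uniformly in $n$ so that the iterated integral and the invariance calculation make sense, and verify that the full-measure conclusion $\Gamma _{y_n}^{b_n}=H$ survives the countable intersection $\bigcap _n$. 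Once these measure-theoretic details are in place, the factor-map property of the projections and the density argument reduce the weak containment conclusion to routine invocations of earlier results.
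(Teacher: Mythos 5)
Your proposal is correct and follows essentially the same route as the paper's own proof: reduce to a fixed non-atomic space via $\bm{\iota}\times\bm{b}$ (which preserves type), take a countable dense subset of the type-$\theta$ actions, form their relatively independent joining over the common factor $\bm{\theta}$, observe that the joint stabilizer almost surely equals the common fiber subgroup $H$ so the type is $\theta$, and conclude universality from the coordinate projections together with Proposition \ref{lem:10.1}. The measure-theoretic points you flag (measurability of $H\mapsto\mu_H^{(n)}$ and the countable relative product) are standard and are likewise taken for granted in the paper.
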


\begin{proof}
Let $(Y,\nu )$ be a non-atomic standard probability space. If $\bm{b}$ is any measure preserving action of $\Gamma$ of type $\theta$ then $\bm{\iota}\times\bm{b}$ is also of type $\theta$, weakly contains $\bm{b}$, and is isomorphic to an element of $A(\Gamma ,Y,\nu )$. It thus suffices to show there is an action $\bm{a}_\theta$ of type $\theta$ that weakly contains every element in the set $\mc{A}_\theta = \{ \bm{a}\in A(\Gamma ,Y,\nu )\csuchthat \mbox{type}(\bm{a}) = \theta \}$. 

Let $\{ \bm{a}_n \} _{n\in \N}$ be a countable dense subset of $\mc{A}_\theta$. For each $n$ the stabilizer map $y\mapsto \mbox{stab}_{a_n}(y) = \{ \gamma \in \Gamma \csuchthat \gamma ^{a_n}y= y \}$ factors $\bm{a}_n$ onto $\bm{\theta}$. Let $\bm{a}_\theta$ denote the relatively independent joining of the actions $\bm{a}_0,\bm{a}_1,\bm{a}_2,\dots$ over the common factor $\bm{\theta}$, i.e., $\bm{a}_\theta = \Gamma \cc ^{\prod _n a_n}(Y ^\N , \nu _\theta )$ where the measure $\nu _\theta$ has each marginal equal to $\nu$ and concentrates on the set $\{ (y_0,y_1,y_2,\dots )\in Y^\N \csuchthat\forall n \, (\mbox{stab}_{a_n}(y_n)=\mbox{stab}_{a_0}(y_0) )\}$. Then for $\nu _\theta$-almost every $(y_0,y_1,\dots ) \in Y^\N$ we have $\mbox{stab}_{\prod _n a_n}((y_0,y_1,\dots )) = \mbox{stab}_{a_0}(y_0)$, from which it follows that $\mbox{type}(\bm{a}_\theta )= \theta$. Since $\bm{a}_n\sqsubseteq \bm{a}_\theta$ for all $n$ the set $\{ \bm{a} \in \mc{A}_\theta \csuchthat \bm{a}\prec \bm{a}_\theta\}$ is dense in $\mc{A}_\theta$ so by Lemma \ref{lem:10.1} $\bm{a}_\theta$ weakly contains every element of $\mc{A}_\theta$.
\end{proof}

\section{Non-classifiability} \label{section5}

\subsection{Non-classifiability by countable structures of $\cong$, $\cong ^w$, and $\cong ^\U$ on free weak equivalence classes}\label{subsection:freeunclass}

\begin{definition}Let $E$ and $F$ be equivalence relations on the standard Borel spaces $X$ and $Y$ respectively.
\begin{enumerate}
\item A \emph{homomorphism} from $E$ to $F$ is a map $\psi :X\ra Y$ such that $xEy \Ra \psi (x)F\psi (y)$.
\item A \emph{reduction} from $E$ to $F$ is a map $\psi :X\ra Y$ such that $xEy \IFF \psi (x)F\psi (y)$.
\item $E$ is said to \emph{admit classification by countable structures} if there exists a countable language $\mc{L}$ and a Borel reduction from $E$ to isomorphism $\cong _{\mc{L}}$ on $X_\mc{L}$, where $X_\mc{L}$ is the space of all $\mc{L}$-structures with universe $\N$.
\item Suppose that the space $X$ is Polish. 
We say that $E$ is \emph{generically $F$-ergodic} if for every Baire measurable homomorphism $\psi$ from $E$ to $F$, there exists some $y\in Y$ such that $\psi ^{-1}([y]_F)$ is comeager.
\end{enumerate}
\end{definition}

The proof of the following lemma is clear.

\begin{lemma}\label{lem:generg}
Let $F_1$ and $F_2$ be equivalence relations on the standard Borel spaces $Y_1$ and $Y_2$ respectively, and let $E$ be an equivalence relation on the Polish space $P$. Suppose that $E$ is generically $F_2$-ergodic and that there exists a Borel reduction from $F_1$ to $F_2$. Then $E$ is generically $F_1$-ergodic.
\end{lemma}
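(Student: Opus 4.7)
The plan is to directly chase definitions, using the reduction to push a Baire measurable homomorphism into $E$-$F_1$ through to a homomorphism into $E$-$F_2$, and then pull back a large set.

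Fix a Baire measurable homomorphism $\psi : P \to Y_1$ from $E$ to $F_1$; the goal is to produce $y_1 \in Y_1$ such that $\psi^{-1}([y_1]_{F_1})$ is comeager in $P$. Let $\theta : Y_1 \to Y_2$ be the given Borel reduction from $F_1$ to $F_2$. First I would check that $\theta \circ \psi : P \to Y_2$ is a Baire measurable homomorphism from $E$ to $F_2$: the homomorphism property is immediate since $xEx'$ implies $\psi(x)F_1\psi(x')$, which by the reduction property gives $\theta(\psi(x)) F_2 \theta(\psi(x'))$; Baire measurability follows because $\theta^{-1}$ of a Borel set is Borel and $\psi^{-1}$ of a Borel set has the Baire property.

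Next I would invoke the hypothesis that $E$ is generically $F_2$-ergodic, applied to $\theta \circ \psi$, to obtain $y_2 \in Y_2$ such that
\[
C \;=\; (\theta \circ \psi)^{-1}([y_2]_{F_2}) \;=\; \{ x \in P \csuchthat \theta(\psi(x)) \, F_2 \, y_2 \}
\]
is comeager in $P$. Since $P$ is Polish (hence nonempty as a Baire space), $C$ is nonempty; pick any $x_0 \in C$ and set $y_1 := \psi(x_0) \in Y_1$.

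Finally I would show $C \subseteq \psi^{-1}([y_1]_{F_1})$, which completes the proof. Indeed, for any $x \in C$ we have $\theta(\psi(x)) \, F_2 \, y_2$ and $\theta(\psi(x_0)) \, F_2 \, y_2$, so by transitivity and symmetry $\theta(\psi(x)) \, F_2 \, \theta(y_1)$; the reduction property of $\theta$ then yields $\psi(x) \, F_1 \, y_1$. Hence $\psi^{-1}([y_1]_{F_1}) \supseteq C$ is comeager, as required. There is no real obstacle here — the only mild point is verifying that composing a Borel map after a Baire measurable map stays Baire measurable, which is routine.
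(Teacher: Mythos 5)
Your proof is correct and is exactly the argument the paper has in mind (the paper simply declares the proof "clear" and omits it): compose the homomorphism with the reduction, apply generic $F_2$-ergodicity, and use the backward direction of the reduction to pull the comeager preimage back to an $F_1$-class. No gaps.
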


Since the orbit equivalence relation associated to a generically turbulent Polish group action is generically $\cong _{\mc{L}}$-ergodic for all countable languages $\mc{L}$ (\cite{Hj00}), Lemma \ref{lem:generg} immediately implies the following.

\begin{lemma} \label{cor:generg}
Let $\EuScript{G}$ be a Polish group and let $P$ be a generically turbulent Polish $\EuScript{G}$-space with corresponding orbit equivalence relation $E^P_{\EuScript{G}}$. Let $F$ be an equivalence relation on a standard Borel space $Y$ and suppose that $E^P_{\EuScript{G}}$ is not generically $F$-ergodic. Then $F$ does not admit classification by countable structures.
\end{lemma}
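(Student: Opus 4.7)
The plan is to prove the corollary by contraposition, leveraging the preceding Lemma \ref{lem:generg} together with Hjorth's theorem on generic turbulence cited in the paragraph just before the statement. Specifically, the claim to establish is: if $F$ \emph{does} admit classification by countable structures, then $E^P_{\EuScript{G}}$ is generically $F$-ergodic.

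I would carry out the argument in the following short sequence. First, unpack the hypothesis: assume $F$ admits classification by countable structures, so by definition there is a countable language $\mathcal{L}$ and a Borel reduction $r:Y\to X_{\mathcal{L}}$ from $F$ to the isomorphism relation $\cong_{\mathcal{L}}$. Second, invoke the quoted theorem of Hjorth \cite{Hj00} stating that the orbit equivalence relation $E^P_{\EuScript{G}}$ of a generically turbulent Polish $\EuScript{G}$-action is generically $\cong_{\mathcal{L}}$-ergodic for every countable language $\mathcal{L}$; in particular this holds for the language $\mathcal{L}$ produced in the first step. Third, apply Lemma \ref{lem:generg} with $E = E^P_{\EuScript{G}}$, $F_1 = F$, and $F_2 = \cong_{\mathcal{L}}$: since $E$ is generically $F_2$-ergodic and there is a Borel (hence Baire measurable) reduction from $F_1$ to $F_2$, the lemma yields that $E$ is generically $F_1$-ergodic, i.e., $E^P_{\EuScript{G}}$ is generically $F$-ergodic. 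This contradicts the hypothesis of the corollary, so $F$ cannot admit classification by countable structures.

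There is essentially no obstacle here: the corollary is a direct packaging of Lemma \ref{lem:generg} with Hjorth's turbulence theorem, and the only substantive content is making sure the Borel reduction from $F$ to $\cong_{\mathcal{L}}$ produced by the definition of classification by countable structures is fed into Lemma \ref{lem:generg} in the correct direction (reduction from $F_1$ to $F_2$, with $E$ known to be generically $F_2$-ergodic). Since the proof is so short, I would simply write it out in full rather than sketch it, and it would serve as a convenient restatement that is then applied in the proof of Theorem \ref{thm:classif}.
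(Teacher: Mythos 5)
Your argument is correct and is exactly the one the paper intends: the paper derives the lemma by noting that generic turbulence gives generic $\cong_{\mathcal{L}}$-ergodicity for every countable language (Hjorth) and that Lemma \ref{lem:generg} then "immediately implies" the statement, which is precisely your contrapositive application of Lemma \ref{lem:generg} with $F_1=F$ and $F_2=\cong_{\mathcal{L}}$. No gaps; the direction of the reduction is handled correctly.
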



Let $\H$ be an infinite dimensional separable Hilbert space and let $\U (\H )$ denote the unitary group of $\H$ which is a Polish group under the strong operator topology. The group $\U (\H)$ acts on $\U (\H )^\Gamma$ by conjugation on each coordinate and we may view the space $\mbox{Rep} (\Gamma ,\H )$ of all unitary representations of $\Gamma$ on $\H$ as an invariant closed subspace of $\U (H) ^\Gamma$, so that it is a Polish $\U (\H )$-space. We call the corresponding orbit equivalence relation on $\mbox{Rep}(\Gamma ,\H )$ \emph{unitary conjugacy} and if $\pi _1$ and $\pi _2$ are in the same unitary conjugacy class then we say that $\pi _1$ and $\pi _2$ are \emph{unitarily conjugate} and write $\pi _1\cong \pi _2$. Let $\lambda _\Gamma : \Gamma \ra \U (\ell _2 (\Gamma ))$ denote the left regular representation of $\Gamma$ and let $\mbox{Rep}_\lambda (\Gamma ,\H )$ be the set of unitary representations of $\Gamma$ on $\U (\H )$ that are weakly contained in $\lambda _\Gamma$. Then $\mbox{Rep}_\lambda (\Gamma ,\H )$ is also a Polish $\U (\H )$ space, being an invariant closed subspace of $\mbox{Rep} (\Gamma ,\H )$.

The following lemma is proved in the same way as \cite[Lemma 2.4]{KLP10}, using that the reduced dual $\hat{\Gamma}_\lambda$, which may be identified with the spectrum of the reduced $C^*$-algebra $C^*_\lambda (\Gamma )$, contains no isolated points (\cite[3.2]{KLP10}).

\begin{lemma}\label{KLPlem2.4}
Let $\kappa$ be a unitary representation of $\Gamma$ on $\H$. Then the set $\{ \pi \in \mbox{\emph{Rep}}_\lambda (\Gamma ,\H ) \csuchthat \pi \perp \kappa \}$ is dense $G_\delta$ in $\mbox{\emph{Rep}}_\lambda (\Gamma ,\H )$.
\end{lemma}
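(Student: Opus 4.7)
The plan is to adapt the argument of \cite[Lemma 2.4]{KLP10}, whose essential input is the perfectness of the reduced dual $\hat{\Gamma}_\lambda$ from \cite[3.2]{KLP10}.

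For the $G_\delta$ assertion, I would use the characterization that $\pi \not\perp \kappa$ if and only if there exists a non-zero bounded operator $T : \H \to \H$ intertwining $\pi$ and $\kappa$ (the nontrivial direction coming from polar decomposition of $T$, whose partial isometry intertwines genuine subrepresentations). For each $n\geq 1$ the set
\[
F_n = \{ (\pi , T) \in \Rep _\lambda (\Gamma , \H ) \times B(\H ) \csuchthat 1\leq \|T\|\leq n, \ T\pi (\gamma ) = \kappa (\gamma )T \ \forall \gamma \in \Gamma \}
\]
is closed in the product of the SOT on $\Rep _\lambda (\Gamma , \H)$ and the WOT on the norm-$n$ ball, because on bounded sets the maps $(\pi, T)\mapsto T\pi(\gamma )\xi$ and $(\pi ,T)\mapsto \kappa (\gamma )T\xi$ are weakly continuous for each fixed $\xi \in \H$ and $\gamma \in \Gamma$. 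Since the WOT ball is compact and each $\pi$-fiber of $F_n$ sits inside it, the projection of $F_n$ to $\Rep_\lambda (\Gamma , \H)$ is closed, so $\{\pi \csuchthat \pi \not\perp \kappa\}$ is $F_\sigma$.

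For density, I would first decompose $\kappa = \kappa _\lambda \oplus \kappa _\lambda ^\perp$, where $\kappa _\lambda$ is the maximal subrepresentation of $\kappa$ weakly contained in $\lambda _\Gamma$; every $\pi \in \Rep _\lambda (\Gamma , \H )$ is automatically disjoint from $\kappa _\lambda ^\perp$, which reduces matters to the case $\kappa \prec \lambda _\Gamma$. Given $\pi_0 \in \Rep_\lambda(\Gamma,\H)$ and a basic SOT-neighborhood $U$ of $\pi _0$ determined by $F\subseteq \Gamma$ finite, vectors $\xi _1, \ldots , \xi _k \in \H$, and $\varepsilon >0$, I would approximate $\pi _0$ inside $U$ by a direct sum $\tilde{\pi } = \bigoplus _i \sigma _i$ of irreducibles $\sigma _i \in \hat{\Gamma}_\lambda$, and then invoke the perfectness of $\hat{\Gamma}_\lambda$ to choose $\sigma _i ' \in \hat{\Gamma}_\lambda$ in each Fell neighborhood of $\sigma _i$ that is not quasi-equivalent to any irreducible occurring in a disintegration of $\kappa$. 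Assembling the $\sigma _i '$ on $\H$ yields $\pi ' \in U$ with $\pi ' \perp \kappa$.

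The main obstacle is the density step: one must ensure that the Fell-topology perturbation $\sigma _i \leadsto \sigma _i '$ simultaneously preserves the SOT approximation of $\tilde{\pi}$ on $\H$ and genuinely escapes the quasi-equivalence class of $\kappa$. The Fell topology is defined precisely so that closeness translates to uniform approximation of matrix coefficients on compacta, and the no-isolated-points condition from \cite[3.2]{KLP10} is exactly what guarantees the existence of irreducibles disjoint from $\kappa$ in every Fell neighborhood. Combining these two ingredients cleanly, as in \cite[Lemma 2.4]{KLP10}, carries the argument through.
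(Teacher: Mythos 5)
Your overall route is the intended one: the paper proves this lemma only by pointing at \cite[Lemma 2.4]{KLP10} and the fact from \cite[3.2]{KLP10} that $\hat{\Gamma}_\lambda$ has no isolated points, so you are reconstructing the argument the paper omits, and both halves of your plan aim at the right targets. But each half has a step that fails as written. In the $G_\delta$ half, your set $F_n$ is not closed: the constraint $\|T\|\geq 1$ is not preserved under WOT limits, because the operator norm is only \emph{lower} semicontinuous for WOT (the rank-one operators $T_n\xi =\langle \xi ,e_n\rangle e_1$ all have norm $1$ but converge to $0$ in WOT). Since any nonzero intertwiner can be rescaled to norm one, that lower bound is carrying the entire argument, and without it the projection of $F_n$ is all of $\mbox{Rep}_\lambda (\Gamma ,\H )$. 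The standard repair: fix a dense sequence $(\xi _j)$ in $\H$ and express $\{ \pi \csuchthat \pi \not\perp \kappa \}$ as the union over $j,k,m$ of the sets of $\pi$ admitting an intertwiner $T$ with $\|T\|\leq 1$ and $|\langle T\xi _j ,\xi _k\rangle |\geq 1/m$. This last condition is the modulus of a single WOT-continuous functional, so it does pass to a WOT-limit point of the $T$'s, and your compactness-of-the-ball projection argument then closes each piece.

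In the density half, the condition you impose on the perturbed irreducibles --- ``not quasi-equivalent to any irreducible occurring in a disintegration of $\kappa$'' --- is simultaneously unattainable and not what disjointness asks for. Take $\Gamma =\Z$ and $\kappa =\lambda _{\Z}$: every character of $\Z$ occurs in the disintegration of $\lambda _{\Z}$ over the circle, so no irreducible satisfies your condition, yet any countable direct sum of characters is disjoint from $\lambda _{\Z}$ (atomic versus Lebesgue spectral type) and the lemma holds. The correct countable ``bad set'' is the set of equivalence classes of irreducible \emph{subrepresentations} of $\kappa$: an irreducible $\sigma '$ fails to be disjoint from $\kappa$ exactly when it embeds into $\kappa$, and pairwise inequivalent irreducible subrepresentations of $\kappa$ sit on pairwise orthogonal subspaces by Schur's lemma, so on a separable $\H$ there are only countably many such classes. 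It is this countable set that the no-isolated-points hypothesis (together with the Baire property of $\hat{\Gamma}_\lambda$) lets you dodge inside each Fell neighborhood; that is how the argument in \cite[Lemma 2.4]{KLP10} actually runs, and with these two corrections your proof matches it.
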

We are now ready to prove Theorem \ref{thm:classif}.


\begin{proof}[Proof of Theorem \ref{thm:classif}]
Given a free action $\bm{a}_0 \in A(\Gamma , X,\mu )$, we let $[\bm{a}_0] = \{ \bm{b}\in A(\Gamma ,X,\mu )\csuchthat \bm{b}\sim \bm{a}_0 \}$ denote its weak equivalence class in $A(\Gamma ,X,\mu )$. 
Let $\H = \ell _2(\Gamma )$ and let $\bm{g} : \mbox{Rep} (\Gamma ,\H ) \ra A(\Gamma ,X,\mu )$ be the continuous map assigning to each $\pi \in \mbox{Rep}_\lambda (\Gamma ,\H )$ the corresponding Gaussian action $\bm{g}(\pi )\in A(\Gamma ,X,\mu )$ (see \cite[Appendix E]{Ke10}).  We have that $\bm{g}(\pi ) \prec \bm{g}(\infty \cdot \lambda _\Gamma ) \cong \bm{s}_\Gamma$ and so by Corollary \ref{thm:shift}, $\bm{a}_0\times \bm{g}(\pi ) \sim \bm{a}_0$. Fix some isomorphism $\varphi : X^2 \ra X$ of the measure spaces $(X^2 ,\mu ^2 )$ and $(X ,\mu )$ and denote by $\bm{b}\mapsto \varphi \cdot \bm{b}$ the corresponding homeomorphism of $A(\Gamma ,X^2,\mu ^2)$ with $A(\Gamma ,X,\mu )$. Let $\psi : \mbox{Rep}_\lambda (\Gamma ,\H ) \ra [\bm{a}_0]$ be the map $\pi \mapsto \varphi \cdot (\bm{a}_0 \times \bm{g}(\pi ))$. This is a continuous homomorphism from unitary conjugacy on $\mbox{Rep}_\lambda (\Gamma ,\H )$ to isomorphism on $[\bm{a}_0]$, and is therefore also a homomorphism to $\cong ^w$ and to $\cong ^\U$ on $[\bm{a}_0]$.

\begin{claim} The inverse image under $\psi$ of each unitary equivalence class in $[\bm{a}_0]$ is meager.  In particular the same is true for each isomorphism class and each weak isomorphism class. 
\end{claim}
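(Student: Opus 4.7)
The plan is to fix an arbitrary $\bm{b}\in [\bm{a}_0]$ and show that $\psi^{-1}(\{\pi' : \psi(\pi')\cong^{\U}\bm{b}\})$ is meager in $\mathrm{Rep}_\lambda(\Gamma,\H)$. Once this is proved for $\cong^{\U}$, the statements for $\cong^w$ and $\cong$ follow immediately because on $[\bm{a}_0]$ these relations refine $\cong^{\U}$, so their equivalence classes sit inside $\cong^{\U}$-classes and are a fortiori contained in meager sets.

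The first step is to locate $\pi$ inside the Koopman representation of $\psi(\pi)$. Two standard facts about Koopman representations (see \cite[Appendix E]{Ke10}) are used. For any measure preserving actions $\bm{c},\bm{d}$ of $\Gamma$, the Koopman representation of the product on the zero-integral subspace decomposes as
\[
\kappa^{\bm{c}\times \bm{d}}_0 \;\cong\; \kappa^{\bm{c}}_0 \oplus \kappa^{\bm{d}}_0 \oplus \bigl(\kappa^{\bm{c}}_0 \otimes \kappa^{\bm{d}}_0\bigr),
\]
and for the Gaussian action $\bm{g}(\pi)$ one has $\kappa^{\bm{g}(\pi)}_0 \cong \bigoplus_{n\geq 1} \pi^{\odot n}$, which in particular contains $\pi$ as a subrepresentation. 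Combining these yields $\pi \leq \kappa^{\bm{a}_0\times \bm{g}(\pi)}_0$, and since $\psi(\pi)=\varphi\cdot(\bm{a}_0\times \bm{g}(\pi))$ differs from $\bm{a}_0\times \bm{g}(\pi)$ only by a spatial isomorphism we obtain $\pi \leq \kappa^{\psi(\pi)}_0$.

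Now if $\psi(\pi)\cong^{\U}\bm{b}$ then by definition $\kappa^{\psi(\pi)}_0 \cong \kappa^{\bm{b}}_0$, so the previous step gives that $\pi$ is (unitarily equivalent to) a subrepresentation of $\kappa^{\bm{b}}_0$. In particular $\pi$ is not disjoint from $\kappa^{\bm{b}}_0$, i.e.\ $\pi \not\perp \kappa^{\bm{b}}_0$. Therefore
\[
\psi^{-1}\bigl(\{\pi' : \psi(\pi')\cong^{\U}\bm{b}\}\bigr) \;\subseteq\; \bigl\{\pi \in \mathrm{Rep}_\lambda(\Gamma,\H) : \pi \not\perp \kappa^{\bm{b}}_0\bigr\}.
\]
Applying Lemma \ref{KLPlem2.4} with $\kappa=\kappa^{\bm{b}}_0$ (viewed as a representation on a separable infinite-dimensional Hilbert space, hence identifiable with $\H$) shows that $\{\pi : \pi \perp \kappa^{\bm{b}}_0\}$ is dense $G_\delta$ in $\mathrm{Rep}_\lambda(\Gamma,\H)$, so its complement — which contains $\psi^{-1}([\bm{b}]_{\cong^{\U}})$ — is meager, proving the claim.

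The conceptual heavy lifting was done earlier: Corollary \ref{thm:shift} guarantees that $\psi$ actually takes values in $[\bm{a}_0]$ (this is where freeness of $\bm{a}_0$ is used), and Lemma \ref{KLPlem2.4} provides the genericity of disjointness. The only remaining point of care in the argument above is invoking the two standard Koopman/Gaussian decomposition facts rather than reproving them; no additional obstacle is expected.
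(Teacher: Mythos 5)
Your proposal is correct and follows essentially the same route as the paper: both arguments show $\pi \leq \kappa^{\bm{g}(\pi)}_0 \leq \kappa^{\bm{a}_0\times\bm{g}(\pi)}_0 \cong \kappa^{\bm{b}}_0$ whenever $\psi(\pi)\cong^{\U}\bm{b}$, conclude $\pi\not\perp\kappa^{\bm{b}}_0$, and invoke Lemma \ref{KLPlem2.4} to see that the preimage of each $\cong^{\U}$-class lies in a meager set. The only difference is that you spell out the standard Koopman/Gaussian decomposition facts that the paper leaves implicit.
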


\begin{proof}[Proof of Claim] Let $\bm{c}\in [\bm{a}_0]$. By Lemma \ref{KLPlem2.4} the set $\{ \pi \in \mbox{Rep}_\lambda (\Gamma ,\H ) \csuchthat \pi \perp \kappa ^{\bm{c}}_0 \}$ is comeager in $\mbox{Rep}_\lambda (\Gamma ,\H )$. If $\psi (\pi )\cong ^\U \bm{c}$ then $\pi \leq \kappa ^{\bm{g}(\pi )}_0 \leq \kappa ^{\bm{a}_0\times \bm{g}(\pi )}_0 \cong \kappa ^{\bm{c}}_0$, so that $\pi\not\perp \kappa ^{\bm{c}}_0 $. \qedhere[Claim]
\end{proof}

By \cite[3.3]{KLP10}, the conjugacy action of $\U (\H )$ on $\mbox{Rep}_\lambda (\Gamma , \H )$ is generically turbulent. The homomorphism $\psi$ witnesses that unitary conjugacy on $\mbox{Rep}_\lambda (\Gamma ,\H )$ is not generically $F|[\bm{a}_0]$-ergodic when $F$ is any of $\cong$, $\cong ^w$, or $\cong ^\U$. The theorem now follows from Lemma \ref{cor:generg}.
\end{proof}

\begin{remark} If the weak equivalence class $[\bm{a}_0]$ contains an ergodic (resp. weak mixing) action $\bm{b}_0$, then the action $\bm{b}_0 \times \bm{g}(\pi )$ is ergodic (resp. weak mixing) provided that the representation $\pi \in \mbox{Rep}_\lambda (\Gamma ,\H )$ is weak mixing. Since the weak mixing $\pi$ are dense $G_\delta$ in $\mbox{Rep}_\lambda (\Gamma ,\H )$ (\cite[3.6]{KLP10}) we conclude that isomorphism (and $\cong ^w$ and $\cong ^\U$) restricted to the ergodic (resp. weak mixing) elements of $[\bm{a}_0]$ does not admit classification by countable structures.

It also follows from the above arguments and \cite[2.2]{HK95} that the equivalence relation $E_0$ of eventual agreement on $2^\N$ is Borel reducible to $F|[\bm{a}_0]$ when $F$ is any of $\cong$, $\cong ^w$, or $\cong ^\U$ (and the same holds for $F| \{ \bm{b}\in [\bm{a}_0]\csuchthat \bm{b}\mbox{ is ergodic (resp. weak mixing)} \}$ when $[\bm{a}_0]$ contains ergodic (resp. weak mixing) elements). 
\end{remark}

\subsection{Extending Theorem \ref{thm:classif}}\label{subsec:nonfree} It would be interesting to see an extension of Theorem \ref{thm:classif} to weak equivalence classes of measure preserving actions that are not necessarily free. We outline here one possible generalization of the argument given in the proof of Theorem \ref{thm:classif} to measure preserving actions that almost surely have infinite orbits. 
Let $\bm{a}=\Gamma \cc ^a (X,\mu )$ be such an action, and let $\theta = \mbox{type}(\bm{a})$, so that $\theta$ concentrates on the infinite index subgroups of $\Gamma$. 
In place of unitary conjugacy on $\mbox{Rep}_\lambda (\Gamma ,\H )$ we work with the cohomology equivalence relation on a certain orbit closure in the Polish space $Z^1 (\bm{\theta}, \U (\H ) ) $ of unitary cocycles of $\bm{\theta}$, where $\H = \ell ^2(\N )$.  The cohomology equivalence relation on $Z^1(\bm{\theta},\U (\H ))$ is the orbit equivalence relation generated by the action of the Polish group $\widetilde{\U (\H )}=L(\mbox{Sub}(\Gamma ) ,\theta  ,\U (\H ))$ given by
\[
(f\cdot \alpha )(\gamma ,H ) = f(\gamma H\gamma ^{-1} )\alpha (\gamma ,H)f(H)^{-1} \in \U (\H )
\]
where $f\in \widetilde{\U (\H )}$, $\alpha \in Z^1(\bm{\theta} ,\U (\H ))$, $\gamma \in \Gamma$, and $H\leq \Gamma$ (see \cite[Chapter III]{Ke10}). In place of the left regular representation $\lambda$ of $\Gamma$ we use a cocycle $\lambda _\theta$ associated to $\theta$ defined as follows. Identify right cosets of the infinite index subgroups $H\leq \Gamma$ with natural numbers by fixing a Borel map $n:\mbox{Sub}(\Gamma ) \times \Gamma \ra \N$ such that for each infinite index $H\leq \Gamma$ the map $\gamma \mapsto n(H,\gamma )$ is a surjection onto $\N$ and satisfies $n(H, \gamma ) = n(H,\delta )$ if and only if $H\gamma = H\delta$. Let $\{ e_n\} _{n\in \N}$ be the standard orthonormal basis for $\ell ^2(\N )= \H$ and define $\lambda _\theta \in Z^1(\bm{\theta}, \U (\H ))$ by
\[
\lambda _\theta (\gamma ,H)(e _{n(H,\delta )}) = e _{n(\gamma H\gamma ^{-1} , \gamma \delta )}
\]
for all $\gamma \in \Gamma$ and $H\leq \Gamma$ of infinite index (recall that $\theta$-almost every $H$ is infinite index in $\Gamma$). Fix an isomorphism $T: \infty\cdot \H\ra \H$ and let $\sigma \in Z^1 (\bm{\theta} , \U (\H ))$ be the image of $\infty\cdot \lambda _{\theta}$ under $T$, i.e., $\sigma (\gamma ,H )= T \circ (\infty\cdot \lambda _\theta )(\gamma ,H)\circ T ^{-1}$.  Let $Z^1_\lambda (\bm{\theta}, \U (\H ))$ denote the orbit closure of $\sigma$ in $Z^1(\bm{\theta}, \U (\H ) )$.  Using the Gaussian map $U(\H ) \ra \mbox{Aut}(X,\mu )$ (see \cite[Appendix E]{Ke10} or \cite{BT-D11}), each $\alpha \in Z^1_\lambda (\bm{\theta}, \U (\H ))$ gives rise to a cocycle $g(\alpha ): \Gamma \times \mbox{Sub}(\Gamma ) \ra \mbox{Aut}(X,\mu )$ of $\bm{\theta}$ with values in the automorphism group $\mbox{Aut}(X,\mu )$ of a non-atomic probability space $(X,\mu )$. We obtain a skew product action $\bm{g}(\alpha ) = (X,\mu )\ltimes _{g(\alpha )}\bm{\theta}$ on the measure space $(Y,\nu ) = (X\times \mbox{Sub}(\Gamma ),\mu\times \theta )$, which is an extension of $\bm{\theta}$. The action $\bm{g}(\lambda _\theta )$ is isomorphic to $\bm{s}_{\theta , \eta }$ (where $\eta$ is non-atomic) and so the action $\bm{g}(\sigma )$ is isomorphic to $\bm{s}_{\theta , \eta ^\N}\cong \bm{s}_{\theta ,\eta}$ as well. Since $\alpha \in Z^1_\lambda (\bm{\theta}, \U (\H ))$ we have $\bm{g}(\alpha )\prec \bm{s}_{\theta ,\eta}$ and thus the relatively independent joining $\bm{g}(\alpha ) \otimes _{\bm{\theta}} \bm{a}$ is weakly equivalent to $\bm{a}$ by Theorem \ref{thm:randshift}. The map $\psi _\theta (\alpha ):= \varphi \cdot (\bm{g}(\alpha ) \otimes _{\bm{\theta}} \bm{a})$ is then a homomorphism from the cohomology equivalence relation on $Z^1_\lambda (\bm{\theta}, \U (\H ))$ to isomorphism on $[\bm{a}]$, where $\varphi :Y\times X \ra X$ is once again an isomorphism of measure spaces. The remaining ingredient that is needed is an analogue of the results from \cite{KLP10}.

\begin{question}
Let $\theta$ be an ergodic IRS of $\Gamma$ with infinite index. Is the action of $\widetilde{\U (\H )}$ on the space $Z^1_\lambda (\bm{\theta} , \U (\H ))$ generically turbulent? Is the preimage under $\psi _\theta$ of each $\cong ^{\U}$-class meager?
\end{question}

Two ergodic theoretic analogues of the space $\mbox{Rep}_{\lambda}(\Gamma , \H )$ are the spaces $A_0 (\Gamma ,X,\mu ) = \{ \bm{a}\in A(\Gamma ,X ,\mu )\csuchthat \bm{a}\prec \bm{s}_\Gamma \}$ and $A_1 (\Gamma ,X,\mu ) = \{ \bm{a}\in A(\Gamma ,X ,\mu )\csuchthat \bm{a}\prec _s \bm{s}_\Gamma \}$, where $(X,\mu )$ is non-atomic. 
When $\Gamma$ is amenable it follows from \cite{FW04} that these spaces both coincide with $A(\Gamma ,X,\mu )$ and the conjugacy action of $\mbox{Aut}(X,\mu )$ on $A(\Gamma ,X ,\mu  )$ is generically turbulent. For non-amenable $\Gamma$, the spaces $A_0(\Gamma ,X,\mu )$, $A_1(\Gamma ,X,\mu )$ and $A(\Gamma ,X,\mu )$ do not all coincide.

\begin{question}
Let $\Gamma$ be a non-amenable group. Is conjugacy on either of $A_0(\Gamma ,X ,\mu  )$ or $A_1(\Gamma ,X,\mu )$ generically turbulent?
\end{question}

For all non-amenable $\Gamma$ the set $A_0 (\Gamma ,X,\mu )$ is nowhere dense in $A_1(\Gamma ,X,\mu )$ (by Theorem \ref{thm:nonerg}), so these two spaces may behave quite differently, generically (indeed, every action in $A_0(\Gamma ,X,\mu )$ is ergodic, while the generic action in $A_1(\Gamma ,X,\mu )$ has continuous ergodic decomposition). The question of generic turbulence of conjugacy on $\mbox{ERG}(\Gamma ,X,\mu ) = \{ \bm{a}\in A(\Gamma ,X,\mu )\csuchthat \bm{a}\mbox{ is ergodic}\}$ is discussed in \cite[\S 5 and \S 12]{Ke10}.  
%
%
%
\section{Types and amenability}\label{sec:amenable}

As noted in Remark \ref{rem:amen1}, any two free measure preserving actions of an infinite amenable group $\Gamma$ are weakly equivalent. In this section we prove theorem \ref{thm:typeamen}, which extends this to actions that are not necessarily free.

\subsection{The space $\mbox{COS}(\Gamma )$}\label{subsection:COS}
%
%
%
Let $\mbox{COS}(\Gamma )$ be the space of all left cosets of all subgroups of $\Gamma$. Since $F\in \mbox{COS}(\Gamma ) \Leftrightarrow \forall \delta \in \Gamma \, (\delta \in F \, \Ra \delta ^{-1}F \in \mbox{Sub}(\Gamma ))$ it follows that $\mbox{COS}(\Gamma )$ is a closed subset of $2^\Gamma$. 
As every left coset of a subgroup $H\leq \Gamma$ is equal to a right coset of a conjugate of $H$ and vice versa, $\mbox{COS}(\Gamma )$ is also the space of all right cosets of subgroups of $\Gamma$ and we have the equality $\mbox{COS}(\Gamma )= \{ \gamma H \delta ^{-1} \csuchthat H\leq \Gamma , \ \gamma ,\delta \in \Gamma \} \subseteq 2^\Gamma$. We let $\ell$ denote the continuous action of $\Gamma$ on $\mbox{COS}(\Gamma )$ by left translation, $\gamma ^\ell\cdot (H\delta ) = \gamma H\delta$. 

\begin{lemma}\label{prop:Jfunc}
Let $\Gamma$ be a countable amenable group and let $\bm{a}=\Gamma \cc ^a (X,\mu )$ be a measure preserving action of $\Gamma$. Then for any finite $F\subseteq \Gamma$ and $\delta >0$ there exists a measurable map $J: X\ra \mbox{\emph{COS}}(\Gamma )$ such that
\[
\mu ( \{ x\in X \csuchthat \forall \gamma \in F \ J(\gamma ^a x) = \gamma ^{\ell}\cdot J(x) \} ) \geq 1-\delta
\]
and $J(x) \in \Gamma _x \backslash \Gamma$ for all $x$.
\end{lemma}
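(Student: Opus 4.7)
The plan is to exploit hyperfiniteness to reduce to a situation in which orbit representatives can be chosen measurably. On a single orbit the map $J$ is easy to pin down: fixing any base point $y_0$ and setting $J(\delta^a y_0) := \delta\,\Gamma_{y_0} = \Gamma_{\delta^a y_0}\,\delta$ gives a well-defined map (changing $\delta$ by an element of $\Gamma_{y_0}$ preserves the right coset) with $J(\gamma^a y) = \gamma^\ell \cdot J(y)$ holding exactly. The obstacle to doing this globally is that the orbit equivalence relation $E_a$ need not be smooth, so orbit representatives cannot be Borel-selected in general. Amenability resolves this via the Connes-Feldman-Weiss theorem, which produces finite Borel sub-equivalence relations $E_0\subseteq E_1\subseteq \cdots$ with $\bigcup_n E_n = E_a$; finite Borel equivalence relations are smooth and hence admit Borel transversals.

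Here is the construction. For each $\gamma \in F$ the set $\{x\csuchthat (x,\gamma^a x)\in E_n\}$ increases to a conull subset of $X$ as $n\ra \infty$, so a union bound lets me choose $n$ large enough that
\[
\mu(X_0)\geq 1-\delta, \qquad X_0 := \{x\csuchthat (x,\gamma^a x)\in E_n \mbox{ for all } \gamma \in F\}.
\]
Write $E=E_n$, choose a Borel selector $s:X\ra X$ for $E$ (meaning $s(x)\,E\,x$ and $s(x)=s(y)$ whenever $x\,E\,y$), and by Luzin-Novikov choose a Borel map $\eta:X\ra \Gamma$ with $\eta(x)^a\cdot s(x) = x$ for all $x$ (the fiber is a left coset of $\Gamma_{s(x)}$, hence countable). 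Define
\[
J(x) := \Gamma_x\,\eta(x) = \eta(x)\,\Gamma_{s(x)},
\]
where the second equality uses $\Gamma_x = \eta(x)\,\Gamma_{s(x)}\,\eta(x)^{-1}$. A different valid choice of $\eta(x)$ differs from $\eta(x)$ by right multiplication by an element of $\Gamma_{s(x)}$ and so yields the same set, so $J$ is well-defined, Borel into $\mbox{COS}(\Gamma)\subseteq 2^\Gamma$, and takes values in $\Gamma_x\backslash \Gamma$ by construction.

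For the near-equivariance, fix $x\in X_0$ and $\gamma \in F$. Then $(x,\gamma^a x)\in E$, so $s(\gamma^a x)=s(x)$, and both $\gamma\eta(x)$ and $\eta(\gamma^a x)$ satisfy $\xi^a\cdot s(x)=\gamma^a x$, hence lie in the same left coset of $\Gamma_{s(x)}$. Therefore
\[
J(\gamma^a x) = \eta(\gamma^a x)\,\Gamma_{s(x)} = \gamma\,\eta(x)\,\Gamma_{s(x)} = \gamma^\ell \cdot J(x),
\]
and $X_0$ witnesses the claimed measure bound. The main conceptual input is Connes-Feldman-Weiss hyperfiniteness; once that is invoked, the rest is a routine Borel selection argument.
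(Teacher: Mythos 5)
Your proof is correct and follows essentially the same route as the paper's: invoke hyperfiniteness of $E_a$ (the paper cites Ornstein--Weiss rather than Connes--Feldman--Weiss, after discarding an invariant null set), pick $n$ with $\mu(X_0)\geq 1-\delta$, take a Borel selector $s$ and a Borel choice of $\eta(x)$ with $\eta(x)^a s(x)=x$, and set $J(x)=\eta(x)\Gamma_{s(x)}=\Gamma_x\eta(x)$. The verification of near-equivariance on $X_0$ is identical to the paper's.
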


\begin{proof}
We note that this is a generalized version of \cite[Theorem 3.1]{BT-D11} which applies to the case in which $\bm{a}$ is free and which is an immediate consequence of the Rokhlin lemma for free actions of amenable groups. For the general case we use the Ornstein-Weiss Theorem \cite[Theorem 6]{OW80} which implies that the orbit equivalence relation $E_{a}$ generated by $\bm{a}$ is hyperfinite when restricted to an invariant co-null Borel set $X'\subseteq X$. We may assume without loss of generality that $X'=X$ and $E_a$ is hyperfinite. Then there exists an increasing sequence $E_0\subseteq E_1\subseteq \cdots$ of finite Borel sub-equivalence relations of $E_{a}$ such that $E_a = \bigcup _{n=0}^\infty E_n$. Let $F$ and $\delta >0$ be given and find $N \in \N$ large enough so that $\mu ( X_N ) >1-\delta$ where $X_N = \{ x\csuchthat \gamma ^ax \in [x]_{E_N} \mbox{ for all }\gamma \in F \}$. Fix a Borel selector $s:X\ra X$ for $E_N$, i.e., for all $x$, $xE_Ns(x)$ and $xE_Ny\Ra s(x)=s(y)$, and let $x\mapsto \gamma _x \in \Gamma$ be any Borel map such that $\gamma _x ^a\cdot s(x)=x$ for all $x\in X$. Define $J: X\ra \mbox{COS}(\Gamma )$ by $J(x)= \gamma _x\Gamma _{s(x)}$. Then $J(x)\in \Gamma _x\backslash \Gamma$ since $\Gamma _x = \Gamma _{\gamma _x^a\cdot s(x)} = \gamma _x\Gamma _{s(x)}\gamma _x$.  
For each $x\in X_N$ and $\gamma \in F$ we have $\gamma ^a x \in [x]_{E_N}$ so that $s(\gamma ^ax ) = s(x)$ and thus $(\gamma _{\gamma ^ax})^a \cdot s(x) =\gamma ^ax = (\gamma \gamma _x )^a\cdot s(x)$. It follows that
\[
J(\gamma ^ax) = \gamma _{\gamma ^ax}\Gamma _{s(x)} = \gamma \gamma _x \Gamma _{s(x)} = \gamma ^\ell \cdot J(x). \qedhere
\]
\end{proof}
%
%

\subsection{Proof of Theorem \ref{thm:typeamen}}\label{subsection:typeamen}

\begin{proof}[Proof of Theorem \ref{thm:typeamen}.(1)]
Since $\mbox{type}(\bm{a})$ is an invariant of stable weak equivalence (see Remark \ref{rem:stableconv}), it remains to show the following:
\begin{center}
$(*)$  If $\theta \in \mbox{IRS}(\Gamma )$ and $\bm{a}$ and $\bm{d}$ are measure preserving actions of $\Gamma$ both of type $\theta$, then $\bm{a}\sim _s\bm{d}$.
\end{center}

We first show that $(*)$ holds under the assumption that $\bm{a}$ and $\bm{d}$ are both ergodic. For this, by Theorem \ref{thm:randshift} it suffices to show that for any ergodic measure preserving action $\bm{a} = \Gamma \cc ^a (X,\mu )$ of $\Gamma$, if $\mbox{type}(\bm{a}) = \theta$ then $\bm{a} \prec \bm{s}_{\theta ,\eta}$ for some standard probability space $(Z , \eta )$.

We will define a measure preserving action $\bm{b}$ containing $\bm{\theta}$ as a factor, and show that the relatively independent joining $\bm{b} \otimes _{\bm{\theta}} \bm{s}_{\theta ,\eta }$ weakly contains $\bm{a}$ when $\eta$ is a standard non-atomic probability measure. Then we will be done once we show $\bm{b}\otimes _{\bm{\theta}} \bm{s}_{\theta ,\eta} \cong \bm{s}_{\theta ,\eta}$. 

Let $\mu = \int _H \mu _H \, d\theta$ be the disintegration of $\mu$ via $x\mapsto \mbox{stab}_a(x)$, and define the measure $\nu$ on the space $Y = \bigsqcup _{H\in \mbox{\tiny{Sub}}(\Gamma )} \{ f\in X^{H\backslash \Gamma} \csuchthat \mbox{stab}_a (f(H\delta )) = H \mbox{ for all }\delta \in \Gamma\} \subseteq X^{\leq\backslash \Gamma}$ by the equation $\nu = \int _H \mu _H ^{H\backslash \Gamma}\, d\theta$. Let $a ^{\leq \backslash \Gamma}$ be the action on $X^{\leq\backslash \Gamma}$ that is equal to $a^{H\backslash \Gamma}$ on $X^{H\backslash \Gamma}$. Then $a^{\leq \backslash \Gamma}$ commutes with the shift action $s$ on $X^{\leq\backslash \Gamma}$ and since $(\gamma ^s )_*(\gamma ^{a^{H\backslash \Gamma}})_*(\mu _H)^{H\backslash \Gamma} = \mu _{\gamma H\gamma ^{-1}}^{(\gamma H\gamma ^{-1})\backslash \Gamma}$ it follows from invariance of $\theta$ that the action $\gamma ^b = \gamma ^s \gamma ^{a^{\leq\backslash \Gamma}}$ preserves the measure $\nu$. We let $\bm{b} = \Gamma \cc ^b (Y,\nu )$. Then $\bm{\theta}$ is a factor of $\bm{b}$ via the map $f\mapsto H_f$. Let $(Z,\eta )$ be a standard non-atomic probability space, and let $\bm{b} \otimes _\theta \bm{s}_{\theta ,\eta }$ denote the relatively independent joining of $\bm{b}$ and $\bm{s}_{\theta ,\eta }$ over $\bm{\theta}$.

We now apply Lemma \ref{prop:Jfunc} to $\bm{s}_{\theta ,\eta}$. Given $F\subseteq \Gamma$ finite and $\epsilon >0$ there exists a measurable $J: Z^{\leq \backslash \Gamma}\ra \mbox{COS}(\Gamma )$ such that $\eta ^{{\theta \backslash \Gamma}} (Z_0) \geq 1-\epsilon$ where $Z_0 = \{ g\in Z^{\leq \backslash \Gamma} \csuchthat J(\gamma ^s \cdot g) = \gamma ^{\ell}\cdot J(g) \mbox{ for all }\gamma \in F \}$, and with $J(g) \in \Gamma _g\backslash \Gamma = H_g\backslash \Gamma$ for all $g\in Z^{\leq \backslash \Gamma}$. We let $\varphi : Y\times Z^{\leq \backslash \Gamma} \ra X$ be the map defined ($\nu \otimes _{\bm{\theta}} \eta ^{\leq \backslash \Gamma}$)-almost everywhere by $\varphi (f,g) = f(J(g))$. Then for all $g\in Z_0$ and $\gamma \in F$ we have $\varphi (\gamma ^{b \times s}(f,g)) = \gamma ^a ((\gamma ^s f)(J(\gamma ^s g))) = \gamma ^a (f(J(g))) = \gamma ^a \varphi ((f,g))$ and
\begin{align*}
\varphi _*(\nu \otimes _{\bm{\theta}} \eta ^{\leq \backslash \Gamma}) &= \int _H \int _g\int _f \updelta _{f(J(g))} \, d\mu _H ^{H\backslash \Gamma } \ d\eta ^{H\backslash \Gamma} \, d\theta \\
&= \int _H \sum _{t\in H\backslash \Gamma} \int _{\{ g\csuchthat J(g) =t \} } \mu _H \, d\eta ^{H\backslash \Gamma} \, d\theta = \int _H \mu _H \, d\theta = \mu .
\end{align*}
It then follows that $\bm{a}\prec \bm{b}\otimes _{\bm{\theta}} \bm{s}_\theta$ since for any measurable partition $A_0,\dots ,A_{k-1}\subseteq X$ of $X$, the sets $B_0 = \varphi ^{-1}(A_0),\dots ,B_{k-1}=\varphi ^{-1}(A_{k-1})$ form a measurable partition of $Y\times X^{\leq\backslash \Gamma}$ satisfying $|\mu (\gamma ^aA_i\cap A_j)- (\nu \otimes _{\bm{\theta}}\eta ^{\leq\backslash \Gamma})(\gamma ^{b\times s}B_i\cap B_j) | <\epsilon$ for all $\gamma \in F$.

By the Rokhlin skew-product theorem there exists a standard probability space $(Z_1,\eta _1)$ and an isomorphism $\Psi$ of $\bm{a}$ with a skew product action $\bm{d}=(Z_1,\eta _1)\ltimes \bm{\theta}$ on the space $(Z_1\times \mbox{Sub}(H), \eta _1 \times \theta )$. 
The isomorphism $\Psi$ is of the form $\Psi (x) = (\Psi _0(x), \Gamma _x )$ and so the restriction $\Psi _H$ of $\Psi _0$ to $X_H =\{ x\csuchthat \Gamma _x = H \}$ is an isomorphism of $(X_H, \mu _H)$ with $(Z_1, \eta _1)$ almost surely. We now define an isomorphism $\Phi : Y \ra Z_1^{\leq\backslash \Gamma}$ of $\bm{b}$ with $\bm{s}_{\theta ,\eta _1}$ by taking $H_{\Phi (f)} = H_f$ and $\Phi (f)(H\gamma ) = \Psi _{\gamma ^{-1}H\gamma}((\gamma ^{-1})^a(f(H\gamma )))$, where $H=H_f$. This is almost everywhere well-defined since $f(H\gamma )\in X_{H}$ almost surely, which ensures that $(\gamma ^{-1})^a(f(H\gamma ))$ is independent of our choice of representative for the coset $H\gamma$, and $(\gamma ^{-1})^a(f(H\gamma ))\in X_{\gamma ^{-1}H\gamma }$ so that we may apply $\Psi _{\gamma ^{-1}H\gamma}$. The map $\Phi$ is equivariant since if $H_f =H$ then $H_{\delta ^bf}= \delta H \delta ^{-1}$ and $\Phi (\delta ^b f)(\delta H\delta ^{-1} \gamma ) = \Psi _{\gamma ^{-1}\delta H\delta ^{-1}\gamma }((\gamma ^{-1})^a (\delta ^bf (\delta H\delta ^{-1}\gamma ))
= \Psi _{\gamma ^{-1}\delta H(\gamma ^{-1}\delta )^{-1}}((\gamma ^{-1}\delta )^a (f(H\delta ^{-1}\gamma ))) = \Phi (f)(H\delta ^{-1}\gamma )  = (\delta ^s\Phi (f))(\delta H \delta ^{-1}\gamma )$. Finally, $\Phi _* \nu = \eta _1^{\theta \backslash \Gamma}$ since
\begin{align*}
\Phi _*\nu =\int _H \Phi _* \mu _H ^{H\backslash \Gamma} \, d\theta &=\int _H \prod _{H\gamma \in H\backslash \Gamma} (\Psi _{\gamma ^{-1}H\gamma})_*(\gamma ^{-1})^a_*\mu _H \, d\theta \\
&= \int _H \prod _{H\gamma \in H\backslash \Gamma}(\Psi _{\gamma ^{-1}H\gamma })_*\mu _{\gamma ^{-1}H\gamma } \, d\theta = \int _H \eta _1^{H\backslash \Gamma} \, d\theta = \eta _1^{\theta \backslash \Gamma}
\end{align*}
and so $\bm{b}\cong \bm{s}_{\theta ,\eta _1}$. Since $H_f = H_{\Phi (f)}$, this extends to an isomorphism of $\bm{b}\otimes _{\bm{\theta}}\bm{s}_{\theta ,\mu }$ with $\bm{s}_{\theta ,\eta _1}\otimes _{\bm{\theta}}\bm{s}_{\theta ,\eta } \cong \bm{s}_{\theta ,\eta _1\times \eta} \cong \bm{s}_{\theta ,\eta}$, as was to be shown.

We next show that $(*)$ holds under the assumption that $\bm{\theta}$ is ergodic. Let $i\in \N \cup \{ \infty \}$ be the index of $\theta$. If $i$ is finite then the orbit of almost every $H\in \mbox{Sub}(\Gamma )$ is finite so by ergodicity of $\bm{\theta}$ there exists $H_0\leq \Gamma$ of index $i$ such that $\theta$ concentrates on the conjugates of $H_0$. Then for some spaces $(Z_1,\eta _1 )$ and $(Z_2,\eta _2)$ we have $\bm{a} \cong \bm{\iota}_{\eta _1} \times \bm{a}_{\Gamma /H_0}$ and $\bm{d}\cong \bm{\iota }_{\eta _2}\times \bm{a}_{\Gamma /H_0}$ where $\bm{a}_{\Gamma /H_0}$ denotes the action of $\Gamma$ on the left cosets of $H_0$ with normalized counting measure. Thus $\bm{a}\sim _s \bm{d}$. If $i= \infty$ then we let $\bm{a}= \int _{Z} \bm{a}_z \, d\eta$ and $\bm{d}= \int _W \bm{d}_w \, d\rho$ be the ergodic decompositions of $\bm{a}$ and $\bm{d}$ respectively. By Proposition \ref{prop:factor}, $\mbox{type}(\bm{a}_z)= \theta$ and $\mbox{type}(\bm{d}_w)=\theta$ almost surely, and $\bm{a}_z$ and $\bm{d}_w$ are non-atomic almost surely since $\theta$ is infinite index. Letting $\bm{b}$ be any non-atomic ergodic action of type $\theta$ the above case implies that $\bm{a}\sim _s\bm{b}\sim _s\bm{d}$.

Finally, we show that $(*)$ holds in general. Let $\theta = \int _{w\in W} \theta _w \, d\rho$ be the ergodic decomposition of $\theta$.  We then obtain corresponding decompositions $\bm{a} = \int _w \bm{a}_w \, d\rho$ and $\bm{d} = \int _w \bm{d}_w \, d\rho$ of $\bm{a}$ and $\bm{d}$ with $\mbox{type}(\bm{a}_w)=\theta _w = \mbox{type}(\bm{d}_w)$ almost surely. The above cases imply that $\bm{a}_w\sim _s \bm{d}_w$ almost surely. Theorem \ref{thm:more} then implies $\bm{a}\sim _s\bm{d}$.
\end{proof}

\begin{proof}[Proof of Theorem \ref{thm:typeamen}.(2)]
Let $\theta = \mbox{type}(\bm{a}) = \mbox{type}(\bm{b})$. If $\bm{\theta}$ is ergodic then by Proposition \ref{prop:factor} almost every ergodic component of $\bm{a}$ and $\bm{b}$ have type $\theta$ and so Theorem \ref{thm:typeamen} and Corollary \ref{cor:sweakerg} imply that $\bm{a}\sim \bm{\iota }_{\eta _1}\times \bm{d}$ and $\bm{b}\sim \bm{\iota}_{\eta _2}\times \bm{d}$ for some ergodic $\bm{d}$ of type $\theta$ and some spaces $(Z_1,\eta _1)$, $(Z_2 ,\eta _2)$. Since $\Gamma$ is amenable, $\bm{d}$ is not strongly ergodic, and since $\theta$ is infinite index, $\bm{d}$ is non-atomic, so by \cite[Theorem 3]{AW11} $\bm{d}\sim \bm{\iota}\times \bm{d}$ and thus $\bm{a}\sim \bm{b}$. The general case now follows by considering the ergodic decomposition of $\theta$.
\end{proof}
\appendix

\section{Ultraproducts of measure preserving actions}\label{app:ultra}

In this appendix we establish some properties of ultraproducts of measure spaces and actions.

\medskip

{\bf Notation.} We refer to \cite{CKT-D11} for background on ultraproducts of measure preserving actions and also \cite{ES08} for background on ultraproducts of measure spaces. Our notation has some changes from that of \cite{CKT-D11} and is as follows. Given a sequence $\bm{a}_n =\Gamma \cc ^{a_n} (X_n, \mu _n)$, $n\in \N$, of measure preserving actions of $\Gamma$ and a non-principal ultrafilter $\mc{U}$ on $\N$ we denote by $(\prod _n \bm{a}_n )/\mc{U} = \Gamma \cc ^{(\prod _n a_n )_{\mc{U}}} ((\prod _n X_n )/\mc{U} , (\prod _n \mu _n )/\mc{U})$, or simply $\bm{a}_{\mc{U}} = \Gamma \cc ^{a_\mc{U}} (X_{\mc{U}} , \mu _{\mc{U}})$ when there is no danger of confusion, the corresponding ultraproduct of the sequence $(\bm{a}_n )$. We let $[x_n]$ denote the equivalence class of the sequence $(x_n) \in \prod _n X_n$ in $X_{\mc{U}}$ and we let $[B_n]$ denote the subset of $X_{\mc{U}}$ determined by the sequence $(B_n) \in \prod _n \bm{B}(X_n)$ of Borel sets. When $x_n=x$ for all $n$ then we write $[x]$ for $[x_n]$ and when $B_n=B$ for all $n$ we write $[B]$ for $[B_n]$. Then $\bm{A}_{\mc{U}}=\bm{A}_\mc{U} (X_{\mc{U}}) = \{ [B_n]\csuchthat (B_n) \in \prod _n \bm{B}(X_n) \}$ is an algebra of subsets of $X_{\mc{U}}$ and $\mu _{\mc{U}}$ is the unique measure on the $\sigma$-algebra $\bm{B}_{\mc{U}} (X_{\mc{U}}) = \sigma (\bm{A}_{\mc{U}})$ whose value on $[A_n] \in \bm{A}_{\mc{U}}$ is $\mu _{\mc{U}} ([A_n]) = \lim _{n\ra \mc{U}} \mu _n (A_n)$. We note that every element of $\bm{B}_{\mc{U}}$ is within a $\mu _{\mc{U}}$-null set of an element of $\bm{A}_{\mc{U}}$.

\medskip

The following proposition deals with lifting measure disintegrations to ultraproducts.

\begin{proposition}\label{prop:ultrafactor}
Suppose that for each $n\in \N$ the Borel map $\pi _n : (Y _n ,\nu _n ) \ra (Z_n,\eta _n)$ factors $\bm{b}_n = \Gamma \cc ^b (Y_n ,\nu _n)$ onto $\bm{d}_n \cc ^d (Z _n ,\eta _n )$ and let $\nu _n = \int _{z\in Z_n} \nu ^n _z \, d\eta _n (z)$ be the disintegration of $\nu _n$ over $\eta _n$ with respect to $\pi _n$. Let $\bm{b}_{\mc{U}} = \Gamma \cc ^{b_{\mc{U}}}(Y_{\mc{U}}, \nu _{\mc{U}} )$ and $\bm{d}_{\mc{U}} = \Gamma \cc ^{d_{\mc{U}}}(Z_{\mc{U}} , \eta _{\mc{U}} )$ be the ultraproducts of the sequences $(\bm{b} _n )$ and $(\bm{d} _n )$ respectively. Then the map $\pi _{\mc{U}} : Y_{\mc{U}} \ra Z_{\mc{U}}$ given by $\pi _{\mc{U}}([y_n]) = [\pi _n(y_n)]$ factors $\bm{b}_{\mc{U}}$ onto $\bm{d}_{\mc{U}}$. If for $[z_n]\in Z_{\mc{U}}$ we let $\nu _{[z_n]} = (\prod _n \nu ^n_{z_n})/\mc{U}$ then
\begin{itemize}
\item[(I)] Each of the measures $\nu _{[z_n]}$ is a probability measure on $(Y_{\mc{U}} , \bm{B}_{\mc{U}} (Y_{\mc{U}}))$ and almost surely $\nu _{[z_n]}$ concentrates on $\pi _{\mc{U}} ^{-1}( [z_n] )$.
\item[(II)] For each $D \in \bm{B}_{\mc{U}}(Y_{\mc{U}})$ the map $(Z_\mc{U} , \bm{B}_{\mc{U}} (Z_{\mc{U}}))\ra ([0,1],\bm{B}([0,1]) )$ sending $[z_n] \mapsto \nu _{[z_n]}(D)$ is measurable and $\nu _{\mc{U}} (D) = \int _{[z_n] \in Z_{\mc{U}}} \, \nu _{[z_n]} (D) \, d\eta _{\mc{U}} ([z_n])$.
\item[(III)] If $[z_n]\mapsto \mu _{[z_n]}$ is another assignment satisfying \emph{(I)} and \emph{(II)} then for all $D\in \bm{B}_{\mc{U}} (Y_{\mc{U}})$ almost surely $\mu _{[z_n]} (D) = \nu _{[z_n]} (D)$.
\end{itemize}
Additionally, for almost all $[z_n]\in Z_{\mc{U}}$ and every $\gamma \in \Gamma$ we have $(\gamma ^{b_{\mc{U}}})_*\nu _{[z_n]} = \nu _{\gamma ^{d_{\mc{U}}}[z_n]}$.
\end{proposition}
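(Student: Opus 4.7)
The plan is to lift pointwise facts about the disintegrations $\nu_n = \int \nu_z^n\, d\eta_n$ to $\nu_{\mc{U}}$ through the defining property of ultraproduct measures: for $[A_n]\in \bm{A}_{\mc{U}}$, $\mu _{\mc{U}}([A_n])=\lim_{n\to\mc{U}}\mu_n(A_n)$, and for the ultralimit $f([z_n])=\lim_{n\to\mc{U}}f_n(z_n)$ of a uniformly bounded sequence of Borel $f_n:Z_n\to[0,1]$, one has $\int f\, d\eta_{\mc{U}}=\lim_{n\to\mc{U}}\int f_n\, d\eta_n$ (this is a general Fubini/Loeb-type identity for ultraproducts of measure spaces, valid first on simple functions and extended by monotone convergence). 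First I would check that $\pi_{\mc{U}}$ is well-defined up to $\nu_{\mc{U}}$-null sets and factors $\bm{b}_{\mc{U}}$ onto $\bm{d}_{\mc{U}}$: measurability is immediate from $\pi_{\mc{U}}^{-1}([E_n])=[\pi_n^{-1}(E_n)]$, the measure-preserving property from $\eta_n(E_n)=\nu_n(\pi_n^{-1}(E_n))$ at each level, and the equivariance from pointwise equivariance of each $\pi_n$.

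Next I turn to (I) and (II) on the algebra $\bm{A}_{\mc{U}}(Y_{\mc{U}})$. For $D=[D_n]$, the map $z_n\mapsto \nu_{z_n}^n(D_n)$ is Borel by the disintegration property at level $n$, so its ultralimit $[z_n]\mapsto \nu_{[z_n]}([D_n])=\lim_{n\to\mc{U}}\nu_{z_n}^n(D_n)$ is $\bm{B}_{\mc{U}}(Z_{\mc{U}})$-measurable (sublevel sets are of the form $\bigcup_k[\{z_n:\nu_{z_n}^n(D_n)<c-1/k\}]$). Applying the exchange identity above gives
\[
\int_{Z_{\mc{U}}}\nu_{[z_n]}([D_n])\, d\eta_{\mc{U}} = \lim_{n\to\mc{U}}\int_{Z_n}\nu_z^n(D_n)\, d\eta_n(z) = \lim_{n\to\mc{U}}\nu_n(D_n) = \nu_{\mc{U}}([D_n]),
\]
proving (II) for sets in $\bm{A}_{\mc{U}}$. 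I extend measurability of $[z_n]\mapsto \nu_{[z_n]}(D)$ and the integration identity to all $D\in \bm{B}_{\mc{U}}$ by the monotone class theorem, using that $\bm{A}_{\mc{U}}$ generates $\bm{B}_{\mc{U}}$ and that every element of $\bm{B}_{\mc{U}}$ is $\nu_{\mc{U}}$-equivalent to one in $\bm{A}_{\mc{U}}$.

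For the concentration statement in (I), fix $E=[E_n]\in \bm{A}_{\mc{U}}(Z_{\mc{U}})$. The conull set $G_n=\{z:\nu_z^n(\pi_n^{-1}(E_n))=1_{E_n}(z)\}$ has $[G_n]$ $\eta_{\mc{U}}$-conull, and for $[z_n]\in[G_n]$ I get $\nu_{[z_n]}(\pi_{\mc{U}}^{-1}(E))=\lim_{n\to\mc{U}}1_{E_n}(z_n)=1_E([z_n])$. A monotone class argument in $E$ then yields $\nu_{[z_n]}(\pi_{\mc{U}}^{-1}(E))=1_E([z_n])$ a.s.\ for each fixed $E\in\bm{B}_{\mc{U}}(Z_{\mc{U}})$, which is the precise meaning of ``$\nu_{[z_n]}$ concentrates on $\pi_{\mc{U}}^{-1}([z_n])$.'' Statement (III) follows from uniqueness: (II) together with concentration forces $\int_E \nu_{[z_n]}(D)\, d\eta_{\mc{U}} = \nu_{\mc{U}}(D\cap \pi_{\mc{U}}^{-1}(E))$ for all $D\in\bm{B}_{\mc{U}}(Y_{\mc{U}})$ and $E\in\bm{B}_{\mc{U}}(Z_{\mc{U}})$, and any $\mu_{[z_n]}$ satisfying the same identity must agree with $\nu_{[z_n]}$ on each fixed $D$ almost surely. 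The equivariance $(\gamma^{b_{\mc{U}}})_*\nu_{[z_n]}=\nu_{\gamma^{d_{\mc{U}}}[z_n]}$ is then extracted from the pointwise identity $(\gamma^{b_n})_*\nu_{z}^n=\nu_{\gamma^{d_n}z}^n$ (valid on an $\eta_n$-conull set), transferred to the ultraproduct by the same ultralimit-of-conull-sets device, with uniqueness (III) handling the null-set bookkeeping.

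The main obstacle I expect is precisely this null-set bookkeeping: the exceptional sets in (II) and (III) depend on the choice of $D$ (and on $\gamma$ for equivariance), so there is no single conull base on which everything holds, and care is needed when extending from the algebra $\bm{A}_{\mc{U}}$ to $\bm{B}_{\mc{U}}$ via monotone classes because countable operations interact nontrivially with ultralimits. The key technical lemma that must be established cleanly is the integration-ultralimit identity $\int f\, d\eta_{\mc{U}}=\lim_{n\to\mc{U}}\int f_n\, d\eta_n$ for uniformly bounded ultralimits, on which essentially everything else rests.
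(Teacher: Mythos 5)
Your proposal is correct and follows essentially the same route as the paper: verify everything first on the algebra $\bm{A}_{\mc{U}}$ via the integral--ultralimit exchange $\int \lim_{n\ra\mc{U}} f_n \, d\eta_{\mc{U}} = \lim_{n\ra\mc{U}}\int f_n \, d\eta_n$ (the paper cites this as Lemma 2.2 of \cite{ES08}), extend (II) to $\bm{B}_{\mc{U}}$ by a monotone class argument, deduce (III) from the resulting integral identity over sets $[B_n]$, and transfer the pointwise identities for concentration and equivariance from level-$n$ conull sets to the conull set $[B_n]\subseteq Z_{\mc{U}}$. The only cosmetic difference is that for the concentration claim in (I) the paper argues directly with the sets $[\pi_n^{-1}(\{z_n\})]\in\bm{A}_{\mc{U}}$ rather than running a monotone class argument in $E$, but both work.
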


\begin{proof}
It is clear that $\pi _{\mc{U}}$ factors $\bm{b}_{\mc{U}}$ onto $\bm{d}_{\mc{U}}$. Property (I) follows from the fact that for each $n$ and $z\in Z_n$, each $\nu ^n _z$ is a Borel probability measure on $Y_n$ and almost surely $\nu ^n_z$ concentrates on $\pi _n ^{-1}(\{ z \} )$. Now let $\bm{D}$ be the collection of all subsets of $Y_{\mc{U}}$ satisfying (II). 
Given $[A_n]\in \bm{A}_{\mc{U}}$ and $V\subseteq [0,1]$ open we have $\nu _{[z_n]}(A_n) \in V$ if and only if $[z_n] \in [ \{ z\csuchthat \nu ^n_z (A_n)\in V \} ]$, so that $[z_n]\mapsto \nu _{[z_n]}([A_n])$ is measurable. As in \cite[Lemma 2.2]{ES08} we have
\begin{align*}
\int _{[z_n]} \nu _{[z_n]} (A_n) \, d\eta _{\mc{U}} &= \int _{[z_n]}\lim _{n\ra \mc{U}} \nu ^n _{z_n} (A_n) \, d\eta _{\mc{U}} \\
& = \lim _{n\ra \mc{U}} \int _{z\in Z_n} \nu ^n _z (A_n ) \, d\eta _n = \lim _{n\ra \mc{U}}\nu _n (A_n) = \nu _{\mc{U}}([A_n])
\end{align*}
which shows that $[A_n] \in \bm{D}$. Thus $\bm{A}_{\mc{U}}\subseteq \bm{D}$, and it is clear that $\bm{D}$ is a monotone class so $\bm{B}_{\mc{U}}\subseteq \bm{D}$, which shows (II). Suppose now that $[z_n]\mapsto \mu _{[z_n]}$ satisfies (I) and (II). Then for each $[B_n]\in \bm{A}_{\mc{U}}(Z_{\mc{U}})$ and $D\in \bm{B}_{\mc{U}}(Y_{\mc{U}})$ we have $\int _{[B_n]} \mu _{[z_n]} (D) \, d\eta _{\mc{U}} = \nu _{\mc{U}}(D\cap \pi _{\mc{U}}^{-1}( [B_n]) ) = \int _{[B_n]} \nu _{[z_n]} (D)\, d\eta _{\mc{U}}$ so that $\mu _{[z_n]} (D) = \nu _{[z_n]}(D)$ almost surely, so that (III) holds.

For the last statement let $B_n\subseteq Z_n$ be an invariant $\eta _n$-conull set on which $(\gamma ^{b_n})_*\nu ^n_z = \nu ^n_{\gamma ^{d_n}z}$ for all $\gamma \in \Gamma$. Then for all $[z_n]$ in the $\eta _{\mc{U}}$-conull set $[B_n]\subseteq Z_{\mc{U}}$ we have for all $\gamma \in \Gamma$ and $[A_n]\in \bm{A}_{\mc{U}}(Y_{\mc{U}})$ that $(\gamma ^{d_{\mc{U}}})_* \nu _{[z_n]}(A_n) = \lim _{n\ra \mc{U}} (\gamma ^{d_n})_*\nu _{z_n}^n (A_n) = \lim _{n\ra \mc{U}} \nu ^n _{\gamma ^{d_n}z}(A_n) = \nu _{\gamma ^{d_{\mc{U}}}[z_n]}([A_n])$ so that $(\gamma ^{d_\mc{U}})_*\nu _{[z_n]}= \nu _{\gamma ^{d_{\mc{U}}}[z_n]}$.
\end{proof}

The next proposition describes the ultrapower of a standard probability space with atoms.

\begin{proposition}\label{prop:measalgcomp} Let $(Z,\eta )$ be a standard probability space and let $A\subseteq Z$ be the set of atoms of $(Z ,\eta )$.
\begin{enumerate}
\item If $(Z, \eta )$ is discrete then $(\mbox{\emph{MALG}}_\eta , d_\eta )$ is a compact metric space homeomorphic to $2^A$ with the product topology, and the map $I_{\mc{U}} : \mbox{\emph{MALG}}_{\eta _{\mc{U}}}\ra \mbox{\emph{MALG}}_{\eta}$ given by $I_{\mc{U}} ( [ B_n ] ) = \lim _{n\ra \mc{U}} B_n = \{ z\in A \csuchthat \{ n \csuchthat \ z\in B_n\} \in \mc{U} \}$ is a measure algebra isomorphism.
\item In general $[A] = \{ [z]\csuchthat z\in A \}\subseteq Z_{\mc{U}}$ is the set of all atoms of $\eta _{\mc{U}}$ and the restriction $\eta |A$ of $\eta$ to $A$ is isomorphic as a measure space to the restriction $\eta _{\mc{U}}|[A]$ of $\eta _{\mc{U}}$ to $[A]$ via the map $z\mapsto [z]$. Under this isomorphism, letting $C=Z\setminus A$, we may identify $(Z_{\mc{U}} , \eta _{\mc{U}})$ with $([C]\sqcup A, (\eta |C)_{\mc{U}} + \eta |A)$.
\end{enumerate}
\end{proposition}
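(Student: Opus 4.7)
For part (1), I would first note that since $(Z,\eta)$ is discrete, $Z$ agrees with $A$ modulo a null set, every subset of $A$ is Borel, and only $\emptyset$ is null, so $\MALG_\eta = 2^A$ as Boolean algebras. The metric $d_\eta$ induces the product topology on $2^A$: given $\epsilon>0$, choose a finite $F\subseteq A$ with $\eta(A\setminus F)<\epsilon$; then agreement on $F$ forces $d_\eta$-distance $<\epsilon$, while disagreement at even a single $z\in A$ contributes $\eta(\{z\})>0$ to the metric. Compactness is then inherited from $2^A$.

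Next, define $I_\mc{U}$ on $\bm{A}_\mc{U}$ by the displayed formula; it is plainly a Boolean homomorphism into $2^A = \MALG_\eta$. The central computation is the identity
\[
[B_n] = [L] \quad \text{in } \MALG_{\eta_\mc{U}}, \text{ where } L := I_\mc{U}([B_n]).
\]
Given $\epsilon>0$, pick a finite $F\subseteq A$ with $\eta(A\setminus F)<\epsilon$. For each $z\in F$, the ultrafilter $\mc{U}$ contains exactly one of $\{n\csuchthat z\in B_n\}$ and $\{n\csuchthat z\notin B_n\}$, and this matches whether $z\in L$ or not; intersecting over the finite set $F$ gives $\{n\csuchthat F\cap(B_n\triangle L)=\emptyset\}\in \mc{U}$, so $\eta(B_n\triangle L)<\epsilon$ on a $\mc{U}$-large set of $n$, whence $\lim_{n\to\mc{U}}\eta(B_n\triangle L)=0$. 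From this identity, well-definedness modulo null sets, surjectivity (each $L\subseteq A$ is the image of $[L]$), injectivity, and measure preservation ($\eta_\mc{U}([B_n]) = \eta_\mc{U}([L]) = \eta(L)$) are all immediate.

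For part (2), decompose $Z=A\sqcup C$ with $C$ non-atomic, giving $Z_\mc{U}=[A]\sqcup [C]$ in $\bm{A}_\mc{U}$ with $\eta_\mc{U}([A])=\eta(A)$ and $\eta_\mc{U}([C])=\eta(C)$. Applying part (1) to $(A,\eta|A)$ (normalized if $\eta(A)>0$), the measure algebra of $\eta_\mc{U}|[A]$ pulls back via $I_\mc{U}$ to $\MALG_{\eta|A}$, and its atoms are exactly $\{[z]\}=[\{z\}]$ for $z\in A$, with $\eta_\mc{U}(\{[z]\})=\eta(\{z\})$; summing gives $\sum_{z\in A}\eta_\mc{U}(\{[z]\})=\eta(A)=\eta_\mc{U}([A])$, so $[A]$ agrees with $\{[z]\csuchthat z\in A\}$ modulo $\eta_\mc{U}$-null sets and $z\mapsto[z]$ is a measure space isomorphism $(A,\eta|A)\cong ([A],\eta_\mc{U}|[A])$. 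It remains to show $\eta_\mc{U}|[C]$ is non-atomic: for any $B=[B_n]\subseteq [C]$ of positive measure, non-atomicity of $(C,\eta|C)$ lets us split each $B_n$ into Borel halves $B_n^1,B_n^2$ with $\eta(B_n^i)=\eta(B_n)/2$, so $[B_n^1]$ and $[B_n^2]$ partition $B$ into two positive-measure pieces and $B$ fails to be an atom. Combined with the atomic analysis this yields the identification $(Z_\mc{U},\eta_\mc{U})\cong ([C]\sqcup A,(\eta|C)_\mc{U}+\eta|A)$. The main technical step is the identity $[B_n]=[L]$ in (1), which uses the ultrafilter property to collapse $\MALG_{\eta_\mc{U}}$ back onto $2^A$ in the discrete case; the rest is a clean split between atomic and continuous parts.
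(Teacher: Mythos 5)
Your proof is correct and follows essentially the same route as the paper: identify $\MALG_\eta$ with $2^A$, establish the homeomorphism by finite approximation of the measure, show that $I_{\mc{U}}$ inverts the natural isometric embedding $B\mapsto[B]$, and split $Z_{\mc{U}}$ into atomic and continuous parts for (2). Your direct $\epsilon$-computation of $[B_n]=[L]$ merely unwinds the paper's appeal to $\mc{U}$-limits in the compact metric space $(2^A,d_\eta)$, and your explicit check that $(\eta|C)_{\mc{U}}$ is non-atomic supplies a detail the paper leaves implicit.
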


\begin{proof}
First suppose that $(Z,\eta )$ is discrete. Without loss of generality we may assume $Z=A$. As sets we may identify $\mbox{MALG} _{\eta}$ with $2^{A}$. Let $B_0,B_1,\dots$ be a sequence in $2^A$ converging in the product topology to some set $B\in 2^A$. Given $\epsilon >0$ let $F\subseteq A$ be a finite set such that $\eta (A\setminus F) <\epsilon$. For all large enough $n$, $B_n$ and $B$ agree on $F$, so that $\eta (B_n\Delta B)< \eta (A\setminus F)<\epsilon$ and thus $d_\eta (B_n, B) \ra 0$. This shows that the map $2^{A}\ra \mbox{MALG}_\eta$ is a continuous bijection from the compact Hausdorff space $2^{A}$ (with the product topology) to $(\mbox{MALG}_\eta , d_\eta )$, so it is a homeomorphism. It is clear that the map $\varphi$ taking $B\subseteq A$ to $[B] \subseteq [A]$ is an isometric embedding of $\mbox{MALG}_{\eta}$ to $\mbox{MALG}_{\eta _{\mc{U}}}$ that preserves all Boolean operations. If now $[B_n]\subseteq [A]$ and $\lim _{n\ra \mc{U}}B_n = B$ then $d_{\eta _{\mc{U}}} ( [B_n] , [B] ) = \lim _{n\ra \mc{U}} d_\eta ( B_n ,B) = 0$ so that $[B_n] =[B]$ and thus $\varphi ^{-1} = I_{\mc{U}}$ which completes the proof of (1). Part (2) follows since $(Z_{\mc{U}}, \eta _{\mc{U}})$ decomposes as $([C] \sqcup [A], (\eta |C)_{\mc{U}} + (\eta |A)_{\mc{U}})$ and part (1) shows that $([A] , (\eta |A)_{\mc{U}}) \cong (A, \eta )$.
\end{proof}

\begin{theorem}\label{thm:genmeasalg}
Let $\bm{a}_0,\bm{a}_1,\dots$ be a sequence of measure preserving actions of $\Gamma$ on the standard probability space $(X,\mu )$ and let $\bm{a}_{\mc{U}} = \Gamma \cc ^{a_{\mc{U}}} (X_{\mc{U}} ,\mu_{\mc{U}})$ be their ultraproduct. Let $\bm{M}_0\subseteq \mbox{\emph{MALG}}_{\mu _{\mc{U}}}$ be any subset such that $( \bm{M}_0 , d_{\mu _{\mc{U}}}|\bm{M}_0 )$ is separable. Then there exists an invariant measure sub-algebra $\bm{M}$ of $\mbox{\emph{MALG}}_{\mu _{\mc{U}}}$ containing $\bm{M}_0$ that is isomorphic as a measure algebra to $\mbox{\emph{MALG}}_\mu$.
\end{theorem}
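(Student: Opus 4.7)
The plan is to produce $\bm{M}$ by closing $\bm{M}_0$ together with a few auxiliary elements under the $\Gamma$-action $a_{\mc{U}}$ and countable Boolean operations, while simultaneously arranging that the atoms of the resulting sub-$\sigma$-algebra match those of $\mu$ and that the atomless part has the correct total mass. Since a separable probability measure algebra is determined up to isomorphism by the multiset of its atom masses together with the mass of its atomless part, this suffices to give $\bm{M} \cong \mathrm{MALG}_\mu$. Without loss of generality $\bm{M}_0$ is countable, since $\bm{M}$ will be $d_{\mu_\mc{U}}$-closed and any closed sub-$\sigma$-algebra containing a countable dense subset of $\bm{M}_0$ contains $\bm{M}_0$.

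Decompose $X = A \sqcup C$ where $A$ is the countable set of $\mu$-atoms and $C = X \setminus A$ is the non-atomic part; since each $a_n$ preserves $\mu$ and hence permutes the atoms and preserves $C$ setwise, both $[A]$ and $[C]$ are $\Gamma$-invariant elements of $\mathrm{MALG}_{\mu_\mc{U}}$. For the atomic part, I would include in the generating family of $\bm{M}$ all the atoms $\{[\{z\}] : z \in A\}$; by Proposition~\ref{prop:measalgcomp}(1) these generate all of $\mathrm{MALG}_{\mu_\mc{U}|[A]} \cong 2^A$. This family is $\Gamma$-invariant: for $z \in A$, the sequence $(\gamma^{a_n}z)_n$ lies in the finite set $\{z' \in A : \mu(z') = \mu(z)\}$, so the ultrafilter $\mc{U}$ selects a single $z_\gamma \in A$ with $\gamma^{a_n}z = z_\gamma$ for $\mc{U}$-many $n$, whence $\gamma^{a_\mc{U}}[\{z\}] = [\{z_\gamma\}]$.

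The substantive work is to construct a countably generated, $\Gamma$-invariant, atomless sub-$\sigma$-algebra $\bm{N} \subseteq \mathrm{MALG}_{\mu_\mc{U}|[C]}$ of total measure $\mu(C)$ containing $\{B \cap [C] : B \in \bm{M}_0\}$. Crucially, $\mathrm{MALG}_{\mu_\mc{U}|[C]}$ is itself atomless: any $[B]$ of positive measure with $B_n \subseteq C$ can be bisected by splitting each $B_n$ in half using the non-atomicity of $\mu|C$. Fixing an enumeration $\Gamma = \{\gamma_k\}_{k \in \N}$, I build $\bm{N} = \bigcup_n \bm{N}_n$ recursively: start with $\bm{N}_0$ the sub-$\sigma$-algebra generated by $[C]$ and $\{B \cap [C] : B \in \bm{M}_0\}$; at stage $n+1$, let $\bm{N}_{n+1}$ be the sub-$\sigma$-algebra generated by $\bm{N}_n$, by $\{\gamma_k^{a_\mc{U}}B : k \leq n,\, B \in \bm{N}_n\}$, and by a fixed halving $[a^1] \subset [a]$ with $\mu_\mc{U}([a^1]) = \mu_\mc{U}([a])/2$ chosen from the ambient algebra for each atom $[a]$ of $\bm{N}_n$. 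Finally, let $\bm{M}$ be the sub-$\sigma$-algebra of $\mathrm{MALG}_{\mu_\mc{U}}$ generated by $\bm{N}$ together with $\{[\{z\}] : z \in A\}$.

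It is then routine to verify that $\bm{M}$ is countably generated, contains $\bm{M}_0$, and is $\Gamma$-invariant, and that its atoms are exactly $\{[\{z\}] : z \in A\}$ of masses $\mu(z)$, with atomless part of total measure $\mu(C)$; the classification of separable measure algebras then gives $\bm{M} \cong 2^A \oplus \mathrm{MALG}_{\mu|C} \cong \mathrm{MALG}_\mu$. The main obstacle is atomlessness of $\bm{N}$. I would argue as follows: if $[a]$ were an atom of $\bm{N}$ of positive measure, then by atomicity $[a]$ is either contained in or disjoint from every element of each $\bm{N}_n$, so the smallest $\bm{N}_n$-set $[c_n]$ containing $[a]$ (obtained as a countable intersection of $\bm{N}_n$-supersets of $[a]$) is itself an atom of $\bm{N}_n$. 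The halving step at stage $n+1$ then forces $\mu_\mc{U}([c_{n+1}]) \leq \mu_\mc{U}([c_n])/2$, so $\mu_\mc{U}([a]) \leq \lim_n \mu_\mc{U}([c_n]) = 0$, a contradiction.
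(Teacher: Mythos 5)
Your proof is correct and follows essentially the same route as the paper's: decompose into the atomic part $[A]$ (handled via Proposition \ref{prop:measalgcomp}) and the atomless part $[C]$, adjoin all the atoms, close under the $\Gamma$-action, and use a halving device to force atomlessness over $[C]$ before invoking the classification of separable measure algebras. Your stage-by-stage halving of the atoms of $\bm{N}_n$, together with the geometric-decay argument for minimal covers, is just a more explicit rendering of the paper's single halving operator $S_{\mc{U}}$ under which the generating countable Boolean algebra is required to be closed.
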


\begin{proof}
Let $A\subseteq X$ be the collection of atoms of $X$ and let $C=X\setminus A$. 
By Proposition \ref{prop:measalgcomp}.(2), $[A]\subseteq X$ is the discrete part of $\mu _{\mc{U}}$ and $x\mapsto [x]$ is an isomorphism $\mu |A \cong \mu _{\mc{U}} |[A]$. Define a function $S_{\mc{U}} : \mbox{MALG}_{\mu _{\mc{U}}}\ra \mbox{MALG}_{\mu _{\mc{U}}}$ first on subsets $D\subseteq [C]$ by taking $S_{\mc{U}}(D)$ to be any subset of $D$ satisfying $\mu _{\mc{U}}(S_{\mc{U}}(D)) = \tfrac{1}{2}\mu _{\mc{U}} (D)$, and then extending this to all of $\mbox{MALG}_{\mu _{\mc{U}}}$ by taking $S_{\mc{U}}(D) = S_{\mc{U}}(D\cap [C]) \sqcup (D\cap [A])$. Fix a countable dense subset $\bm{M}_1$ of $\bm{M}_0$ and let $\bm{B}_0\subseteq \mbox{MALG}_{\mu _{\mc{U}}}$ be a countable Boolean algebra containing $\bm{M}_1 \cup \{ \{ [x] \} \csuchthat x\in A \}$ and closed under the functions $S_{\mc{U}}$ and $\gamma ^{a_{\mc{U}}}$ for all $\gamma \in \Gamma$. Then the $\sigma$-algebra $\bm{M} = \sigma (\bm{B}_0 )$ equipped with $\mu _{\mc{U}}$ is an invariant countably generated measure sub-algebra of $\mbox{MALG}_{\mu _{\mc{U}}}$ containing $\bm{M}_0$. Since $\bm{B}_0$ is closed under $S_{\mc{U}}$, the atoms of $\bm{B}_0$, and hence also those of $\bm{M}$, must be contained in $[A]$, and as $\bm{M}$ contains $\{ [B]\csuchthat B\subseteq A \}$, the descrete part of $\bm{M}$ is isomorphic to the discrete part of $\mbox{MALG}_\mu$. It follows that $\bm{M}\cong \mbox{MALG}_\mu$. 
\end{proof}

\begin{proposition}\label{prop:weakcontspace}
Let $\bm{a}= \Gamma \cc ^a (X,\mu )$ and $\bm{b} = \Gamma \cc ^b (Y,\nu )$ be measure preserving actions of $\Gamma$. If $\bm{a}$ is weakly contained in $\bm{b}$ then then the measure space $(X,\mu )$ is a quotient of the measure space $(Y,\nu )$. If $\bm{a}$ and $\bm{b}$ are weakly equivalent then $(X,\mu )$ is isomorphic to $(Y,\nu )$. In particular, the identity actions $\bm{\iota}_{\eta _1}$ and $\bm{\iota} _{\eta _2}$ are weakly equivalent if and only if $(Z_1, \eta _1)$ and $(Z_2, \eta _2)$ are isomorphic measure spaces.
\end{proposition}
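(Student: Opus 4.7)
The plan is to derive the statement from a bin-packing argument applied to the atoms of $\mu$ and $\nu$. The crucial input is the $F=\{e\}$ case of the definition of weak containment: since $e^a A_i\cap A_j=\delta_{ij}A_i$, $\bm{a}\prec\bm{b}$ implies that for every finite Borel partition of $(X,\mu)$ with prescribed measures and every $\epsilon>0$, there is a finite Borel partition of $(Y,\nu)$ whose measures match within $\epsilon$ coordinate-wise. This is a purely measure-theoretic statement, independent of the $\Gamma$-actions.

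Enumerate the atoms of $\mu$ in decreasing order of mass as $\{a_i\}_i$ with $\alpha_i=\mu(\{a_i\})$, and similarly $\{b_j\}_j$ with $\beta_j=\nu(\{b_j\})$; let $r_\mu,r_\nu$ be the non-atomic masses. For each $n$, set $\delta_n:=\beta_n/3$ and choose $N_n$ large enough that $\alpha_i<\delta_n/2$ for all $i>N_n$. Then $X$ has a finite Borel partition consisting of $\{a_1\},\dots,\{a_{N_n}\}$ together with finitely many residual pieces each of measure at most $\delta_n$, obtained by greedily grouping the remaining atoms of $\mu$ into bundles of total mass at most $\delta_n$ and splitting the non-atomic part of $\mu$ into pieces of measure at most $\delta_n$. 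Applying weak containment with $\epsilon=1/n$ yields a partition of $Y$ into sets $B_1^n,\dots,B_{N_n}^n,C_1^n,\dots,C_{M_n}^n$ with $|\nu(B_i^n)-\alpha_i|<1/n$ and $\nu(C_\ell^n)<\delta_n+1/n$. For $j\le n$ and $n$ large, $\beta_j\ge\beta_n=3\delta_n>\delta_n+1/n$, so $b_j$ must lie in some $B_i^n$; set $\phi_n(b_j):=a_i$. Fixing a nonprincipal ultrafilter $\mathcal{U}$ on $\N$, define $\phi(b_j):=\lim_{n\to\mathcal{U}}\phi_n(b_j)$; this limit is well-defined in the finite set $\{a_i:\alpha_i\ge\beta_j\}$ since $\phi_n(b_j)=a_i$ forces $\beta_j\le\nu(B_i^n)<\alpha_i+1/n$. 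The resulting $\phi:\{b_j\}\to\{a_i\}$ satisfies the packing inequality $\sum_{\phi(b_j)=a_i}\beta_j\le\alpha_i$: any finite subset $T\subseteq\phi^{-1}(\{a_i\})$ with $\sum_{b\in T}\beta(b)>\alpha_i+2\eta$ would, by intersecting the ultrafilter sets $\{n:\phi_n(b)=a_i\}$ over $b\in T$, produce arbitrarily large $n$ with $\sum_{b\in T}\beta(b)\le\nu(B_i^n)<\alpha_i+1/n$, a contradiction once $1/n<\eta$.

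From $\phi$ I build a measure-preserving Borel map $\pi:Y\to X$ by setting $\pi(b_j):=\phi(b_j)$ on the atoms of $\nu$, and on the non-atomic part of $Y$ by any measure-preserving Borel map onto the ``leftover'' target consisting of, for each atom $a_i$, a subset of mass $\alpha_i-\sum_{\phi(b_j)=a_i}\beta_j\ge0$ collapsing to $a_i$, together with the non-atomic part of $\mu$. The total target mass equals $r_\nu$ via the identity $\sum_i\alpha_i-\sum_j\beta_j+r_\mu=r_\nu$, using $r_\mu\le r_\nu$ which is automatic from the packing. Hence $\pi_*\nu=\mu$, establishing that $(X,\mu)$ is a quotient of $(Y,\nu)$. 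For weak equivalence, running the argument in both directions gives packings $\phi:\{b_j\}\to\{a_i\}$ and $\psi:\{a_i\}\to\{b_j\}$; with the survival functions $S_\mu(t):=\sum_{\alpha_i\ge t}\alpha_i$ and $S_\nu(t):=\sum_{\beta_j\ge t}\beta_j$, the packing $\phi$ sends $\{b_j:\beta_j\ge t\}$ into $\{a_i:\alpha_i\ge t\}$ while respecting capacity, so $S_\nu(t)\le S_\mu(t)$; the reverse packing $\psi$ gives the opposite inequality, hence $S_\mu\equiv S_\nu$. Comparing jump heights of these right-continuous step functions forces the atom multisets of $\mu$ and $\nu$ to coincide, and $r_\mu=r_\nu$ follows from $S_\mu(0^+)=S_\nu(0^+)$. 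Since a standard probability space is classified up to isomorphism by its atom multiset together with its non-atomic mass, we conclude $(X,\mu)\cong(Y,\nu)$, and the ``in particular'' statement for identity actions is then immediate (the converse direction is trivial, as any isomorphism of measure spaces transports $\bm{\iota}_{\eta_1}$ to $\bm{\iota}_{\eta_2}$).

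The main obstacle is the packing step: ensuring that the atoms of $\nu$ reliably land in the ``large'' pieces $B_i^n$ rather than the ``small'' residual pieces $C_\ell^n$, and that the capacity constraint survives the ultralimit. Both are controlled by the choice $\delta_n=\beta_n/3$, which forces the residual pieces at stage $n$ to be too small to contain $b_n$ or any earlier atom, combined with a standard finite-intersection-of-ultrafilter-sets argument for the capacity.
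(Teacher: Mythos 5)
Your overall route is genuinely different from the paper's and, once one step is repaired, it works. The paper deduces the quotient statement by realizing $\bm{a}$ as a shift system on $(K^\Gamma ,\lambda )$ with $K=2^\N$, showing via Proposition \ref{prop:ultrafunct} that this system is a factor of the ultrapower $\bm{b}_{\mathcal{U}}$, and then invoking Theorem \ref{thm:genmeasalg} to pull the factor down to a standard one; the isomorphism statement is then obtained by comparing the atomic parts of two measure spaces that are factors of each other. You instead strip weak containment down to its purely measure-theoretic content (the $F=\{ e\}$ case) and run a direct bin-packing argument on atoms, with the survival functions $S_\mu$, $S_\nu$ doing the work in the weak-equivalence case. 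This is more elementary (no ultraproducts of actions, no measure-algebra extension theorem) and isolates exactly what is being used; the paper's heavier route has the advantage that the quotient map arises as the restriction of an actual factor map between systems, which is what the surrounding sections need anyway.

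However, the step you yourself flag as crucial fails as written. You set $\delta _n=\beta _n/3$, take $\epsilon =1/n$, and claim that for $j\le n$ and $n$ large one has $\beta _j\ge \beta _n=3\delta _n>\delta _n+1/n$, which is what excludes the atom $b_j$ from the residual pieces $C_\ell ^n$ (these have $\nu$-measure $<\delta _n+1/n$). But $3\delta _n>\delta _n+1/n$ is equivalent to $\beta _n>3/(2n)$, whereas the decreasing enumeration forces $n\beta _n\le \sum _{j\le n}\beta _j\le 1$, i.e.\ $\beta _n\le 1/n<3/(2n)$. So the inequality you need is false for \emph{every} $n$, not just occasionally. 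The repair is local: take the tolerance at stage $n$ to be $\epsilon _n=\beta _n/3$ rather than $1/n$ (with the obvious modification if $\nu$ has only finitely many atoms). Then each residual piece has $\nu$-measure $<\delta _n+\epsilon _n=2\beta _n/3<\beta _j$ for all $j\le n$, so $b_j$ does land in some $B_i^n$; and since $\epsilon _n\to 0$, the rest goes through: the bound $\alpha _i>\beta _j-\epsilon _n$ eventually confines $\phi _n(b_j)$ to the fixed finite set $\{ a_i\csuchthat \alpha _i>\beta _j/2\}$, so the ultralimit exists and lands in $\{ a_i\csuchthat \alpha _i\ge \beta _j\}$, and the capacity inequality, the construction of $\pi$, and the survival-function comparison are all correct as you wrote them. (A cosmetic point: $S_\mu (t)=\sum _{\alpha _i\ge t}\alpha _i$ is left-continuous, not right-continuous, but the jump at $c$ is $c$ times the multiplicity either way, so the conclusion that the atom multisets coincide is unaffected.)
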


\begin{proof}
Suppose first that $\bm{a}\prec \bm{b}$. Let $\phi : X \ra K=2^\N$ be any Borel isomorphism and let $\lambda = (\Phi ^{\phi ,a})_*\mu$. Then $\bm{a} \cong \Gamma \cc ^s (K^\Gamma ,\lambda )$ and as $\bm{a}\prec \bm{b}$ there exists $\lambda _n = (\Phi ^{\phi _n , b})_*\nu \in E(\bm{b}, K)$ with $\lambda _n \ra \lambda$. By Proposition \ref{prop:ultrafunct} $\Gamma \cc ^s (K^\Gamma ,\lambda )$ is a factor of the ultrapower $\bm{b}_{\mc{U}}$ of $\bm{b}$ via $\Phi ^{\phi , b_{\mc{U}}}$ where $\phi$ is the ultralimit of the $\phi _n$. Thus $\bm{a}$ is also a factor of $\bm{b}_{\mc{U}}$ so by Theorem \ref{thm:genmeasalg} this implies $(X,\mu )$ is a factor of $(Y,\nu )$.

Now suppose that $\bm{a}$ and $\bm{b}$ are weakly equivalent. Then the measure spaces $(X,\mu )$ and $(Y,\nu )$ are factors of each other, say $\pi : (Y,\nu ) \ra (X,\mu )$ and $\varphi : (X,\mu )\ra (Y,\nu )$. Let $A\subseteq X$ be the set of atoms of $X$ and let $B\subseteq Y$ be the set of atoms of $Y$. If $\mu (A)=0$ then we are done since this implies both $(X,\mu )$ and $(Y,\nu )$ are non-atomic. So suppose that $\mu (A)>0$. It is clear that $A\subseteq \varphi ^{-1}(B)$ and $B\subseteq \pi ^{-1}(A)$, hence $\mu (A) = \nu (B)$. Additionally, $\mu (\varphi ^{-1}(B)\setminus A) = 0$, otherwise $\nu (B) =\mu (\varphi ^{-1}(B)) > \mu (A)$.  Similarly $\nu (\pi ^{-1}(A)\setminus B) =0$. Thus $\varphi ^{-1}: (\mbox{MALG}_{\nu _B}, d_{\nu _B}) \ra (\mbox{MALG}_{\mu _A}, d_{\mu _A})$ and $\pi ^{-1} : (\mbox{MALG}_{\mu _A},d_{\mu _A})\ra (\mbox{MALG}_{\nu _B},d_{\nu _B})$ are isometric embeddings of compact metric spaces (Proposition \ref{prop:measalgcomp}), so it follows that both $\pi ^{-1}$ and $\varphi ^{-1}$ are in fact isometric isomorphisms. Since these maps are also Boolean algebra homomorphisms it follows that both are measure algebra isomorphisms. This shows that the discrete parts of $(X,\mu )$ and $(Y,\nu )$ are isomorphic, from which it follows that $(X,\mu )$ and $(Y,\nu )$ are isomorphic.
\end{proof}

\section{Stable weak containment}\label{app:stable}

In this appendix we establish some basic properties of stable weak containment of measure preserving actions. Our development mirrors our development of weak containment of measure preserving actions.

\begin{definition}\label{def:swc}
Let $\mc{A}$ and $\mc{B}$ be two sets of measure preserving actions of $\Gamma$. We say that $\mc{A}$ is \emph{stably weakly contained in} $\mc{B}$, written $\mc{A}\prec _s \mc{B}$ if for every $\Gamma \cc ^a (X,\mu ) = \bm{a} \in \mc{A}$, for any Borel partition $A_0,\dots ,A_{k-1}$ of $X$, $F\subseteq \Gamma$ finite, and $\epsilon >0$, there exist nonnegative reals $\alpha _0,\dots ,\alpha _{m-1}$ with $\sum _{i<m}\alpha _i =1$ along with actions $\Gamma \cc ^{b_i}(Y_i,\nu _i) = \bm{b}_i \in \mc{B}$, $i<m$, and a Borel partition $B_0,\dots ,B_{k-1}$ of $\sum _{i<m}Y_i$ such that
\[
|\mu (\gamma ^aA_i\cap A_j) - (\textstyle{\sum _{i<m}}\alpha _i\tilde{\nu} _i)(\gamma ^{\sum _{i<m}b_i}B_i\cap B_j) |<\epsilon
\]
for all $i,j< k$ and $\gamma \in F$. (See \S\ref{subsection:convexity} for notation.)
\end{definition}

The relation $\prec _s$ is a reflexive and transitive relation on sets of measure preserving actions. We call $\mc{A}$ and $\mc{B}$ \emph{stably weakly equivalent}, written $\mc{A}\sim _s \mc{B}$, if both $\mc{A}\prec _s \mc{B}$ and $\mc{B}\prec _s \mc{A}$. We write $\bm{a}\prec _s \mc{B}$, $\mc{A}\prec _s\bm{b}$, and $\bm{a}\prec _s \bm{b}$ for $\{ \bm{a}\} \prec _s \mc{B}$, $\mc{A}\prec _s \{ \bm{b} \}$ and $\{ \bm{a}\} \prec _s \{\bm{b} \}$ respectively, and similarly with $\sim _s$ in place of $\prec _s$.

It is clear that $\bm{a}\prec _s\bm{b}$ if and only if $\bm{a}\prec \{ \bm{\iota}_{\eta _{\bm{\alpha}}}\times \bm{b}\csuchthat \bm{\alpha} = ( \alpha _0,\dots ,\alpha _{m-1}) \in [0,1]^m, \ \sum _{i<m}\alpha _i = 1 , \ m\in \N \}$, so by Lemma \ref{lem:setcon} we have $\bm{a}\prec _s \bm{b}$ if and only if $\bm{a}\prec \bm{\iota}\times \bm{b}$ if and only if $\bm{\iota}\times \bm{a} \prec \bm{\iota}\times \bm{b}$. From this point of view Theorem \ref{thm:main} says that if $\bm{a}$ is ergodic then $\bm{a}\prec _s \bm{b}$ if and only if $\bm{a}\prec \bm{b}$. Theorem \ref{thm:conv} implies that $\bm{a}\prec _s \bm{b}$ if and only if $E(\bm{a},K)\subseteq \ol{\mbox{co}}{E(\bm{b},K)}$ for every compact Polish space $K$, and $\bm{a}\sim _s \bm{b}$ if and only if $\ol{\mbox{co}}E(\bm{a},K)=\ol{\mbox{co}}E(\bm{b},K)$ for every compact Polish space $K$. More generally, we have the following analogue of Proposition \ref{prop:AWgen} which can be proved directly by using the same methods.

\begin{proposition}\label{prop:stab}
Let $\mc{A}$ and $\mc{B}$ be sets of measure preserving actions of $\Gamma$. Then the following are equivalent
\begin{enumerate}
\item $\mc{A} \prec _s \mc{B}$;
\item $\bigcup _{\bm{d}\in \mc{A}} E(\bm{d} , K) \subseteq \ol{\mbox{\emph{co}}} (\bigcup _{\bm{b}\in \mc{B}}E(\bm{b},K))$ for every finite $K$;
\item $\bigcup _{\bm{d}\in \mc{A}} E(\bm{d} , K) \subseteq \ol{\mbox{\emph{co}}}( \bigcup _{\bm{b}\in \mc{B}}E(\bm{b},K))$ for every compact Polish $K$;
\item $\bigcup _{\bm{d}\in \mc{A}} E(\bm{d} , 2^\N ) \subseteq \ol{\mbox{\emph{co}}} ( \bigcup _{\bm{b}\in \mc{B}}E(\bm{b},2 ^\N ))$.
\end{enumerate}
\end{proposition}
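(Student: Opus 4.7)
The plan is to reduce this proposition directly to Proposition \ref{prop:AWgen} by observing that, if we let
\[
\mc{B}^{\mathrm{co}} = \big\{ \textstyle{\sum _{i<m}}\alpha _i\bm{b}_i \csuchthat m\geq 1,\ \bm{b}_i \in \mc{B},\ \alpha _i\geq 0,\ \textstyle{\sum _{i<m}} \alpha _i =1 \big\}
\]
denote the class of all finite convex sums of actions in $\mc{B}$, then unpacking Definition \ref{def:swc} gives the tautology $\mc{A}\prec _s \mc{B}$ if and only if $\mc{A}\prec \mc{B}^{\mathrm{co}}$. The substance of the proof is to translate the closures appearing in Proposition \ref{prop:AWgen} applied to $(\mc{A},\mc{B}^{\mathrm{co}})$ into the closed convex hulls appearing in (2)--(4).

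The first step is to verify, for each compact Polish $K$, the set identity
\[
\bigcup _{\bm{c}\in \mc{B}^{\mathrm{co}}} E(\bm{c},K) = \mbox{co}\Big( \bigcup _{\bm{b}\in \mc{B}} E(\bm{b}, K) \Big) .
\]
The inclusion $\subseteq$ is a direct generalization of the first inclusion of Lemma \ref{lem:subsets}: given $\bm{c}=\sum _{i<m}\alpha _i\bm{b}_i$ and $\phi :\sum _i Y_i \ra K$, set $\phi _i = \phi|_{\{ i\} \times Y_i}$ and observe that $(\Phi ^{\phi ,\sum b_i})^{-1}(A) = \bigsqcup _i \{ i\} \times (\Phi ^{\phi _i,b_i})^{-1}(A)$, so $(\Phi ^{\phi ,\sum b_i})_*(\sum _i \alpha _i\tilde{\nu}_i) = \sum _i \alpha _i (\Phi ^{\phi _i,b_i})_*\nu _i$. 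The reverse inclusion $\supseteq$ is the analogue of the second inclusion of Lemma \ref{lem:subsets} with the actions on the different summands allowed to differ: given $\lambda _i=(\Phi ^{\phi _i,b_i})_*\nu _i \in E(\bm{b}_i,K)$ and weights $\alpha _i$, the function $\phi$ constructed above witnesses $\sum _i\alpha _i\lambda _i\in E(\sum _i\alpha _i\bm{b}_i,K)$.

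Taking closures in $M_s(K^\Gamma )$ yields
\[
\overline{\textstyle{\bigcup }_{\bm{c}\in \mc{B}^{\mathrm{co}}} E(\bm{c},K)} = \overline{\mbox{co}}\Big( \textstyle{\bigcup} _{\bm{b}\in \mc{B}} E(\bm{b}, K) \Big) .
\]
Combined with the tautology $\mc{A}\prec _s \mc{B}\Leftrightarrow \mc{A}\prec \mc{B}^{\mathrm{co}}$, Proposition \ref{prop:AWgen} applied to the pair $(\mc{A},\mc{B}^{\mathrm{co}})$ delivers the equivalence of (1) with each of (2), (3), (4).

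I do not expect any genuine obstacle: the entire combinatorial content is already encapsulated in Proposition \ref{prop:AWgen} and Lemma \ref{lem:subsets}, and the only task is the bookkeeping required to extend Lemma \ref{lem:subsets} to sums of distinct actions, which is essentially verbatim. Alternatively, one could give a direct proof by adapting the argument of Proposition \ref{prop:AWgen} line by line, replacing each occurrence of the single approximating action $\bm{b}_n$ with a convex combination $\sum _i \alpha _{i,n}\bm{b}_{i,n}$ and the closure of a union with a closed convex hull; the quantitative estimates $\epsilon _n k^{2|F_n|}$ etc.\ go through unchanged. The reduction approach is cleaner and is the one I would present.
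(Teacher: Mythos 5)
Your proposal is correct, and it takes a different route from the one the paper indicates: the paper gives no written proof, saying only that the proposition ``can be proved directly by using the same methods'' as Proposition \ref{prop:AWgen}, i.e., by re-running the quantitative argument with a convex combination $\sum_{i}\alpha_{i,n}\bm{b}_{i,n}$ in place of the single approximating action $\bm{b}_n$. Your reduction via the auxiliary class $\mc{B}^{\mathrm{co}}$ of finite convex sums avoids repeating any estimates: the equivalence $\mc{A}\prec_s\mc{B}\Leftrightarrow\mc{A}\prec\mc{B}^{\mathrm{co}}$ really is a tautology given Definition \ref{def:swc} and the definition of $\sum_{i<m}\alpha_i\bm{b}_i$ in \S\ref{subsection:convexity}, and your computation $(\Phi^{\phi,\sum_i b_i})_*(\sum_i\alpha_i\tilde{\nu}_i)=\sum_i\alpha_i(\Phi^{\phi_i,b_i})_*\nu_i$ correctly yields the exact identity $\bigcup_{\bm{c}\in\mc{B}^{\mathrm{co}}}E(\bm{c},K)=\mbox{co}\big(\bigcup_{\bm{b}\in\mc{B}}E(\bm{b},K)\big)$ in both directions at once, after which Proposition \ref{prop:AWgen} applied to the pair $(\mc{A},\mc{B}^{\mathrm{co}})$ delivers (1)--(4) verbatim. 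This is in fact the natural generalization of what the paper does carry out in the singleton case in Appendix \ref{app:stable}, where $\bm{a}\prec_s\bm{b}$ is identified with $\bm{a}\prec\{\bm{\iota}_{\eta_{\bm{\alpha}}}\times\bm{b}\}$ and Theorem \ref{thm:conv} (itself proved via Lemmas \ref{lem:subsets} and \ref{lem:setcon}) is invoked; the extra generality you need is only that the summands $\bm{b}_i$ may differ, which your extension of Lemma \ref{lem:subsets} handles. The one point worth making explicit in a final write-up is that $\mc{B}^{\mathrm{co}}$ consists of genuine measure preserving actions in the paper's sense (a finite disjoint sum of standard probability spaces is standard), so Proposition \ref{prop:AWgen} is applicable to it; with that noted, your reduction is complete, and it buys cleanliness at no cost relative to the direct re-proof the paper suggests.
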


$\ $\\
\noindent Department of Mathematics \\
California Institute of Technology \\
Pasadena, CA 91125 \\
\texttt{rtuckerd@caltech.edu}

\end{document}